\newtheorem{theorem}{Theorem}[section]
\newtheorem{proposition}[theorem]{Proposition}
\newtheorem{lemma}[theorem]{Lemma}
\newtheorem{corollary}[theorem]{Corollary}
\theoremstyle{definition}
\newtheorem{definition}[theorem]{Definition}
\newtheorem{example}[theorem]{Example}
\newtheorem{remark}[theorem]{Remark}
\newtheorem{claim}[theorem]{Claim}
\newcommand{\set}[1]{\left\{ #1 \right\}}
\newcommand{\ls}[2]{#1_{1},\dots,#1_{#2}}
\newcommand{\abs}[1]{\left | #1 \right |}
\newcommand{\mc}[1]{\ensuremath{\mathcal{#1}}}
\newcommand{\et}{\mathrm{\acute{e}t}}
\newcommand{\tikzpullback}{\arrow[phantom,dr,"\ulcorner",very near start]}
\newcommand{\BC}{\ensuremath{\mathcal{BC}}}
\newcommand{\BT}{\ensuremath{\mathcal{BT}}}
\newcommand{\powerseries}[1]{[\! [ #1 ]\! ]}
\newcommand{\smallvector}[2]{{\begin{pmatrix} #1 \\ #2\end{pmatrix}}}
\newcommand{\inv}{{-1}}
\DeclareMathOperator{\Q}{\mathbb{Q}}
\DeclareMathOperator{\Z}{\mathbb{Z}}
\DeclareMathOperator{\F}{\mathbb{F}}
\DeclareMathOperator{\D}{\mathbb{D}}
\DeclareMathOperator{\Proj}{\mathbb{P}}
\DeclareMathOperator{\annu}{\mathbb{A}}
\DeclareMathOperator{\Aut}{Aut}
\DeclareMathOperator{\Bun}{Bun}
\DeclareMathOperator{\Perf}{Perf}
\DeclareMathOperator{\Nrd}{Nrd}
\DeclareMathOperator{\GL}{GL}
\DeclareMathOperator{\SL}{SL}
\DeclareMathOperator{\Gal}{Gal}
\DeclareMathOperator{\Hom}{Hom}
\DeclareMathOperator{\Ext}{Ext}
\DeclareMathOperator{\spa}{Spa}
\DeclareMathOperator{\spd}{Spd}
\DeclareMathOperator{\rad}{rad}
\DeclareMathOperator{\ind}{Ind}
\DeclareMathOperator{\cind}{c-Ind}
\DeclareMathOperator{\res}{Res}
\DeclareMathOperator{\st}{St}
\DeclareMathOperator{\Pic}{Pic}
\DeclareMathOperator{\isoc}{Isoc}
\DeclareMathOperator{\Div}{Div}
\title{Gluing sheaves along Harder-Narasimhan strata of \texorpdfstring{$\Bun_G$}{BunG}}
\author{Jon Miles}
\date{}
\begin{document}

\begin{abstract}
	We describe how to glue prime-to-$p$ torsion sheaves along Harder-Narasimhan strata of Fargues-Scholze's $\Bun_G$ in terms of the cohomology of locally closed strata inside the smooth charts $\mc M_b \to \Bun_G$ constructed in \cite{fargues2021geometrization}, which are moduli of certain split parabolic bundles. Our computations for $G=\GL_2$ explicitly describe images of geometric constant term functors when restricted to a distinguished point inside $\Bun_{T_{\{b\}}}$, where $T_{\{b\}}$ is the split inner form of the Levi quotient of the corresponding parabolic subgroup $P_b$. 
\end{abstract}

\maketitle

\tableofcontents

\section{Introduction}

\subsection{Motivation and overview}
The geometrization of the local Langlands correspondence by certain sheaf categories on the moduli of analytic vector bundles on relative Fargues-Fontaine curves has proved to be a fundamental advancement in geometric representation theory. The geometry of Fargues-Scholze's $\Bun_G$ is interesting in its own right within the context of rational $p$-adic Hodge theory of perfectoid spaces. In particular its geometric points correspond to isomorphism classes of $G$-isocrystals, whose slope decompositions induce a canonical Harder-Narasimhan stratification of the stack. In this paper, we study the \'etale cohomology of the $\mc M_b$ coverings of open strata $\Bun_{G}^{\leq b}$ \cite[\hphantom{}V.3]{fargues2021geometrization} and their role in understanding the smooth representation theory of $p$-adic reductive groups in terms of (prime-to-$p$ torsion) sheaves $D_\et(-)$ restricted to locally closed strata.

Classically, there are functors between smooth representation categories of Levi subgroups $M$ of a fixed reductive group $G$ given by parabolic induction and Jacquet restriction that are crucial in classifying and understanding smooth $G$-representations. These are categorified by geometric Eisenstein series and constant term functors between $\Bun_M$ and $\Bun_G$ (cf. \cite{hamann2024geometriceisensteinseriesi}), and it was shown in loc. cit. that the cohomology of $\mc M_b$ can be identified with certain stalks of constant term functors. In this way, the gluing data between sheaves along Harder-Narasimhan strata of $\Bun_G$ can be thought of as a geometric analogue of Jacquet restrictions, which are most interesting when restricted to subquotients of parabolically induced representations. 

There are two main purposes of this article: the first is to formalize the role of certain strata $\mc M_{b_2}^{b_1}\hookrightarrow \mc M_{b_2}$ and their cohomology in the understanding of the infinite semi-orthogonal decomposition of $D_\et(\Bun_G)$ with prime-to-$p$ torsion coefficients. The second is to be a very explicit introduction to objects appearing in the automorphic side of the Fargues-Scholze program and how smooth representations appear in the étale cohomology of diamonds in the $\GL_2$ case. A large part of the article is to compute some examples of gluing data along the first specialization from basic strata for $\Bun_2$. Notably, we only use methods from the automorphic side. We hope that the computations appearing here provide some insight to the connections between constant term functors and more classical categories of smooth representations.

\subsection{Main results for general reductive groups}
Let $E$ be a local field with residue characteristic $p$, $G$ a connected reductive group over $E$, and $\Lambda$ a torsion ring killed by an integer $n$ that is prime to $p$. The goal of this paper is to understand the infinite semi-orthogonal decomposition of $D_\et(\Bun_G):=D_\et(\Bun_G,\Lambda)$ induced by excision triangles along Harder-Narasimhan strata in terms of the cohomology of certain diamonds. More precisely, for $b\in B(G)$ one considers the quasicompact open substack $\Bun_G^{\leq b}$ consisting of points that specialize to $b$ (i.e. ``more semistable'' than $b$), which has $b$ as its unique closed point. Let $i:\Bun_G^b \hookrightarrow \Bun_G^{\leq b}$ be the inclusion with open complement $j:\Bun_G^{<b}\hookrightarrow \Bun_G^{\leq b}$. Then there are the usual functorial excision triangles \[Ri_* i^! A\to A\to Rj_*j^* A\] for any $A$ in $D_\et(\Bun_G^{\leq b})$, so that one can recover the original triangle by a map $i^* R j_*B\to C$ in $D_\et(\Bun_G^b)$, up to adjunction. Let $\underline{G_b(E)}$ denote the group of connected components of the stabilizer of $\Bun_G^b$, so that $D_\et(\Bun_{G}^b)$ is canonically equivalent to the derived category of smooth $G_b(E)$-representations in $\Lambda$-modules. Then since homomorphisms are well understood in this category, the problem of which sheaves glue non-trivially is essentially reduced to understanding the functor $i^* Rj_*$.

\subsubsection{Cohomology of \texorpdfstring{$\mc M_b$}{Mb} and relation to constant term functors}
Our first result is constructing natural equivalences between $i_b^*$ and the cohomology of sheaves on the smooth covers $\mathcal{M}_{b}\to \Bun_G$, defined in \cite[Chapter~V.3]{fargues2021geometrization} as the moduli stack of $G$-bundles with ascending separated and exhaustive filtrations on fibre functors indexed by $\lambda\in \Q$ and with associated graded that is  $v$-locally isomorphic to $\mc E_b$. Then there is a canonical $\underline{G_b(E)}$-torsor $\widetilde{\mc M_{b}}\to \mc M_b$ given by fixing $v$-local trivializations of the associated graded bundle. Let $\phi: \widetilde{\mc M_{b}} \to \Bun_G$ denote the composition.
\begin{proposition}[Proposition \ref{action-on-cohomology}]
For any $A\in D_\et(\Bun_G)$, there is a natural isomorphism of underlying $\Lambda$-modules \[(i_{b}^* A)_\Lambda \cong R\Gamma (\widetilde{\mc M_{b}}, \phi^*A)\] in the derived category of smooth $\Lambda$-modules $D(\Lambda)$ which is equivariant for the natural $G_{b}(E)$-structure on the right. 
\end{proposition}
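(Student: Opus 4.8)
The plan is to use the contraction principle for the natural $\mathbb{G}_m$-action on $\mc M_b$ that degenerates a filtration to its associated graded; this reduces the computation to the cohomology over the split locus, which lies inside the closed stratum $\Bun_G^b$, where the answer becomes visible. Concretely: since $\mc M_b$ parametrizes $G$-bundles together with a $\Q$-indexed separated exhaustive filtration on fibre functors, the Rees construction equips $\mc M_b$ with a $\mathbb{G}_m$-action for which the forgetful map $\pi_b\colon\mc M_b\to\Bun_G$ is $\mathbb{G}_m$-invariant (rescaling a filtration does not change the underlying bundle) and the action contracts $\mc M_b$ onto its fixed locus $\mc M_b^{\mathrm{sp}}$, the locus of split filtrations, with contraction map the associated-graded morphism $c\colon\mc M_b\to\mc M_b^{\mathrm{sp}}$; the fibres of $c$ are iterated extensions of positive Banach--Colmez spaces, hence cohomologically trivial for prime-to-$p$ coefficients. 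The $\underline{G_b(E)}$-torsor $\widetilde{\mc M_b}\to\mc M_b$ is defined by trivializing the associated graded, which the $\mathbb{G}_m$-action preserves canonically, so the action lifts to $\widetilde{\mc M_b}$, commutes there with the deck $\underline{G_b(E)}$-action, and contracts $\widetilde{\mc M_b}$ onto the induced torsor $\widetilde{\mc M_b^{\mathrm{sp}}}$; and $\phi$ is $\mathbb{G}_m$-invariant since $\pi_b$ is, so $\phi^*A$ is canonically $\mathbb{G}_m$-equivariant.

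Applying the contraction principle (the $v$-stack incarnation of Braden's theorem / the Drinfeld--Gaitsgory contraction lemma, cf. the use of such arguments with $\mc M_b$ in \cite{fargues2021geometrization} and \cite{hamann2024geometriceisensteinseriesi}) to the $\mathbb{G}_m$-equivariant sheaf $\phi^*A$ on $\widetilde{\mc M_b}$ gives a $G_b(E)$-equivariant isomorphism $R\Gamma(\widetilde{\mc M_b},\phi^*A)\cong R\Gamma(\widetilde{\mc M_b^{\mathrm{sp}}},\widetilde\iota^{\,*}\phi^*A)$, where $\widetilde\iota\colon\widetilde{\mc M_b^{\mathrm{sp}}}\hookrightarrow\widetilde{\mc M_b}$; the $G_b(E)$-equivariance survives because the deck action commutes with $\mathbb{G}_m$ throughout. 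A split filtered bundle has underlying $G$-bundle isomorphic to $\mc E_b$, so $\mc M_b^{\mathrm{sp}}\to\Bun_G$ factors through $i_b$; more precisely $\mc M_b^{\mathrm{sp}}$ is the stratum $\Bun_{M_b}^{b_M}\simeq[\ast/\underline{G_b(E)}]$ of the split Levi $M_b$ over which $b$ becomes basic, and $\widetilde\iota^{\,*}\phi^*A$ is the pullback of $i_b^*A$ along $\mc M_b^{\mathrm{sp}}\to\Bun_G^b$.

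To finish, recall $\Bun_G^b\simeq[\ast/\widetilde G_b]$ with $\widetilde G_b=\underline{\Aut}(\mc E_b)$ an extension $1\to\widetilde G_b^{>0}\to\widetilde G_b\to\underline{G_b(E)}\to 1$ whose kernel is again an iterated extension of positive Banach--Colmez spaces. Such spaces are invisible to $D_\et(-,\Lambda)$, so pullback along $\mc M_b^{\mathrm{sp}}=[\ast/\underline{G_b(E)}]\to[\ast/\widetilde G_b]=\Bun_G^b$ is the identity under the canonical equivalence $D_\et(\Bun_G^b)\simeq D(G_b(E),\Lambda)$; hence $\widetilde\iota^{\,*}\phi^*A$ is $i_b^*A$ regarded as a smooth $G_b(E)$-representation. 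Finally $\widetilde{\mc M_b^{\mathrm{sp}}}\to\mc M_b^{\mathrm{sp}}$ is the tautological $\underline{G_b(E)}$-torsor $\ast\to[\ast/\underline{G_b(E)}]$, so computing $R\Gamma(\widetilde{\mc M_b^{\mathrm{sp}}},-)$ of a sheaf pulled back from $\mc M_b^{\mathrm{sp}}$ returns its underlying $\Lambda$-complex together with the deck $G_b(E)$-action; unwinding, this is $(i_b^*A)_\Lambda$ with its smooth $G_b(E)$-structure, as claimed.

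The main obstacle is the invocation of the contraction principle in this setting: $\widetilde{\mc M_b}$ is only an inverse limit of finite \'etale covers of $\mc M_b$ and $\pi_b$ is not proper, so one needs the principle for $\mathbb{G}_m$-equivariant sheaves on such $v$-stacks, and one must check that the $\mathbb{G}_m$-action genuinely contracts $\mc M_b$ (and $\widetilde{\mc M_b}$) onto the split locus in the precise sense required. This is where the description of $\mc M_b$ as a moduli of split parabolic bundles and the positivity of the Banach--Colmez spaces appearing as fibres of $c$ and in $\widetilde G_b^{>0}$ do the work; the positivity is also what prevents any Tate twist from appearing in the final isomorphism. The remaining ingredients --- cohomological triviality of the unipotent parts, the tautological-torsor computation, and tracking the $G_b(E)$-action --- are comparatively formal.
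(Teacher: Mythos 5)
Your proposal takes a genuinely different route. The paper's proof is short: it invokes the ``strict henselization'' property of $\widetilde{\mc M_b}$ (Proposition~\ref{strict_hens}, which is a direct citation of \cite[Theorem~V.4.2]{fargues2021geometrization}) together with Lemma~\ref{split_point}, a two-line diagram chase identifying the base point of $\widetilde{\mc M_b}$ with the point of $\Bun_G^b$ cut out by the section $\underline{G_b(E)}\hookrightarrow\mc G_b$; the equivariance is then checked by tracking the descent datum along the $\underline{G_b(E)}$-torsor $\widetilde{\mc M_b}\to\mc M_b$. You instead try to reprove the content of Fargues--Scholze V.4.2 from first principles via a Rees $\mathbb{G}_m$-action contracting $\widetilde{\mc M_b}$ onto the split point and a $v$-stack version of the contraction lemma. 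That is a legitimate alternative line of argument --- Fargues--Scholze's own proof of V.4.2 does eventually deploy a $\mathbb{G}_m$-contraction --- and it has the virtue of making the geometry visible rather than quoting a black box.

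Two concrete problems with the execution, though. First, a factual error: the fibres of the contraction $c\colon\mc M_b\to\mc M_b^{\mathrm{sp}}$ (equivalently, $\widetilde{\mc M_b}$ itself) are iterated extensions of \emph{negative} Banach--Colmez spaces, not positive ones. For $\GL_2$ with $b\leftrightarrow\mc O(m)\oplus\mc O(n)$, $m<n$, one has $\widetilde{\mc M_b}\cong\BC(\mc O(m-n)[1])$, and the introduction of the paper explicitly calls $\widetilde{\mc M_b}$ an iterated extension of negative Banach--Colmez spaces. This is not just a sign convention: the assertion ``hence cohomologically trivial'' is immediate for positive BC spaces (Proposition~\ref{Hansen-bc-dualizing}) but for negative BC spaces it \emph{is} essentially the content of FS V.4.2 --- so that parenthetical is either circular (if you were leaning on it) or a non sequitur (if the contraction principle is doing all the work, in which case the assertion about fibres is irrelevant). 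Second, the ``contraction principle on $v$-stacks'' that you invoke is not a light-weight tool here: Fargues--Scholze's hyperbolic localization is proved for spatial diamonds with $\mathbb{G}_m$-action, and applying it to the pro-\'etale torsor $\widetilde{\mc M_b}\to\mc M_b$ over a small $v$-stack requires exactly the sort of bootstrapping that goes into FS's own proof of V.4.2 (positive-BC contractions plus descent along $\underline{E}^n$-torsors). You flag this as the main obstacle, and you are right that it is where all the work lives; once you have done it, you have essentially reproved V.4.2, which is the theorem the paper simply cites. If you intend this as a self-contained alternative, fix the positive/negative slip, drop the circular ``cohomologically trivial'' gloss, and supply the precise form of the contraction lemma you need.
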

This is an immediate consequence of (\cite[Theorem~V.4.2]{fargues2021geometrization}), which identifies the cohomology of $\widetilde{\mc M_b}$ with the pullback to the special point corresponding to the bundle with everywhere split filtration. In this sense, Fargues-Scholze call $\widetilde{\mc M_{b}}$ the ``strict henselization'' of the point $i_b:\Bun_G^b\hookrightarrow \Bun_G$. As the space $\widetilde{\mc M_b}$ is isomorphic to an iterated extension of negative Banach-Colmez spaces, this is one of several instances where it is useful to think of Banach-Colmez spaces as playing the role of henselian local rings in the geometry of $v$-stacks (at least on the level of $\ell$-adic \'etale sheaves). Morally speaking, the equivariant version of our computation can be seen as a sort of nearby cycles functor for local systems over the punctured space $\widetilde{\mc M_b^\circ} \to *$.

We remark that the stack $\mc M= \bigsqcup_{b\in B(G)}\mc M_b$ seems to be closely related to an analogue of the moduli of Simpson's mixed twistor structures over the twistor projective line, which serve as a generalization of mixed real Hodge structures (cf. \cite{simpson1997mixed}). In a slightly different direction, Hamann and Imai identify $\mc M_b\cong \Bun_{P_b}^{b'}$ as a certain basic point of the moduli of parabolic bundles for the dynamic parabolic $P_b$ with stabilizer isomorphic to $\underline{G_b(E)}$\footnote{The notation $b'$ is our own; for more details see \cite[Corollary~1.7]{hamann2024dualizingcomplexesmoduliparabolic}.} . Recently, Hamann, Hansen, and Scholze used this identification to identify the renormalized pullback $i_{b}^{\mathrm{ren}*}$ with a stalk\footnote{The stalk is supported at a semistable point of $\Bun_{G_{\{b\}}}$ given by a certain basic reduction of $b$ to $B(G_{\{b\}})$, where $G_{\{b\}}$ is the Levi quotient of $P_{\{b\}}$; furthermore $G_{\{b\}}$ is a quasi-split inner form of $G_b$.} of the constant term functor $\mathrm{CT}_{P_{\{b\}}^{-}!}\cong \mathrm{CT}_{P_{\{b\}}^{}*}$ \cite[Corollary~2.2.5]{hamann2024geometriceisensteinseriesi} after identifying the two under second adjointness \cite[Theorem~3.2.1]{hamann2024geometriceisensteinseriesi}. The renormalization does not fundamentally affect the image, since $i_b^*$ and $i_b^{\mathrm{ren}*}$ differ only by a shift by the codimension of $\Bun_G^b\hookrightarrow \Bun_G$ and twist by a square root of the modulus character $\delta_b$ of $P_{\{b\}}$. Our result can thus be seen as giving an explicit tower of small $v$-stacks $\widetilde{\mc M_{b}}\cong \varprojlim_K \widetilde{\mc M_{b}}/\underline{K}$ as $K$ ranges through the open pro-$p$ subgroups of $G_b(E)$, whose cohomology realizes a distinguished ``classical'' part of the constant term functors.

\subsubsection{Cohomology of \texorpdfstring{$\mc M_{b_2}^{b_1}$}{Mb1b2} and geometric Eisenstein series}
We mostly restrict to the setting where the sheaf on $\Bun_G^{\leq b}$ arises as the restriction of $Ri_{b_1,*} A$ for some $\Bun_G^{b_1}\hookrightarrow \Bun_G$ factoring through $\Bun_G^{\leq b}\setminus \Bun_G^{b}$; in this setting, we relabel $b_2=b$. Using the six functor formalism for $D_\et(-,\Lambda)$, it is possible to reduce $i_{b_2}^*Ri_{b_1,*}$ to the cohomology of a locally closed stratum inside of $\widetilde{\mc M_{b_2}}$. Namely, we define via pullback 
\begin{equation*}
\begin{tikzcd}
\widetilde{\mc M^{b_1}_{b_2}}\arrow[d,"\widetilde{i_{b_1}}",swap] \arrow[r,"g_{b_1}^{}"] \tikzpullback & \mc M^{b_1}_{b_2}\arrow[d,swap,"i_{b_1}' "]\arrow[r,"f_{b_1}"]\tikzpullback & {[*/ \mc{G}_{b_1}]}\arrow[d, "i_{b_1}"]\\
\widetilde{\mc M_{b_2}}\arrow[r,"g"] & \mc M_{b_2} \arrow[r,"f"]& \Bun_G,
\end{tikzcd}
\end{equation*}
and note that there is a corresponding tower $\widetilde{\mc M_{b_2}^{b_1}}\cong \varprojlim_K \widetilde{\mc M_{b_2}^{b_1}}/\underline{K}$ as above with each of the finite steps forming an \'etale covering of $\mc M_{b_2}^{b_1}$. We use the notation $\left(R\Gamma(\widetilde{\mc M_{b_2}^{b_1}},g_{b_1}^*\mc F)\right)^\mathrm{sm}:=\varinjlim_K R\Gamma(\widetilde{\mc M_{b_2}^{b_1}}/\underline{K},\mc F)$ for any sheaf $\mc F$ on $\mc M_{b_2}^{b_1}$, which we call the smooth cohomology of $\mc F$.

\begin{theorem}[Theorem \ref{Smooth-cohomology-of-tower}]
There is a natural isomorphism of functors $D_\et([*/\mc G_{b_1}])\to D_{\et}(*)\cong D(\Lambda)$
\[R\Gamma(\widetilde{\mc M_{b_2}},-) \circ (fg)^*R i_{b_1,*} \xrightarrow{\sim} \left(R\Gamma(\widetilde{\mc M_{b_2}^{b_1}},g_{b_1}^*(-))\right)^\mathrm{sm}\circ f_{b_1}^*.\]
Furthermore, the induced smooth $G_{b_2}(E)$-actions on either side agree, and remembering the equivariant structure computes $i_{b_2}^* Ri_{b_1,*}$.
\end{theorem}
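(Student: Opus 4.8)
The plan is to deduce the statement from Proposition \ref{action-on-cohomology} by a base-change computation followed by a cohomology-commutes-with-limits argument, and then to upgrade the resulting isomorphism of $\Lambda$-modules to a $G_{b_2}(E)$-equivariant one. First I would apply Proposition \ref{action-on-cohomology} to the sheaf $A = Ri_{b_1,*}\mc F$ for $\mc F\in D_\et([*/\mc G_{b_1}])$, noting that the map $\phi$ there is exactly $fg$. This produces a natural $G_{b_2}(E)$-equivariant isomorphism $(i_{b_2}^*Ri_{b_1,*}\mc F)_\Lambda\cong R\Gamma(\widetilde{\mc M_{b_2}}, (fg)^*Ri_{b_1,*}\mc F)$, so the whole theorem reduces to identifying the right-hand side, naturally in $\mc F$ and equivariantly, with $\left(R\Gamma(\widetilde{\mc M_{b_2}^{b_1}}, g_{b_1}^*(-))\right)^{\mathrm{sm}}\circ f_{b_1}^*$; the final sentence of the theorem is then immediate.

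Next I would run base change along the outer rectangle of the displayed diagram, which is Cartesian. The composite $fg\colon \widetilde{\mc M_{b_2}}\to\Bun_G$ is cohomologically smooth: $f$ is the smooth chart of \cite[V.3]{fargues2021geometrization}, and $g$ is a $\underline{G_{b_2}(E)}$-torsor, hence a pro-(finite étale) map and so cohomologically smooth of dimension $0$. Writing the locally closed immersion $i_{b_1}$ as a closed immersion followed by an open immersion and applying proper base change for the former and smooth base change (against the cohomologically smooth pullbacks of $fg$) for the latter, I obtain a natural isomorphism $(fg)^*Ri_{b_1,*}\mc F\cong R\widetilde{i_{b_1}}{}_{*}\,(g_{b_1}^*f_{b_1}^*\mc F)$. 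Applying $R\Gamma(\widetilde{\mc M_{b_2}},-)$ and composing derived pushforwards gives $R\Gamma(\widetilde{\mc M_{b_2}}, (fg)^*Ri_{b_1,*}\mc F)\cong R\Gamma(\widetilde{\mc M_{b_2}^{b_1}}, g_{b_1}^*f_{b_1}^*\mc F)$, naturally in $\mc F$.

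Then I would pass to the tower. The sheaf $g_{b_1}^*f_{b_1}^*\mc F$ on $\widetilde{\mc M_{b_2}^{b_1}}$ is pulled back from $\mc M_{b_2}^{b_1}$, hence from each finite level $\widetilde{\mc M_{b_2}^{b_1}}/\underline{K}$. Since $\widetilde{\mc M_{b_2}^{b_1}}\cong\varprojlim_K\widetilde{\mc M_{b_2}^{b_1}}/\underline{K}$ with affine (finite étale) transition maps and the relevant v-stacks are spatial, étale cohomology commutes with the limit, so $R\Gamma(\widetilde{\mc M_{b_2}^{b_1}}, g_{b_1}^*f_{b_1}^*\mc F)\cong\varinjlim_K R\Gamma(\widetilde{\mc M_{b_2}^{b_1}}/\underline{K}, f_{b_1}^*\mc F)$, which by definition is $\left(R\Gamma(\widetilde{\mc M_{b_2}^{b_1}}, g_{b_1}^*(-))\right)^{\mathrm{sm}}(f_{b_1}^*\mc F)$. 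Chaining the three isomorphisms proves the stated isomorphism of $\Lambda$-modules, functorially in $\mc F$.

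Finally I would check the two $G_{b_2}(E)$-actions match. Every morphism in the diagram, and therefore each isomorphism above, is equivariant for the residual $\underline{G_{b_2}(E)}$-action carried by $\widetilde{\mc M_{b_2}}$ and $\widetilde{\mc M_{b_2}^{b_1}}$ over $\mc M_{b_2}$ and $\mc M_{b_2}^{b_1}$; on $\varinjlim_K R\Gamma(\widetilde{\mc M_{b_2}^{b_1}}/\underline{K},-)$ this residual action is exactly the smooth $G_{b_2}(E)$-action recorded by the notation $(-)^{\mathrm{sm}}$, since each element acts through a sufficiently small $\underline{K}$. Comparing this with the $G_{b_2}(E)$-structure output by Proposition \ref{action-on-cohomology} on the left, and using that Proposition's equivariant identification of $R\Gamma(\widetilde{\mc M_{b_2}},(fg)^*-)$ with $(i_{b_2}^*-)_\Lambda$, upgrades the isomorphism to one of smooth $G_{b_2}(E)$-representations and yields the last assertion. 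I expect the main obstacle to be precisely this equivariance bookkeeping, together with pinning down the hypotheses under which smooth base change and the commutation of cohomology with the tower both apply (cohomological smoothness of $fg$ and spatiality/finiteness of the $\widetilde{\mc M_{b_2}^{b_1}}/\underline{K}$); the non-equivariant chain of isomorphisms is otherwise formal.
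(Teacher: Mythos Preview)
Your proposal has a genuine gap: the claim that $g\colon \widetilde{\mc M_{b_2}}\to \mc M_{b_2}$ is cohomologically smooth is false. A $\underline{G_{b_2}(E)}$-torsor is pro-\'etale, but pro-\'etale maps with profinite (rather than discrete) fibres are \emph{not} cohomologically smooth in Scholze's sense; for instance $\underline{\Z_p}\to *$ is not. The paper flags exactly this point just before Lemma~\ref{intermediate_etale_covers}: ``the fibers of $\widetilde{\mc M_{b_2}}\to \mc M_{b_2}$ look like $\underline{G_{b_2}(E)}$, which is not smooth due to the locally profinite nature of the group.'' Without cohomological smoothness of $g$, your smooth base change along the outer rectangle is unjustified, and the chain $(fg)^*Ri_{b_1,*}\mc F\cong R\widetilde{i_{b_1}}{}_*\,g_{b_1}^*f_{b_1}^*\mc F$ collapses.

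The paper circumvents this by never applying base change to $g$ itself. Instead it works at each intermediate level $\widetilde{\mc M_{b_2}}/\underline{K}\to \mc M_{b_2}$, which \emph{is} \'etale (hence cohomologically smooth), applies smooth base change there, and then takes the colimit over $K$. The key input making the colimit agree with the left-hand side is Proposition~\ref{strict-hens-cor}, which identifies $R\Gamma(\widetilde{\mc M_{b_2}}/\underline{K},-)$ with the $K$-invariants of the stalk; since the left-hand side is already a smooth $G_{b_2}(E)$-representation (by Proposition~\ref{action-on-cohomology}), it equals the colimit of its $K$-invariants. Your step~5 (cohomology commuting with the tower) is not a formal consequence of spatiality here --- the remark following the theorem notes that the ``sm'' is genuinely needed except when $\widetilde{\mc M_{b_2}^{b_1}}$ happens to be quasicompact --- so you cannot bypass the level-by-level argument.
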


Also in \cite[Corollary~2.2.5]{hamann2024geometriceisensteinseriesi}, the authors identify the similarly renormalized $i_{b_1,*}^\mathrm{ren}$ as the $*$-pushforward along the same distinguished point of $\Bun_{G_{\{b_1\}}}$ followed by the geometric Eisenstein series $\mathrm{Eis}_{P_{\{b_1\}}^- *}$, which can be thought of as a geometric analogue of parabolic induction. Insomuch as there is an adjunction $\mathrm{CT}_{P_{\{b\}}^- !}\dashv \mathrm{Eis}_{P_{\{b\}}^- *}$ \cite[Lemma~2.1.8]{hamann2024geometriceisensteinseriesi}, one can think of the constant term functor as a geometric analogue of the Jacquet restriction functor, and this analogy can be seen explicitly in our computations for $G=\GL_2$ (see for example the appearance of the Jacquet module in the proof of Proposition \ref{non-LS-type}). Then Theorem \ref{Smooth-cohomology-of-tower} can be loosely interpreted as giving a tower whose cohomology approximates the image of geometric Eisenstein series applied to complexes of smooth $G_{b_1}(E)$-representations, but only at the stalks $\Bun_G^{b_2}$ that are specializations of $b_1$. 

\subsubsection{Restriction to admissible representations and Verdier duality}
Finally, there is a similar $\underline{G_{b_1}(E)}$-torsor $\widetilde{\widetilde{\mc M_{b_2}^{b_1}}}\to \widetilde{\mc M_{b_2}^{b_1}}$ given by partially uniformizing\footnote{Note that for basic $b_1$, $\mc G_{b_1}=\underline{G_{b_1}(E)}$ and the distinction is irrelevant. In general, the fiber groupoids of the partial uniformization can be thought of as $G$-bundles everywhere locally of type $\mc E_{b_1}$ equipped with trivializations of the associated graded bundle with respect to the Harder-Narasimhan filtration for any representation of $G$.} the $\mc G_{b_1}$-action on $\widetilde{\mc M_{b_2}^{b_1}}$ induced by automorphisms of the total bundle. After basechange to a geometric point $\spa (C, C^+)$, the double-uniformized space often has a concrete description in terms of positive Banach-Colmez spaces, whose prime-to-$p$ cohomology is very simple; more precisely, the cohomology can be described in terms of excision and cohomology of polydisks. If we restrict to $\mathrm{ULA}$ sheaves (for example $Ri_{b_1,*}[\sigma]$ for an admissible smooth representation $\sigma$ of $G_{b_1}(E)$), we can apply Verdier duality on $\Bun_G^{\leq b_2}$ to rather compute the compactly supported cohomology of $\widetilde{\mc M_{b_1}^{b_2}}$, which can be obtained from the compactly supported cohomology of $\widetilde{\widetilde{\mc M_{b_1}^{b_2}}}$ by a homological Hochschild-Serre descent spectral sequence for $\underline{G_{b_1}(E)}$ (see Proposition \ref{homological-Hochschild-Serre}). We make extensive use of tools that are available for compactly supported cohomology, such as the general basechange, Künneth, and projection formulas, as well as the excision triangles for $!$-pushforwards. In particular, there is the following standard lemma in equivariant geometry (descent along torsors) which reduces the computation even further to compactly supported cohomology of the trivial sheaf.
\begin{lemma}[Lemma \ref{equivariant-projection-formula}]
Let $p:Y\to X$ be a $\underline{G}$-torsor in small $v$-stacks for a totally disconnected Hausdorff group $G$. Then there is a cartesian diagram
\begin{center}
\begin{tikzcd}
Y\arrow[r,"g"]\arrow[d,"p",swap] & *\arrow[d,"q"]\\
X\arrow[r,"f",swap] & {[*/\underline{G}]}.
\end{tikzcd}
\end{center}
For any smooth $G$-representation $(\sigma,V)$ corresponding to a sheaf $ [\sigma] \in D_\et([*/\underline{G}])$, there is an isomorphism of underlying $\Lambda$-modules
\[R\Gamma_c(Y,p^*f^*[\sigma])\cong V \otimes^{\mathbb{L}}_\Lambda R\Gamma_c(Y,\Lambda).\]
Furthermore, $Rf_! f^*[\sigma]\cong [\sigma]\otimes^{\mathbb L}_{\Lambda} f_! \Lambda \in D(G)$, identifying the $G$-equivariant structure of $R\Gamma_c(Y, p^* f^*[\sigma])$ with the equivariant structure of $V\otimes_{\Lambda}^{\mathbb{L}} R\Gamma_c(Y,\Lambda)$ obtained from the canonical descent datum via pullback along $q$.
\end{lemma}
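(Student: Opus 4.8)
The plan is to deduce both assertions formally from the projection formula and $!$-base change in the six-functor formalism for $D_\et(-,\Lambda)$ on small $v$-stacks; the only substantive input is that the torsor map is cohomologically smooth, which is where the hypothesis on $G$ is used.

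First I would record that, since $p\colon Y\to X$ is a $\underline{G}$-torsor with $G$ totally disconnected Hausdorff, it is representable in locally spatial diamonds and $\ell$-cohomologically smooth of dimension $0$: $v$-locally on $X$ it is a base change of $\underline{G}\to *$, which has this property because $G$ is, up to disjoint union, a profinite set $\underline{K}$ for $K\le G$ a compact open subgroup; the relative dualizing complex $\omega_p$ is canonically $\Lambda_Y$, in particular orientable. By $v$-descent along $q$ (equivalently along $p$) the morphism $f\colon X\to[*/\underline{G}]$ is likewise cohomologically smooth, so $Rf_!$ — and its base change $Rg_!$ — are defined and satisfy $!$-base change for the cartesian square together with the projection formula.

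Then the projection formula for $f$ supplies a natural isomorphism
\[
Rf_!f^*[\sigma]\;\cong\;Rf_!\bigl(f^*[\sigma]\otimes^{\mathbb{L}}_\Lambda\Lambda_X\bigr)\;\cong\;[\sigma]\otimes^{\mathbb{L}}_\Lambda Rf_!\Lambda_X
\]
in $D_\et([*/\underline{G}])\simeq D(G)$, which is precisely the second displayed isomorphism upon writing $f_!\Lambda:=Rf_!\Lambda_X$; being a natural isomorphism of functors it is automatically an isomorphism in the equivariant category, i.e.\ it intertwines the canonical descent data on the two sides. Applying $q^*$, using $!$-base change $q^*Rf_!\simeq Rg_!p^*$ (applied both to $f^*[\sigma]$ and to $\Lambda_X$), the identity $p^*f^*=g^*q^*$, and the fact that $q^*$ forgets the $G$-action — so that $q^*[\sigma]=V$ and $q^*f_!\Lambda=Rg_!p^*\Lambda_X=R\Gamma_c(Y,\Lambda)$ — one obtains
\[
R\Gamma_c(Y,p^*f^*[\sigma])\;=\;Rg_!\bigl(g^*V\bigr)\;\cong\;V\otimes^{\mathbb{L}}_\Lambda R\Gamma_c(Y,\Lambda),
\]
the first claim. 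For the equivariance, the $G$-structure on $R\Gamma_c(Y,p^*f^*[\sigma])$ is by definition the descent datum presenting it as $q^*$ of $Rf_!f^*[\sigma]\in D(G)$; transporting along the projection-formula isomorphism identifies this with the descent datum on $q^*\bigl([\sigma]\otimes^{\mathbb{L}}_\Lambda f_!\Lambda\bigr)=q^*[\sigma]\otimes^{\mathbb{L}}_\Lambda q^*f_!\Lambda$, namely the diagonal of the tautological datum on $q^*[\sigma]=V$ (the representation $\sigma$) and the canonical one on $q^*f_!\Lambda=R\Gamma_c(Y,\Lambda)$ inherited from $f_!\Lambda\in D(G)$.

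I expect the only delicate point to be foundational: confirming that $f$ lies in the class of morphisms for which $Rf_!$, $!$-base change, and the projection formula are available in the chosen account of the six operations on small $v$-stacks, and that the equivalence $D_\et([*/\underline{G}])\simeq D(G)$ carries $q^*$ to the forgetful functor. Granting those, the argument is entirely formal and uses nothing about the geometry of $Y$, which is exactly why the lemma is isolated as a general fact about torsors.
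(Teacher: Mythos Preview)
Your argument is correct and essentially the same as the paper's: both rest on the projection formula together with $!$-basechange for the cartesian square. The paper orders things slightly differently, first applying the projection formula along $g\colon Y\to *$ to get the underlying $\Lambda$-module isomorphism directly (since $p^*f^*[\sigma]=g^*V$ is constant), and only then invoking the projection formula for $f$ and basechange $q^*Rf_!\cong Rg_!p^*$ for the equivariant statement; you go in the reverse order, which is arguably cleaner.

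One minor correction: your claim that $f$ is cohomologically smooth ``by $v$-descent along $q$'' is not right --- the pullback of $f$ along the $v$-cover $q$ is $g\colon Y\to *$, not $p$, and nothing is assumed about $Y\to *$. Fortunately this is irrelevant: cohomological smoothness is not needed for the projection formula or $!$-basechange, only that $Rf_!$ is defined in the ambient six-functor formalism, which the paper (like you, in your final paragraph) simply takes for granted.
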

Applying the lemma to $p: \widetilde{\widetilde{\mc M_{b_2}^{b_1}}}\to \widetilde{\mc M_{b_2}^{b_1}}$ gives an identification (with slight abuse of notation for the sheaf $[\sigma]$ pulled back to $\widetilde{\mc M_{b_2}^{b_1}}$)
\[R\Gamma_c(\widetilde {\mc M_{b_2}^{b_1}},[\sigma])\cong \left (\sigma \otimes_{\Lambda}^{\mathbb{L}} R\Gamma_c(\widetilde{\widetilde{ \mc M_{b_2}^{b_1}}},\Lambda)\right )\otimes^{\mathbb{L}}_{\mc H(G_{b_1}(E))} \Lambda,\]
where $-\otimes^{\mathbb{L}}_{\mc H(G_{b_1}(E))} \Lambda$ is the smooth group homology functor.
The identification is furthermore $G_{b_2}(E)$-equivariant, and the righthand-side (up to Verdier duality) will be our main approach to understanding the functor $i_{b_2}^*Ri_{b_1,*}[\sigma]$.

\subsection{Main results and methods when \texorpdfstring{$G=\GL_2$}{G=GL2}}
A large part of the paper is concerned with computing the images of sheaves along immediate specializations from the semistable locus of $\Bun_2$. The geometry of the spaces $\mc M_{b_2}^{b_1}$ is already interesting here. 
\subsubsection{Half-integral slope}
Let $b_1\in B(\GL_2)$ be the basic point corresponding to the bundle $\mc O(1/2)$ and $b_2$ be its immediate specialization corresponding to $\mc O\oplus \mc O(1)$, where the stabilizers are the units of quaternions $\mc G_{b_1}\cong \underline{D^\times}$ and a split rank $2$ torus $\pi_0 \mc G_{b_2}=\underline{G_{b_2}(E)}\cong \underline{E_1^\times}\times \underline{E_2^\times}$, respectively\footnote{The notation $E_1^\times=E^\times$ and $E_2^\times=E^\times$ is important to distinguish the two coordinates of the split torus since their geometric actions on moduli spaces are fundamentally different.}. Using ideas related to Hamann's analogue of Drinfeld compactification of $\Bun_{P_{b_2}}$, we construct in Proposition \ref{double-uniformized-slope-1/2-cover} a $\underline{D^\times}\times (\underline{E_1^\times}\times \underline{E_2^\times})$-equivariant isomorphism
\[\widetilde{\widetilde{ \mc M_{b_2}^{b_1}}}\cong(\BC(\mc O(1/2))\setminus{0})\times \underline{E^\times}. \]
Further quotienting the $\underline{D^\times}$-action gives an identification $\widetilde{\mc M_{b_2}^{b_1}} \cong \BC(\mc O(-1)[1])\setminus 0$. Then the cohomology of the negative Banach-Colmez space can be rather understand in terms of equivariant cohomology of a product of positive Banach-Colmez spaces.
\begin{proposition}[Proposition \ref{double-uniformized-half-slope-cs-coh}]
The compactly supported cohomology of $\widetilde{\widetilde{\mc M_{b_2}^{b_1}}}$ as $D^\times$-modules is 
\[H_c^i(\widetilde{\widetilde{\mc M_{b_2}^{b_1}}},\Lambda) =\begin{cases} 
\Lambda\otimes \cind_{\SL_1(D)}^{D^\times}\Lambda,\quad & i=1\\
\abs{\Nrd}^{}\otimes \cind_{\SL_1(D)}^{D^\times}\Lambda,\quad & i=2\\
0,\quad & \text{else.}
\end{cases}
\]
\end{proposition}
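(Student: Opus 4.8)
The plan is to reduce the computation to a Künneth decomposition of the equivariant description supplied by Proposition~\ref{double-uniformized-slope-1/2-cover}, together with the standard computation of the compactly supported cohomology of a positive Banach--Colmez space and an excision triangle for the removed origin.

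First I would invoke Proposition~\ref{double-uniformized-slope-1/2-cover}, which gives a $\underline{D^\times}\times(\underline{E_1^\times}\times\underline{E_2^\times})$-equivariant isomorphism $\widetilde{\widetilde{\mc M_{b_2}^{b_1}}}\cong(\BC(\mc O(1/2))\setminus 0)\times\underline{E^\times}$ in which $\underline{D^\times}$ acts on the first factor through the canonical identification $\Aut(\mc O(1/2))\cong D^\times$ and on $\underline{E^\times}$ by translation along $\Nrd\colon D^\times\twoheadrightarrow E^\times$; the torus factors act in a way that does not affect the underlying $\Lambda$-module and $D^\times$-module structures computed below. Since $\BC(\mc O(1/2))\setminus 0$ is a locally spatial diamond and $\underline{E^\times}$ is a disjoint union of spatial diamonds, the Künneth formula for $R\Gamma_c(-,\Lambda)$ produces a $D^\times$-equivariant isomorphism
\[R\Gamma_c(\widetilde{\widetilde{\mc M_{b_2}^{b_1}}},\Lambda)\cong R\Gamma_c(\BC(\mc O(1/2))\setminus 0,\Lambda)\otimes_\Lambda^{\mathbb L}R\Gamma_c(\underline{E^\times},\Lambda),\]
so it suffices to compute the two factors separately.

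For $\underline{E^\times}$: this is a countable disjoint union of profinite sets (the cosets of $\mc O_E^\times$), so $R\Gamma_c(\underline{E^\times},\Lambda)$ is concentrated in degree $0$ and equals the smooth compactly supported functions $C_c^\infty(E^\times,\Lambda)$; since $D^\times$ acts by translation through $\Nrd$, the bijection $\Nrd\colon\SL_1(D)\backslash D^\times\xrightarrow{\sim}E^\times$ identifies this $D^\times$-representation with $\cind_{\SL_1(D)}^{D^\times}\Lambda$. For $\BC(\mc O(1/2))$: it is a positive Banach--Colmez space of dimension $\deg\mc O(1/2)=1$, hence has the same compactly supported cohomology as the affine line, $R\Gamma_c(\BC(\mc O(1/2)),\Lambda)\cong\Lambda(-1)[-2]$, and $\Aut(\mc O(1/2))=D^\times$ acts on this line through the character $\abs{\Nrd}$ (the reduced-norm analogue of $\abs{\det}$ governing the action on the top compactly supported cohomology of a linear action on a positive Banach--Colmez space). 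Excising the closed $D^\times$-fixed point $0\hookrightarrow\BC(\mc O(1/2))$ with open complement $\BC(\mc O(1/2))\setminus 0$ yields a triangle whose boundary morphism $R\Gamma_c(\BC(\mc O(1/2)),\Lambda)\to R\Gamma_c(\{0\},\Lambda)=\Lambda$ is a map $\Lambda(-1)[-2]\to\Lambda$ and therefore vanishes; thus $R\Gamma_c(\BC(\mc O(1/2))\setminus 0,\Lambda)\cong\Lambda[-1]\oplus\Lambda(-1)[-2]$, with $D^\times$ acting trivially in degree $1$ and through $\abs{\Nrd}$ in degree $2$.

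Feeding these into the Künneth isomorphism, only total degrees $1$ and $2$ survive, and one obtains $H_c^1\cong\Lambda\otimes\cind_{\SL_1(D)}^{D^\times}\Lambda$ and $H_c^2\cong\Lambda(-1)\otimes\cind_{\SL_1(D)}^{D^\times}\Lambda\cong\abs{\Nrd}\otimes\cind_{\SL_1(D)}^{D^\times}\Lambda$, with all other $H_c^i$ vanishing, as claimed. I expect the one genuinely delicate point to be pinning down the $D^\times$-equivariant structure on $R\Gamma_c(\BC(\mc O(1/2)),\Lambda)$ --- that the automorphism action twists the top cohomology by exactly $\abs{\Nrd}$ --- together with carefully tracking through Proposition~\ref{double-uniformized-slope-1/2-cover} which of the three groups acts on which factor, so that the compact induction genuinely originates from the $\underline{E^\times}$-component rather than from a hidden twist; the remaining ingredients (excision for $R\Gamma_c$, Künneth, and the cohomology of profinite sets) are formal.
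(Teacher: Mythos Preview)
Your proposal is correct and follows essentially the same approach as the paper: both invoke Proposition~\ref{double-uniformized-slope-1/2-cover}, apply the K\"unneth formula, identify $H_c^0(\underline{E^\times},\Lambda)$ as $\cind_{\SL_1(D)}^{D^\times}\Lambda$ via the $\Nrd$-action, and compute $H_c^*(\BC(\mc O(1/2))\setminus 0,\Lambda)$ to be $\Lambda$ in degree~$1$ and $\Lambda(-1)$ in degree~$2$. The only cosmetic difference is that the paper identifies $\BC(\mc O(1/2))\setminus 0$ with the punctured disk $\D^*$ and cites the Kummer-theoretic computation already carried out in the proof of Proposition~\ref{actions-on-Z} (together with the observation that any smooth character of $D^\times$ factors through $\Nrd$), whereas you obtain the same groups via the excision triangle for $\{0\}\hookrightarrow\BC(\mc O(1/2))$ --- both routes yield the same $D^\times$-equivariant answer.
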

This computation only uses the Künneth formula, excision, and the identification of the dualizing sheaf of Banach-Colmez spaces as input. Then for any irreducible smooth representation $\sigma$ of $D^\times$, the image $i_{b_2}^* Ri_{b_1,*}[\sigma]$ is essentially given by the smooth $\SL_1(D)$-homology of $\sigma$, which reduces to the study of orbits since $\SL_1(D)$ has split-rank $0$. In particular, we deduce the following for powers of the reduced norm $\abs{\Nrd}:D^\times \to \Lambda$.
\begin{theorem}
The cohomology of $\abs{\Nrd}^k$ as a sheaf on $\widetilde{\mc M_{b_2}^{b_1}}$ as $E_1^\times \times E_2^\times$-modules is 
\[H^i(\widetilde{\mc M_{b_2}^{b_1}},\abs{\Nrd}^k) =\begin{cases} 
\abs{-}^k,\quad & i=0\\
\abs{-}^k\delta_T,\quad & i=1\\
0,\quad & \text{else.}
\end{cases}
\]
\end{theorem}
The remaining irreducible smooth $D^\times$-modules are finite dimensional and compactly induced from a compact-mod-center open subgroup $K\subset D^\times$, and the images of the gluing functors reduce to $K\cap \SL_1(D)$-orbits of smooth characters of $K$.
\subsubsection{Integral slope}
We can now describe the main computational results of this paper, which concern the first specialization of integral slope components of $\Bun_2$; namely when $b_1$ corresponds to $\mc O^2$ and $b_2$ corresponds to $\mc O(-1)\oplus \mc O(1)$. The relevant computations for integral slope components are smooth group homology of parabolically induced representations, yielding Jacquet modules with their induced torus actions. These representations arise geometrically from the relative cohomology of sheaves along a fibration $Z\setminus \set{0}\to \underline{\Proj^1(E)}$, where the fiber over $\infty$ is the moduli of certain bounded modifications of line bundles on the curve, equipped with an action of the standard Borel subgroup (see Lemma \ref{geometry-of-Z}). More precisely, $Z$ arises as the boundary of a partial (``toroidal'') compactification of $\widetilde{\widetilde{\mc M_{b_2}^{b_1}}}$ by a positive Banach-Colmez space $\BC(\mc O(1))^2$ (up to a factor of $\underline{E^\times}$ corresponding to the determinant; see Proposition \ref{Equivariant-iso-cover}), which is the preimage of $\underline{\Proj^1(E)}$ for the natural map to the associated projectivized Banach-Colmez space $(\BC(\mc O(1)^2)\setminus 0)/\underline{E^\times} $. Let $U$ denote the open complement of $Z\subset \BC(\mc O(1)^2)$. Then using excision sequences, Künneth formula, and other tools of six functors, one deduces the following.
\begin{proposition}[Proposition \ref{cs-coh-double-tilde-M-trivial-module}]
The compactly supported cohomology of $\widetilde{\widetilde{\mc M_{b_2}^{b_1}}}\cong U \times \underline{E^\times}$ as $G_{b_1}(E)$-modules is 
\[H_c^i(\widetilde{\widetilde{\mc M_{b_2}^{b_1}}},\Lambda) =\begin{cases} \st \otimes \cind_{\SL_2(E)}^{\GL_2(E)}\Lambda ,\quad & i=2\\
\left(\ind_{B}^{\GL_2(E)}\abs{-}\otimes 1\right)\otimes \cind_{\SL_2(E)}^{\GL_2(E)} \Lambda ,\quad & i=3\\
\abs{\det} \otimes \cind_{\SL_2(E)}^{\GL_2(E)} \Lambda,\quad & i=4\\ 
0,\quad & \text{else}.
\end{cases}
\]
\end{proposition}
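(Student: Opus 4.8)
The plan is to strip off the split factor $\underline{E^\times}$ with the Künneth formula and then compute $R\Gamma_c(U,\Lambda)$ by excision along the boundary $Z\subset\BC(\mc O(1)^2)$, feeding in the equivariant compactly supported cohomology of positive Banach-Colmez spaces. To begin, I would use the $G_{b_1}(E)=\GL_2(E)$-equivariant isomorphism $\widetilde{\widetilde{\mc M_{b_2}^{b_1}}}\cong U\times\underline{E^\times}$ of Proposition~\ref{Equivariant-iso-cover}, in which $\GL_2(E)$ acts on the factor $\underline{E^\times}$ through $\det$ followed by translation. As $\underline{E^\times}$ is a disjoint union of profinite sets, $R\Gamma_c(\underline{E^\times},\Lambda)$ is concentrated in degree $0$ and equals $C_c^\infty(E^\times,\Lambda)\cong\cind_{\SL_2(E)}^{\GL_2(E)}\Lambda$ as a $\GL_2(E)$-representation; the Künneth formula for $!$-pushforwards then yields a $\GL_2(E)$-equivariant isomorphism
\[
R\Gamma_c(\widetilde{\widetilde{\mc M_{b_2}^{b_1}}},\Lambda)\;\cong\;R\Gamma_c(U,\Lambda)\otimes_\Lambda\cind_{\SL_2(E)}^{\GL_2(E)}\Lambda,
\]
and since the second factor is $\Lambda$-free no derived tensor intervenes. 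Everything thus reduces to identifying $R\Gamma_c(U,\Lambda)$ as a complex of $\GL_2(E)$-modules.

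Next I would apply the excision triangle $R\Gamma_c(U,\Lambda)\to R\Gamma_c(\BC(\mc O(1)^2),\Lambda)\to R\Gamma_c(Z,\Lambda)\xrightarrow{+1}$ associated to the open immersion $U\hookrightarrow\BC(\mc O(1)^2)$ with closed complement $Z$; all of its maps are $\GL_2(E)$-equivariant by functoriality. The middle term is known: writing $\BC(\mc O(1)^2)=\BC(\mc O(1))\times\BC(\mc O(1))$, Künneth together with the computation of the compactly supported cohomology of positive Banach-Colmez spaces puts $R\Gamma_c(\BC(\mc O(1)^2),\Lambda)$ in cohomological degree $4$, on which $\GL_2(E)$ acts by $\abs{\det}$ (up to a Tate twist, which is inessential over the geometric base point).

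The substantive input is the computation of $R\Gamma_c(Z,\Lambda)$, for which I would use Lemma~\ref{geometry-of-Z}. That lemma realizes $Z\setminus\{0\}$ as a $\GL_2(E)$-equivariant fibration over $\underline{\Proj^1(E)}=\underline{\GL_2(E)/B}$ whose fibre $F$ is the moduli of the relevant bounded modifications of line bundles, equipped with its $B$-action. Being a homogeneous bundle, $Z\setminus\{0\}\cong\underline{\GL_2(E)}\times^{\underline B}F$, so by descent along the $\underline B$-torsor $\underline{\GL_2(E)}\to\underline{\Proj^1(E)}$ and properness of $\underline{\Proj^1(E)}$ (which also has cohomological dimension $0$) one gets $R\Gamma_c(Z\setminus\{0\},\Lambda)\cong\ind_B^{\GL_2(E)}\!\big(R\Gamma_c(F,\Lambda)\big)$, with $\cind$ and $\ind$ coinciding since $B\backslash\GL_2(E)$ is compact. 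Identifying $F$, up to cohomologically inessential factors, with $\BC(\mc O(1))\setminus\{0\}$ gives $H^1_c(F,\Lambda)=\Lambda$ with trivial $B$-action (the contribution of the removed point) and $H^2_c(F,\Lambda)$ a Tate twist on which $B$ acts through its torus quotient by the scaling character $\abs{-}\otimes 1$; hence $H^1_c(Z\setminus\{0\},\Lambda)=\ind_B^{\GL_2(E)}\mathbf 1=C^\infty(\Proj^1(E),\Lambda)$, $H^2_c(Z\setminus\{0\},\Lambda)=\ind_B^{\GL_2(E)}(\abs{-}\otimes 1)$, and all other groups vanish. Finally, the excision triangle $R\Gamma_c(Z\setminus\{0\},\Lambda)\to R\Gamma_c(Z,\Lambda)\to\Lambda\xrightarrow{+1}$ at the cone point $\{0\}\subset Z$ has boundary map $\Lambda\to R\Gamma_c(Z\setminus\{0\},\Lambda)[1]$ equal to the inclusion of the constant functions $\Lambda\hookrightarrow C^\infty(\Proj^1(E),\Lambda)$; this is the one genuinely non-formal step, and it is exactly what makes the Steinberg representation appear, since $C^\infty(\Proj^1(E),\Lambda)/\Lambda\cong\st$. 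One obtains $H^1_c(Z,\Lambda)=\st$, $H^2_c(Z,\Lambda)=\ind_B^{\GL_2(E)}(\abs{-}\otimes 1)$, and all else zero.

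Assembling the first excision sequence: the term $R\Gamma_c(\BC(\mc O(1)^2),\Lambda)$ contributes only in degree $4$, and every other term in the long exact sequence is flanked by zeros, so the connecting maps force $H^2_c(U,\Lambda)\cong\st$, $H^3_c(U,\Lambda)\cong\ind_B^{\GL_2(E)}(\abs{-}\otimes 1)$, $H^4_c(U,\Lambda)\cong\abs{\det}$ and $H^i_c(U,\Lambda)=0$ otherwise; tensoring with $\cind_{\SL_2(E)}^{\GL_2(E)}\Lambda$ yields the stated formula, the equivariant structure being inherited from the equivariant isomorphisms used throughout. The main obstacle is not the six-functor bookkeeping, which is routine given the formalism already in place, but rather (i) the geometric input of Lemma~\ref{geometry-of-Z} --- above all pinning down the fibre $F$ and the precise character by which the torus acts on its top compactly supported cohomology --- and (ii) tracking the various scaling and modulus characters and Tate twists, so that degree $3$ comes out exactly as $\ind_B^{\GL_2(E)}(\abs{-}\otimes 1)$ and degree $4$ exactly as $\abs{\det}$, and so that the cone-point boundary map is genuinely the inclusion of the constant functions on $\Proj^1(E)$ (and not zero) --- which is precisely what singles out the Steinberg representation in degree $2$ rather than $C^\infty(\Proj^1(E),\Lambda)$ or the trivial representation.
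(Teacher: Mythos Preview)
Your proposal is correct and follows essentially the same route as the paper: Künneth to split off the $\underline{E^\times}$ factor as $\cind_{\SL_2(E)}^{\GL_2(E)}\Lambda$, then the excision triangle for $U\hookrightarrow\BC(\mc O(1)^2)\hookleftarrow Z$, with $R\Gamma_c(Z,\Lambda)$ computed via the fibration of Lemma~\ref{geometry-of-Z} over $\underline{\Proj^1(E)}$ followed by cone-point excision. The only spot to tighten is the cone-point boundary map: the paper pins it down by first observing $H_c^0(Z,\Lambda)=0$ (since $Z$ is connected but not quasicompact), which forces the map $\Lambda\to\ind_B^{\GL_2(E)}\Lambda$ to be injective, and then uses $\GL_2(E)$-equivariance to identify it with the inclusion of the constants; you flag this step as non-formal but do not supply this argument.
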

We also compute the compactly supported cohomology modulo a compact open subgroup $K\subset \GL_2(E)$, which reduces to identifying the orbits of the action of congruence subgroups in $\SL_2(\mc O_E)$ on $\Proj^1(E)=B\backslash \SL_2(E)$ (Proposition \ref{gluing-compact-generators}). Notably, these cohomology groups correspond to the gluing data for the smooth representations $\cind_K^{\GL_2(E)}\Lambda$ and form a collection of compact generators of $D_\et(\Bun_2^1)\cong D(\GL_2(E),\Lambda)$. Furthermore, we compute the images $i_{b_2}^*Ri_{b_1,*}$ for several classes of irreducible smooth representations of $\GL_2(E)$. For unramified characters, we have the following.
\begin{theorem}[Theorem \ref{Purity-for-trivial-rep}]
Let $k$ be an integer (more generally, $k\in \frac{1}{2}\Z$ in case there is a fixed $\sqrt{q}\in \Lambda$). Then 
\[i_{b_2}^* Ri_{b_1,*} \abs{\det}^{k} = \abs{-}^{k}\oplus \abs{-}^{k}\delta_T [-3].\]
In particular, $i_{b_2}^* Ri_{b_1,*} \Lambda = \Lambda[0] \oplus \delta_T [-3]$.
\end{theorem}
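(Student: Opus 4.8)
The plan is to push the reduction machinery of the previous sections down to an explicit smooth group homology computation and then evaluate it. First reduce to $k=0$. The character $\abs{\det}^k$ of $\GL_2(E)$ is the restriction to $\Bun_2^{b_1}=[*/\underline{\GL_2(E)}]$ of the rank-one local system $L_k$ on $\Bun_2$ obtained by pulling back the rank-one local system on $\Bun_1$ attached to the character $\abs{-}^k$ of $\GL_1(E)=E^\times$ along the determinant morphism $\det\colon\Bun_2\to\Bun_1$; its restriction to $\Bun_2^{b_2}=[*/\underline{T(E)}]$ is the character $\abs{-}^k$ of $T(E)=E_1^\times\times E_2^\times$, since $\det$ carries the diagonal torus to $(t_1,t_2)\mapsto t_1t_2$. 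As $L_k$ is invertible, the projection formula gives $Ri_{b_1,*}\abs{\det}^k\cong L_k|_{\Bun_2^{\leq b_2}}\otimes Ri_{b_1,*}\Lambda$, and pulling back along $i_{b_2}$ yields $i_{b_2}^*Ri_{b_1,*}\abs{\det}^k\cong\abs{-}^k\otimes i_{b_2}^*Ri_{b_1,*}\Lambda$. So it suffices to prove $i_{b_2}^*Ri_{b_1,*}\Lambda=\Lambda[0]\oplus\delta_T[-3]$.

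Next, unwind. By Proposition~\ref{action-on-cohomology} and Theorem~\ref{Smooth-cohomology-of-tower}, $i_{b_2}^*Ri_{b_1,*}\Lambda$ is computed, $T(E)$-equivariantly, by the smooth cohomology $R\Gamma(\widetilde{\mc M_{b_2}^{b_1}},\Lambda)^{\mathrm{sm}}$. The constant sheaf is ULA on $\Bun_2^{b_1}$, so $Ri_{b_1,*}\Lambda$ is ULA, and Verdier duality on $\Bun_2^{\leq b_2}$ identifies this with $R\Hom_\Lambda\big(R\Gamma_c(\widetilde{\mc M_{b_2}^{b_1}},\Lambda),\Lambda\big)$ up to the dualizing twist and shift of $\widetilde{\mc M_{b_2}^{b_1}}$, which is cohomologically smooth of dimension $2$; over a geometric point this amounts to the shift $[-4]$. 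Then Lemma~\ref{equivariant-projection-formula} applied to the $\underline{\GL_2(E)}$-torsor $\widetilde{\widetilde{\mc M_{b_2}^{b_1}}}\to\widetilde{\mc M_{b_2}^{b_1}}$ gives $R\Gamma_c(\widetilde{\mc M_{b_2}^{b_1}},\Lambda)\cong H_c^*(\widetilde{\widetilde{\mc M_{b_2}^{b_1}}},\Lambda)\otimes^{\mathbb L}_{\mc H(\GL_2(E))}\Lambda$, i.e.\ the smooth $\GL_2(E)$-homology of the complex computed in Proposition~\ref{cs-coh-double-tilde-M-trivial-module}.

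Now evaluate this homology. Each cohomology group in Proposition~\ref{cs-coh-double-tilde-M-trivial-module} has the form $V\otimes\cind_{\SL_2(E)}^{\GL_2(E)}\Lambda$, so by the projection formula for $\cind$ and Shapiro's lemma its smooth $\GL_2(E)$-homology is $H_*(\SL_2(E),V|_{\SL_2(E)})$. Here $\abs{\det}|_{\SL_2(E)}$ is trivial, and $(\ind_B^{\GL_2(E)}\abs{-}\otimes1)|_{\SL_2(E)}$ is the unramified principal series of $\SL_2(E)$, whose smooth $\SL_2(E)$-homology vanishes (compute it by Shapiro for induction from the Borel followed by a Hochschild--Serre argument for $B_{\SL_2}(E)=E^\times\ltimes(E,+)$, in which the unipotent radical contributes only in homological degree $0$, reducing to $H_*(E^\times,\abs{-})=0$). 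Likewise $H_*(\SL_2(E),\mathbf 1)$ is concentrated in degree $0$ (use the action on the Bruhat--Tits tree and that $\SL_2(E)$ is perfect), and $H_*(\SL_2(E),\st)$ is concentrated in degree $1$, where it is $\Lambda$ (e.g.\ via the resolution $0\to\mathbf 1\to C^\infty(\Proj^1(E))\to\st\to0$). Feeding this into the hyperhomology spectral sequence, the degree-$2$ term (the $\st$-piece) contributes $\Lambda$ in cohomological degree $2-1=1$, the degree-$3$ term (the principal series) contributes nothing, and the degree-$4$ term (the $\abs{\det}$-piece) contributes $\Lambda$ in degree $4$, with no surviving differentials; applying $R\Hom_\Lambda(-,\Lambda)[-4]$ moves these to degrees $3$ and $0$. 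Finally one identifies the $T(E)$-equivariant structure: the degree-$0$ class is the trivial character, while the degree-$3$ class carries $\delta_T$, which appears via the scaling action of $T(E)$ on the top compactly supported cohomology of the Banach--Colmez spaces $\BC(\mc O(1))$ entering $\widetilde{\widetilde{\mc M_{b_2}^{b_1}}}$, exactly as the modulus character appeared in the half-integral-slope computation. This proves $i_{b_2}^*Ri_{b_1,*}\Lambda=\Lambda[0]\oplus\delta_T[-3]$, and the first paragraph upgrades this to the statement for general $k$.

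The main obstacle is the group homology step: computing $H_*(\SL_2(E),-)$ on the trivial representation, the Steinberg representation and the unramified principal series, and checking that these assemble --- through the hyperhomology spectral sequence and, if necessary, the failure of $R\Gamma_c(\widetilde{\widetilde{\mc M_{b_2}^{b_1}}},\Lambda)$ to be formal as an object of the derived category of smooth $\GL_2(E)$-representations --- to leave exactly two one-dimensional contributions. This is where the nature of $\Lambda$ enters: the vanishing of $H_*(E^\times,\abs{-})$ and of the higher homology of $\SL_2(E)$ is controlled by $q-1$ and the roots of unity in $E$, so the clean form of the theorem is most transparent over, say, $\overline{\mathbb{Q}}_\ell$ or $\Lambda$ of banal residue characteristic. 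A secondary difficulty is the bookkeeping identifying the surviving degree-$3$ class with the specific character $\delta_T$ rather than another unramified twist, and tracking all of the shifts precisely.
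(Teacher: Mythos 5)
Your route is structurally the same as the paper's: Verdier duality on the cohomologically smooth $\widetilde{\mc M_{b_2}^{b_1}}$ to reduce to compactly supported cohomology, descent along the $\underline{\GL_2(E)}$-torsor $\widetilde{\widetilde{\mc M_{b_2}^{b_1}}}\to \widetilde{\mc M_{b_2}^{b_1}}$ via Proposition~\ref{homological-Hochschild-Serre}/Lemma~\ref{equivariant-projection-formula}, and the smooth $\GL_2(E)$-homology of $\st$, the unramified principal series, and $\abs{\det}$ (which is exactly the content of Theorem~\ref{cs-coh-tilde-M-trivial-module}). The one genuine variation is your reduction to $k=0$: you pull back a rank-one local system from $\Bun_1$ along $\det\colon\Bun_2\to\Bun_1$ and invoke the projection formula, whereas the paper applies a derived adjunction $R\Gamma\Hom(\Lambda,[\chi])\cong \Hom(Rf_!([\chi]^{-1}\otimes f^!\Lambda),\Lambda)$ directly on $\widetilde{\mc M_{b_2}^{b_1}}$. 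Both are fine.

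There is, however, a genuine gap, and it is not the secondary one you flag but the primary one. You write that Verdier duality identifies the answer ``up to the dualizing twist and shift... over a geometric point this amounts to the shift $[-4]$,'' effectively discarding the Tate twist $\Lambda(2)$ from the dualizing complex $\Lambda(2)[4]$ of $\widetilde{\mc M_{b_2}^{b_1}}$. This twist is \emph{not} a formality: after descent it carries a nontrivial $\underline{E_1^\times}\times\underline{E_2^\times}$-character, because $\begin{pmatrix}\pi&0\\0&\pi\end{pmatrix}$ acts on the $\BC(\mc O(1))^2\setminus Z$-factor as inverse geometric Frobenius (so the twist contributes $q^{\pm 2}$ on compactly supported cohomology), and this produces a character that is different from the one the same determinant character would induce were it to come from a local system on $[*/\underline{\GL_2(E)}]$ (this is precisely the warning at the end of the paper's proof of Corollary~\ref{determinant-character-twists}). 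Indeed, if you drop the twist and naively dualize the characters from Theorem~\ref{cs-coh-tilde-M-trivial-module} ($H_c^1=\Lambda$, $H_c^4=\delta_T$), you get $H^0=\delta_T^{-1}$ and $H^3=\Lambda$ --- the wrong answer. The paper fixes this by computing the induced character $\chi_d=\abs{\det}^{-1}$ of the Tate twist, observing that it cancels the Tate-twist part of the degree-$4$ class, and then re-running the indicator-function bookkeeping of Theorem~\ref{cs-coh-tilde-M-trivial-module} with the extra twist in place; this is what makes the final characters come out as $\Lambda$ and $\delta_T$ rather than $\delta_T^{-1}$ and $\Lambda$. So the ``bookkeeping of the degree-$3$ character'' you call a secondary difficulty is in fact the crux of the proof. (As a minor remark, your worry about non-formality of $R\Gamma_c(\widetilde{\widetilde{\mc M_{b_2}^{b_1}}},\Lambda)$ is moot: the nonzero $E_2$-terms sit at $(p,q)=(-1,2)$ and $(0,4)$, which no differential in the second-quadrant descent spectral sequence can connect.)
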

We compute directly that the first step in the semi-orthogonal decomposition splits when restricted to the cuspidal irreducible representations of $\GL_2(E)$ (Proposition \ref{vanishing-of-cuspidals}). The crucial input here is the vanishing of smooth $\SL_2(E)$-homology of cuspidal representations since there are no Iwahori fixed vectors. 

Finally, we restrict attention to those representations that are parabolic inductions of unramified characters, and compute directly that the images vanish for generic characters (reproving a result of Hamann) and are nontrivial for non-generic characters. The gluing data for non-generic characters along Harder-Narasimhan strata can be seen as an automorphic incarnation of monodromy of the associated Fargues-Scholze parameter in the moduli of $L$-parameters. 
\begin{proposition}[Proposition \ref{non-LS-type}]
    Let $\chi=\chi_1 \otimes \chi_2$ be an unramified character of $T\subset B$. Then $H_c^i(\widetilde{\mc M_{b_2}^{b_1}}, \ind_B^{\GL_2(E)}\chi\cdot \delta_T^{-1/2})$ vanishes for all $i$ unless $\chi_1 \cdot \chi_2^\inv \cong 1$ or $\abs{-}^{\pm 1}$ as characters $E^\times \to \Lambda$.
    
    Furthermore, if $\chi = \abs{-}_1^a \otimes \abs{-}_2^b: (z_1,z_2)\mapsto \abs{z_1}^a \cdot \abs{z_2}^b$ with $a,b\in \frac{1}{2}\Z$, then \newline
    for $a+1=b$,
    \[H_c^i(\widetilde{\mc M_{b_2}^{b_1}},\ind_B^{\GL_2(E)}(\abs{-}^a\otimes \abs{-}^{a+1})\cdot \delta_T^{-1/2}) =\begin{cases} 
    \abs{-}^{a+1/2} & i=1,2\\
    0,\quad & \text{else,}
    \end{cases}
    \]
    for $a=b$,
    \[H_c^i(\widetilde{\mc M_{b_2}^{b_1}},\ind_B^{\GL_2(E)}(\abs{-}^a\otimes \abs{-}^a)\cdot \delta_T^{-1/2}) =\begin{cases} 
    \abs{-}^a\delta_T^{1/2} & i=2,3\\
    0,\quad & \text{else,}
    \end{cases}
    \]
    and for $a-1=b$,
    \[H_c^i(\widetilde{\mc M_{b_2}^{b_1}},\ind_B^{\GL_2(E)}(\abs{-}^{a}\otimes \abs{-}^{a-1})\cdot \delta_T^{-1/2}) =\begin{cases} 
    	\abs{-}^{a-1/2}\delta_T & i=3,4\\
    	0,\quad & \text{else,}
    \end{cases}
    \]
    and the cohomology vanishes for any other pair $(a,b)$.
\end{proposition}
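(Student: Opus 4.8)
The plan is to reduce everything to smooth group homology. Write $\sigma=\ind_B^{\GL_2(E)}\chi\,\delta_T^{-1/2}$ for the representation in question. Since $\widetilde{\widetilde{\mc M_{b_2}^{b_1}}}\to\widetilde{\mc M_{b_2}^{b_1}}$ is a $\underline{\GL_2(E)}$-torsor, the descent formula following from Lemma~\ref{equivariant-projection-formula} gives an $E_1^\times\times E_2^\times$-equivariant identification
\[
R\Gamma_c(\widetilde{\mc M_{b_2}^{b_1}},[\sigma])\;\cong\;\Bigl(\sigma\otimes^{\mathbb{L}}_\Lambda R\Gamma_c(\widetilde{\widetilde{\mc M_{b_2}^{b_1}}},\Lambda)\Bigr)\otimes^{\mathbb{L}}_{\mc H(\GL_2(E))}\Lambda .
\]
Feeding in Proposition~\ref{cs-coh-double-tilde-M-trivial-module}, the inner argument has cohomology only in degrees $t=2,3,4$, each of the form $\tau_t\otimes\cind_{\SL_2(E)}^{\GL_2(E)}\Lambda$ with $\tau_2=\st$, $\tau_3=\ind_B^{\GL_2(E)}(\abs{-}\otimes 1)$, $\tau_4=\abs{\det}$. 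Applying the projection formula for compact induction twice turns $\sigma\otimes\tau_t\otimes\cind_{\SL_2(E)}^{\GL_2(E)}\Lambda$ into $\cind_{\SL_2(E)}^{\GL_2(E)}\bigl((\sigma\otimes\tau_t)|_{\SL_2(E)}\bigr)$, and Shapiro's lemma for smooth homology then rewrites the $\GL_2(E)$-homology as the smooth $\SL_2(E)$-homology of $(\sigma\otimes\tau_t)|_{\SL_2(E)}$. I would organise this in the hyper-homology spectral sequence
\[
E_2^{s,t}=H_{-s}\bigl(\SL_2(E),(\sigma\otimes\tau_t)|_{\SL_2(E)}\bigr)\;\Longrightarrow\;H_c^{s+t}(\widetilde{\mc M_{b_2}^{b_1}},[\sigma]),\qquad t\in\{2,3,4\}.
\]
Because $\SL_2(E)$ acts on its Bruhat--Tits tree with compact open stabilisers, smooth $\SL_2(E)$-homology (under the standing hypotheses on $\Lambda$) is concentrated in degrees $0$ and $1$, so $s\in\{0,-1\}$ and every higher differential is forced to vanish; the spectral sequence degenerates and $H_c^j$ is an extension of the two groups $H_0(\SL_2(E),(\sigma\otimes\tau_j)|_{\SL_2(E)})$ and $H_1(\SL_2(E),(\sigma\otimes\tau_{j+1})|_{\SL_2(E)})$ (in the relevant cases only one of these will be nonzero).

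Thus everything comes down to computing $H_*(\SL_2(E),(\sigma\otimes\tau_t)|_{\SL_2(E)})$ for $t=2,3,4$. Here $\sigma|_{\SL_2(E)}\cong\ind_{B_{\SL_2}}^{\SL_2(E)}\psi$ is a (non-normalised) principal series, where $\psi$ is the restriction of $\chi\,\delta_T^{-1/2}$ to $T\cap\SL_2$; up to normalisation $\psi(\mathrm{diag}(t,t^\inv))=(\chi_1\chi_2^\inv)(t)\,\abs{t}^{-1}$. Using the projection formula $\ind_{B_{\SL_2}}^{\SL_2(E)}\psi\otimes\rho\cong\ind_{B_{\SL_2}}^{\SL_2(E)}(\psi\otimes\rho|_{B_{\SL_2}})$ and Shapiro again, $H_*(\SL_2(E),(\sigma\otimes\tau_t)|_{\SL_2(E)})\cong H_*(B_{\SL_2}(E),\psi\otimes\tau_t|_{B_{\SL_2}})$; running the Hochschild--Serre sequence for $B_{\SL_2}=T\ltimes N$, the homology of $N\cong(E,+)$ with prime-to-$p$ coefficients is concentrated in degree $0$ and computes the Jacquet module, leaving $H_*(E^\times,-)$ applied to that Jacquet module. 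For $\tau_4$ (whose $\SL_2$-restriction is trivial) this is immediately $H_*(E^\times,\psi)$, which is $\Lambda$ in each of degrees $0,1$ exactly when $\psi$ is trivial, i.e.\ $\chi_1\chi_2^\inv=\abs{-}$ (the case $a-1=b$); for $\tau_2$ and $\tau_3$, whose $\SL_2$-restrictions are (constituents of) principal series $\ind_{B_{\SL_2}}^{\SL_2}\psi'$, I would expand $\ind\psi\otimes\ind\psi'$ via the geometric lemma / Bruhat filtration for $B_{\SL_2}\backslash\SL_2/B_{\SL_2}$ into a two-term filtration whose graded pieces feed $H_*(E^\times,\psi\psi')$ (closed cell) and $H_*(E^\times, w(\psi')\,\psi\,\delta_{B_{\SL_2}})$ (open cell). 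Tracking the modulus twists, these are nonzero precisely for $\chi_1\chi_2^\inv=\mathbf 1$ in the $\tau_3$-case ($a=b$) and $\chi_1\chi_2^\inv=\abs{-}^{-1}$ in the $\tau_2$-case ($a+1=b$), and vanish for every other value of $\chi_1\chi_2^\inv$; this gives both the generic vanishing and the vanishing for $(a,b)$ outside the three listed families, and in the three surviving cases leaves exactly one $\tau_t$ contributing, in homological degrees $0$ and $1$, which produces the claimed concentration in two consecutive cohomological degrees.

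It remains to pin down the $E_1^\times\times E_2^\times$-equivariant structure on the surviving one-dimensional groups, which is the delicate part. I would trace the $G_{b_2}(E)$-action through the isomorphism $\widetilde{\widetilde{\mc M_{b_2}^{b_1}}}\cong U\times\underline{E^\times}$ of Proposition~\ref{cs-coh-double-tilde-M-trivial-module} (equivalently Proposition~\ref{Equivariant-iso-cover}): the $\underline{E^\times}$ factor is the determinant line and accounts for the $\cind_{\SL_2(E)}^{\GL_2(E)}\Lambda$, while each step up in cohomological degree along the stratification of $\BC(\mc O(1)^2)$ carries a Tate-type twist by $\abs{-}$, so that $\tau_2,\tau_3,\tau_4$ sit in the relative positions matching the exponents in the statement; the $\delta_T$ and $\delta_T^{1/2}$ factors then come from the bookkeeping in the $B_{\SL_2}$- and $E^\times$-homology (in particular the open-cell modulus twist above). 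Matching these against the three possibilities $\abs{-}^{a+1/2}$, $\abs{-}^{a}\delta_T^{1/2}$, $\abs{-}^{a-1/2}\delta_T$ yields the table. The main obstacle is exactly this last step combined with getting all the (non-normalised) induction and modulus-character normalisations right in the geometric-lemma computation: once the conventions are fixed the homology itself collapses to the elementary computation of $H_*(E^\times,-)$ on characters, but the precise exponents and the genericity thresholds $\chi_1\chi_2^\inv\in\{1,\abs{-}^{\pm1}\}$ are where essentially all the care is concentrated.
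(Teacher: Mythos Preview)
Your approach is essentially the paper's: descend via the $\underline{\GL_2(E)}$-torsor using Proposition~\ref{cs-coh-double-tilde-M-trivial-module}, then reduce by projection formula and Shapiro to $T_s$-homology of (twisted) Jacquet modules of the $\tau_t$. The only structural difference is the order of reduction—the paper peels off the parabolic induction first (Lemma~\ref{homology-of-tensor-with-parabolic-induction}), going $\GL_2\to B\to B_s$, while you peel off $\cind_{\SL_2}^{\GL_2}\Lambda$ first, going $\GL_2\to\SL_2\to B_s$—and both land in the same place. Note that your Shapiro step from $\SL_2$ to $B_s$ carries a modulus twist since $B_s$ is not unimodular; this is exactly what Lemma~\ref{homology-of-tensor-with-parabolic-induction} packages, and you should track it when you pin down the exponents.

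There is one genuine gap, in the $\tau_3$-case ($a=b$). Your filtration argument is enough for the generic vanishing: when $\chi_1\chi_2^{-1}\notin\{1,\abs{-}^{\pm1}\}$ both graded pieces of the Jacquet module are nontrivial characters of $T_s$, so their homology and hence that of the extension vanishes. But when $\chi_1\chi_2^{-1}=1$, the Jacquet module of $\tau_3|_{\SL_2}\cong\ind_{B_s}^{\SL_2}\delta_T^{-1/2}$ is a \emph{non-split} self-extension of the trivial $T_s$-character, and the connecting homomorphism $H_1(T_s,\Lambda)\to H_0(T_s,\Lambda)$ in the long exact sequence is multiplication by a (normalised) intertwining constant. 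This is the content of Lemma~\ref{cohomology-PS(1)}: that constant is a unit under the banal hypothesis on $\Lambda$, so the homology is $\Lambda$ (not $\Lambda^2$) in each of degrees $0,1$. Without this step you cannot conclude the one-dimensional answer $\abs{-}^a\delta_T^{1/2}$ in degrees $2,3$. So the computation does not quite ``collapse to $H_*(E^\times,-)$ on characters'' here; you must show the extension class acts invertibly.
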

Finally, Verdier duality yields the following cohomologies of the images of the gluing functors. 
\begin{corollary}[Corollary \ref{gluing-functors-for-unram-principal-series}]
    Let $m,n\in \frac{1}{2}\Z$. Then 
    $i_{b_2}^* Ri_{b_1,*} \ind_B^{\GL_2(E)}\left(\abs{\det}^m\otimes \delta_T^{n}\otimes \delta_T^{-1/2}\right)$ vanishes unless $n\in \set{-1/2,0,1/2},$ where we have
    \begin{align*}
    H^k(i_{b_2}^* Ri_{b_1,*} \ind_B^{\GL_2(E)}\left(\abs{\det}^m\otimes \delta_T^{-1/2}\otimes \delta_T^{-1/2}\right)) &=\begin{cases} 
    \abs{-}^m\delta_T & k=2,3\\
    0,\quad & \text{else,}
    \end{cases}\\
    H^k(i_{b_2}^* Ri_{b_1,*} \ind_B^{\GL_2(E)}\left(\abs{\det}^m \otimes \delta_T^{-1/2}\right)) &=\begin{cases} 
    \abs{-}^m\delta_T^{1/2} & k=1,2\\
    0,\quad & \text{else,}
    \end{cases}\\
    H^k(i_{b_2}^* Ri_{b_1,*} \ind_B^{\GL_2(E)}\left(\abs{\det}^m\otimes \delta_T^{1/2}\otimes \delta_T^{-1/2}\right)) &=\begin{cases} 
    \abs{-}^m & k=0,1\\
    0,\quad & \text{else.}
    \end{cases}
    \end{align*}
\end{corollary}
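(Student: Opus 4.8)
The plan is to derive the corollary directly from Proposition \ref{non-LS-type} by Verdier duality on $\Bun_{\GL_2}^{\leq b_2}$ --- precisely the route indicated after Lemma \ref{equivariant-projection-formula}: Proposition \ref{non-LS-type} computes $R\Gamma_c(\widetilde{\mc M_{b_2}^{b_1}},[\sigma])$ for every unramified principal series $\sigma$, and it remains only to transport this through the duality relating $R\Gamma_c(\widetilde{\mc M_{b_2}^{b_1}},-)$ to the gluing functor $i_{b_2}^*Ri_{b_1,*}$.

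First I would change variables. Since $\delta_T(t_1,t_2)=\abs{t_1}\abs{t_2}^{-1}$ and $\abs{\det}(t_1,t_2)=\abs{t_1}\abs{t_2}$, the character $\abs{\det}^m\otimes\delta_T^n$ of $T$ equals $\abs{-}_1^{m+n}\otimes\abs{-}_2^{m-n}$, so $\sigma_{m,n}:=\ind_B^{\GL_2(E)}(\abs{\det}^m\otimes\delta_T^n\otimes\delta_T^{-1/2})$ is an unramified principal series whose inducing character has component ratio $\abs{-}^{2n}$; its smooth contragredient $\sigma_{m,n}^\vee$ is again such a representation, now with ratio $\abs{-}^{-2n}$, and passing to the contragredient interchanges the exponent $m$ with $-m$ while leaving $n$ fixed. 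By the first assertion of Proposition \ref{non-LS-type}, $R\Gamma_c(\widetilde{\mc M_{b_2}^{b_1}},[\sigma_{m,n}^\vee])$ vanishes unless $\abs{-}^{2n}\in\{1,\abs{-}^{\pm1}\}$, i.e. unless $n\in\{-1/2,0,1/2\}$; since the identification below is an equivalence, this yields the vanishing half of the statement.

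For the three surviving values of $n$ I would run the duality. As $i_{b_1}\colon\Bun_{\GL_2}^{b_1}\hookrightarrow\Bun_{\GL_2}^{\leq b_2}$ is an open immersion and $\sigma_{m,n}$ is admissible, $Ri_{b_1,*}[\sigma_{m,n}]$ is ULA; Verdier duality on $\Bun_{\GL_2}^{\leq b_2}$, together with $\mathbb{D}[\sigma_{m,n}]=[\sigma_{m,n}^\vee]$ on $\Bun_{\GL_2}^{b_1}\cong[*/\underline{\GL_2(E)}]$, then gives $i_{b_2}^*Ri_{b_1,*}[\sigma_{m,n}]\cong\mathbb{D}\bigl(i_{b_2}^!Ri_{b_1,!}[\sigma_{m,n}^\vee]\bigr)$, with $\mathbb{D}$ now on $\Bun_{\GL_2}^{b_2}\cong[*/\mc G_{b_2}]$. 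By the $!$-analogue of Proposition \ref{action-on-cohomology} (the identification used after Lemma \ref{equivariant-projection-formula}), $i_{b_2}^!Ri_{b_1,!}[\sigma_{m,n}^\vee]$ is $R\Gamma_c(\widetilde{\mc M_{b_2}^{b_1}},[\sigma_{m,n}^\vee])$ up to a universal shift and $G_{b_2}(E)$-equivariant twist. Substituting the relevant displayed group from Proposition \ref{non-LS-type} for $\sigma_{m,n}^\vee$ (the family $a+1=b$, $a=b$, or $a-1=b$ according to $n=-1/2,0,1/2$), dualizing the resulting complex of characters back over $G_{b_2}(E)\cong E_1^\times\times E_2^\times$, and collecting the shift and equivariant twist then produces the three displayed answers: the contragredient restores the exponent $m$ (negated on passing to $\sigma_{m,n}^\vee$), the duality reverses and re-centres the cohomological degrees, and the $\delta_T$, $\delta_T^{1/2}$ and trivial twists record the $G_{b_2}(E)$-structure of the dualizing complex.

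The crux is the bookkeeping in this last step. Reconciling the degree ranges $\{1,2\}$, $\{2,3\}$, $\{3,4\}$ of Proposition \ref{non-LS-type} with the claimed ranges $\{2,3\}$, $\{1,2\}$, $\{0,1\}$ forces a definite cohomological shift, combining the universal shift in the $!$-analogue of Proposition \ref{action-on-cohomology} with the shift of $\mathbb{D}$ on $[*/\mc G_{b_2}]$ --- the latter governed by the dimension of the positive Banach-Colmez part $\BC(\mc O(2))$ of the automorphism group of $\mc O(-1)\oplus\mc O(1)$. More delicately, one must pin down the $G_{b_2}(E)$-equivariant character carried by the relevant dualizing complexes (a power of $\delta_T$, together with Tate twists) so as to produce the $\delta_T$, $\delta_T^{1/2}$, and trivial twists in the three cases. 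Once these invertible data are fixed --- by a direct computation via the torsor $\widetilde{\widetilde{\mc M_{b_2}^{b_1}}}\to\widetilde{\mc M_{b_2}^{b_1}}$ and the description $\widetilde{\widetilde{\mc M_{b_2}^{b_1}}}\cong U\times\underline{E^\times}$ of Proposition \ref{cs-coh-double-tilde-M-trivial-module}, or by comparison with the already-established Theorem \ref{Purity-for-trivial-rep} --- the remaining verification is formal.
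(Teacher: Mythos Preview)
Your overall strategy---reduce to Proposition \ref{non-LS-type} by a duality---is the paper's, but the implementation differs. The paper does not invoke a ``$!$-analogue of Proposition \ref{action-on-cohomology}'' (no such statement is established, and the passage after Lemma \ref{equivariant-projection-formula} only explains how to compute $R\Gamma_c(\widetilde{\mc M_{b_2}^{b_1}},-)$ via the torsor, not how to identify it with $i_{b_2}^!Ri_{b_1,!}$). Instead, following the proof of Theorem \ref{Purity-for-trivial-rep}, it applies Poincar\'e duality directly on the cohomologically smooth space $\widetilde{\mc M_{b_2}^{b_1}}$, whose dualizing sheaf is $\Lambda(2)[4]$ by Lemma \ref{negative_bc_dualizing_sheaf}. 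This gives $H^k(\widetilde{\mc M_{b_2}^{b_1}},[\sigma])\cong H_c^{4-k}(\widetilde{\mc M_{b_2}^{b_1}},\Lambda(2)\otimes[\sigma^\vee])^\vee$, after which the paper reduces to $m=0$ and reruns the descent computation of Proposition \ref{non-LS-type} with the additional twist $\Lambda(2)$ (which on the $\GL_2(E)$-side is $\abs{\det}^{-1}$, leaving the $B_s$-homology unchanged). The paper then carries out the $z_1,z_2$-character computation explicitly for each of $q=2,3,4$; this is where the answers $\delta_T,\delta_T^{1/2},\Lambda$ actually emerge, and it is not ``formal'' bookkeeping but the substance of the argument.

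Two slips to fix. First, the paper's convention is $\delta_T(t_1,t_2)=\abs{t_2/t_1}$ (see the Notation section), so $\abs{\det}^m\delta_T^n=\abs{-}_1^{m-n}\otimes\abs{-}_2^{m+n}$ and the component ratio $\chi_1\chi_2^{-1}$ is $\abs{-}^{-2n}$. Second, the smooth dual of the normalized induction is $\sigma_{m,n}^\vee=\sigma_{-m,-n}$: both $m$ and $n$ flip. (You already note the ratio becomes $\abs{-}^{-2n}$, which is precisely the statement that $n$ has flipped.) These errors do not affect the vanishing claim, since $\{-1/2,0,1/2\}$ is symmetric, but they will scramble your matching of the three nontrivial values of $n$ with the three families $a+1=b$, $a=b$, $a-1=b$ of Proposition \ref{non-LS-type}, and hence the final characters, if left uncorrected.
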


\subsection{Outline of the sections} The rest of the paper is organized as follows.

§2: we recall the basic properties of the formalism of vector bundles on the Fargues-Fontaine curve, and the associated Banach-Colmez space to a vector bundle with everywhere positive slopes. We also summarize the description of the geometry of the moduli stack of rank $2$ vector bundles $\Bun_2$ in \cite{fargues2021geometrization}, including the smooth charts $\mc M_b \to \Bun_2$ defined by moduli of filtered rank $2$ vector bundles on the curve. Then we remind the reader of two semi-orthogonal decompositions of $D_\et(\Bun_2,\Lambda)$ arising from Harder-Narasimhan strata of $\Bun_2$. 

§3: we describe how to compute the gluing functors for one of the semi-orthogonal decompositions in terms of the cohomology of $\widetilde{\mc M_{b_2}^{b_1}}$, which is a $\underline{G_{b_2}(E)}$-torsor over a locally closed stratum of $\mc M_{b_2}$. In the final part we shift to the compactly supported cohomology of a further trivialization $\widetilde{\widetilde{\mc M_{b_2}^{b_1}}}$ uniformizing the $\mc G_{b_1}$-action, which is much easier to compute in practice, and we explain how compactly supported cohomology descends along torsors by a homological version of Hochschild-Serre spectral sequence (Proposition \ref{homological-Hochschild-Serre}). We also explain how the compactly supported cohomology of these spaces describe the semi-orthogonal decomposition when restricted to dualizable sheaves. Note that the results of this section hold for any reductive group $G/E$.

§4: we compute some examples of the functor $i^*Rj_*$ for $\GL_2/E$. First we collect some lemmas regarding cohomology of Banach-Colmez spaces. The majority of this section is concerned with the immediate specialization in integral slopes, restricted to irreducible smooth $\GL_2(E)$-modules. We also do the same in half-integral slope.

Appendix: we collect some theorems on the smooth (co)homology of $\SL_2(E)$-, $E^\times$-, and $\GL_2(E)$-modules. We provide a brief summary of the smooth projective $\SL_2(E)$-resolution of the trivial representation constructed by Schneider and Stuhler in \cite{StuhlerSchneider+1993+19+32} via sheaves on the Bruhat-Tits building, which we use to compute the $\SL_2(E)$-cohomology of the trivial and Steinberg representations (Example \ref{sl2-cohomology-steinberg}). While these computations are well-known, it is important to reproduce them in order to explicitly understand the $G_{b_2}(E)$-module structure induced on these (co)homology groups. 

\subsection{Notation and conventions}
We adopt the notation used in \cite{fargues2021geometrization}, with very minor exceptions.
\begin{itemize}
    \item $E$ is a nonarchimedean local field with integers $\mc O_E$, uniformizer $\pi$, and residue field $\F_q=\mc O_E/\pi$ of characteristic $p$.
    \item $G$ is a reductive group over $E$.
    \item $k$ is a discrete field containing $\F_q$. Typically, $k=\overline{\F_q}$.
    \item $*$ is the $v$-sheaf $\spd k$, which is the base space of all $v$-stacks we consider. In particular our spaces admit an arithmetic $q$-Frobenius automorphism.
    \item $C$ is an algebraically closed complete non-archimedean field containing $k$. In particular it is perfectoid with pseudo-uniformizer $\varpi$.
    \item We write $\D$ for the perfectoid open unit disk over $C$.
    \item $\Lambda$ is a self-injective ring killed by an integer $n$ which is prime to $p$. There is also sometimes a standing assumption that the torsion of $\Lambda$ is ``sufficiently large'' (e.g. $\ell$ is a banal prime for $\GL_2$, so that $\ell$ does not divide the pro-order of any compact open subgroup); more precisely, the primes dividing $n$ are distinct from those dividing $(q-1)^2(q+1)$, so that the modular cohomology for certain quotients by arithmetic subgroups is trivial (this is further explained in the Appendix). The cases of primary interest are $\Lambda = \overline{\F_\ell}$ and $\Lambda = \Z/\ell^i\Z$.
    \item For any locally profinite group $G$, we write $D(G)$ for the derived category of smooth representations of $G$ in $\Lambda$-modules.
    \item $B$ is the group of $E$-rational points of the standard Borel subgroup of $\GL_2(E)$ or $\SL_2(E)$ depending on context.
    \item $\ind_B^G \sigma$ is the non-smooth and non-normalized induction (i.e. $\sigma$ is not first twisted by the character $\delta_B^{-1/2}$ as in the principal series representations).
    \item $\delta_T: E^\times \times E^\times \to \Lambda$ is the character $(e_1,e_2)\mapsto \abs{e_2/e_1}$, where $\abs{e_2/e_1}\in q^{\Z}\subset \Q$ is viewed as an element of $\Lambda$ by considering the image of $1$ under the multiplication-by-$\abs{e_2/e_1}$ map, which is well-defined since $q\in \Lambda^\times$. We warn that this is the convention used in \cite{Bushnell2006}, and the opposite sign can be found in many places in the literature. We often write $\delta_T$ even for the inflated character to $B$ in order to emphasize this discrepancy.
    \item If $S$ is a compact Hausdorff space, then $\underline{S}$ is the diamond taking $X\in \Perf$ to the continuous functions
    \[\underline{S}(X):= C^0(\abs{X},S).\]
    More generally, if $S$ is locally compact Hausdorff then the same functor only produces a $v$-sheaf.
\end{itemize}

It will often be the case that we implicitly base change along $\spa C\to *=\spd k$, which induces equivalences on \'etale sheaves and their cohomology if $k$ is algebraically closed (and in particular, equipped with its $q$-Frobenius automorphism). The symbol $\Lambda$ is used interchangeably as the constant sheaf, the abstract ring, the trivial representation, or a complex supported in degree $0$ depending on context. Note that since all of the groups we work with are locally pro-$p$ and since $\Lambda$ is discrete, self-injective, and $p^\inv \in \Lambda$, we have in our setup exactness for the functors $(-)^{\text{sm}}:(\rho,V)\mapsto (\rho, V^{\text{sm}}:=\varinjlim_K V^K)$ and $(-)^\vee:(\rho,V)\mapsto (\rho^{\vee},V^\vee:=\Hom_{G}(V,\Lambda)^{\text{sm}})$. Then since taking smooth vectors is exact and preserves projectives, it does not affect computations of cohomology, so we will also be quite inexplicit about taking smooth subrepresentations at some points. We also omit a lot of details of the perfectoid and $v$-geometry, such as remarking when maps are (partially) compactifiable, of finite geometric transcendence degree, representable in locally spatial diamonds, etc. We fix a Haar measure $\mu_{\GL_2(E)}$ so that the Hecke algebra is identified with the algebra of compactly supported locally constant functions on $\GL_2(E)$. We write $R i_{b,?}: D_\et(\Bun_G^b)\to D_\et(\Bun_G)$ for $?\in \set{\sharp,!,*}$ instead of $i_{b,?}$ in contrast to other places in the literature since it is convenient for denoting the degree of cohomology groups.

\subsection*{Acknowledgements}
I would like to thank Peter Scholze for suggesting this topic, namely the computation for unramified characters of $\GL_2(E)$, as my master's thesis in 2022/23 and for his helpful remarks, suggestions, and corrections. Thanks also to Johannes Anschütz, Ali Barkhordarian, Lucas Gerth, Linus Hamann, David Hansen, Ben Heuer, Bence Hevesi, Thibaud van den Hove, Katharina Hübner, Jordan Levin, Timo Richarz, Ruth Wild, Mingjia Zhang, and Konrad Zou for helpful conversations, their interest in this project, and their support.

During the completion of this article, the author was partially supported by the Deutsche Forschungsgemeinschaft (DFG, German Research Foundation) through
the Collaborative Research Centre TRR 326 GAUS, project number 444845124.

\section{Vector Bundles on the Curve}

In this section we recall many relevant definitions and theorems involving vector bundles on the Fargues-Fontaine curve. We give special attention to the moduli of rank $2$ bundles for use in section 4. This section may be skipped and is only included for convenience of the reader.
\subsection{The Fargues-Fontaine curve and Banach-Colmez spaces}
We briefly recall the formalism of vector bundles over the curve, starting with vector bundles over absolute curves $X_{C,E}$ for geometric points $\spa C$. 
\begin{definition}
For any $S=\spa(R,R^+)\in \Perf$ an affinoid perfectoid space over $\F_q$ with pseudo-uniformizer $\varpi\in R^+$, consider the locus $Y_{S,E}\subset \spa W_{\mc O_E}(R^+)$ where $\pi$ and $[\varpi]$ are invertible. Note that $W_{\mc O_E}(R^+):= W(R^+)\otimes_{W(\F_q)}\mc O_E$ are the ramified Witt vectors, which inherit a Frobenius automorphism via the first factor. Then $Y_{S,E}$ is something like the ``generic fiber of $S\times_{\F_q} \mc O_E$ over $E$'', and it inherits this Frobenius automorphism, denoted $\varphi$ (note that such an analogy is more accurate after tilting; $Y^\diamond \cong S\times \spd E$, and the Frobenius is sent to the partial Frobenius automorphism coming from $S$ \cite[Proposition~II.1.17]{fargues2021geometrization}).
The \emph{Fargues-Fontaine curve for $E$ and $S$} is the adic space over $\spa E$ defined by 
\[X_{S,E}:=Y_{S,E}/\varphi^{\Z}.\]
We will often omit $E$ from the notation, and we let $X:= X_{C,E}$ denote the absolute curve over a fixed geometric point $\spa(C,\mc O_C)$. Note that $X_{S,E}$ does not map to $S$, even in equal characteristic, but nonetheless it serves as a fundamental object in understanding the rational $p$-adic Hodge theory of families parameterized by $S$.
\end{definition}

The key geometric object at play is the moduli stack of vector bundles on the curve. While the theory of quasi-coherent sheaves on adic spaces is necessarily condensed, there is a very classical theory of vector bundles on $X$. Furthermore, vector bundles on $X_{S,E}$ satisfy descent for the $v$-topology on $S$, and hence can be thought of as $p$-adic analytic families over $S$. Observe that $W_{\mc O_E}(\overline{\F_q})[1/\pi]= \Breve{E}$ is the $\pi$-adic completion of the maximal unramified extension of $E$, together with a (Witt vector) Frobenius automorphism $\phi_{\Breve{E}}$. Then one might expect via descent on $\overline{\F_q}\subset C$ and $Y\to X$ that a vector bundle on $X_{C,E}$ is given by an $\Breve{E}$-vector space together with a Frobenius cocycle.

\begin{definition}
An \textit{$E$-isocrystal} is a pair $(V,\phi)$ where $V$ is a finite dimensional $\Breve{E}$-vector space and $\phi:V\xrightarrow{\sim}V$ is a $\phi_{\Breve{E}}$-linear automorphism.
\end{definition}
According to the Dieudonn\'{e}-Manin classification, the category of $E$-isocrystals is semisimple with simple objects $V_\lambda$ corresponding to $\lambda=\frac{d}{r} \in \Q$ (with $d$ and $r$ coprime), defined by 
\[V_\lambda= \Breve{E}^r,\ \phi= \begin{bmatrix}0 & 1 & 0 &  \cdots &\\ & 0 &1 & 0 &\cdots \\ & & \ddots & \ddots & \ddots \\
0 & & & 0 & 1\\ \pi^d & 0 & & & 0\end{bmatrix}.\] Then there is a slope formalism, where orthogonality of the $V_\lambda$ induces unique (split) Harder-Narasimhan filtrations on isocrystals. There is a functor $\isoc_{E}\to \mathrm{VB}(X_{C,E})$ extending $\mc O(\lambda):=\mc E(V_{-\lambda})$ obtained by descending $V_{-\lambda}$ from $Y_{C,E}\to \spa \Breve{E}$ to $X_{C,E}$ (the minus sign occurs so that $\mc O(1)$ is ample). It was shown by  Fargues-Fontaine, Hartl-Pink, and Kedlaya that this functor induces a bijection on isomorphism classes. Furthermore, this identification yields split Harder-Narasimhan filtrations on vector bundles over absolute curves $X_{C,E}$. Originally proven by Kedlaya-Liu, there is a notion of global Harder-Narasimhan (exhausted, separated, and decreasing) filtration on vector bundles over $X_{S,E}$ that exist if the Harder-Narasimhan polygon is constant for every point of $S$ and are split after a pro-\'etale cover \cite[Theorem~II.2.19]{fargues2021geometrization}. 

\begin{remark}
	Note that the splitting of Harder-Narasimhan filtrations for vector bundles over absolute curves happens classically (i.e. for smooth projective curves over $\mathbb{C}$) only for genus $g=0,1$. In varying ways, $X_{C,E}$ shares properties with genus $0$ or genus $1$ curves, and hence can be thought of as having ``genus $1/2$''.
\end{remark}

The following class of spaces are fundamental in constructing representable covers of points in the moduli of vector bundles.

\begin{definition}[{\cite[\hphantom~I.3.5]{fargues2021geometrization}}]
Let $\mc E$ be a vector bundle on $X_S$ with positive slopes at every point of $S$. The \emph{(positive) Banach-Colmez space} $\BC(\mc E)$ associated with $\mc E$ is the $v$-sheaf over $S$ whose $T$-valued points, for $T\in \Perf_S$, are given by \[\BC(\mc E)(T)=H^0(X_T, \mc E|_{X_T}).\]
\end{definition}
\begin{remark}
$\BC(\mc E)$ is in general only a $v$-sheaf if the base $S$ is not perfectoid (e.g. when defined over $\spd k$), but is a locally spatial diamond as soon as the base is perfectoid.
\end{remark}
The long exact cohomology sequences associated to inclusions of vector bundles on $X_S$ provide a very useful tool for understanding Banach-Colmez spaces. For example, the long exact sequence associated to a degree $1$ modification
\[0\to \mc O\rightarrow \mc O(1)\rightarrow i_* \mc O_{S^\sharp}\to 0\]
induces
\[0\to \underline E \to \BC(\mc O(1)) \to (\mathbb{A}^1_{S^\sharp})^\diamond \to 0,\]
which can be thought of as a highly nonsplit extension of an (equal characteristic) affine line over $S$ by the locally archimedean group $E$ (this intuition only fails since $\mathbb{A}^1_{S^\sharp}$ is not perfectoid). In many ways, $\BC(\mc O(n))$ for positive $n$ is the analogue of affine $n$-space in classical geometric representation theory, and the projectivized Banach-Colmez spaces $\underline{E^\times}\backslash [\BC(\mc E) \setminus 0]$ are analogues of projective spaces, which can be seen explicitly in later computations. For example, the projectivized Banach-Colmez space associated to $\BC(\mc O^2)$ is $\underline{\Proj^1(E)}$. Another example is the space of closed effective degree $1$ Cartier divisors on the curve, $\mathrm{Div}^1 \cong \underline{E^\times}\backslash [\BC(\mc O(1)) \setminus 0]$, where the isomorphism is given by an Abel-Jacobi map in the geometric class field theory of Fargues \cite[Corollary~II.2.4]{fargues2021geometrization}.
\begin{proposition}[\cite{fargues2021geometrization} II.2.5(iv)]\label{smalldegBC}
If $0<\lambda\leq [E:\Q_p]$ (resp. for all positive $\lambda$ if $E$ is of equal characteristic), there is an isomorphism
\[\BC (\mc O(\lambda))\cong \spd k\powerseries{x_1^{1/p^\infty},\dots, x_d^{1/p^\infty}}\]
where $\lambda=d/r$ with coprime integers $d,r>0$.
\end{proposition}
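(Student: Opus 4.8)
The statement is that of \cite{fargues2021geometrization}; I indicate the approach I would take. The plan is to realise $\BC(\mc O(\lambda))$ as the universal cover of a suitable $p$-divisible formal group and then to quote the structure theory of such universal covers. As a first reduction, note that the formation of $\BC(\mc O(\lambda))$ commutes with base change and that, as recalled above, $\BC(\mc O(\lambda))$ becomes a locally spatial diamond over any perfectoid base; it therefore suffices to construct a natural isomorphism after base change along $\spa C\to\spd k$ and to observe that it intertwines the Frobenii, so that it descends to $\spd k$.

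The main input is the correspondence between $p$-divisible groups and vector bundles on the curve together with the crystalline period morphism. Over $\mc O_C$ one has an isoclinic $p$-divisible formal group $\mf G$ of dimension $d$ (carrying an $\mc O_E$-action in the general case) whose associated bundle — the degree-$d$ modification of a trivial bundle determined by its Hodge--Tate filtration — is $\mc O(\lambda)$; such a $\mf G$ exists exactly when its dimension does not exceed its height, and, since $\deg\mc O(\lambda)=d$ and $\operatorname{rk}\mc O(\lambda)=r$ so that the relevant $\Z_p$-height is $r[E:\Q_p]$, this is precisely the condition $\lambda\leq[E:\Q_p]$. For such $\mf G$ the universal cover $\widetilde{\mf G}=\varprojlim_{\times p}\mf G$, viewed as a $v$-sheaf on $\Perf$, is functorially identified with $(R,R^+)\mapsto H^0(X_{S,E},\mc O(\lambda))=\BC(\mc O(\lambda))(R,R^+)$ for $S=\spa(R,R^+)$, the isomorphism being $\varphi$-equivariant and hence already defined over $\spd k$. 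In the equal-characteristic case one argues instead with formal $\mc O_E$-modules of Drinfeld type, where the dimension constraint disappears — heuristically because the non-perfectoid affine-line contribution coming from a degree-one modification is perfectoid-izable in characteristic $p$. It then remains to invoke the structure theorem (Scholze--Weinstein): $\widetilde{\mf G}$ is represented by the adic generic fibre of $\operatorname{Spf}\mc O_C\powerseries{x_1^{1/p^\infty},\dots,x_d^{1/p^\infty}}$, a perfectoid open polydisc of dimension $\dim\mf G=d$, giving $\BC(\mc O(\lambda))\cong\spd k\powerseries{x_1^{1/p^\infty},\dots,x_d^{1/p^\infty}}$ after descent.

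The hard part will be the middle step: making the relative crystalline period isomorphism $\widetilde{\mf G}\cong\BC(\mc O(\lambda))$ precise over an arbitrary perfectoid base rather than merely at geometric points, and — just as delicate — pinning down the Dieudonné and Hodge--Tate normalisations so that one really lands on the positive slope $\lambda$, so that $\BC(\mc O(\lambda))$ is even defined, and on dimension exactly $d$ rather than, say, $r-d$. An alternative, more hands-on route that avoids $p$-divisible groups is to start from the generic degree-$d$ modification sequence $0\to\mc O^{\,r}\to\mc O(d/r)\to\mc T\to 0$ with $\mc T$ a length-$d$ torsion sheaf, apply $R\Gamma$ over the relative curve (using $H^1(X_S,\mc O)=0$) to present $\BC(\mc O(d/r))$ as an extension of a $d$-dimensional affine-space-type $v$-sheaf by $\underline E^{\,r}$, and then exploit a contracting endomorphism of this extension — afforded by a uniformiser of the central division algebra $\operatorname{End}(\mc O(\lambda))$ — to collapse the total space onto a perfectoid polydisc; but carrying out that collapse rigorously is itself the crux, and the bound $\lambda\leq[E:\Q_p]$ reappears there as exactly the range in which this is possible.
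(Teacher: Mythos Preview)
The paper does not give a proof of this proposition: it is simply cited from \cite{fargues2021geometrization}~II.2.5(iv), and the subsequent Remark~\ref{basechange-to-C} sketches exactly the picture you describe --- in equal characteristic an explicit identification via power series, and in the $p$-adic case the Lubin--Tate/Scholze--Weinstein identification of $\BC(\mc O(\lambda))$ with the universal cover of a $p$-divisible group of the correct height and dimension. Your proposal is a correct outline of the standard argument and matches the paper's own informal explanation; there is nothing further to compare.
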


\begin{remark}\label{basechange-to-C}\hfill
\begin{itemize}
	\item In many of our computations, $\BC(\mc O(1))$ will be base changed to $\spa C$. Then this space is representable as the perfectoid open unit disk over $C$. More generally ($\lambda$ as above), there is an isomorphism with the perfectoid open disk $\BC(\mc O(\lambda))\cong \D^r_C$ with an action of $\pi\in \underline E^\times$ given by $d$-many iterations of the inverse of geometric Frobenius.
	\item The isomorphism in equal characteristic is explicit: a $\spa (R,R^+)$-point of $\BC(\mc O(1))$ corresponds to a power series $\sum_{n\in \Z} a_n \cdot t^n$, determined by a single term due to the Frobenius equivariance condition (i.e. $a_{n+1}^q=a_n$). That is, a power series $\sum_{n\in \Z} a^{1/q^n}t^n$, which converges if and only if $a\in R^{\circ\circ}$. Then since $\pi:= t$, multiplication by $\pi$ acts as taking the $q^\text{th}$-root on $a$, so the $\underline{E^\times}$-action is carried over to the perfectoid open disk as $\pi=$ inverse geometric $q$-Frobenius. This will induce the same action as an arithmetic Frobenius on \'etale cohomology. 
	\item In the case of $p$-adic $E$, such an isomorphism follows from the Lubin-Tate theory of $E$, identifying $\BC(\mc O(1))$ as (the diamond associated to) the adic generic fiber of the universal cover of the moduli space of $p$-divisible groups of height $1$ and dimension $1$, characterized by their (covariant) Dieudonn\'e modules. The induced $\underline{E^\times}$-action is also $\pi$ acting by inverse geometric Frobenius. See \cite[\hphantom~II.2.1]{fargues2021geometrization} for more details.
\end{itemize}
\end{remark}

Regardless of representability, the cohomology of positive Banach-Colmez spaces was computed in \cite{Hansen_shtukas}, where it was shown that they are ``$\ell$-cohomological disks''.

\begin{proposition}[{\cite[\hphantom~4.7]{Hansen_shtukas}}]\label{Hansen-bc-dualizing}
Let $\lambda>0$ and $f: \BC(\mc O(\lambda))\to *$ be the structure morphism. Then after base-change to $\spa C$, the natural map
\[\Lambda \to R f_{C,*}\Lambda\]
is an isomorphism. Furthermore, $f:\BC(\mc O(\lambda))\to *$ is cohomologically smooth with dualizing sheaf $Rf^!\Lambda \cong \Lambda(d)[2d]$, where $d$ is the degree of $\mc O(\lambda)$.
\end{proposition}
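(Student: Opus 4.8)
The plan is to reduce everything, after base change to $\spa C$, to the \'etale cohomology of perfectoid polydisks, and to reach an arbitrary $\lambda$ by a single d\'evissage. Over $\spa C$ the space $Y:=\BC(\mc O(\lambda))_C$ is a locally spatial diamond, and I would prove the stronger statement that $f_C\colon Y\to\spa C$ is $\Lambda$-acyclic and cohomologically smooth of pure dimension $d=\deg\mc O(\lambda)$ with $Rf_C^!\Lambda\cong\Lambda(d)[2d]$ (so that also $R\Gamma_c(Y,\Lambda)\cong\Lambda(-d)[-2d]$ by Poincar\'e-Lefschetz duality, valid since $\Lambda$ is self-injective). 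Passing back to $*=\spd k$ is then formal for the smoothness part: cohomological smoothness is v-local on the target, $\spa C\to *$ is a v-cover, and the dualizing complex remains $\Lambda(d)[2d]$ by v-descent; the cohomology statement is phrased over $\spa C$ only because $\BC(\mc O(\lambda))$ is a priori merely a $v$-sheaf over $\spd k$.

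The engine is a stability lemma. Let $\mathcal C$ be the class of diamonds $g\colon Z\to\spa C$ that are $\Lambda$-acyclic and cohomologically smooth of some pure dimension $e$ with $Rg^!\Lambda\cong\Lambda(e)[2e]$ (hence $R\Gamma_c(Z,\Lambda)\cong\Lambda(-e)[-2e]$; note any nonempty $Z\in\mathcal C$ is connected, as $H^0(Z,\Lambda)=\Lambda$). I claim $\mathcal C$ is closed under torsors: if $G\to\spa C$ lies in $\mathcal C$ with dimension $e_1$ and $p\colon Z\to W$ is a torsor under the group $v$-sheaf $G$ with $W\in\mathcal C$ of dimension $e_2$, then $Z\in\mathcal C$ with dimension $e_1+e_2$. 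This one proves by the usual descent-along-torsors arguments (compare Lemma \ref{equivariant-projection-formula}): a torsor under a cohomologically smooth group $v$-sheaf is a cohomologically smooth morphism with constant relative dualizing complex --- v-locally it is a base change of $G\to\spa C$ --- so composing with the structure map of $W$ gives that $Z\to\spa C$ is cohomologically smooth of dimension $e_1+e_2$ with $Rg^!\Lambda\cong\Lambda(e_1+e_2)[2(e_1+e_2)]$; moreover $Rp_!\Lambda$ commutes with base change, and the torsor cocycle acts trivially on the one-dimensional top compactly supported cohomology of the connected fibre, so $Rp_!\Lambda\cong\Lambda(-e_1)[-2e_1]$ and therefore $R\Gamma_c(Z,\Lambda)=R\Gamma_c(W,Rp_!\Lambda)\cong\Lambda(-(e_1+e_2))[-2(e_1+e_2)]$, whence $R\Gamma(Z,\Lambda)\cong\Lambda$ by Poincar\'e duality. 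The building blocks of $\mathcal C$ are classical: the perfectoid open polydisk $\D^e_C$ (reduce to the closed perfectoid polydisk, and thence, by a limit argument along $T_i\mapsto T_i^p$, to the classical closed polydisk, which is $\Lambda$-acyclic and smooth of dimension $e$; the open polydisk is an increasing union of closed ones), and the affine-line diamond $(\mathbb{A}^1_{C^\sharp})^\diamond$ over any untilt $C^\sharp$, which is likewise $\Lambda$-acyclic and cohomologically smooth of dimension $1$. Applying the stability lemma repeatedly, $\BC(\mc T)_C$ for any torsion sheaf $\mc T$ on $X$ of length $\ell$ lies in $\mathcal C$ with dimension $\ell$, being an iterated torsor tower built from affine-line diamonds.

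Now take an arbitrary $\lambda=d/r>0$. If $0<\lambda\le[E:\Q_p]$ (all $\lambda>0$ in equal characteristic), then $Y$ is the open polydisk $\D^d_C$ by Proposition \ref{smalldegBC} and we are done. Otherwise $\lambda>[E:\Q_p]\ge 1$, so there is an integer $m\ge 1$ with $0<\lambda-m\le[E:\Q_p]$; fix one. Tensoring the short exact sequence $0\to\mc O\to\mc O(m)\to\mc S\to 0$ (with $\mc S$ torsion of length $m$) by the bundle $\mc O(\lambda-m)$ gives a short exact sequence $0\to\mc O(\lambda-m)\to\mc O(\lambda)\to\mc T\to 0$ with $\mc T$ torsion of length $\ell:=mr=d-\deg\mc O(\lambda-m)$. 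Since $\lambda-m>0$ we have $H^1(X,\mc O(\lambda-m))=0$, so applying $\BC(-)$ yields a short exact sequence of $v$-sheaves
\[0\longrightarrow\BC(\mc O(\lambda-m))_C\longrightarrow Y\ \stackrel{p}{\longrightarrow}\ \BC(\mc T)_C\longrightarrow 0,\]
exhibiting $p$ as a torsor under the group $v$-sheaf $\BC(\mc O(\lambda-m))_C$. As $\lambda-m$ is in the range of Proposition \ref{smalldegBC}, $\BC(\mc O(\lambda-m))_C$ is the open polydisk $\D^{d-\ell}_C\in\mathcal C$ of dimension $d-\ell$, while $\BC(\mc T)_C\in\mathcal C$ has dimension $\ell$; the stability lemma then places $Y$ in $\mathcal C$ with dimension $(d-\ell)+\ell=d$. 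This gives $Rf_C^!\Lambda\cong\Lambda(d)[2d]$ and $R\Gamma(Y,\Lambda)\cong\Lambda$, and since $Y$ is nonempty and connected the natural map $\Lambda\to Rf_{C,*}\Lambda$ is identified with this isomorphism.

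I expect the real difficulty to lie in the foundational inputs rather than the combinatorics. One needs, essentially as black boxes, the $\Lambda$-acyclicity and cohomological smoothness of perfectoid balls over $\spa C$ (the classical closed polydisk is elementary, but passing through the perfection and through increasing unions of closed polydisks requires the limit formalism for \'etale cohomology of diamonds) together with the precise behaviour of $Rf^!$ and $Rf_!$ under torsors and base change --- in effect the full six-functor formalism of \cite{fargues2021geometrization}, including Poincar\'e-Lefschetz duality and v-descent for cohomological smoothness. By contrast the d\'evissage is cheap: it uses the classification of bundles on $X$ only through the tensor identity $\mc O(\lambda)=\mc O(\lambda-m)\otimes\mc O(m)$, so no control of the Harder-Narasimhan slopes of the kernel of a resolution of $\mc O(\lambda)$ by copies of $\mc O(1)$ is needed, and the sole nontrivial geometric input is the vanishing $H^1(X,\mc O(\mu))=0$ for $\mu\ge 0$.
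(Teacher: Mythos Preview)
The paper does not supply its own proof of this proposition; it is quoted directly from \cite[4.7]{Hansen_shtukas}. Your sketch is a correct outline of such a proof and is close in spirit to Hansen's argument: reduce to perfectoid polydisks and affine-line diamonds via short exact sequences on the curve, then propagate $\Lambda$-acyclicity and cohomological smoothness through torsors. The paper does carry out a related but not identical d\'evissage in Lemma~\ref{BC-homological-disks}, exhibiting $\BC(\mc O(n))_C$ for integral $n$ as the base of a pro-\'etale $\underline{E}^{n-1}$-torsor with total space $\D^n$; your approach runs in the opposite direction, fibering $\BC(\mc O(\lambda))$ over a torsion Banach--Colmez space with polydisk fibres, which has the advantage of handling non-integral $\lambda$ uniformly in mixed characteristic without needing $\lambda\le [E:\Q_p]$. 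Both rest on the same foundational inputs you correctly flagged as the real content: cohomological smoothness of perfectoid balls and affine-line diamonds over $\spa C$, and the six-functor formalism for torsors.
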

A similar characterization of the cohomology of negative Banach-Colmez spaces was computed in \cite{hamann2023geometric}; see Lemma \ref{negative_bc_dualizing_sheaf}.

\subsection{Geometry of \texorpdfstring{$\Bun_2$}{Bun2}}
This subsection recaps the geometry of the moduli of rank $2$ bundles on the curve and gives some examples relevant to our main computations.

Let $\Bun_2$ be the prestack that assigns to each $S\in \Perf_k$ the groupoid of rank $2$ vector bundles on $X_{S,E}$.
Let $B(\GL_2)$ denote the set of Newton polygons with length $2$. By the Harder-Narasimhan formalism from the previous section, this set is equivalent to the isomorphism classes of $2$-dimensional isocrystals. 
\\
Then since the Harder-Narasimhan filtrations for geometric points $\spa C\to \Bun_2$ are split, the points of the topological space of $\Bun_2$ can be identified with elements of $B(\GL_2)$. The next theorem makes this more precise, and further determines the topology of $\abs{\Bun_2}$.
\begin{theorem}[{\cite[\hphantom~I.4.1]{fargues2021geometrization}}]
The prestack $\Bun_2$ satisfies the following properties.
\begin{enumerate}
    \item The prestack $\Bun_2$ is a stack for the $v$-topology.
    \item The points $\abs{\Bun_2}$ are naturally in bijection with Kottwitz' set $B(\GL_2)$ of $2$-dimensional $E$-isocrystals. 
    \item The Newton map \[\nu: \abs{\Bun_2}\to B(\GL_2)\to (X_*(T)_{\Q}^+)^\Gamma = X_*(T)_{\Q}^+\]
    is semicontinuous, and the Kottwitz map \[\kappa:\abs{\Bun_2}\to B(\GL_2)\to \pi_1(\GL_{2,{\overline{E}}})_{\Gamma}\cong \Z\]
    is locally constant ($\Gamma:=\Gal(\overline{E}/E)$, and the Galois action on coroots is trivial since $\GL_2(E)$ is split). Furthemore, the map $\abs{\Bun_2}\to B(\GL_2)$ is a homeomorphism when $B(\GL_2)$ is equipped with the order topology \cite[Theorem~1.1]{Hansen_degenerating_VB}.
    \item For any $b\in B(\GL_2)$, the corresponding subfunctor 
    \[i_b:\Bun_2^b:=\Bun_2\times_{\abs{\Bun_2}}\set{b}\subset \Bun_2\]
    is locally closed, and isomorphic to $[*/\mc G_b]$, where $\mc G_b$ is a $v$-sheaf of groups such that $\mc G_b\to *$ is representable in locally spatial diamonds with $\pi_0 \mc G_b=G_b(E)$. The connected component $\mc G_b^\circ \subset \mc G_b$ of the identity is cohomologically smooth of dimension $\langle 2\rho, \nu_b\rangle$.
    \item In particular, the semistable locus $\Bun_2^\text{ss}\subset \Bun_2$ is open, and given by \[\Bun_2^\text{ss}\cong \bigsqcup_{b\in B(\GL_2)_{\text{basic}}}[*/\underline{G_b(E)}].\]
    \item The $v$-stack $\Bun_2$ is a cohomologically smooth Artin stack of dimension $0$.
\end{enumerate}
\end{theorem}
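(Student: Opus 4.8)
The plan is to follow the argument of \cite{fargues2021geometrization}, establishing the six assertions in order of increasing difficulty, with (6) — cohomological smoothness — carrying essentially all the content. For (1), one uses that vector bundles on the relative curve satisfy $v$-descent (a theorem of Kedlaya--Liu): presenting a bundle on $X_{S,E}$ as $\varphi$-equivariant data on $Y_{S,E}$, descent along a $v$-cover $S'\to S$ reduces to descent of finite projective modules over the period sheaves. For (2), over a geometric point $\spa(C,\mc O_C)$ the Dieudonn\'e--Manin classification together with the Fargues--Fontaine/Hartl--Pink/Kedlaya equivalence $\isoc_E\xrightarrow{\sim}\mathrm{VB}(X_{C,E})$ identifies isomorphism classes of rank-$2$ bundles with $B(\GL_2)$, and $|\Bun_2|$ classifies precisely such isomorphism classes. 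For (3), the Kottwitz invariant is $\deg\det$, locally constant in perfectoid families; upper semicontinuity of the Newton polygon is Kedlaya--Liu's semicontinuity theorem on the curve; and the homeomorphism $|\Bun_2|\cong B(\GL_2)$ for the order topology is \cite[Theorem~1.1]{Hansen_degenerating_VB}.

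For (4)--(5), fix $b$ with $\mc E_b\cong\bigoplus_i\mc O(\lambda_i)$. By Kedlaya--Liu, a rank-$2$ bundle on $X_{S,E}$ whose Newton polygon is constantly that of $b$ has its Harder--Narasimhan filtration pro-\'etale-locally split, and comparing graded pieces shows the bundle is pro-\'etale-locally — in particular $v$-locally — isomorphic to $\mc E_b$; hence $\Bun_2^b\cong[*/\mc G_b]$ with $\mc G_b:=\underline{\Aut}(\mc E_b)$. Decomposing $\underline{\mathrm{End}}(\mc E_b)=\bigoplus_{i,j}\underline{\mathrm{Hom}}(\mc O(\lambda_i),\mc O(\lambda_j))$, the $(i,j)$-summand vanishes for $\lambda_i>\lambda_j$, is \'etale over $*$ for $\lambda_i=\lambda_j$ (a constant sheaf on a finite-dimensional $E$-algebra), and for $\lambda_i<\lambda_j$ is the positive Banach--Colmez space attached to the positive-slope bundle $\mathcal{H}om(\mc O(\lambda_i),\mc O(\lambda_j))$ — a locally spatial diamond, cohomologically smooth over $*$ of dimension its degree (Proposition \ref{Hansen-bc-dualizing}). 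Thus $\mc G_b\to *$ is representable in locally spatial diamonds, $\pi_0\mc G_b$ is the automorphism group of the corresponding isocrystal, namely $G_b(E)$, and $\mc G_b^\circ$ — the iterated extension of the summands with $\lambda_i<\lambda_j$ — is cohomologically smooth of dimension $\sum_{\lambda_i<\lambda_j}\deg\mathcal{H}om(\mc O(\lambda_i),\mc O(\lambda_j))=\langle2\rho,\nu_b\rangle$. For (5), the semistable locus is where the Newton polygon is minimal for its degree, open by (3); for basic $b$ all $\lambda_i$ coincide, so $\mc G_b$ is \'etale, equal to $\underline{G_b(E)}$, with $\langle2\rho,\nu_b\rangle=0$.

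Part (6) is the main obstacle. First, Artin-ness: one produces on each quasicompact open $\Bun_2^{\leq b}$ a surjective cohomologically smooth chart from a locally spatial diamond — for the component of $\Bun_2^{\mathrm{ss}}$ attached to $\mc O^2$, which is $[*/\underline{\GL_2(E)}]$, the point $*$ is such a chart, and for strata with nontrivial unstable part one uses the composite $\widetilde{\mc M_b}\to\mc M_b\to\Bun_2^{\leq b}$ of \cite[Chapter~V.3]{fargues2021geometrization}, cohomologically smooth and surjective, with $\widetilde{\mc M_b}$ an iterated extension of negative Banach--Colmez spaces (hence a locally spatial diamond, cohomologically smooth over $*$). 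Granting the charts, cohomological smoothness of $\Bun_2\to *$ is then checked on them, using that Banach--Colmez spaces, positive and negative, are cohomologically smooth (Proposition \ref{Hansen-bc-dualizing} and Lemma \ref{negative_bc_dualizing_sheaf}) and that cohomological smoothness is local on the source and stable under composition. The amplitude and dimension are comparatively cheap: the tangent complex at $\mc E$ is $R\Gamma(X_C,\mathrm{ad}\,\mc E)[1]$, perfect of amplitude $[-1,0]$ since $X_C$ is ``curve-like'' and has no $H^2$, and its Euler characteristic in the Grothendieck group of Banach--Colmez spaces has dimension-part $-\deg(\mathrm{ad}\,\mc E)$, which vanishes because the endomorphism bundle has degree $0$; hence $\dim\Bun_2=0$ throughout. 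The genuinely hard input is the cohomological smoothness of the charts $\mc M_b\to\Bun_2$ — equivalently, that the relevant $\Ext$-sheaves are Banach--Colmez spaces of exactly the expected cohomological dimension — which is where the special geometry of the Fargues--Fontaine curve (its ``genus $1/2$'') enters most essentially.
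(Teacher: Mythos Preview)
The paper does not prove this theorem: it is stated with a citation to \cite[I.4.1]{fargues2021geometrization} (and to \cite{Hansen_degenerating_VB} for the homeomorphism statement) as part of a background recap in Section~2.2, and no proof is given. There is thus no ``paper's own proof'' to compare your proposal against.

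That said, your sketch is a faithful outline of the Fargues--Scholze argument and correctly locates the weight of the theorem in part~(6), with the smooth charts $\mc M_b\to\Bun_G$ of \cite[Chapter~V.3]{fargues2021geometrization} as the key input. One small correction: in your discussion of Artin-ness you write that for the basic stratum $[*/\underline{\GL_2(E)}]$ the point $*$ is a smooth chart, but $*\to[*/\underline{\GL_2(E)}]$ is a $\underline{\GL_2(E)}$-torsor, which is pro-\'etale rather than cohomologically smooth (the locally profinite group is not smooth); the actual charts in \cite{fargues2021geometrization} are the $\mc M_b$ for all $b$, including basic ones (where $\mc M_b\cong[*/\underline{G_b(E)}]$ itself serves as the chart after uniformizing by $\underline{G_b(E)}$-torsors at finite level). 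Otherwise the proposal is a reasonable summary of the cited proof.
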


In the case of $\GL_2$, the Newton resp. Kottwitz maps on $B(G)$ (the latter of which can be very inexplicit) are given concretely as the slopes of the Harder-Narasimhan filtration resp. the degree of the isocrystal $b\in B(\GL_2)$. Putting everything together, we get the following picture for $\abs{\Bun_2}$. The connected components are arranged vertically by slope with the semistable points sitting in dimension $0$, which specialize to points with the same slope sitting in codimension $\langle 2\rho, \nu_b\rangle$. Note that the connected components are totally ordered under specialization, but this is no longer true even for $\GL_3$, which has more complicated geometry in higher codimension (for example, $\mc O^3$ specializes to $\mc O(-1/2) \oplus \mc O(1)$ and $\mc O(-1) \oplus \mc O(1/2)$, which are not specializations of each other).

\begin{center}
\begin{scriptsize}
\begin{tabular}{|c|}
     \hline \\
     \hfill \\
     \large{$\abs{\Bun_2}$} \\
\begin{tikzcd}
\vdots&\vdots  & & & & \\
2&\mc O(1)^2&  &\mc O\oplus \mc O(2) & &\cdots\\
1&\mc O(1/2) & \mc O\oplus \mc O(1)& &\mc O(-1)\oplus \mc O(2) &\cdots\\
\kappa(b)=0& \mc O^2 &  &\mc O(-1)\oplus \mc O(1) & &\cdots \\
-1&\mc O(-1/2) & \mc O(-1)\oplus \mc O & &\mc O(-2)\oplus \mc O(1) &\cdots\\
\vdots &\vdots & & & &\\
& \dim = 0 & -1 & -2 & -3 & \cdots\\
\end{tikzcd}\\
     \hline
\end{tabular}
\end{scriptsize}
\end{center}

Let $\mc G_b$ denote the stabilizer of a point $b\in \abs{\Bun_2}$. Recall \cite[Proposition~III.5.1]{fargues2021geometrization} that $\mc G_b$ is a split extension of $\underline{G_b(E)}$ by a ``unipotent group diamond" consisting of automorphisms $\gamma:\mc E_b \to \mc E_b$ such that $\gamma -1$ takes sections to a subsheaf of strictly larger slope under the Harder-Narasimhan filtration; 
\[0 \to \mc G_b^{>0}\to \mc G_b\to \underline{G_b(E)}\to 0,\]
where $\mc G_b^{>0}$ is a successive extension of positive Banach-Colmez spaces acting by unipotent transformations. The groups $\underline{G_b(E)}$ are analogous to the Levi quotient of the parabolic reduction of $\mc E_b$ in the geometric Langlands program.

\begin{example}\hfill 
\begin{itemize} 
    \item For basic $b$, already $\mc G_{b}\cong \underline{G_{b}(E)}$ since the corresponding vector bundle $\mc E_b$ is semistable, and hence the Harder-Narasimhan filtration has constant slope. Then $G_b(E)$ is either $\GL_2(E)$ for integral slope basic points or $D_{1/2}^\times$ (units of the unique quaternion algebra over $E$ with invariant $1/2$) for half-integral slope. 
    \item For $b \leftrightarrow \mc O(-1) \oplus \mc O(1)$, $\mc G_b$ is the automorphism group diamond \[\Aut(\mc O(-1)\oplus \mc O(1))\cong \begin{bmatrix} \underline{E^\times} & \BC(\mc O(2))\\ 0 & \underline{E^\times} \end{bmatrix},\]
    so that $\mc G_b^{>0}\cong \BC(\mc O(2))$ are the automorphisms taking the filtration step with slope $-1$ quotient to the filtration step with slope $1$ quotient. Geometrically, this is why the stacky point $[*/\mc G_b]\hookrightarrow \Bun_2$ has dimension $-2$.
    \item As above, $G_b(E)\cong E^\times \times E^\times$ for any non-basic point $b\in B(\GL_2)$. In particular, they are $E$-split tori even in half-integral slope!
\end{itemize}
\end{example}
\begin{proposition}[{\cite[\hphantom~III.5.3]{fargues2021geometrization}}]
Let $b\in B(\GL_2)$ be any element given by a $2$-dimensional isocrystal. The induced map $x_b: *\to \Bun_2^b$ is a surjective map of $v$-stacks, and $*\times_{\Bun_2^b}*\cong \mc G_{b}$, so that 
\[\Bun_2^b\cong [*/\mc G_b]\]
is the classifying stack of $\mc G_b$-torsors. In particular, the map $\mc G_b \to \pi_0 \mc G_b \cong \underline{G_b(E)}$ induces a map \[\Bun_2^b\to [*/\underline{G_b(E)}]\] that admits a splitting. 
\end{proposition}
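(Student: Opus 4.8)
\emph{Strategy and surjectivity of $x_b$.} The plan is to realize $x_b$ as a $v$-cover, identify the groupoid it presents, and then read off the map to $[*/\underline{G_b(E)}]$ together with its splitting from the Harder-Narasimhan grading of $\mc E_b$. First, the bundle $\mc E_b$ attached to the $2$-dimensional isocrystal $b$ has constant Harder-Narasimhan polygon $\nu_b$ and Kottwitz invariant $\kappa_b$ --- its HN filtration being the split one coming from the Dieudonn\'e-Manin decomposition --- so it lies in $\Bun_2^b(\spd k)$ and induces $x_b\colon *\to \Bun_2^b$. To see that $x_b$ is surjective as a map of $v$-stacks, I would take $S\in\Perf_k$ and $\mc E\in\Bun_2^b(S)$ and produce a $v$-cover $S'\to S$ over which $\mc E$ becomes isomorphic to $\mc E_b$: by definition of $\Bun_2^b$ the HN polygon of $\mc E$ is everywhere equal to $\nu_b$, so by the relative Harder-Narasimhan theorem of Kedlaya-Liu \cite[Theorem~II.2.19]{fargues2021geometrization} the bundle $\mc E$ carries a relative HN filtration which, after a pro-\'etale cover of $S$, splits as a direct sum $\bigoplus_i \mc E_i$ of semistable bundles of the constant slopes and ranks dictated by $b$. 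For $\GL_2$ these summands are either a pair of line bundles of degrees $d_1>d_2$ (the non-basic case) or a single semistable rank-$2$ bundle (the basic case); in either case a further $v$-cover makes each summand isomorphic to the corresponding constant bundle ($\mc O(d)$, $\mc O(d)^2$, or $\mc O(1/2)$), using the $v$-local triviality of line bundles of fixed degree and the description of the semistable locus in \cite[I.4.1(5)]{fargues2021geometrization}. Composing the covers gives the asserted trivialization.

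\emph{Groupoid presentation.} By construction of the $2$-fibre product, $*\times_{\Bun_2^b}*$ represents the automorphism $v$-sheaf $\underline{\Aut}(\mc E_b)$ of the constant bundle $\mc E_b$, which by \cite[Proposition~III.5.1]{fargues2021geometrization} is exactly $\mc G_b$; in particular $\mc G_b\to *$ is representable in locally spatial diamonds and sits in the split extension $0\to \mc G_b^{>0}\to \mc G_b\to \underline{G_b(E)}\to 0$ with $\mc G_b^{>0}$ a successive extension of positive Banach-Colmez spaces. Since $\Bun_2^b$ is a $v$-stack --- a locally closed substack of $\Bun_2$ by \cite[I.4.1(4)]{fargues2021geometrization} --- and $x_b$ is a surjection whose self-fibre-product is the group $v$-sheaf $\mc G_b$ acting on $*$, descent along $x_b$ identifies $\Bun_2^b$ with the classifying stack $[*/\mc G_b]$ of $\mc G_b$-torsors.

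\emph{The map to $[*/\underline{G_b(E)}]$ and its splitting.} The projection $\mc G_b\to \mc G_b/\mc G_b^{>0}=\pi_0\mc G_b\cong \underline{G_b(E)}$ induces, by extension of structure group, a map $\Bun_2^b=[*/\mc G_b]\to [*/\underline{G_b(E)}]$. A section arises from the HN grading: $\mc E_b$ is canonically graded as $\bigoplus_i \mc O(\lambda_i)^{\oplus m_i}$, and the automorphisms preserving this grading form precisely the constant group $\underline{G_b(E)}$, so that the inclusion of grading-preserving automorphisms is a homomorphism $\underline{G_b(E)}\hookrightarrow \mc G_b$ splitting the extension above --- this is the splitting recorded in \cite[Proposition~III.5.1]{fargues2021geometrization}. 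Applying $[*/(-)]$ turns it into a section of $\Bun_2^b\to [*/\underline{G_b(E)}]$, as claimed.

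I expect the only non-formal ingredient to be the surjectivity of $x_b$, namely the statement that a family of rank-$2$ bundles with constant Harder-Narasimhan polygon is $v$-locally isomorphic to the constant bundle $\mc E_b$. For $\GL_2$ this is quick because there are only the handful of cases above, but it is the single step that draws on genuine geometry of the Fargues-Fontaine curve (the Kedlaya-Liu relative HN filtration and the structure of the semistable locus) rather than on formal properties of $v$-stacks and group $v$-sheaves; everything else is $v$-descent, the automorphism computation \cite[Proposition~III.5.1]{fargues2021geometrization}, and functoriality of $[*/(-)]$.
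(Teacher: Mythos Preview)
The paper does not give its own proof of this proposition; it is stated with a citation to \cite[III.5.3]{fargues2021geometrization} and is used as background input. Your argument is a correct and standard reconstruction of the proof from that reference: $v$-local splitting of the Harder-Narasimhan filtration via Kedlaya--Liu, trivialization of the semistable graded pieces, identification of the self-fibre product with $\underline{\Aut}(\mc E_b)=\mc G_b$, and the splitting coming from grading-preserving automorphisms. There is nothing to compare against in the present paper, and your sketch matches the intended content of the cited result.
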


Finally, we recall the smooth covers constructed in \cite{fargues2021geometrization} that allow us to compute cohomology of sheaves on $\Bun_2$. These covers are defined as moduli of filtered vector bundles, and have much simpler geometry than the covers defined by the affine Schubert cells.
\begin{definition}[{\cite[\hphantom~V.3.2]{fargues2021geometrization}}]
The $v$-stack $\mc M$ is the moduli stack taking $S\in \Perf_k$ to the groupoid of rank $2$ vector bundles $\mc E$ on $X_S$ together with an increasing separated and exhaustive (weight) $\Q$-filtration $\mc E^{\leq\lambda}\subset \mc E$, such that (letting $\mc E^{<\lambda}= \bigcup_{\lambda'<\lambda}\mc E^{\leq \lambda'}$), the quotient 
\[\mc E^\lambda = \mc E^{\leq \lambda}/ \mc E^{<\lambda}\]
is a semistable vector bundle of slope $\lambda$, for all $\lambda\in \Q$. 
\end{definition}
Note that the slopes of the associated graded pieces are increasing and hence opposite to the Harder-Narasimhan filtration on $\mc E$; in particular, the slopes in the weight filtration need not agree with the slopes in the Harder-Narasimhan filtration. Our primary example will be analyzing filtrations of the form \[0\to \mc O(-n)\to \mc O^2 \to \mc O(n) \to 0.\]

\begin{remark}[\cite{fargues2021geometrization} V.3]
Passing to the associated graded of the $\Q$-filtration yields a map to the stack of rank 2 vector bundles with split global Harder-Narasimhan filtration, $\Bun_2^{\text{HN-split}}\cong \bigsqcup_{b\in B(G)}[*/\underline{G_{b}(E)}]$ \cite[Proposition~III.4.7]{fargues2021geometrization}. This gives a decomposition $\mc M = \bigsqcup_{b\in B(G)} \mc M_b$. 
\end{remark}

The geometry of $\mc M_b$ is discussed at length in \cite[Chapter~V.3]{fargues2021geometrization}.
To summarize:
\begin{theorem}[{\cite[\hphantom~V.3.5,~V.3.7]{fargues2021geometrization}}]\label{properties-of-M_b}
For any $b\in B(\GL_2)$, the map $f_b:\mc M_b\to \Bun_2$ forgetting the filtration is representable in locally spatial diamonds, partially proper, and cohomologically smooth. There is a factorization of $f_b$ through $\mc M_b\to [*/\underline{G_b(E)}]$, which is representable in locally spatial diamonds, partially proper, and cohomologically smooth of dimension $\langle 2\rho, \nu_b \rangle$.
\end{theorem}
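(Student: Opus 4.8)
The plan is to deduce both assertions from the description of $\mc M_b$ as a short tower over $[*/\underline{G_b(E)}]$ built out of Banach--Colmez spaces, thereby reducing cohomological smoothness to \ref{Hansen-bc-dualizing} and \ref{negative_bc_dualizing_sheaf}. First I would treat the morphism $q_b\colon\mc M_b\to[*/\underline{G_b(E)}]$ obtained by passing to the associated graded of the weight filtration (using $\Bun_2^{\text{HN-split}}\cong\bigsqcup_{b'}[*/\underline{G_{b'}(E)}]$ and restricting to the $b$-component). Since representability in locally spatial diamonds, partial properness and cohomological smoothness all descend along a surjective cohomologically smooth base change, and since the atlas $*\to[*/\underline{G_b(E)}]$ is a $\underline{G_b(E)}$-torsor, hence cohomologically smooth of dimension $0$, it suffices to analyse $N_b:=\mc M_b\times_{[*/\underline{G_b(E)}]}*$, which classifies weight-filtered rank-$2$ bundles on $X_S$ together with an isomorphism of the associated graded with the fixed split bundle $\mc E_b=\bigoplus_i\mc E^{\lambda_i}$.

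For basic $b$ the weight filtration has a single step and $N_b=*$. Otherwise the two jumps $\lambda_1<\lambda_2$ are both of rank $1$ (so $\lambda_1,\lambda_2\in\Z$), and $N_b$ classifies short exact sequences $0\to\mc E^{\lambda_1}\to\mc E\to\mc E^{\lambda_2}\to0$ with rigidified end terms. Since $\mathcal{H}om(\mc E^{\lambda_2},\mc E^{\lambda_1})$ has slope $\lambda_1-\lambda_2<0$, its $H^0$ over any $X_S$ vanishes; this kills all automorphisms of such extensions, so the moduli in question is a $v$-sheaf rather than a stack, and its functor of points is exactly $\mathrm{Ext}^1_{X_S}(\mc E^{\lambda_2},\mc E^{\lambda_1})$, i.e. $N_b\cong\BC\!\left(\mathcal{H}om(\mc E^{\lambda_2},\mc E^{\lambda_1})[1]\right)$, a negative Banach--Colmez space. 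Invoking \ref{negative_bc_dualizing_sheaf} (following \cite{hamann2023geometric}): for a bundle $\mc F$ with everywhere negative slopes on $X_S$, after base change to a perfectoid space $\BC(\mc F[1])$ is a locally spatial diamond, partially proper and cohomologically smooth of relative dimension $-\deg\mc F$, and over $\spd k$ it is a $v$-sheaf with the same formal properties. Here $\deg\mathcal{H}om(\mc E^{\lambda_2},\mc E^{\lambda_1})=\lambda_1-\lambda_2$, so by additivity of relative dimension (the composite $N_b\to[*/\underline{G_b(E)}]$ has dimension $\lambda_2-\lambda_1$ while $N_b\to\mc M_b$ has dimension $0$) the map $q_b$ is representable in locally spatial diamonds, partially proper and cohomologically smooth of dimension $\lambda_2-\lambda_1=\langle2\rho,\nu_b\rangle$, both sides vanishing in the basic case since $\nu_b$ is then central.

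For $f_b\colon\mc M_b\to\Bun_2$ the same three properties are $v$-local on the target, so I would base-change along an arbitrary $S\to\Bun_2$ classifying $\mc E$ on $X_S$: the fibre product is the moduli of $\Q$-filtrations on $\mc E$ of type $b$, which for $\GL_2$ is the moduli of line subbundles $\mc L\subset\mc E$ of the prescribed (locally constant) degree, the quotient being automatically a line bundle of complementary degree and the rank-$1$ graded pieces automatically semistable; this is nonempty only over $\Bun_2^{\leq b}$, where $\mc E$ has Harder--Narasimhan slopes between $\lambda_1$ and $\lambda_2$. Working $v$-locally on $S$ so that $\mc L$ is a fixed line bundle $\mc L_0$, this is the open subspace of $\big(\BC(\mathcal{H}om(\mc L_0,\mc E))\setminus 0\big)/\underline{E^\times}$ cut out by the condition that the tautological map $\mc L_0\to\mc E$ be a subbundle inclusion; over $\Bun_2^{\leq b}$ the bundle $\mathcal{H}om(\mc L_0,\mc E)$ has non-negative slopes, so by \ref{Hansen-bc-dualizing} (together with the slope-$0$ contributions, which are copies of $\underline E$) this positive Banach--Colmez space is representable in locally spatial diamonds, partially proper and cohomologically smooth over $S$, and its open subspace modulo the free $\underline{E^\times}$-action inherits all three --- partial properness because being a line subbundle of $\mc E$ is insensitive to enlarging the ring of integral elements of $S$, the Fargues--Fontaine curve and its vector bundles being unchanged. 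Alternatively one may quote the identification $\mc M_b\cong\Bun_{P_b}^{b'}$ of $\mc M_b$ with a union of connected components of $\Bun_{P_b}$ and conclude from cohomological smoothness of $\Bun_{P_b}\to\Bun_2$.

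The hard part is the middle step: pinning down $N_b$ as a negative Banach--Colmez space and then feeding in the genuinely non-trivial fact that such spaces are cohomologically smooth with the stated dualizing complex; once that is in place the dimension $\langle2\rho,\nu_b\rangle$ is forced by additivity. For a general reductive group one would additionally need the inductive tower structure of $\mc M_b$, handling one pair of successive jumps at a time, but for $\GL_2$ the weight filtration has at most two steps so no induction is required; the one point to be careful about in the last paragraph is tracking partial properness across the open subbundle locus, which is handled by the insensitivity of bundles on the curve to the ring of integral elements.
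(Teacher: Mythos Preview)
The paper does not prove this theorem: it is quoted directly from \cite[\hphantom{}V.3.5,~V.3.7]{fargues2021geometrization} without argument, so there is no ``paper's own proof'' to compare against. I will therefore comment on the correctness of your proposal.

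Your treatment of $q_b\colon \mc M_b\to [*/\underline{G_b(E)}]$ is clean and correct: pulling back along the $\underline{G_b(E)}$-torsor $*\to [*/\underline{G_b(E)}]$ gives $\widetilde{\mc M_b}$, which for nonbasic $b\in B(\GL_2)$ is the negative Banach--Colmez space $\BC(\mc O(\lambda_1-\lambda_2)[1])$, and Lemma~\ref{negative_bc_dualizing_sheaf} then yields the claimed properties with the right dimension $\lambda_2-\lambda_1=\langle 2\rho,\nu_b\rangle$.

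Your argument for $f_b\colon \mc M_b\to \Bun_2$ has a real gap. After base change to $S\to \Bun_2^{\leq b}$ you invoke Proposition~\ref{Hansen-bc-dualizing} for $\BC(\mathcal{H}om(\mc L_0,\mc E))$, but that proposition is stated only for a single stable bundle $\mc O(\lambda)$ over $*$. Over a general $S$ in $\Bun_2^{\leq b}$ the Harder--Narasimhan polygon of $\mc E$ jumps, so the slopes of $\mathcal{H}om(\mc L_0,\mc E)$ are not constant, and one cannot simply split it as a sum of $\mc O(\lambda_i)$'s and apply the lemma factor by factor. Establishing cohomological smoothness of Banach--Colmez spaces attached to families of bundles with \emph{varying} non-negative slopes is exactly the nontrivial content of the Fargues--Scholze argument; their proof in \cite[\hphantom{}V.3]{fargues2021geometrization} proceeds via a Jacobian criterion and deformation theory rather than by reducing to Proposition~\ref{Hansen-bc-dualizing}. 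Your alternative of identifying $\mc M_b$ with a component of $\Bun_{P_b}$ and citing smoothness of $\Bun_{P_b}\to \Bun_2$ is valid but amounts to quoting the same external result, so it does not constitute an independent proof either.
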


\subsection{Semi-orthogonal decomposition and excision triangles}
We recollect the abstract notion of semi-orthogonal decomposition, along with two infinite semi-orthogonal decompositions of $D_\et(\Bun_G,\Lambda)$ constructed in \cite{fargues2021geometrization}. We also list some properties of sheaves on $\Bun_G$ that correspond to properties of representations on each stratum.

Suppose \mc D is a triangulated category. A full triangulated subcategory $\mc D_1 \subset \mc D$ is \textit{admissible} if the inclusion $Ri_*: \mc D_1 \hookrightarrow \mc D$ admits left and right adjoints $i^* \dashv Ri_* \dashv i^!$. (Our notation is already suggestive of the geometric application to follow.)
Suppose $\mc D_1,\dots, \mc D_m \subseteq \mc D$ are admissible subcategories satisfying $\Hom(\mc D_j,\mc D_i)=0$ whenever $j>i$, and for every $E\in \mc D$, there exists a filtration $0=E_0\to E_1\to \dots \to E_m = E$
and exact triangles $E_{i-1}\to E_i\to A_i$ with $A_i\in \mc D_i$ for each $i\in \set{1,\dots,m}$. Then we say \mc D admits a \textit{semi-orthogonal decomposition} by the $\mc D_i$, and write $\mc D = \langle \ls{\mc D}{m} \rangle.$

For a semi-orthogonal decomposition $\mc D= \langle \mc D_1,\dots, \mc D_m\rangle$, write $Ri_*:\mc D_1\to \mc D$ and $Rj_!:\ ^{\perp}\mc D_1\to \mc D$ for the inclusions, where $^\perp \mc D_1\subset \mc D$ is the full triangulated subcategory \[\set{A\in \mc D\ |\ \Hom(A,Ri_* B)=0\text{ for all }B\in \mc D_1}.\] 
In particular, each $\mc D_2,\dots, \mc D_m\subset {}^{\perp}\mc D_1$ and no object of $\mc D_1$ is in ${}^{\perp}\mc D_1$.
\begin{lemma}
	Suppose $Rj_!$ admits a right adjoint $j^*$. Then there are functorial excision exact triangles \[Rj_! j^* A\to A \to Ri_*i^* A\to Rj_! j^* A[1].\]
\end{lemma}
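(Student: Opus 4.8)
The plan is to run the standard recollement (gluing) argument, using only the two adjunctions $i^*\dashv Ri_*$ and $Rj_!\dashv j^*$ together with the full faithfulness of the inclusions $Ri_*$ and $Rj_!$. First I would record two vanishing identities. Because $Ri_*$ is fully faithful, an object $A\in\mc D$ satisfies $i^*A=0$ if and only if $\Hom(A,Ri_*B)=\Hom(i^*A,B)=0$ for all $B\in\mc D_1$; hence $\ker i^*={}^\perp\mc D_1$, which is by definition the essential image of $Rj_!$, so $i^*\circ Rj_!=0$. Dually, for $D\in\mc D_1$ and $B\in{}^\perp\mc D_1$ one has $\Hom(B,j^*Ri_*D)=\Hom(Rj_!B,Ri_*D)=\Hom(i^*Rj_!B,D)=0$, whence $j^*\circ Ri_*=0$. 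I would also use the triangle identities: since $Ri_*$ is fully faithful the counit $i^*Ri_*\to\mathrm{id}$ is an isomorphism, and since $Rj_!$ is fully faithful the counit $Rj_!j^*\to\mathrm{id}$ is an isomorphism on the essential image ${}^\perp\mc D_1$.

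Next, for a fixed $A$ I would take the unit $\eta_A\colon A\to Ri_*i^*A$ and complete it to a distinguished triangle
\[
C\to A\xrightarrow{\ \eta_A\ }Ri_*i^*A\to C[1].
\]
Applying the triangulated functor $i^*$, and using that $i^*(\eta_A)$ is a section of the counit isomorphism $i^*Ri_*i^*A\xrightarrow{\sim}i^*A$ (hence is itself an isomorphism), one gets $i^*C=0$, i.e. $C\in\ker i^*={}^\perp\mc D_1$; in particular the counit gives an isomorphism $Rj_!j^*C\xrightarrow{\sim}C$. Applying $j^*$ to the same triangle and using $j^*\circ Ri_*=0$ produces an isomorphism $j^*C\xrightarrow{\sim}j^*A$. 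Composing, $C\cong Rj_!j^*C\cong Rj_!j^*A$, and by naturality of the counit $\varepsilon\colon Rj_!j^*\to\mathrm{id}$ the induced map $Rj_!j^*A\cong C\to A$ is exactly $\varepsilon_A$ (the comparison uses the commuting square with $Rj_!j^*(C\to A)$ on top, $\varepsilon_C,\varepsilon_A$ on the sides, and $C\to A$ on the bottom, together with the fact that the isomorphism $j^*C\cong j^*A$ is $j^*$ applied to $C\to A$). Thus the triangle becomes
\[
Rj_!j^*A\xrightarrow{\ \varepsilon_A\ }A\xrightarrow{\ \eta_A\ }Ri_*i^*A\to Rj_!j^*A[1],
\]
the asserted excision triangle.

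For functoriality, the outer maps are the natural transformations $\varepsilon$ and $\eta$, and in the stable enhancement underlying $D_\et(-,\Lambda)$ the cofiber — hence the connecting morphism — is functorial, so the whole family of triangles is functorial in $A$; in the purely triangulated formulation one instead checks directly that all the identifications above are natural in $A$. I expect the only step requiring genuine care to be the compatibility in the previous paragraph, namely that the map $C\to A$ obtained by completing the triangle is \emph{the} counit $\varepsilon_A$ rather than merely conjugate to it — this is precisely where naturality of $Rj_!j^*\to\mathrm{id}$ (a triangle identity) enters; everything else is formal manipulation of the two adjunctions.
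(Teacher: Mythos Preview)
Your proof is correct and follows essentially the same route as the paper: complete the unit $A\to Ri_*i^*A$ to a triangle, show the cone lies in ${}^\perp\mc D_1$, and identify it with $Rj_!j^*A$. You are more explicit about the identification $C\cong Rj_!j^*A$ and the fact that $C\to A$ is the counit, whereas the paper simply notes $K_A\in{}^\perp\mc D_1$ via the Hom computation and then asserts that $A\mapsto K_A$ furnishes the right adjoint to $Rj_!$.
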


\begin{proof}
	The map $A\to Ri_* i^*A$ is the unit of the adjunction, and fits in some triangle $K_A \to A\to R i_* i^*A\to K_A[1]$. For any $B\in \mc D_1$, applying $\Hom(-,Ri_*B)$ induces maps 
	\[\Hom(K_A[1],Ri_* B)\to \Hom(Ri_*i^*A, Ri_*B)\to \Hom(A,Ri_*B),\]
	and $Ri_*$ is fully faithful, so that the map on the right identifies with the adjunction isomorphism \[\Hom(i^*A,B)\xrightarrow{\sim}\Hom(A,Ri_*B).\]
	More explicitly: The composition $A\to Ri_*i^*A \to Ri_*B$ corresponds to some map $i^*A\to B$ under the adjunction, and the map $Ri_*i^*A\to Ri_*B$ is the image of some map $i^*A\to B$; then these are necessarily the same map since an adjunction induces \textit{natural} bijections on Hom-sets.\\
	In particular, this shows that $K_A\in {}^\perp \mc D_1$. Then one checks that the assignment $A\mapsto K_A: \mc D\to {}^\perp \mc D_1$ is indeed a right adjoint to $Rj_!$.
\end{proof}

Turning this procedure around, one can obtain all objects in $\mc D$ by completing triangles for morphisms $Ri_* B' \to Rj_! A'[1]$ with $A'\in \mc {}^\perp \mc D_1$ and $B'\in \mc D_1$. By adjunction, this is the same as a morphism $B' \to i^!Rj_! A'[1]$, so that we are interested in analyzing the functors $i^!Rj_!$. An entirely dual situation to that above produces functorial triangles
\[Ri_* i^! A \to A \to Rj_*j^* A \to Ri_* i^! A [1]\]
for the inclusion $Rj_*: \mc D_1^\perp\hookrightarrow \mc D$, whenever it admits a left adjoint $j^* \dashv Rj_*$. For this situation, the gluing functors $i^* Rj_*: \mc D_1^\perp \to \mc D_1$ are of interest. Note however that the functors $i^*Rj_*$ and $i^! Rj_!$ agree up to a shift.
\begin{lemma}
	There is an isomorphism $i^*Rj_*\cong i^! Rj_! [1]$.
\end{lemma}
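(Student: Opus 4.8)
The plan is to evaluate one of the two excision triangles recalled just above at an ``extension by zero'' object and then apply $i^{*}$ (respectively $i^{!}$); the shift $[1]$ will be produced automatically by the rotation of the triangle.

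Before doing so I would record a few formal consequences of the axioms. Since $Ri_{*}$, $Rj_{!}$ and $Rj_{*}$ are all fully faithful (they are inclusions of admissible subcategories), the unit and counit identities give $i^{*}Ri_{*}\cong\mathrm{id}$, $i^{!}Ri_{*}\cong\mathrm{id}$, $j^{*}Rj_{!}\cong\mathrm{id}$ and $j^{*}Rj_{*}\cong\mathrm{id}$, where in the last two lines $j^{*}$ denotes whichever adjoint is paired with the functor in question ($j^{*}\dashv Rj_{*}$ on the one hand, $Rj_{!}\dashv j^{*}$ on the other). I would also record the vanishings $i^{*}Rj_{!}=0$ and $i^{!}Rj_{*}=0$: for $B\in{}^{\perp}\mc D_{1}$ and any $C\in\mc D_{1}$ one has $\Hom_{\mc D_{1}}(i^{*}Rj_{!}B,C)\cong\Hom_{\mc D}(Rj_{!}B,Ri_{*}C)=0$ by the very definition of ${}^{\perp}\mc D_{1}$, so $i^{*}Rj_{!}B=0$ by the Yoneda lemma in $\mc D_{1}$; the other vanishing is dual, using the definition of $\mc D_{1}^{\perp}$.

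Now I would take $B\in{}^{\perp}\mc D_{1}$ and plug $A=Rj_{!}B$ into the second excision triangle, obtaining
\[Ri_{*}i^{!}(Rj_{!}B)\longrightarrow Rj_{!}B\longrightarrow Rj_{*}j^{*}(Rj_{!}B)\longrightarrow Ri_{*}i^{!}(Rj_{!}B)[1].\]
Applying $i^{*}$ and using $i^{*}Ri_{*}\cong\mathrm{id}$ together with $i^{*}Rj_{!}=0$ turns this into a triangle
\[i^{!}Rj_{!}B\longrightarrow 0\longrightarrow i^{*}\bigl(Rj_{*}j^{*}(Rj_{!}B)\bigr)\longrightarrow i^{!}Rj_{!}B[1],\]
whence $i^{*}\bigl(Rj_{*}j^{*}(Rj_{!}B)\bigr)\cong i^{!}Rj_{!}B[1]$. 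It remains only to observe that the composite ${}^{\perp}\mc D_{1}\hookrightarrow\mc D\xrightarrow{j^{*}}\mc D_{1}^{\perp}$ (with $j^{*}$ the left adjoint of $Rj_{*}$) is the canonical equivalence between the two complementary subcategories, both of which are identified with $D_\et(\Bun_G^{<b})$ in the geometric situation of interest; under this identification $j^{*}(Rj_{!}B)$ is just $B$, and the displayed isomorphism is exactly $i^{*}Rj_{*}\cong i^{!}Rj_{!}[1]$. Symmetrically, one could instead feed $A=Rj_{*}B'$ into the first excision triangle and apply $i^{!}$, using $i^{!}Ri_{*}\cong\mathrm{id}$ and $i^{!}Rj_{*}=0$.

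There is no genuine obstacle here — it is a one-step diagram chase — so the only thing demanding attention is the bookkeeping: distinguishing the two functors denoted $j^{*}$ (the right adjoint of $Rj_{!}$ versus the left adjoint of $Rj_{*}$), keeping track of the canonical equivalence ${}^{\perp}\mc D_{1}\simeq\mc D_{1}^{\perp}$ under which the statement is literally an identity of functors, and noting that the shift $[1]$ appears precisely as the rotation of the triangle forced once its middle term vanishes.
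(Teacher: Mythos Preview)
Your proof is correct and follows essentially the same approach as the paper: plug $Rj_!B$ into the second excision triangle and postcompose with $i^*$, using $i^*Ri_*\cong\mathrm{id}$ and $i^*Rj_!=0$. Your version is more detailed than the paper's (which simply says the lemma ``follows immediately'' from this triangle), in particular in spelling out the vanishing $i^*Rj_!=0$ and the bookkeeping around the two meanings of $j^*$.
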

\begin{proof}
	This lemma is well-known, and follows immediately by considering the triangle
	\[Ri_* i^! (Rj_! A)\to Rj_! A \to Rj_* j^* (Rj_! A)\to Ri_* i^! (Rj_! A)[1],\]
	and postcomposing by the exact functor $i^*$.
\end{proof}

Intuitively, a semi-orthogonal decomposition gives natural filtrations on the objects of $\mc D$ where the associated graded subquotients live in increasingly finer subcategories. The following proposition says that the graded pieces of a complex in $D_\et(\Bun_2)$ are naturally identified with a complex of smooth representations of the locally pro-$p$ group $G_b(E):= \pi_0(\mc G_b)$. 

\begin{proposition}[\cite{fargues2021geometrization} V.2.2]
	For any $b\in B(G)$, the map
	\[i_b:\Bun_2^b=[*/\mc G_b]\to [*/\underline{G_b(E)}]\]
	induces via pullback an equivalence
	\[D(G_b(E),\Lambda)\cong D_\et([*/\underline{G_b(E)}],\Lambda)\cong D_\et([*/\mc G_b],\Lambda).\]
\end{proposition}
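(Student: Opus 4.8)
The plan is to prove the two displayed equivalences separately and compose them: \textbf{(A)} the identification $D(G_b(E),\Lambda)\cong D_\et([*/\underline{G_b(E)}],\Lambda)$ of étale sheaves on the classifying stack of the locally profinite group $G_b(E)$ with smooth representations, and \textbf{(B)} that pullback along $\pi\colon[*/\mc G_b]\to[*/\underline{G_b(E)}]$ — which exists and splits by the structure of $\mc G_b$ recalled above — is an equivalence.

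For \textbf{(A)}, apply $v$-descent along the $\underline{G_b(E)}$-torsor $x_b\colon *\to[*/\underline{G_b(E)}]$, so that $D_\et([*/\underline{G_b(E)}],\Lambda)$ becomes the totalization $\lim_{\Delta}D_\et(\underline{G_b(E)}^{\times\bullet},\Lambda)$ over the \v{C}ech nerve, i.e. complexes of $\Lambda$-modules with a continuous $\underline{G_b(E)}$-action. To compute this, filter $G:=G_b(E)$ by compact open subgroups $K$ (so $\underline G=\bigsqcup_{G/K}\underline K$ and $\underline K=\varprojlim_{K'\trianglelefteq K}\underline{K/K'}$) and argue in three steps: (i) for a finite group $H$ one has $D_\et([*/\underline H],\Lambda)\simeq D(\Lambda[H])$, since $\underline H$ is a finite disjoint union of copies of $*$; (ii) by continuity of $D_\et$ along the cofiltered limit, $D_\et([*/\underline K],\Lambda)\simeq\varinjlim_{K'}D(\Lambda[K/K'])\simeq D(K,\Lambda)$ — smoothness of the resulting representation is precisely the statement that every $\ell$-adic étale sheaf on the profinite set $\underline K$ and its self-products is an increasing union of sheaves pulled back from finite quotients; (iii) descend from $K$ back to $G$ along $[*/\underline K]\hookrightarrow[*/\underline G]$, matching the residual descent data with the $G/K$-action that promotes a smooth $K$-representation to a smooth $G$-representation. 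The self-injectivity of $\Lambda$ and the banal-prime hypothesis enter in step (ii), through the triviality of the modular cohomology of the finite quotients discussed in the appendix.

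For \textbf{(B)}, observe that $\pi$ is a neutral gerbe banded by $\mc G_b^{>0}$ — the split kernel of $\mc G_b\to\underline{G_b(E)}$, a successive extension of positive Banach--Colmez spaces $\BC(\mc O(\lambda_i))$ with $\lambda_i>0$. Since $\pi$ has a section $s$, the identity $s^*\pi^*\simeq\mathrm{id}$ is automatic, so it suffices to show that $\pi^*$ is fully faithful and essentially surjective; both follow from the single fact that $\mc G_b^{>0}$ is $\Lambda$-cohomologically trivial, i.e. $R\Gamma([*/\mc G_b^{>0}],\Lambda)\simeq\Lambda$. Indeed, full faithfulness (the unit $\mathcal G\to R\pi_*\pi^*\mathcal G$ being an isomorphism) reduces, by the projection formula and base change along $x_b$, to exactly that vanishing; and essential surjectivity follows because the $\mc G_b^{>0}$-descent datum carried by the restriction along $x_b$ of any $\mathcal F\in D_\et([*/\mc G_b])$ — a cocycle valued in automorphisms over the \v{C}ech nerve of $*\to[*/\mc G_b^{>0}]$ — is forced by the vanishing to be the trivial one. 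The vanishing $R\Gamma([*/\mc G_b^{>0}],\Lambda)\simeq\Lambda$ is proved by induction on the length of the filtration of $\mc G_b^{>0}$, the single-step input being Hansen's Proposition \ref{Hansen-bc-dualizing}: $\Lambda\xrightarrow{\sim}Rf_*\Lambda$ for $f\colon\BC(\mc O(\lambda))\to *$ (after base change to $\spa C$), together with the Künneth formula for the products $\BC(\mc O(\lambda))^{\times n}$ and the cohomological smoothness of $\BC(\mc O(\lambda))$ to license the base changes.

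I expect the main obstacle to be the foundational input behind \textbf{(A)}: establishing continuity of $D_\et$ along the cofiltered limits involved and computing $D_\et(\underline S,\Lambda)$ for a profinite set $S$ as an ind-category of finite local systems. This is the point at which the hypotheses on $\Lambda$ are genuinely used and is the technical heart of \cite{fargues2021geometrization}; by contrast the gerbe argument for \textbf{(B)} is comparatively formal once Hansen's computation of the cohomology of Banach--Colmez spaces is in hand, the only care needed being to upgrade ``the cohomology of $\BC(\mc O(\lambda))$ is trivial'' to ``$D_\et$ of its classifying stack is trivial, universally in the base'', which rests on the cohomological smoothness rather than the cohomology alone.
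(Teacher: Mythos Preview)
The paper does not prove this statement itself; it records the result with a citation to \cite[V.2.2]{fargues2021geometrization} (and elsewhere cites \cite[Theorem~V.1.1]{fargues2021geometrization} for the first equivalence). The only related argument actually written out is the short proof in \S3 that pullback along the \emph{section} $[*/\underline{G_{b_2}(E)}]\to [*/\mc G_{b_2}]$ is also an equivalence --- and that argument takes the present proposition as a black box. Your sketch is a faithful reconstruction of the Fargues--Scholze argument, so there is no meaningful comparison to make at the level of strategy.

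Two small corrections. In \textbf{(A)}, the banal-prime hypothesis plays no role: the identification $D_\et([*/\underline G],\Lambda)\simeq D(G,\Lambda)$ for locally profinite $G$ holds for any $n$-torsion $\Lambda$ with $(n,p)=1$; the banal assumption enters the paper only in the appendix computations for $\SL_2(E)$-cohomology, not here. In \textbf{(B)}, your essential-surjectivity step is too compressed: saying the descent datum is ``forced to be trivial by the vanishing'' hides the actual mechanism. What you need is that pullback $D(\Lambda)\to D_\et([*/\mc G_b^{>0}])$ is an equivalence. Full faithfulness is indeed the cohomological vanishing you state; essential surjectivity then follows because any descent isomorphism for a constant sheaf over the \v{C}ech nerve of $*\to[*/\mc G_b^{>0}]$ is, by that same full faithfulness, itself constant over each $(\mc G_b^{>0})^{\times n}$, and the degeneracy axiom (restriction along the identity section) then forces it to equal the identity. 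Once this absolute case is in hand, the relative statement over $[*/\underline{G_b(E)}]$ follows by base change.
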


\begin{proposition}[{\cite[\hphantom~VII.7.3]{fargues2021geometrization}}]
	For any quasicompact open substack $U \subset \Bun_2$, the category $D_\et (U,\Lambda)$ admits a semi-orthogonal decomposition into the categories $D_\et(\Bun_2^b,\Lambda)\\ \cong D(G_b(E),\Lambda)$ for $b\in \abs{U}\subset B(G)$ via excision triangles. 
\end{proposition}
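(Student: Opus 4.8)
The plan is to construct the semi-orthogonal decomposition by induction on the (finite) set $\abs{U}\subset B(G)$, which we may totally order by the specialization order so that the unique closed point $b_0$ of $U$ comes last; here I use crucially that $\abs{\Bun_2}\cong B(\GL_2)$ is homeomorphic to $B(\GL_2)$ with the order topology, so $\abs{U}$ is a finite poset in which closed subsets are exactly the ``upward closed under specialization'' subsets. First I would recall that for $b\in\abs{U}$ the locally closed immersion $i_b:\Bun_2^b\hookrightarrow U$ is an immersion of the classifying stack $[\ast/\mc G_b]$; since $\mc G_b\to *$ is cohomologically smooth, $i_b$ is a cohomologically smooth locally closed immersion, so in the six-functor formalism for $D_\et(-,\Lambda)$ one has the full adjoint chain $i_b^*\dashv Ri_{b,*}\dashv i_b^!$ (with $i_b^!\cong i_b^*$ up to the twist-and-shift given by the dimension of $\mc G_b$). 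Hence each $\mc D_b:=D_\et(\Bun_2^b,\Lambda)$ embeds as an admissible subcategory of $D_\et(U,\Lambda)$ via $Ri_{b,*}$, and by the previous proposition $\mc D_b\cong D(G_b(E),\Lambda)$.

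Next I would verify the semi-orthogonality $\Hom(Ri_{b,*}B,\,Ri_{b',*}B')=0$ when $b$ is strictly smaller than $b'$ in the order, i.e. when $b'$ does not specialize to $b$. By adjunction this Hom equals $\Hom(i_{b'}^* Ri_{b,*}B,\,B')$, so it suffices to show $i_{b'}^* Ri_{b,*}=0$ for such a pair. Here is where the topology enters: write $Z_{\geq b}$ for the closed substack of $U$ consisting of points that specialize to $b$, with open complement $V$; then $\Bun_2^b\hookrightarrow Z_{\geq b}$ is a closed immersion, so $Ri_{b,*}B$ is supported (in the sense of $*$-restriction to the open complement vanishing) on $Z_{\geq b}$. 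Since $b'\notin Z_{\geq b}$ — because $b'$ is not a generization of $b$ — the point $b'$ lies in the open complement $V$, giving $i_{b'}^*Ri_{b,*}B\cong i_{b',V}^* (Ri_{b,*}B)|_V = i_{b',V}^*\,0=0$. I should take a little care here that ``support on $Z_{\geq b}$'' is justified by base change along the open immersion $V\hookrightarrow U$ for the closed-complement excision triangle.

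The remaining point is the existence of the functorial filtration. I would do this by induction on $\#\abs{U}$: let $b_0$ be the closed point, $i:\Bun_2^{b_0}\hookrightarrow U$ the closed immersion and $j: U':=U\setminus\Bun_2^{b_0}\hookrightarrow U$ the open complement (note $U'$ is again a quasicompact open substack, with strictly smaller $\abs{U'}$). For $A\in D_\et(U,\Lambda)$ the closed-complement excision triangle $Rj_! j^*A\to A\to Ri_* i^*A\to$ expresses $A$ as an extension of $Ri_*i^*A\in\mc D_{b_0}$ by $Rj_!j^*A$, and by the inductive hypothesis $j^*A$ has a filtration with graded pieces in the $D_\et(\Bun_2^b,\Lambda)$ for $b\in\abs{U'}$, which pushes forward along $Rj_!$ (an exact functor) to a filtration of $Rj_!j^*A$; splicing gives the desired filtration of $A$ with the $\mc D_{b_0}$-piece last, matching the chosen order. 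Functoriality is automatic from functoriality of the excision triangle and of the inductive filtration.

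I expect the only genuine subtlety — hence the main thing to be careful about rather than a true obstacle — is making the bookkeeping of the order precise: one must choose the total order refining the specialization order so that closed points come last at every stage, check that the admissibility and six-functor adjunctions really are available for these non-qcqs stacky immersions (which is exactly the content of the cited structure theorem for $\Bun_2$ together with Fargues--Scholze's six-functor formalism), and confirm that the two excision triangles of the abstract lemmas match the geometric open/closed excision triangle on $U$. All of these are either cited above or formal; the heart of the argument is the support computation $i_{b'}^*Ri_{b,*}=0$, which is an immediate consequence of the order-topology description of $\abs{\Bun_2}$.
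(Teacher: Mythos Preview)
The paper does not prove this proposition; it is cited directly from \cite[VII.7.3]{fargues2021geometrization}. Your inductive construction of the filtration via excision triangles is the standard argument and is correct. However, the semi-orthogonality step contains a genuine error of direction. With your ordering (closed point last), the paper's convention requires $\Hom(\mc D_j,\mc D_i)=0$ for $j>i$, i.e.\ $\Hom(\text{more closed},\text{more open})=0$; you instead attempt the opposite inequality. Worse, what you try to prove is actually \emph{false}: when $b$ is more open and $b'$ is a strict specialization, one has $b'\in\overline{\{b\}}$, and $i_{b'}^* Ri_{b,*}$ is precisely the nonzero gluing functor this paper is devoted to computing. Your set $Z_{\geq b}$ is internally inconsistent --- if it means $\overline{\{b\}}$ then it is closed and $Ri_{b,*}B$ is supported there, but then $b'\in Z_{\geq b}$; if it means $\Bun_2^{\leq b}$ then $\Bun_2^b\hookrightarrow Z_{\geq b}$ is indeed closed and $b'\notin Z_{\geq b}$, but $Z_{\geq b}$ is \emph{open} and $Ri_{b,*}B$ is not supported there. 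Either way the argument fails. (Relatedly, the chain $i_b^*\dashv Ri_{b,*}\dashv i_b^!$ is incorrect: $i_b^!$ is right adjoint to $Ri_{b,!}$, not to $Ri_{b,*}$.)

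The fix is to note that the graded pieces of your filtration are embedded via $Ri_{b,!}$, since $Rj_!\circ Ri_{b_1,*}^{U'}=Rj_!\circ Ri_{b_1,!}^{U'}=Ri_{b_1,!}^U$ (the inner $*$ and $!$ agree because $i_{b_1}^{U'}$ is a closed immersion). The correct vanishing $\Hom(Ri_{b',!}B',Ri_{b,!}B)=0$ for $b'>b$ then follows cleanly: writing $Ri_{b,!}B=j_{\leq b,!}(i'_{b,*}B)$, the $(j_{\leq b,!},j_{\leq b}^*)$-adjunction gives $\Hom(j_{\leq b}^*Ri_{b',!}B',\,i'_{b,*}B)$, and $Ri_{b',!}B'$ is $*$-supported on the single point $\{b'\}\not\subset\Bun_2^{\leq b}$. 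Equivalently, and closer to your recursion: $\Hom(\mc D_m,\mc D_i)=\Hom(i_{b_0,*}X,Rj_!Y)=\Hom(j^*i_{b_0,*}X,Y)=0$, and for indices both strictly less than $m$ one reduces to $U'$ by full faithfulness of $Rj_!$.
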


In other words, the Harder-Narasimhan stratification of $\abs{\Bun_2}$ induces semi-orthogonal decompositions on the restrictions of $D_\et(\Bun_2)$ to finitely many points. The interactions of sheaves supported on different strata are given by the functors $i^*Rj_*$ where $i$ and $j$ are the inclusions of the closed point resp. open complement inside a locally closed substack of $\Bun_2$; these interactions can be computed inductively on the locally closed substacks consisting of one point specializing to the other.
This gives an infinite semi-orthogonal decomposition of $D_\et(\Bun_2)$ when considering the restrictions to each quasicompact open substack. Furthermore, there is a class of compact generators for $D_\et(\Bun_2)$ that are supported on finitely many points. 

\begin{theorem}[{\cite[\hphantom~V.4.1]{fargues2021geometrization}}]
	For any locally closed substack $U\subset \Bun_2$, the triangulated category $D_\et(U,\Lambda)$ is compactly generated. An object $A\in D_\et(U,\Lambda)$ is compact if and only if for all $b\in B(\GL_2)$ contained in $U$, the restriction 
	\[i_b^* A\in D_\et(\Bun_2^b,\Lambda)\cong D(G_b(E),\Lambda)\]
	along $i_b:\Bun_2^b \subset \Bun_2$ is compact, and zero for almost all $b$. Here, compactness in $D(G_b(E),\Lambda)$ is equivalent to lying in the thick triangulated subcategory generated by $\cind_{K}^{G_b(E)}\Lambda$ as $K$ runs over open pro-$p$-subgroups of $G_b(E)$.
\end{theorem}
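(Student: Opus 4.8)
The plan is to use the semi-orthogonal decomposition to reduce to the representation theory of a single stratum, and then to propagate compact generation and the compactness criterion one stratum at a time along the Harder--Narasimhan filtration; the sole nonformal ingredient is that the closed-stratum pushforwards $Ri_{b,*}$ preserve compact objects. To begin, write $\Bun_2=\bigcup_c\Bun_2^{\leq c}$ as an increasing union of quasicompact open substacks, so that any locally closed $U$ is the increasing union of the open substacks $U_n:=U\cap\Bun_2^{\leq c_n}$, each meeting only finitely many strata and hence quasicompact, with $D_\et(U)=\varprojlim_n D_\et(U_n)$ along $*$-restriction. A compact object of $D_\et(U)$ has support in finitely many strata and is therefore pulled back by $j_!$ from some $U_n$, so it suffices to prove the statement for $U$ quasicompact, say with strata $b_1>\dots>b_m$ totally ordered under specialization and $b_m$ the unique closed point. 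For the base case $m=1$, $U=\Bun_2^b=[*/\mc G_b]$ and the equivalence recalled above identifies $D_\et(U,\Lambda)\cong D(G_b(E),\Lambda)$; since $G_b(E)$ is locally pro-$p$ and $p\in\Lambda^\times$, for every open pro-$p$ subgroup $K$ the functor $\Hom(\cind_K^{G_b(E)}\Lambda,-)\cong(-)^K$ is exact and commutes with arbitrary colimits, so the $\cind_K^{G_b(E)}\Lambda$ are compact projective generators, and by a standard argument the compact objects are exactly the thick triangulated subcategory they generate.

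The key step is: for the inclusion $i=i_{b_m}:[*/\mc G_{b_m}]\hookrightarrow U$ of the closed stratum, with open complement $j:U'\hookrightarrow U$, the functor $i^!$ commutes with colimits, equivalently $Ri_*$ sends compact objects to compact objects. Using the recollement triangle $Ri_*i^!\to\mathrm{id}\to Rj_*j^*$ and that $Ri_*$, being left adjoint to $i^!$, is cocontinuous, this is equivalent to $Rj_*$ commuting with colimits. Now $j$ is a quasicompact (indeed quasicompact quasiseparated) open immersion, because $U$ and $U'$ meet only finitely many Harder--Narasimhan strata and each $\Bun_2^{\leq c}$ is quasicompact; for the qcqs morphisms of the locally spatial diamonds and Artin $v$-stacks in play, $Rj_*$ commutes with filtered colimits, hence with all colimits since it is exact. (Alternatively one argues by cohomological smoothness: $\Bun_2$ is cohomologically smooth over $\spd k$ of dimension $0$ and $\Bun_2^b$ of dimension $-\langle 2\rho,\nu_b\rangle$, both visible from the smooth charts $f_b:\mc M_b\to\Bun_2$ of Theorem \ref{properties-of-M_b}, so that on dualizable sheaves $i_b^!\cong i_b^*(-)\otimes i_b^!\Lambda$ with $i_b^!\Lambda$ an invertible shift-and-twist, which is visibly cocontinuous.) In particular $Rj_*$ is cocontinuous, hence $j^*$ preserves compact objects, a fact used below.

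Now induct on $m$, assuming $D_\et(U')$ is compactly generated by a set $\mc S'$ (closed under shifts). Set $\mc S:=\{\, Ri_{b_m,*}P : P \text{ a compact generator of } [*/\mc G_{b_m}] \,\}\cup\{\, Rj_!Q : Q\in\mc S' \,\}$; every element is compact --- the first family by the key step, the second because $Rj_!$ is left adjoint to the cocontinuous $j^*$. If $A\in D_\et(U)$ satisfies $\Hom(s,A)=0$ for all $s\in\mc S$ and all shifts, then $j^*A$ is right-orthogonal to $\mc S'$, so $j^*A=0$, and the triangle $Ri_{b_m,*}i_{b_m}^!A\to A\to Rj_*j^*A$ gives $A\cong Ri_{b_m,*}i_{b_m}^!A$; orthogonality to $Ri_{b_m,*}P$ then forces $i_{b_m}^!A=0$, so $A=0$. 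Hence $\mc S$ compactly generates $D_\et(U)$. For the compactness criterion, if $A$ is compact then each $i_b^*A$ is compact ($i_b^*$ is left adjoint to the cocontinuous $Ri_{b,*}$, and for strata inside $U'$ one uses in addition that $j^*$ preserves compacts by the key step) and vanishes for almost all $b$; conversely, if $i_b^*A$ is compact for all $b$ and zero for almost all $b$, then the finite filtration of $A$ coming from the excision triangles $Rj_!j^*A\to A\to Ri_*i^*A$ has graded pieces $Ri_{b_k,*}i_{b_k}^*A$, each of which is $Ri_{b_k,*}$ of a compact object and hence compact by the key step, so $A$ is compact. The case of non-quasicompact $U$ follows formally, the compact objects of $D_\et(U)$ being the filtered union of those of the $D_\et(U_n)$ along the fully faithful, compact-preserving functors $j_{n,!}$. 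Finally, compactness in $D(G_b(E),\Lambda)$ is membership in the thick subcategory generated by the $\cind_K^{G_b(E)}\Lambda$ by the base case.

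I expect the only genuine obstacle to be the key step of the second paragraph --- showing that $i_b^!$, equivalently $Rj_*$, commutes with colimits. The argument through quasicompactness of the Harder--Narasimhan strata and coherence of $Rj_*$ for qcqs morphisms should be the most economical; the cohomological-smoothness route is conceptually cleaner but needs care to extend the purity isomorphism past dualizable sheaves, for instance by noting that the compact generators $Ri_{b,*}\cind_K^{G_b(E)}\Lambda$ arise, up to the shift-and-twist, from the dualizing data of the explicit charts $\mc M_b$. Everything else --- the recollement bookkeeping, the standard generation lemma for compactly generated triangulated categories, and the behaviour of smooth $\Lambda$-representations of locally pro-$p$ groups --- is formal.
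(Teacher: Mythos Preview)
The paper does not give its own proof of this theorem; it is simply cited from \cite[Theorem~V.4.1]{fargues2021geometrization}. The remark immediately following the statement indicates the shape of the original argument in Fargues--Scholze: compact generators are produced \emph{directly} as $A_K^b:=Rf_{K,!}Rf_K^!\Lambda$ for the cohomologically smooth charts $f_K:\widetilde{\mc M_b}/\underline{K}\to\Bun_2$, and their generating property is deduced from the ``strict henselization'' result (Proposition~\ref{strict_hens}), which gives $R\Hom(A_K^b,A)\cong R\Gamma(\widetilde{\mc M_b}/\underline{K},f_K^*A)\cong (i_b^*A)^K$. Compactness of $A_K^b$ is immediate since $f_K$ is cohomologically smooth (so $f_K^!$ is a twist of $f_K^*$, hence cocontinuous, and $Rf_{K,!}$ is its left adjoint).

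Your approach is a genuinely different, recollement-based route: you build generators of the form $Ri_{b_m,*}P$ and $Rj_!Q$ inductively along the stratification, and the one nontrivial input is that $Ri_{b,*}$ preserves compacts, i.e.\ that $Rj_*$ for a qcqs open immersion of these Artin $v$-stacks commutes with colimits. Your qcqs argument for this is correct. What the Fargues--Scholze route buys is an explicit and geometrically natural set of generators well-adapted to later use (Bernstein--Zelevinsky duality, Hecke actions), and it sidesteps any discussion of $Rj_*$; what your route buys is that it is more formal once the six-functor package is in place, and it makes the compactness criterion fall out of the same induction rather than requiring a separate check. One small remark: your assumption that the strata of $U$ are totally ordered under specialization is special to $\GL_2$ (the paper notes this fails for $\GL_3$), but your induction only needs a single closed point to peel off, so this is harmless.
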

\begin{remark}[{\cite[177]{fargues2021geometrization}}]
	If $f_K:\widetilde{\mc M_b}/\underline{K}\to \Bun_2$ is the induced map for each open pro-$p$ $K\subset G_b(E)$ (see Definition \ref{tilde-covering}), then the compact generators are given by $A_K^b:=R f_{K!}Rf_K^!\Lambda \in D_\et(\Bun_2,\Lambda)$ for varying $b$ and $K$. Note that the Bernstein-Zelevinsky dual of $A_K^b$ is $i_{b,!}\cind_K^{G_b(E)}\Lambda$ \cite[180]{fargues2021geometrization}.
\end{remark}
\begin{theorem}[{\cite[\hphantom~V.7.1]{fargues2021geometrization}}]
	Let $A\in D_\et(\Bun_2,\Lambda)$. Then $A$ is universally locally acyclic with respect to $\Bun_2\to *$ if and only if for all $b\in B(\GL_2)$, the pullback $i_b^*A$ to $i_b:\Bun_2^b\hookrightarrow \Bun_2$ corresponds under $D_\et(\Bun_2^b,\Lambda)\cong D(G_b(E),\Lambda)$ to a complex $M_b$ of smooth $G_b(E)$-representations for which $M_b^K$ is a perfect complex of $\Lambda$-modules for all open pro-$p$ subgroups $K\subset G_b(E)$.
\end{theorem}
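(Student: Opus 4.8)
The plan is to establish both directions stratum by stratum, using the smooth charts $\widetilde{\mc M_b}/\underline K\to \Bun_2$ together with the excision triangles along the Harder--Narasimhan stratification. The facts about universal local acyclicity I will invoke are the standard ones: $\mathrm{ULA}$-over-$\spd k$ complexes form a triangulated subcategory, stable under $h^*$ and $h^!$ along cohomologically smooth $h$, under $Rh_!$ along compactifiable $h$, under $Rh_*=Rh_!$ along proper $h$, and under Verdier duality; and on $\spd k$, where $D_\et(\spd k,\Lambda)\cong D(\Lambda)$, a complex is $\mathrm{ULA}$ over $\spd k$ precisely when it is perfect.

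For the forward direction, assume $A$ is $\mathrm{ULA}$ over $\spd k$ and fix $b\in B(\GL_2)$ and an open pro-$p$ subgroup $K\subset G_b(E)$. The map $\phi_{b,K}\colon\widetilde{\mc M_b}/\underline K\to\mc M_b\to\Bun_2$ is cohomologically smooth --- the first map is étale, the second is $f_b$, which is cohomologically smooth by Theorem~\ref{properties-of-M_b} --- so $\phi_{b,K}^*A$ is again $\mathrm{ULA}$ over $\spd k$. Since $\widetilde{\mc M_b}$ is an iterated extension of negative Banach--Colmez spaces, the structure map $g\colon\widetilde{\mc M_b}/\underline K\to\spd k$ is cohomologically smooth, compactifiable, of finite dimension $d$, with $g^!\Lambda\cong\Lambda(d)[2d]$ (cf.\ Lemma~\ref{negative_bc_dualizing_sheaf}); hence Verdier duality identifies $R\Gamma(\widetilde{\mc M_b}/\underline K,\phi_{b,K}^*A)$, up to a shift and a Tate twist, with the $\Lambda$-linear dual of $R\Gamma_c$ applied to the Verdier dual of $\phi_{b,K}^*A$, which is again $\mathrm{ULA}$ over $\spd k$; since $Rg_!$ preserves $\mathrm{ULA}$-over-$\spd k$ complexes ($g$ being compactifiable) and such complexes on $\spd k$ are perfect, $R\Gamma(\widetilde{\mc M_b}/\underline K,\phi_{b,K}^*A)$ is a perfect $\Lambda$-complex. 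Finally, Proposition~\ref{action-on-cohomology} and the tower $\widetilde{\mc M_b}\cong\varprojlim_K\widetilde{\mc M_b}/\underline K$ (together with the vanishing of higher cohomology of the pro-$p$ group $K$ with $\Lambda$-coefficients) identify this complex naturally with the $K$-invariants $M_b^K$ of $i_b^*A$, so $M_b^K$ is perfect for every $b$ and every $K$.

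For the converse, since universal local acyclicity is local on $\Bun_2$ it suffices to prove $A|_{\Bun_2^{\le b}}$ is $\mathrm{ULA}$ over $\spd k$ for each $b$; as $\Bun_2^{\le b}$ is quasicompact with only finitely many Harder--Narasimhan strata, one inducts on the number of strata of a quasicompact open $U$. In the single-stratum case $U=\Bun_2^b=[*/\mc G_b]$, the sheaf $[M_b]$ is the pullback of the corresponding object on $[*/\underline{G_b(E)}]$ along the map $[*/\mc G_b]\to[*/\underline{G_b(E)}]$, which is cohomologically smooth because $\mc G_b$ is an extension of $\underline{G_b(E)}$ by the cohomologically smooth group $\mc G_b^{>0}$; so one is reduced to showing that an object of $D(G_b(E),\Lambda)$ whose $K$-invariants are perfect for all open pro-$p$ $K$ corresponds to a $\mathrm{ULA}$-over-$\spd k$ sheaf on the classifying stack $[*/\underline{G_b(E)}]$. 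For the inductive step, choose a stratum $\Bun_2^b$ that is closed in $U$ (e.g.\ the most degenerate one present), let $j$ be the open complement and $i=i_b$ the closed immersion, and run the excision triangle $Rj_!j^*A\to A|_U\to Ri_*i^*A$ through two-out-of-three: $j^*A$ is $\mathrm{ULA}$ by the inductive hypothesis, hence so is $Rj_!j^*A$; $i^*A=[M_b]$ is $\mathrm{ULA}$ by the single-stratum case, and $Ri_*=Ri_!$ preserves $\mathrm{ULA}$ as $i$ is a closed immersion; therefore $A|_U$ is $\mathrm{ULA}$ over $\spd k$.

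I expect the main obstacle to be exactly the single-stratum base case of the converse: the equivalence, on $[*/\underline{G_b(E)}]$, between $\mathrm{ULA}$-over-$\spd k$ sheaves and complexes of smooth representations with perfect $K$-invariants. I would attack this by descending first along $[*/\underline K]\to[*/\underline{G_b(E)}]$ for a compact open $K\subset G_b(E)$, and then along the finite étale tower $[*/\underline{K'}]\to[*/\underline K]$ over open normal subgroups $K'\triangleleft K$, reducing to perfectness on $\spd k$ at each finite level --- here the banality hypothesis on $\ell$ is needed to trivialise the modular cohomology of the finite quotients $K/K'$. Beyond this base case the argument is formal bookkeeping with the six operations along the stratification; all of the genuinely geometric content --- the identification of $i_b^*$ with the cohomology of $\widetilde{\mc M_b}$ (Proposition~\ref{action-on-cohomology}) and the finiteness of the charts $\widetilde{\mc M_b}/\underline K$ --- is already established.
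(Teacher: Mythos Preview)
The paper does not give its own proof of this statement: it is quoted verbatim from \cite[V.7.1]{fargues2021geometrization} and used as input. So there is no in-paper argument to compare against; what you have written is a reconstruction of the Fargues--Scholze proof rather than an alternative to anything in this article.

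That said, your outline matches the structure of the Fargues--Scholze argument closely: smooth pullback along the charts $\widetilde{\mc M_b}/\underline K\to\Bun_2$ for the forward direction (using Proposition~\ref{strict-hens-cor} to identify $R\Gamma(\widetilde{\mc M_b}/\underline K,\phi_{b,K}^*A)$ with $M_b^K$), and excision plus induction on strata for the converse. Two small remarks. First, in the forward direction your Verdier-duality detour through $R\Gamma_c$ is unnecessary: once you know $\phi_{b,K}^*A$ is ULA on the cohomologically smooth space $\widetilde{\mc M_b}/\underline K$, the fact that $R\Gamma$ here computes a stalk (Proposition~\ref{strict-hens-cor}) already gives perfectness, since ULA complexes have perfect stalks. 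Second, your worry about a ``banality hypothesis'' in the base case is misplaced: for $K$ pro-$p$ and $K'\triangleleft K$ open, the quotient $K/K'$ is a finite $p$-group, whose order is invertible in $\Lambda$ by assumption, so the descent along $[*/\underline{K'}]\to[*/\underline K]$ is automatically clean. The genuine content of the base case is rather the passage from $[*/\underline K]$ to $[*/\underline{G_b(E)}]$, i.e.\ checking ULA after a cover by a discrete (non-profinite) set $G_b(E)/K$; Fargues--Scholze handle this by characterising ULA on $[*/\underline{G_b(E)}]$ via compact objects and duality, not by further descent.
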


The compact objects resp. ULA objects are geometric analogues of compact complexes resp. admissible $G_b(E)$-representations, and such objects satisfy Bernstein-Zelevinsky duality resp. Verdier duality \cite[Chapter~V]{fargues2021geometrization}. We are mostly interested in computing the functors $i_{b_2}^*R i_{b_1,*}$ on smooth irreducible representations, which are in particular admissible and finite length \cite[\hphantom~9.4~and~10.2]{Bushnell2006}, so that they behave well with respect to Verdier duality.

\section{Gluing sheaves along strata of \texorpdfstring{$\Bun_G$}{BunG}}
In this part of the paper, we show how to reduce the functorial gluing data $i^* Rj_*$ to the cohomology of certain locally closed strata of the $\mc M_b$ covers of $\Bun_G^{\leq b}$ for any reductive group $G$. The proof proceeds in essentially three steps: a short diagram chase together with the ``strictly henselian'' property of $\widetilde{\mc M_b}$, a projective limit over smooth base change (which serves as the geometric analogue of passing to smooth vectors), and descent of compactly supported cohomology along torsors.

\subsection{Gluing functors as cohomology of negative Banach-colmez spaces}

This section explains how to reduce the computation of the functors $i_{b_2}^*Ri_{b_1,*}$ to the \'etale cohomology of a certain $v$-sheaf $\widetilde{\mc M_{b_2}^{b_1}}$, which can be thought of as a locally closed subspace of the ``strict henselization at $\Bun_G^{b_2}$''. The core idea is to consider the colimit over smooth basechange isomorphisms along a pro-étale $\underline{G_{b_2}(E)}$-torsor, which produces the smooth subrepresentation of the cohomology of $\widetilde{\mc M_{b_2}^{b_1}}$. If one rather starts with a complex on the connected quasicompact open substack $j:\Bun_G^{\leq b_2}\hookrightarrow \Bun_G$, the gluing functors reduce simply to the cohomology of $\widetilde{\mc M_{b_2}^\circ}$, which is a punctured negative Banach-Colmez space when $G=\GL_2$ (or an iterated extension of negative Banach-Colmez spaces for general $G$). 

We recall the following proposition, which allows us to make all of our computations after changing base to a complete algebraically closed nonarchimedean field $C$.

\begin{proposition}[{\cite[Corollary~V.2.3]{fargues2021geometrization}}]
For any locally closed substack $U\subset \Bun_G$, the functor 
\[D_\et(U,\Lambda)\to D_\et(U \times \spa C, \Lambda)\]
is an equivalence of categories. 
\end{proposition}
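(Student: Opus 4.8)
\emph{Proof proposal.} The statement asserts that $D_\et(-,\Lambda)$ on a locally closed substack of $\Bun_G$ is insensitive to base change along $\spa C\to *=\spd k$; I would prove it by d\'evissage along the Harder-Narasimhan stratification, reducing to a single stratum and ultimately to the point. First I would reduce to the case that $U$ lies inside a quasicompact open substack of $\Bun_G$: writing $U$ as the filtered union of its intersections $U\cap V$ with quasicompact opens $V\subset \Bun_G$, one has $D_\et(U,\Lambda)=\varprojlim_V D_\et(U\cap V,\Lambda)$ along the restriction functors, and likewise after base change to $\spa C$; since base change commutes with this limit, it suffices to treat $U$ locally closed in a quasicompact open, which contains only finitely many strata.

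Next I would induct on the number of strata contained in $U$. For the inductive step, pick a stratum $\Bun_G^b$ that is closed in $U$ (one exists since $U$ contains finitely many strata), with closed immersion $i:\Bun_G^b\hookrightarrow U$ and open complement $j:U'\hookrightarrow U$. The base change $-\times\spa C$ is cohomologically smooth, hence compatible with the full six-functor formalism, so it induces a morphism between the recollement data $(i^*,Ri_*,i^!;Rj_!,j^*,Rj_*)$ for $(U,\Bun_G^b,U')$ and that for the base change. Since a morphism of recollements which is an equivalence on two of the three pieces is an equivalence on the third, and since the inductive hypothesis supplies this for $U'$, the problem reduces to $U=\Bun_G^b$.

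It then remains to treat a single stratum $\Bun_G^b\cong[*/\mc G_b]$. Recall that $\mc G_b$ is a split extension of $\underline{G_b(E)}$ by the ``unipotent'' diamond $\mc G_b^{>0}$, an iterated extension of positive Banach-Colmez spaces, which is $\ell$-cohomologically trivial after base change to any geometric point by Proposition~\ref{Hansen-bc-dualizing}; hence pullback along $[*/\mc G_b]\to[*/\underline{G_b(E)}]$ is an equivalence, compatibly with $-\times\spa C$, and we reduce to $U=[*/\underline{G_b(E)}]$. By \cite[Proposition~V.2.2]{fargues2021geometrization} (and its analogue over $\spa C$) both sides are identified with $D(G_b(E),\Lambda)$ via a functor compatible with pullback; since the target is a purely representation-theoretic category independent of the base, the base change functor is an equivalence. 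At the bottom of the d\'evissage one needs $D_\et(*,\Lambda)\xrightarrow{\sim}D_\et(\spa C,\Lambda)$: both sides are canonically $D(\Lambda)$ and the functor is the identity, since $C$ is algebraically closed (so $\spa C$ is a geometric point, with $R\Gamma(\spa C,\Lambda)=\Lambda$) and $k$ is a discrete algebraically closed field, leaving no nontrivial prime-to-$p$ \'etale cohomology to obstruct full faithfulness or descent.

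I expect the main obstacle to be bookkeeping rather than a single hard idea: one must verify that the pullback functor is genuinely compatible with each identification above --- equivalently, that the equivalences of \cite[Proposition~V.2.2]{fargues2021geometrization} and the Banach-Colmez triviality are ``defined over $k$'' --- and that recollement for locally closed substacks of $\Bun_G$ is compatible with $-\times\spa C$. Both are formal consequences of the six-functor formalism together with the explicit descriptions of $\mc G_b$ and of positive Banach-Colmez cohomology recalled above, after which the induction is routine.
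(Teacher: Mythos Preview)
The paper does not give its own proof of this proposition: it is stated with the attribution \cite[Corollary~V.2.3]{fargues2021geometrization} and simply recalled as background (``We recall the following proposition\ldots''), so there is nothing to compare your argument against within this article. Your d\'evissage sketch---reducing to a single stratum via recollement, then using the identification $D_\et(\Bun_G^b)\cong D(G_b(E),\Lambda)$ from \cite[Proposition~V.2.2]{fargues2021geometrization}---is essentially the strategy used in the cited reference, and is a correct outline.
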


\begin{definition}\label{tilde-covering}
Define the cover $\widetilde{\mc M_{b}}$ by the pullback 
\begin{center}
\begin{tikzcd}
\widetilde{\mc M_b}\arrow[r]\arrow[d]\tikzpullback & \mc M_b\arrow[d,"\text{gr}"]\\
*\arrow[r] & {[*/\underline{G_b(E)}]}.
\end{tikzcd}
\end{center}
\end{definition}
In particular, $\widetilde{\mc M_b}\to \mc M_b$ is a $\underline{G_b(E)}$-torsor. Furthermore when $G=\GL_2$, $\widetilde{\mc M_b}$ parameterizes rank $2$ bundles with filtrations corresponding to the point $b$, together with $v$-local trivializations of the graded pieces. In the case of nonbasic $b\in \abs{\Bun_2}$, $\widetilde{\mc M_{b}}$ is a negative Banach-Colmez space that parameterizes extensions of line bundles $\mc O(n)$ by $\mc O(m)$ with $m<n$ (see \cite[Example~V.3.1]{fargues2021geometrization}). For more general $G$, $\widetilde{\mc M_b}$ is an iterated extension of negative Banach-Colmez spaces.

\begin{proposition}[{\cite[\hphantom~V.4.2]{fargues2021geometrization}}]\label{strict_hens}
Let $b\in B(G)$. For any $A\in D_\et(\widetilde{\mc M}_b,\Lambda)$ with stalk $A_0 = i^*A\in D_\et(*,\Lambda)\cong D(\Lambda)$ at the closed point $i: * \subset \widetilde{\mc M}_b$, the map 
\[R\Gamma(\widetilde{\mc M}_b,A)\to A_0\]
induced by $R\Gamma(A \to R i_* i^* A)$ is an isomorphism. In particular, $R\Gamma(\widetilde{\mc M}_b, -)$ commutes with direct sums.
\end{proposition}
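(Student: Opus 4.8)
This is \cite[Theorem~V.4.2]{fargues2021geometrization}; I indicate the strategy. The plan is to exploit a contracting action that retracts $\widetilde{\mc M}_b$ onto its closed point $*$, reducing the claim to the cohomological triviality of negative Banach-Colmez spaces.

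First I would reduce to the case where $\widetilde{\mc M}_b$ is a single negative Banach-Colmez space $\BC(\mc V[1])$ with $\mc V$ of everywhere positive slopes. By the structure recalled after Definition~\ref{tilde-covering} (see also \cite[Example~V.3.1]{fargues2021geometrization}), $\widetilde{\mc M}_b$ is an iterated extension of such spaces, and $i:*\hookrightarrow\widetilde{\mc M}_b$ is the zero section at each stage; writing $g:\widetilde{\mc M}_b\to Z$ for the bottom step (so $Z$ is an iterated extension of fewer such spaces, with its own closed point $*_Z$ below which $*$ lies), one has $R\Gamma(\widetilde{\mc M}_b,A)=R\Gamma(Z,Rg_*A)$, and it suffices to know (i) $(Rg_*A)_{*_Z}\cong i^*A$, which is the claim for the fibre $\BC(\mc V[1])$ of $g$ over $*_Z$ together with a base-change statement for the partially proper cohomologically smooth $g$, and (ii) the claim for $Z$, which follows by induction on the number of steps.

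The heart is then the single space $Y:=\BC(\mc V[1])$ with origin $i:*\hookrightarrow Y$. It carries an action of (a $v$-sheaf incarnation of) the multiplicative monoid $\mathbb{A}^1$ — scaling $\mc V$ by scalars, equivalently the action of the grading cocharacter $\nu_b$ on the unipotent part of $\Aut(\mc E_b)$ — for which $1$ acts as $\mathrm{id}_Y$, the element $0$ acts as the constant map $Y\to *\xrightarrow{i}Y$, and $i$ is fixed. Granting $\mathbb{A}^1$-homotopy invariance for $D_\et(-,\Lambda)$ with prime-to-$p$ coefficients, comparing the restrictions of $R\Gamma(\mathbb{A}^1\times Y,m^*A)$ to $\{1\}\times Y$ and to $\{0\}\times Y$ (where $m$ is the action) identifies $R\Gamma(Y,A)$ with $R\Gamma(Y,p^*i^*A)$, compatibly with the natural maps to $i^*A$; and $R\Gamma(Y,p^*i^*A)\cong i^*A$ because $R\Gamma(Y,\Lambda)=\Lambda$ — this last input being the negative-slope counterpart of Proposition~\ref{Hansen-bc-dualizing}, cf. \cite{hamann2023geometric} and Lemma~\ref{negative_bc_dualizing_sheaf} — where one uses that $Rf_*$ for $f:Y\to*$ has bounded cohomological amplitude so that $Rf_*p^*i^*A\cong i^*A\otimes^{\mathbb L}_\Lambda Rf_*\Lambda$ applies even to the possibly unbounded $i^*A$. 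A short diagram chase shows the resulting map $R\Gamma(Y,A)\to i^*A$ is the one induced by the unit $A\to Ri_*i^*A$, proving the proposition. The final assertion is then immediate: $i^*$ commutes with arbitrary direct sums, hence so does $R\Gamma(\widetilde{\mc M}_b,-)$.

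The step I expect to be the main obstacle is $\mathbb{A}^1$-homotopy invariance for an arbitrary, possibly unbounded and non-constructible coefficient complex $A$ in the $v$-stack setting; this is precisely the ``strictly henselian'' content of \cite[Theorem~V.4.2]{fargues2021geometrization}, so the cleanest route is to cite it. A more hands-on alternative in low rank is to present $\BC(\mc O(-d)[1])$ as a $v$-quotient of an ($\ell$-)cohomologically trivial space by $\underline{E}^{d}$ via the long exact sequence of a modification (as in $\BC(\mc O(-1)[1])\cong(\mathbb{A}^1_{C^\sharp})^\diamond/\underline{E}$ coming from $0\to\mc O(-1)\to\mc O\to i_*\mc O_{C^\sharp}\to 0$) and to run the contraction argument there, needing only equivariant cohomology for the $\ell$-cohomologically trivial group $\underline{E}$; the contracting scaling action is still essential, since the torsor presentation alone does not make $Y$ strictly local.
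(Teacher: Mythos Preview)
The paper gives no proof of this statement; it is recorded purely as a citation of \cite[Theorem~V.4.2]{fargues2021geometrization}, with only the one-line gloss that ``$\widetilde{\mc M}_b$ behaves like the `strict henselization' of $\Bun_G$ at $b$.'' Your sketch therefore goes well beyond what the paper does, and the overall architecture you describe --- induction on the filtration steps of $\widetilde{\mc M}_b$ as an iterated extension of negative Banach--Colmez spaces, reducing to the strict locality of a single such space --- matches the strategy in Fargues--Scholze.

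One imprecision worth flagging: the contraction mechanism in \cite{fargues2021geometrization} is not literally an $\mathbb{A}^1$-homotopy. A negative Banach--Colmez space $\BC(\mc V[1])$ carries an $\underline{E}$-module structure, and the relevant contraction is the action of $\pi\in\mc O_E$ (or rather the topological nilpotence of $\pi$ making $\pi^n$ tend to $0$); there is no natural action of the multiplicative monoid $\mathbb{A}^1$ in the scheme-theoretic or diamond sense that plays the role you assign to it. The argument in \cite{fargues2021geometrization} instead exploits presentations of $\BC(\mc V[1])$ as quotients of positive Banach--Colmez spaces (perfectoid open polydisks, which are genuinely strictly local) by actions of $\underline{E}^n$, exactly the route you describe as your ``more hands-on alternative.'' Since $\underline{E}$ is $\ell$-cohomologically trivial, descent along that torsor transports the strict locality down. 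So your alternative is not really an alternative --- it is essentially the actual proof --- and the $\mathbb{A}^1$-homotopy framing, while morally right, would require justification you do not supply. You correctly identify this as the delicate point and defer to the citation, which is exactly what the paper itself does.
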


In this way, $\widetilde{\mc M}_b$ behaves like the ``strict henselization'' of $\Bun_G$ at $b$. The following proposition is obtained via descent to an open pro-$p$ level subgroup $K\subset G_b(E)$.

\begin{proposition}[{\cite[Corollary~V.4.4]{fargues2021geometrization}}]\label{strict-hens-cor}
Let $b\in B(G)$ and let $K\subset G_b(E)$ be an open pro-$p$-subgroup. Then for any $A\in D_\et (\widetilde{\mc M}_b/\underline{K},\Lambda)$ with pullback $A_0=i^*A \in D_\et([*/\underline{K}],\Lambda)\cong D(K,\Lambda)$ corresponding to a complex $V$ of smooth $K$-representations, the map
\[R\Gamma(\widetilde{\mc M}_b/\underline{K}, A)\to R\Gamma([*/\underline{K}, A_0]) \cong V^K\]
is an isomorphism. In particular, $R\Gamma(\widetilde{\mc M}_b/\underline{K}, -)$ commutes with direct sums.
\end{proposition}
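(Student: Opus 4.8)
The plan is to descend the statement of Proposition~\ref{strict_hens} along the pro-$p$ torsor $p\colon\widetilde{\mc M}_b\to\widetilde{\mc M}_b/\underline K$. Base-changing the defining square of Definition~\ref{tilde-covering} along $[*/\underline K]\to[*/\underline{G_b(E)}]$ presents $\widetilde{\mc M}_b/\underline K$ as $\mc M_b\times_{[*/\underline{G_b(E)}]}[*/\underline K]$, exhibits $p$ as a $\underline K$-torsor, and identifies $\widetilde{\mc M}_b=(\widetilde{\mc M}_b/\underline K)\times_{[*/\underline K]}*$. Moreover the closed point $i\colon[*/\underline K]\hookrightarrow\widetilde{\mc M}_b/\underline K$ (the preimage of the distinguished point of $\mc M_b$) has preimage under $p$ exactly the closed point $i_0\colon*\hookrightarrow\widetilde{\mc M}_b$, giving a Cartesian square whose vertical maps are $p$ and the atlas $q\colon*\to[*/\underline K]$.

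First I would record the descent mechanism. Since $p$ is a $\underline K$-torsor, pullback along $p$ identifies $D_\et(\widetilde{\mc M}_b/\underline K,\Lambda)$ with the category of $\underline K$-equivariant objects of $D_\et(\widetilde{\mc M}_b,\Lambda)$. Concretely, writing $\pi\colon\widetilde{\mc M}_b/\underline K\to[*/\underline K]$ for the structure map, one has $R\Gamma(\widetilde{\mc M}_b/\underline K,A)=R\Gamma([*/\underline K],R\pi_*A)$, where $R\pi_*A\in D_\et([*/\underline K],\Lambda)\cong D(K,\Lambda)$ corresponds to a complex $W$ of smooth $K$-representations whose underlying complex is $R\Gamma(\widetilde{\mc M}_b,p^*A)$ by base change along $q$, the $K$-action being the associated descent datum. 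As $K$ is pro-$p$ and $p\in\Lambda^\times$, the functor $R\Gamma([*/\underline K],-)\colon D(K,\Lambda)\to D(\Lambda)$ is the exact functor $(-)^K$ of smooth invariants, which commutes with arbitrary direct sums. Hence $R\Gamma(\widetilde{\mc M}_b/\underline K,A)\cong R\Gamma(\widetilde{\mc M}_b,p^*A)^K$, naturally in $A$.

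Next I would apply Proposition~\ref{strict_hens} to $p^*A$. Its stalk at $i_0$ is $i_0^*p^*A=q^*i^*A=q^*A_0$, i.e.\ the underlying complex $V$ of $A_0$, and the $\underline K$-equivariant structure transported from $p^*A$ along $i_0^*$ coincides with the $K$-action on $A_0=i^*A$ since $i_0$ is the base change of $i$ along $p$. The isomorphism $R\Gamma(\widetilde{\mc M}_b,p^*A)\xrightarrow{\sim}(p^*A)_0$ of Proposition~\ref{strict_hens} is obtained by applying $R\Gamma$ to the unit $p^*A\to Ri_{0,*}i_0^*p^*A$, hence is functorial in $p^*A$ and therefore $\underline K$-equivariant. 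Taking $K$-invariants gives
\[
R\Gamma(\widetilde{\mc M}_b/\underline K,A)\;\cong\;R\Gamma(\widetilde{\mc M}_b,p^*A)^K\;\xrightarrow{\ \sim\ }\;A_0^K\;=\;V^K,
\]
and unwinding the identifications shows that this composite is the map induced by the unit $A\to Ri_*i^*A$, namely the map $R\Gamma(\widetilde{\mc M}_b/\underline K,A)\to R\Gamma([*/\underline K],A_0)\cong V^K$ of the statement. The last assertion then follows since $i^*$ and $(-)^K$ both commute with direct sums (or directly from the corresponding assertion in Proposition~\ref{strict_hens} together with exactness of $(-)^K$).

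The step I expect to be the main obstacle is the equivariance bookkeeping in the last two paragraphs: one must verify that the isomorphism of Proposition~\ref{strict_hens}, applied to the pulled-back sheaf $p^*A$, genuinely intertwines the $\underline K$-torsor descent datum on $R\Gamma(\widetilde{\mc M}_b,p^*A)$ with the smooth $K$-action on the stalk $A_0$, so that passing to invariants is legitimate and recovers the asserted map. This comes down to the naturality of the unit morphism and to the compatibility of the two Cartesian squares above. A secondary, more routine point is justifying base change along the profinite torsor $q$ together with the exactness and direct-sum-preservation of smooth $K$-invariants for pro-$p$ $K$ over $\Lambda$ with $p$ invertible.
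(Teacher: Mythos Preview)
Your proposal is correct and follows exactly the approach indicated in the paper: the paper does not give a proof but simply remarks that the proposition ``is obtained via descent to an open pro-$p$ level subgroup $K\subset G_b(E)$'' and cites \cite[Corollary~V.4.4]{fargues2021geometrization}. Your argument carries out precisely this descent, using the $\underline K$-torsor $p\colon\widetilde{\mc M}_b\to\widetilde{\mc M}_b/\underline K$, the exactness of $(-)^K$ for pro-$p$ $K$ with $p\in\Lambda^\times$, and the functoriality of the isomorphism in Proposition~\ref{strict_hens}.
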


Now we have a diagram for any $b_1\in B(\GL_2)$ with specialization $b_2$:
\begin{equation}
\begin{tikzcd}
 &  & {[*/ \mc{G}_{b_1}]}\arrow[d, "i_{b_1}"]\\
\widetilde{\mc M_{b_2}}\arrow[r,"g"]\arrow[d, dashed, shift right =1]\tikzpullback & \mc M_{b_2} \arrow[r,"f"]\arrow[d, dashed, shift right =1]& \Bun_G \\
* \arrow[r]\arrow[u, shift right =1] & {[*/\underline{G_{b_2}(E)}]}\arrow[r] \arrow[u,shift right=1] \arrow[phantom,ur,"\urcorner",very near start] & {[*/ \mc{G}_{b_2}],} \arrow[u,"i_{b_2}", swap]
\end{tikzcd}
\end{equation}
where the bottom-right horizontal map is induced by the section $\mc G_{b_2}\leftarrow \underline{G_{b_2}(E)}$ of the projection, and the map $[*/\underline{G_{b_2}(E)}]\to \mc M_{b_2}$ takes a $\underline{G_{b_2}(E)}$-torsor to the corresponding split filtered vector bundle on $X_S$.
\begin{proposition}
Pulling back along $[*/\underline{G_{b_2}(E)}]\to [*/\mc G_{b_2}]$ induces an equivalence on $D_\et(-)$.
\end{proposition}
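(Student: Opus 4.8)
The plan is to deduce this formally from the equivalence $D_\et([*/\underline{G_{b_2}(E)}])\cong D_\et([*/\mc G_{b_2}])$ already recorded in \cite[Proposition~V.2.2]{fargues2021geometrization}, by observing that the map in the statement is a one-sided inverse of the map used there.

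First I would set up notation. Let $p\colon \mc G_{b_2}\twoheadrightarrow \pi_0\mc G_{b_2}\cong\underline{G_{b_2}(E)}$ be the projection, which induces $[p]\colon [*/\mc G_{b_2}]\to[*/\underline{G_{b_2}(E)}]$, and let $s\colon\underline{G_{b_2}(E)}\to\mc G_{b_2}$ be a group-theoretic section; it exists since $0\to\mc G_{b_2}^{>0}\to\mc G_{b_2}\to\underline{G_{b_2}(E)}\to 0$ is a split extension \cite[Proposition~III.5.1]{fargues2021geometrization}. It induces the map $[s]\colon[*/\underline{G_{b_2}(E)}]\to[*/\mc G_{b_2}]$ appearing in the statement (the bottom-right horizontal map in the diagram above). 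Since $p\circ s=\mathrm{id}$, functoriality of $[*/-]$ gives $[p]\circ[s]=\mathrm{id}_{[*/\underline{G_{b_2}(E)}]}$.

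Next I would invoke \cite[Proposition~V.2.2]{fargues2021geometrization}, which says $[p]^*\colon D_\et([*/\underline{G_{b_2}(E)}])\to D_\et([*/\mc G_{b_2}])$ is an equivalence. Applying $(-)^*$ to the identity $[p]\circ[s]=\mathrm{id}$ yields $[s]^*\circ[p]^*=\mathrm{id}$ on $D_\et([*/\underline{G_{b_2}(E)}])$; hence $[s]^*$ is a retraction of the equivalence $[p]^*$, and a one-sided inverse of an equivalence is automatically a quasi-inverse, so $[s]^*$ is itself an equivalence. This is the claim.

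I expect no genuine obstacle here: the content is entirely inside \cite[Proposition~V.2.2]{fargues2021geometrization}, whose proof rests on the fact that $\mc G_{b_2}^{>0}$ is a successive extension of positive Banach-Colmez spaces, each of which is an $\ell$-cohomological point (Proposition~\ref{Hansen-bc-dualizing}), so that $[*/\mc G_{b_2}^{>0}]\to *$ induces an equivalence on $D_\et(-)$. If one wanted a self-contained proof one could instead filter $[*/\mc G_{b_2}]$ by the steps of $\mc G_{b_2}^{>0}$ and apply Proposition~\ref{Hansen-bc-dualizing} one layer at a time; but recognizing $[s]^*$ as the inverse of the equivalence in \cite[Proposition~V.2.2]{fargues2021geometrization} is cleaner and suffices.
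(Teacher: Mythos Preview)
Your argument is correct and follows essentially the same strategy as the paper: both recognize $[s]^*$ as a one-sided inverse to the equivalence $[p]^*$ of \cite[Proposition~V.2.2]{fargues2021geometrization} by checking $[p]\circ[s]=\mathrm{id}$. The only difference is that you deduce $[p]\circ[s]=\mathrm{id}$ directly from functoriality of $[*/-]$ applied to $p\circ s=\mathrm{id}$, whereas the paper verifies it by passing to the pro-\'etale cover $*\to[*/\underline{G_{b_2}(E)}]$, where the composition becomes $*\to[*/\mc G_{b_2}^{>0}]\to *$; your route is a bit cleaner but the content is the same.
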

\begin{proof}
Fully faithfulness is shown in the proof of \cite[Proposition~V.2.1]{fargues2021geometrization}. In fact, pulling back induces the identity functor after identifying $D_\et(\mc G_{b_2})\xleftarrow{\cong}D_\et([*/\underline{G_{b_2}(E)}])$ via pullback along $[*/\mc G_{b_2}]\to [*/\underline{G_{b_2}(E)}]$. But the composition $[*/\underline{G_{b_2}(E)}]\to [*/\mc G_{b_2}]\to [*/\underline{G_{b_2}(E)}]$ is pro-\'etale locally the identity map $*\to [*/\BC(\mc L)] \to *$, which must glue to the identity.
\end{proof}
Pulling back along $* \to [*/\underline{G_{b_2}(E)}]$ induces 
\[D(G_{b_2}(E),\Lambda)\cong D_{\et}([*/\underline{G_{b_2}(E)}],\Lambda)\to D_\et(*,\Lambda)\cong D(\Lambda),\]
which sends a complex of smooth $G_{b_2}(E)$-representations to the same complex as their underlying $\Lambda$-modules \cite[Thm~V.1.1]{fargues2021geometrization}. The following lemma allows us to instead pull back through $\widetilde{\mc M_{b_2}}$. 

\begin{lemma}\label{split_point}
The point $*\to \Bun_G$ obtained via composition along the bottom row followed by $i_{b_2}$ is the same point as the composition $*\to \widetilde{\mc M_{b_2}}\xrightarrow{fg} \Bun_G$ in the diagram.
\end{lemma}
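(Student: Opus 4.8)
The plan is to unwind the section $*\to\widetilde{\mc M_{b_2}}$ (the closed point, in the sense of Proposition~\ref{strict_hens}) via the universal property of the cartesian square of Definition~\ref{tilde-covering}, and then transport it through $g$ and $f$. That square is the pullback of $\mathrm{gr}\colon\mc M_{b_2}\to[*/\underline{G_{b_2}(E)}]$ along $*\to[*/\underline{G_{b_2}(E)}]$, so the morphism $*\to\widetilde{\mc M_{b_2}}$ drawn in the diagram is the one classified by the pair $\big(\mathrm{id}\colon *\to*,\ s\colon *\to[*/\underline{G_{b_2}(E)}]\to\mc M_{b_2}\big)$, where the second arrow of $s$ is the section of $\mathrm{gr}$ sending a torsor to its associated \emph{split} filtered bundle. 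This pair is compatible over $[*/\underline{G_{b_2}(E)}]$ precisely because $\mathrm{gr}$ precomposed with the split section is the identity; hence the induced map satisfies $g\circ(*\to\widetilde{\mc M_{b_2}})=s$, and therefore $fg\circ(*\to\widetilde{\mc M_{b_2}})=f\circ s$, the $G$-bundle obtained by equipping $*$ with its split filtered bundle and then forgetting the filtration.

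Next I would identify $f\circ s$ intrinsically, avoiding any choice of point. The composite $f\circ(\text{split section})\colon[*/\underline{G_{b_2}(E)}]\to\mc M_{b_2}\to\Bun_G$ sends a torsor $\mc T$ to the underlying bundle of its associated split filtered bundle, i.e.\ to the contracted product $\mc T\times^{\underline{G_{b_2}(E)}}\mc E_{b_2}$, where $\mc E_{b_2}=\bigoplus_\lambda\mc E_{b_2}^\lambda$ is the Dieudonn\'e--Manin bundle attached to $b_2$ (that the weight filtration is opposite to the Harder--Narasimhan filtration is immaterial here, since on the split bundle both are filtrations by the same direct summands). By functoriality of the contracted product along the inclusion $\underline{G_{b_2}(E)}\hookrightarrow\mc G_{b_2}=\underline{\Aut}(\mc E_{b_2})$ splitting $\mc G_{b_2}\to\pi_0\mc G_{b_2}$, this composite factors as $[*/\underline{G_{b_2}(E)}]\to[*/\mc G_{b_2}]\xrightarrow{i_{b_2}}\Bun_G$, with the first arrow precisely the section appearing in the bottom row of the diagram. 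Restricting this factorization along the trivial torsor $*\to[*/\underline{G_{b_2}(E)}]$ then gives $fg\circ(*\to\widetilde{\mc M_{b_2}})=i_{b_2}\circ\big(*\to[*/\underline{G_{b_2}(E)}]\to[*/\mc G_{b_2}]\big)$, which is exactly the assertion.

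I do not expect a genuine obstacle: the statement is a diagram chase, and the only point deserving care is the matching of conventions. Concretely, one must check that ``forget the weight $\Q$-filtration'' intertwines the split section $[*/\underline{G_{b_2}(E)}]\to\mc M_{b_2}$ of $\mathrm{gr}$ with the group-theoretic section $[*/\underline{G_{b_2}(E)}]\to[*/\mc G_{b_2}]$ of the projection to $\pi_0$ — equivalently, that forgetting the filtration from the canonical split filtered structure on $\bigoplus_\lambda\mc E_{b_2}^\lambda$ returns $\mc E_{b_2}$ \emph{with the correct $\mc G_{b_2}$-twist}, not merely an abstractly isomorphic $G$-bundle. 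The functoriality of contracted products used above settles exactly this, and in particular the argument never invokes the explicit description of $\widetilde{\mc M_{b_2}}$ as a (punctured) negative Banach--Colmez space.
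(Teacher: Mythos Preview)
Your argument is correct and is precisely the ``simple diagram chase'' that the paper alludes to; the paper's own proof is a one-liner offering either this chase or the equivalent observation that both compositions classify the same $G$-bundle $\mc E_{b_2}$ (the split filtered bundle with its filtration forgotten), which is exactly what your factorization through the contracted product verifies.
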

\begin{proof}
Either by a simple diagram chase, or by observing that both points correspond to the bundle $\mc E_{b_2}$ (i.e. the trivialized bundle with everywhere split Harder-Narasimhan filtration corresponding to $b_2$).
\end{proof}

Then as a consequence of Proposition \ref{strict_hens} and Lemma \ref{split_point}, we immediately get the following proposition. For convenience, if $C\in D_\et([*/\underline {G_{b}(E)}])$ is a complex of smooth $G_b(E)$-modules, we denote the pullback to $D(\Lambda)$ along $*\to [*/\underline{G_b(E)}]$ by $C_\Lambda$, which we call the underlying $\Lambda$-module.
\begin{proposition}\label{action-on-cohomology}
	For any $A\in D_\et(\Bun_G)$, there is a natural isomorphism \[(i_{b_2}^* A)_\Lambda \cong R\Gamma (\widetilde{\mc M_{b_2}}, (fg)^*A)\] in $D(\Lambda)$ which is equivariant for the natural $G_{b_2}(E)$-structure on the right. 
\end{proposition}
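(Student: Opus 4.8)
The plan is to assemble the statement from the pieces established in the excerpt: Proposition \ref{strict_hens} (the strict henselian property of $\widetilde{\mc M_{b_2}}$), Lemma \ref{split_point} (the two points of $\Bun_G$ agree), and the identification of $D_\et([*/\underline{G_{b_2}(E)}])$ with $D(G_{b_2}(E),\Lambda)$ together with the fact that pullback along $*\to[*/\underline{G_{b_2}(E)}]$ forgets the equivariant structure. First I would fix $A\in D_\et(\Bun_G)$ and form $(fg)^*A\in D_\et(\widetilde{\mc M_{b_2}})$. By Lemma \ref{split_point}, the stalk of $(fg)^*A$ at the closed point $i:*\hookrightarrow\widetilde{\mc M_{b_2}}$ is canonically the stalk $i_{b_2}^*A$ (as an object of $D_\et(*)\cong D(\Lambda)$), i.e.\ the underlying $\Lambda$-module $(i_{b_2}^*A)_\Lambda$. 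Then Proposition \ref{strict_hens} gives that the natural map
\[
R\Gamma(\widetilde{\mc M_{b_2}},(fg)^*A)\longrightarrow \bigl((fg)^*A\bigr)_0 \;=\; (i_{b_2}^*A)_\Lambda
\]
is an isomorphism in $D(\Lambda)$. This proves the isomorphism of underlying $\Lambda$-modules; the content that remains is the equivariance.

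For the equivariant structure, the point is that $g:\widetilde{\mc M_{b_2}}\to\mc M_{b_2}$ is a $\underline{G_{b_2}(E)}$-torsor, so for each $\gamma\in G_{b_2}(E)$ there is an automorphism of $\widetilde{\mc M_{b_2}}$ over $\mc M_{b_2}$, hence over $\Bun_G$; pulling back $(fg)^*A$ along this automorphism is canonically identified with $(fg)^*A$ itself (since $fg$ is $G_{b_2}(E)$-invariant), which endows $R\Gamma(\widetilde{\mc M_{b_2}},(fg)^*A)$ with an action of $G_{b_2}(E)$. Concretely, I would phrase this as descent: the torsor $g$ fits into the cartesian square from Definition \ref{tilde-covering}, so $R\Gamma(\widetilde{\mc M_{b_2}},(fg)^*A)$ is the pullback along $*\to[*/\underline{G_{b_2}(E)}]$ of the object $R\Gamma_{\mc M_{b_2}/[*/\underline{G_{b_2}(E)}]}(-)$ pushed to $[*/\underline{G_{b_2}(E)}]$, i.e.\ it carries the canonical descent datum making it an object of $D_\et([*/\underline{G_{b_2}(E)}])\cong D(G_{b_2}(E),\Lambda)$, and pulling back to $*$ forgets exactly this datum. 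One must then check that this smooth $G_{b_2}(E)$-action is compatible with the one on $(i_{b_2}^*A)_\Lambda$ coming from $D_\et([*/\mc G_{b_2}])\cong D(G_{b_2}(E),\Lambda)$ under the isomorphism above — this follows because the isomorphism of Proposition \ref{strict_hens} is natural in the relevant sense and can be checked at each finite level $\widetilde{\mc M_{b_2}}/\underline{K}$ using Proposition \ref{strict-hens-cor}, where it reduces to the statement that $R\Gamma(\widetilde{\mc M_{b_2}}/\underline K,-)\xrightarrow{\sim} (-)^K$ over $[*/\underline K]$ is an equivalence matching the $K$-actions.

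The main obstacle is the bookkeeping for the equivariant structure: Proposition \ref{strict_hens} is stated as an isomorphism of plain objects in $D(\Lambda)$, so one must upgrade it to an equivalence in $D_\et([*/\underline{G_{b_2}(E)}])$ before forgetting down to $D(\Lambda)$, and this requires knowing the isomorphism is compatible with pullback along the finite-level covers $\widetilde{\mc M_{b_2}}/\underline K$ — equivalently, that the map $A\to Ri_*i^*A$ inducing it is functorial in a way that descends. This is morally routine (it is why the authors call $\widetilde{\mc M_{b_2}}$ a strict henselization), and I expect it is handled by invoking Proposition \ref{strict-hens-cor} at each $K$ and passing to the (exact) colimit over open pro-$p$ $K\subset G_{b_2}(E)$, using that taking smooth vectors is exact in this setup. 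Everything else — the diagram chase of Lemma \ref{split_point}, the identification of stalks, the torsor descent — is formal.
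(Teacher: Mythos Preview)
Your proposal is correct and follows essentially the same route as the paper: the underlying $\Lambda$-module isomorphism is obtained exactly as you describe, by chaining Lemma~\ref{split_point} and Proposition~\ref{strict_hens}. The only difference is in the bookkeeping for equivariance: rather than passing to finite levels $\widetilde{\mc M_{b_2}}/\underline{K}$ and taking a colimit, the paper argues directly at the torsor level, writing out the canonical descent datum on $\widetilde{\mc M_{b_2}}\times_{\mc M_{b_2}}\widetilde{\mc M_{b_2}}\cong \underline{G_{b_2}(E)}\times\widetilde{\mc M_{b_2}}$ and observing that the section $s:*\to\widetilde{\mc M_{b_2}}$ induces a $\underline{G_{b_2}(E)}$-equivariant section $\tilde s$ on fiber products, so that the commutative square matching the two descent data follows by functoriality of $s^*$ and $\tilde s^*$. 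This avoids your concern about upgrading Proposition~\ref{strict_hens} level-by-level; the paper simply notes the equivariance is ``more or less tautological since both actions are defined via pullbacks along commuting arrows.'' Your finite-level approach also works and is the one the paper uses later (in the proof of Theorem~\ref{Smooth-cohomology-of-tower}), but here the direct \v{C}ech-style argument is shorter.
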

\begin{proof}
	The natural isomorphism as $\Lambda$-modules follows as a chain of natural isomorphisms 
	\[(i_{b_2}^*)_\Lambda\cong s^*(fg)^*\cong R\Gamma(\widetilde{\mc M_{b_2}},(fg)^*(-)),\]
	where the first is given by Lemma \ref{split_point} and the second by Proposition \ref{strict_hens}. 
	
	The equivariance is more or less tautological since both actions are defined via pullbacks along commuting arrows. To see it explictly, note that $\widetilde{\mc{M}_{b_2}}\times_{\mc M_{b_2}}\widetilde{\mc{M}_{b_2}}\to \widetilde{\mc{M}_{b_2}}$ is a trivial $\underline{G_{b_2}(E)}$-torsor, and the diagonal section induces an isomorphism $\widetilde{\mc{M}_{b_2}}\times_{\mc M_{b_2}}\widetilde{\mc{M}_{b_2}}\cong \underline{G_{b_2}(E)}\times \widetilde{\mc{M}_{b_2}}$. Any element $\gamma\in G_{b_2}(E)$ induces points on $\underline{G_{b_2}(E)}$, so that the equivariant structure on cohomology (for any sheaf $\mc F$ on $\mc M_{b_2}$) is given by 
	\[R\Gamma(\widetilde{\mc{M}_{b_2}},g^*\mc F)\to R\Gamma( \widetilde{\mc{M}_{b_2}} \times_{{\mc{M}_{b_2}}}\widetilde{\mc{M}_{b_2}},\mathrm{pr}_1^*g^*\mc F)\xrightarrow{\mathrm{can}}  R\Gamma( \widetilde{\mc{M}_{b_2}} \times_{{\mc{M}_{b_2}}}\widetilde{\mc{M}_{b_2}},\mathrm{pr}_2^*g^*\mc F)\xrightarrow{\gamma^*}R\Gamma(\widetilde{\mc{M}_{b_2}},g^*\mc F),\]
	where $\mathrm{can}: \mathrm{pr}_1^*g^*\mc F\xrightarrow{\cong}\mathrm{pr}_2^*g^* \mc F$ is the canonical descent datum, and $\gamma:\widetilde{\mc{M}_{b_2}}\hookrightarrow \underline{G_{b_2}(E)}\times \widetilde{\mc{M}_{b_2}}\cong \widetilde{\mc{M}_{b_2}}\times_{\mc M_{b_2}}\widetilde{\mc{M}_{b_2}}$ is the section determined by $\gamma$. 
	
	The section $s:*\to \widetilde{\mc{M}_{b_2}}$ induces a $\underline{G_{b_2}(E)}$-equivariant section $\tilde{s}:*\times_{[*/\underline{G_{b_2}(E)}]}*\to \widetilde{\mc{M}_{b_2}}\times_{\mc M_{b_2}}\widetilde{\mc{M}_{b_2}}$. Then one can write the corresponding row on cohomology for the canonical descent datum for $(i_{b_2}^* A)_\Lambda$ with commuting vertical arrows induced by $s$ and $\tilde{s}$; in particular, a commutative square 
	\begin{center}
	\begin{tikzcd}
		R\Gamma( \widetilde{\mc{M}_{b_2}} \times_{{\mc{M}_{b_2}}}\widetilde{\mc{M}_{b_2}},\mathrm{pr}_1^*g^*\mc F) \arrow[r,"\mathrm{can}"]\arrow[d, "\tilde{s}^*"]& R\Gamma( \widetilde{\mc{M}_{b_2}}\arrow[d, "\tilde{s}^*"] \times_{{\mc{M}_{b_2}}}\widetilde{\mc{M}_{b_2}},\mathrm{pr}_2^*g^*\mc F)\\
		R\Gamma(*\times_{[*/\underline{G_{b_2}(E)}]}*,\mathrm{pr}_1^*(\mc F)_\Lambda) \arrow[r,"\mathrm{can}"] & R\Gamma(*\times_{[*/\underline{G_{b_2}(E)}]}*,\mathrm{pr}_2^*(\mc F)_\Lambda),
	\end{tikzcd}
	\end{center}
	after identifying $\mathrm{pr}_k^*(\mc F)_\Lambda$ with $\tilde{s}^*\mathrm{pr}_k^* g^*\mc F$ for $k=1,2$.
\end{proof}
\begin{corollary}
	For any $A\in D_\et(\Bun_G^{b_1})$, there is a natural isomorphism \[(i_{b_2}^* R i_{b_1,*} A)_\Lambda \cong R\Gamma (\widetilde{\mc M_{b_2}}, (fg)^*Ri_{b_1,*}A)\] in $D(\Lambda)$ which is equivariant for the natural $G_{b_2}(E)$-structure on the right. 
\end{corollary}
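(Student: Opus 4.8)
The plan is that this corollary is an immediate specialization of Proposition \ref{action-on-cohomology}, and the only point requiring a word of justification is that the sheaf to which we apply it is a legitimate object of $D_\et(\Bun_G)$. First I would observe that $i_{b_1}\colon \Bun_G^{b_1}\hookrightarrow \Bun_G$ is a locally closed immersion of $v$-stacks, so that the six-functor formalism for $D_\et(-,\Lambda)$ furnishes a derived pushforward $Ri_{b_1,*}\colon D_\et(\Bun_G^{b_1},\Lambda)\to D_\et(\Bun_G,\Lambda)$. Hence for any $A\in D_\et(\Bun_G^{b_1})$ the object $Ri_{b_1,*}A$ lies in $D_\et(\Bun_G)$ and is a valid input for Proposition \ref{action-on-cohomology}.

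Next I would simply substitute $Ri_{b_1,*}A$ for the generic sheaf in the statement of Proposition \ref{action-on-cohomology}. This yields a natural isomorphism of underlying $\Lambda$-modules
\[(i_{b_2}^* Ri_{b_1,*}A)_\Lambda \cong R\Gamma(\widetilde{\mc M_{b_2}}, (fg)^* Ri_{b_1,*}A),\]
and the equivariance clause of that proposition produces compatibility with the $G_{b_2}(E)$-action on the right-hand side with no further argument: the torsor $\widetilde{\mc M_{b_2}}\to \mc M_{b_2}$, the maps $f$, $g$, $i_{b_2}$, and the section $s\colon *\to\widetilde{\mc M_{b_2}}$ used to set up that equivariant structure do not depend on the sheaf being pushed forward, so the commuting-square argument in the proof of Proposition \ref{action-on-cohomology} applies verbatim. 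Naturality in $A$ follows by composing the naturality assertion of that proposition with the functoriality of $Ri_{b_1,*}$.

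I expect essentially no obstacle here; all of the content sits in Proposition \ref{action-on-cohomology} (hence ultimately in the ``strict henselian'' property of Proposition \ref{strict_hens} together with the identification of the two distinguished points in Lemma \ref{split_point}). The reason for recording this special case separately is that it is the starting point for the genuinely nontrivial step to follow: one still must compute $(fg)^*Ri_{b_1,*}A$ explicitly, which is accomplished by a base-change and excision argument on the smooth chart $\mc M_{b_2}$, reducing the right-hand side to the (smooth) cohomology of the locally closed stratum $\widetilde{\mc M_{b_2}^{b_1}}$ — but that is the subject of Theorem \ref{Smooth-cohomology-of-tower} and not of the present corollary.
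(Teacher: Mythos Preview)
Your proposal is correct and matches the paper's approach: the corollary is stated immediately after Proposition \ref{action-on-cohomology} with no separate proof, since it is obtained by applying that proposition to the sheaf $Ri_{b_1,*}A\in D_\et(\Bun_G)$. Your additional remarks about naturality and the role of the six-functor formalism are accurate but already implicit in the paper's setup.
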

Now we would like to pass to the cohomology over a locally closed stratum $\widetilde{\mc M_{b_2}^{b_1}}\hookrightarrow \widetilde{\mc{M}_{b_2}}$ by a smooth basechange morphism in the diagram below. Unfortunately, the fibers of $\widetilde{\mc M_{b_2}}\to \mc M_{b_2}$ look like $\underline{G_{b_2}(E)}$, which is not smooth due to the locally profinite nature of the group. Instead, we make use of the following intermediate spaces where smooth base change does apply, and consider the colimit of the system. 

\begin{lemma}\label{intermediate_etale_covers}
$\widetilde{\mc M_{b_2}}\to \mc M_{b_2}$ is a pro-\'{e}tale morphism of small $v$-stacks. More precisely, any open pro-$p$ subgroup $K\subset G$ yields the discrete $G$-set $G/K$, and the pushout along $\underline{G}\to \underline{G/K}$ induces separated \'{e}tale maps (in particular, representable in locally spatial diamonds)
\[\widetilde{\mc M_{b_2}}/ \underline{K}\cong \widetilde{\mc M_{b_2}}\times^{\underline{G}}\underline{G/K}\to \mc M_{b_2},\]
with finite \'{e}tale transition maps $\widetilde{\mc M_{b_2}}/\underline{K'}\to \widetilde{\mc M_{b_2}}/\underline{K}$ whenever $K'\subset K$, and
\[\widetilde{\mc M_{b_2}}\cong \varprojlim_K \widetilde{\mc M_{b_2}}/\underline{K}\to \mc M_{b_2}.\]
\end{lemma}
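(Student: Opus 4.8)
The plan is to realise $\widetilde{\mc M_{b_2}}\to\mc M_{b_2}$ as a cofiltered limit of finite \'etale covers by pushing its $\underline G$-torsor structure forward along the quotient maps $G\to G/K$, where I abbreviate $G:=G_{b_2}(E)$. Since $G$ is locally pro-$p$, the open pro-$p$ subgroups $K$ form a cofinal system of compact open neighbourhoods of the identity, $G\cong\varprojlim_K G/K$ as topological spaces (each $G/K$ discrete, each fibre $K/K'$ of $G/K'\to G/K$ finite), and the functor $\underline{(-)}$ carries this inverse limit of locally compact Hausdorff spaces to an inverse limit of $v$-sheaves, so $\underline G\cong\varprojlim_K\underline{G/K}$ with $\underline{G/K}=\bigsqcup_{G/K}\ast$ separated \'etale over $\ast$ and with finite \'etale transition maps. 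By Definition~\ref{tilde-covering}, $\widetilde{\mc M_{b_2}}\to\mc M_{b_2}$ is a $\underline G$-torsor for the $v$-topology, hence a $v$-cover which is $v$-locally on $\mc M_{b_2}$ isomorphic to $\underline G\times(\text{base})$; this local triviality is the mechanism by which every assertion will be reduced to the corresponding elementary statement about constant spaces over a base.

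First I would form, for each open pro-$p$ subgroup $K\subset G$, the contracted product $\widetilde{\mc M_{b_2}}\times^{\underline G}\underline{G/K}$. As the $\underline G$-action is free, this equals the quotient of $\widetilde{\mc M_{b_2}}$ by the restricted (still free) $\underline K$-action, which justifies the notation $\widetilde{\mc M_{b_2}}/\underline K$. Being separated \'etale and being representable in locally spatial diamonds are both $v$-local on the target, so to verify these for $\widetilde{\mc M_{b_2}}/\underline K\to\mc M_{b_2}$ it suffices to trivialise the torsor, where the map becomes the evidently separated \'etale projection $\bigsqcup_{G/K}(\text{base})\to(\text{base})$. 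For $K'\subseteq K$ the fibres of $G/K'\to G/K$ all have cardinality $[K:K']<\infty$, so $\underline{G/K'}\to\underline{G/K}$ is finite \'etale, and by base change plus the same $v$-local reduction (over each component the map is $\bigsqcup_{K/K'}(\text{base})\to(\text{base})$) the induced transition map $\widetilde{\mc M_{b_2}}/\underline{K'}\to\widetilde{\mc M_{b_2}}/\underline K$ is finite \'etale.

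It then remains to identify $\widetilde{\mc M_{b_2}}$ with $\varprojlim_K\widetilde{\mc M_{b_2}}/\underline K$ over $\mc M_{b_2}$ and to deduce pro-\'etaleness. For the identification I would show that the canonical comparison map $\widetilde{\mc M_{b_2}}\to\varprojlim_K\widetilde{\mc M_{b_2}}/\underline K$ is an isomorphism of small $v$-stacks, which is again a $v$-local question on $\mc M_{b_2}$: after trivialising the torsor both sides become $\varprojlim_K\big(\underline{G/K}\times(\text{base})\big)=\big(\varprojlim_K\underline{G/K}\big)\times(\text{base})=\underline G\times(\text{base})$. Since the transition maps in this cofiltered system are finite \'etale, in particular qcqs, the resulting presentation $\widetilde{\mc M_{b_2}}\cong\varprojlim_K\widetilde{\mc M_{b_2}}/\underline K$ exhibits $\widetilde{\mc M_{b_2}}\to\mc M_{b_2}$ as a pro-\'etale morphism of small $v$-stacks.

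I expect the only genuine care required to be bookkeeping: checking that each relative property invoked (separated, \'etale, finite \'etale, representable in locally spatial diamonds, being an isomorphism) is stable under base change and $v$-local on the target in the sense used in \cite{fargues2021geometrization}, and that the cofiltered limit commutes with the contracted product. There is no deeper obstacle, since once a $v$-cover trivialising the torsor is fixed every step collapses to an assertion about the constant $v$-sheaves $\underline{G/K}$ over that cover.
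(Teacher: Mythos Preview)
Your proposal is correct and takes essentially the same approach as the paper: both reduce all claims to $v$-local statements on $\mc M_{b_2}$, where the torsor trivialises and everything becomes a statement about the constant sheaves $\underline{G/K}$. The paper's proof is simply a compressed version of yours, citing \cite[Lemma~10.13]{scholze2017etale} (which packages the general fact about $\underline G$-torsors for locally profinite $G$) after passing to a $v$-cover by a perfectoid space; you have unpacked precisely what that citation contains.
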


\begin{proof}
Follows immediately from \cite[Lemma~10.13]{scholze2017etale} after passing to a $v$-cover of $\mc M_{b_2}$ by a perfectoid space, since all claims are $v$-local on the target. 
\end{proof}

\begin{definition}
We define $\mc M_{b_2}^{b_1}$ and the $\underline{G_{b_2}(E)}$-torsor $\widetilde{\mc M_{b_2}^{b_1}}\to \mc M_{b_2}^{b_1}$ to be the pullbacks in the following diagram:

\begin{equation}
\begin{tikzcd}
\widetilde{\mc M^{b_1}_{b_2}}\arrow[d,"\widetilde{i_{b_1}}",swap] \arrow[r,"g_{b_1}^{}"] \tikzpullback & \mc M^{b_1}_{b_2}\arrow[d,swap,"i_{b_1}' "]\arrow[r,"f_{b_1}"]\tikzpullback & {[*/ \mc{G}_{b_1}]}\arrow[d, "i_{b_1}"]\\
\widetilde{\mc M_{b_2}}\arrow[r,"g"]\arrow[d, dashed, shift right =1]\tikzpullback & \mc M_{b_2} \arrow[r,"f"]\arrow[d, dashed, shift right =1]& \Bun_G \\
* \arrow[r]\arrow[u, shift right =1] & {[*/\underline{G_{b_2}(E)}]}\arrow[r] \arrow[u,shift right=1] \arrow[phantom,ur,"\urcorner",very near start] & {[*/ \mc{G}_{b_2}].} \arrow[u,"i_{b_2}", swap]
\end{tikzcd}
\end{equation}
In particular, $\mc M_{b_2}^{b_1}$ is the moduli $v$-stack parameterizing filtered vector bundles $\mc E$ over $X_S$ such that at each geometric point of $S$, $\mc E$ is isomorphic to $\mc E_{b_1}$ and the associated graded of the filtration is isomorphic to $\mc E_{b_2}$; that is to say that the weight filtration of $\mc E_{b_1}$ has pure subquotients corresponding to the semistable direct summands of $\mc E_{b_2}$. Note that the inclusion of $\Bun_G^{b_1}$ factors through the open substack $j_{<b_2}:\Bun_G^{\leq b_2}\setminus \Bun_G^{b_2}\hookrightarrow \Bun_G$, and the pullback of $\mc M_{b_2}$ along $j_{<b_2}$ recovers the punctured cover $\mc M_{b_2}^\circ$.

\end{definition}

The pullback of sheaves on $\mc M_{b_2}^{b_1}$ along $g_{b_1}^{}$ naturally inherit a $\underline{G_{b_2}(E)}$-equivariant structure, inducing $G_{b_2}(E)$-module structures on the $\Lambda$-modules of $R\Gamma(\widetilde{\mc M_{b_2}^{b_1}},g_{b_1}^*(-))$. However, this will not necessarily produce a complex of smooth representations. Then it is required to take the colimit of $K$-fixed vectors for open pro-$p$ subgroups $K\subset G_{b_2}(E)$, which extends to $D(\Lambda)$ by applying the colimit in each degree (recall that taking smooth vectors is exact since $G_{b_2}(E)$ is locally pro-$p$ and due to our assumptions on $\Lambda$, so that there is no need to derive this procedure). In other words, there is a functor \[\left(R\Gamma(\widetilde{\mc M_{b_2}^{b_1}},g_{b_1}^*(-))\right)^\mathrm{sm}:=\varinjlim_K R\Gamma(\widetilde{\mc M_{b_2}^{b_1}}/\underline{K},-):D_\et(\mc M_{b_2}^{b_1})\to D(\Lambda),\]
whose image lies in the category of smooth representations $D_\et([*/\underline{G_{b_2}(E)}])\cong D(G_{b_2}(E))$ after remembering the representation structures. Then intuitively, the following proposition states that there is a base-change isomorphism along the pro-\'etale torsor $g$, up to ``taking smooth subrepresentations''. Note that what we call smooth cohomology here is the usual definition of the cohomology of such a tower of \'{e}tale maps.

\begin{theorem}\label{Smooth-cohomology-of-tower}
There is a natural isomorphism of functors $D_\et([*/\mc G_{b_1}])\to D_{\et}(*)\cong D(\Lambda)$
\[R\Gamma(\widetilde{\mc M_{b_2}},-) \circ (fg)^*R i_{b_1,*} \xrightarrow{\sim} \left(R\Gamma(\widetilde{\mc M_{b_2}^{b_1}},g_{b_1}^*(-))\right)^\mathrm{sm}\circ f_{b_1}^*.\]
Furthermore, the induced smooth $G_{b_2}(E)$-actions on either side agree.
\end{theorem}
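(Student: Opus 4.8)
The plan is to prove the isomorphism first at each finite \'etale level $\widetilde{\mc M_{b_2}}/\underline K$ of the tower, where smooth base change is available, and then to pass to the colimit over open pro-$p$ subgroups $K\subset G_{b_2}(E)$. Going through the tower is forced: the fibres of $g:\widetilde{\mc M_{b_2}}\to \mc M_{b_2}$ are copies of $\underline{G_{b_2}(E)}$, which is not cohomologically smooth, so one cannot base change along the composite $fg$ directly; by contrast each $g_K:\widetilde{\mc M_{b_2}}/\underline K\to \mc M_{b_2}$ is separated \'etale by Lemma \ref{intermediate_etale_covers}, so $p_K:=f\circ g_K:\widetilde{\mc M_{b_2}}/\underline K\to \Bun_G$ is cohomologically smooth by Theorem \ref{properties-of-M_b}.

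First I would record, for each such $K$, that taking $\underline K$-quotients commutes with the fibre products defining $\widetilde{\mc M_{b_2}^{b_1}}$, since $\underline K$ acts freely on $\widetilde{\mc M_{b_2}}$ and trivially on $\mc M_{b_2}$, $\Bun_G$ and $[*/\mc G_{b_1}]$; hence $\widetilde{\mc M_{b_2}^{b_1}}/\underline K\cong (\widetilde{\mc M_{b_2}}/\underline K)\times_{\Bun_G}[*/\mc G_{b_1}]$ is precisely the pullback of $i_{b_1}$ along $p_K$, with locally closed immersion $\widetilde{i}_K$ into $\widetilde{\mc M_{b_2}}/\underline K$ and projection $h_K$ to $[*/\mc G_{b_1}]$ (compatible with $f_{b_1}g_{b_1}$ along the tower). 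Then I would apply smooth base change along the cohomologically smooth map $p_K$ to the $*$-pushforward $Ri_{b_1,*}$, obtaining $p_K^*Ri_{b_1,*}A\cong R(\widetilde{i}_K)_*h_K^*A$, and take $R\Gamma(\widetilde{\mc M_{b_2}}/\underline K,-)$, composing the two $*$-pushforwards, to get a natural isomorphism $R\Gamma(\widetilde{\mc M_{b_2}}/\underline K,\,p_K^*Ri_{b_1,*}A)\cong R\Gamma(\widetilde{\mc M_{b_2}^{b_1}}/\underline K,\,h_K^*A)$, compatibly with the restriction maps as $K$ shrinks.

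To identify the left-hand side I would use that the closed point of $\widetilde{\mc M_{b_2}}/\underline K$ is $[*/\underline K]$ and that, by Lemma \ref{split_point} at level $\underline K$, the composite $[*/\underline K]\hookrightarrow \widetilde{\mc M_{b_2}}/\underline K\xrightarrow{p_K}\Bun_G$ is the distinguished point factoring through $[*/\underline{G_{b_2}(E)}]$; so the pullback of $Ri_{b_1,*}A$ to the closed point is $i_{b_2}^*Ri_{b_1,*}A$ restricted to $K$, and Proposition \ref{strict-hens-cor} gives $R\Gamma(\widetilde{\mc M_{b_2}}/\underline K,\,p_K^*Ri_{b_1,*}A)\cong (i_{b_2}^*Ri_{b_1,*}A)^{K}$. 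Passing to the filtered colimit over $K$, the right-hand side becomes $\bigl(R\Gamma(\widetilde{\mc M_{b_2}^{b_1}},g_{b_1}^*f_{b_1}^*A)\bigr)^{\mathrm{sm}}$ by definition, while the left-hand side becomes $\varinjlim_K (i_{b_2}^*Ri_{b_1,*}A)^K=i_{b_2}^*Ri_{b_1,*}A$ as a $\Lambda$-complex, because $i_{b_2}^*Ri_{b_1,*}A$ underlies a complex of smooth $G_{b_2}(E)$-representations and taking smooth vectors is exact; by Proposition \ref{action-on-cohomology} this complex is $R\Gamma(\widetilde{\mc M_{b_2}},(fg)^*Ri_{b_1,*}A)$, the left-hand side of the claim. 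Naturality in $A$ of every step then yields the isomorphism of functors.

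For the equivariance I would argue as in the proof of Proposition \ref{action-on-cohomology}: the $G_{b_2}(E)$-action on the left comes from the torsor $\widetilde{\mc M_{b_2}}\to \mc M_{b_2}$, and on the right it is the action on the colimit induced by $\gamma\in G_{b_2}(E)$ permuting the tower, $\widetilde{\mc M_{b_2}^{b_1}}/\underline K\to \widetilde{\mc M_{b_2}^{b_1}}/\underline{\gamma K\gamma^{-1}}$ over $[*/\mc G_{b_1}]$; these agree because the tower maps are pulled back from the $\underline{G_{b_2}(E)}$-torsor datum over $\mc M_{b_2}$, hence trivial over $\Bun_G$ and $[*/\mc G_{b_1}]$, and so commute with smooth base change and with the identification of Proposition \ref{strict-hens-cor}. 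I expect the hard part to be the invocation of smooth base change: one must check that $p_K$ is cohomologically smooth and that the locally closed immersion $i_{b_1}$ of Artin $v$-stacks is tame enough for the $*$-version to apply — most cleanly by factoring $i_{b_1}$ as an open immersion into its closure followed by a closed immersion and treating each factor separately — and then carry the equivariant structure of the tower through all three steps without losing track of it.
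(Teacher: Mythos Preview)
Your proposal is correct and follows essentially the same route as the paper: work at each finite level $\widetilde{\mc M_{b_2}}/\underline K$, apply smooth base change along the cohomologically smooth map to $\Bun_G$, invoke the strict henselian property (Proposition~\ref{strict-hens-cor}) to identify the result with $(i_{b_2}^*Ri_{b_1,*}A)^K$, and pass to the colimit over $K$; equivariance comes from the torsor structure exactly as you describe. Your final worry about factoring $i_{b_1}$ is unnecessary: the paper simply cites \cite[Proposition~23.16(ii)]{scholze2017etale}, which gives the base-change isomorphism for $*$-pushforward along an arbitrary map against a separated, representable-in-locally-spatial-diamonds, cohomologically smooth map, so no decomposition of $i_{b_1}$ is needed.
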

\begin{proof}
Lemma \ref{intermediate_etale_covers} allows us to fill in the following diagram, where the morphisms $g^K,\ g_{b_1}^K$ are \'{e}tale and hence cohomologically smooth:
\begin{equation}
\begin{tikzcd}
\widetilde{\mc M^{b_1}_{b_2}}\arrow[d,"\widetilde{i_{b_1}}",swap] \arrow[r,"\psi"] \tikzpullback& \widetilde{\mc M^{b_1}_{b_2}}/\underline{K}\arrow[r,"g_{b_1}^K"]\arrow[d,"i_{b_1}^K",swap]\tikzpullback& \mc M^{b_1}_{b_2}\arrow[d,swap,"i_{b_1}' "]\arrow[r,"f_{b_1}"]\tikzpullback & {[*/ \mc{G}_{b_1}]}\arrow[d, "i_{b_1}
"]\\
\widetilde{\mc M_{b_2}}\arrow[r]\arrow[d, dashed, shift right =1]\tikzpullback & \widetilde{\mc M_{b_2}}/\underline{K} \arrow[d, dashed, shift right =1]\arrow[r,"g^K"]\tikzpullback & \mc M_{b_2} \arrow[r,"f"]\arrow[d, dashed, shift right =1]& \Bun_G \\
* \arrow[r]\arrow[u, shift right =1,"s",swap]&{[*/\underline{K}]} \arrow[r]\arrow[u, shift right =1,"s_K",swap]& {[*/\underline{G_{b_2}(E)}]}\arrow[r] \arrow[u,shift right=1] \arrow[phantom,ur,"\urcorner",very near start] & {[*/ \mc{G}_{b_2}]} \arrow[u,"i_{b_2}", swap]
\end{tikzcd}
\end{equation}

Now since $f$ and $g^K$ are separated, representable in locally spatial diamonds, and cohomologically smooth (\cite[Theorem~V.3.7]{fargues2021geometrization} and \cite[Proposition~24.2]{scholze2017etale}), we can apply smooth base change \cite[Proposition~23.16(ii)]{scholze2017etale} to the two squares on the upper right. 

Since $R\Gamma(\widetilde{\mc M_{b_2}},-)\circ (f g)^* Ri_{b_1,*}$ is naturally isomorphic to the pullback of $i_{b_2}^* Ri_{b_1,*}$ to $*$ (Proposition \ref{strict_hens}), the resulting $G_{b_2}(E)$-representations are already smooth, so in particular it agrees with its smooth subrepresentation.
Then making use of proposition \ref{strict-hens-cor}, we compute for any $A\in D_\et(\Bun_2^{b_1},\Lambda)$:
\begin{align*}
    R\Gamma(\widetilde{\mc M_{b_2}},-)\circ g^*f^*R_{i_{b_1,*}}A &\cong 
    \varinjlim_K \left( s^* g^*f^* R i_{b_1,*} A\right)^K\\
    &\cong 
    \varinjlim_K R\Gamma([*/\underline{K}],-)\circ s_K^* R i_{b_1,*}^K (g_{b_1}^K)^* f_{b_1}^*  A\\
    &\cong \varinjlim_K R\Gamma(\widetilde{\mc M_{b_2}}/\underline{K},-)\circ R i_{b_1,*}^K (g_{b_1}^K)^* f_{b_1}^* A\\
    &\cong \varinjlim_K R\Gamma(\widetilde{\mc M_{b_2}^{b_1}}/\underline{K}, -)\circ  (g_{b_1}^K)^*f_{b_1}^* A,
\end{align*}
where $s$ is the section $*\to \widetilde{\mc M_{b_2}^{b_1}}$ and $s_K$ is the section $[*/\underline{K}]\to \widetilde{\mc M_{b_2}}/\underline{K}$. 

But descent along $\psi:\widetilde{\mc M_{b_2}^{b_1}}\to \widetilde{\mc M_{b_2}^{b_1}}/\underline{K}$ identifies $R\Gamma(\widetilde{\mc M_{b_2}^{b_1}}/\underline{K}, -)\circ  (g_{b_1}^K)^*f_{b_1}^* A$ as the $K$-fixed points of $R\Gamma(\widetilde{\mc M_{b_2}^{b_1}},-)\circ \psi^* (g_{b_1}^K)^*f_{b_1}^* A = R\Gamma(\widetilde{\mc M_{b_2}^{b_1}},-)\circ g_{b_1}^*f_{b_1}^* A$ (note that $K$-group cohomology is simply the functor of $K$-invariants since $K$ is pro-$p$).
\end{proof}

\begin{remark}
In the case of $b_1$ basic and $b_2$ of minimal codimension in the same component, $\widetilde{\mc M_{b_2}^{b_1}}$ is the open complement $\widetilde{\mc M_{b_2}^{\circ}}\subset \widetilde{\mc M_{b_2}}$, which is quasicompact (along with all of its quotients by $\underline{K}$ and $\underline{G_{b_2}(E)}$). Then as arbitrary basechange holds for $*$-pushforwards along qcqs maps of small $v$-stacks \cite[Proposition~17.6]{scholze2017etale}, one concludes in this situation that the functor on the righthand side already produces a complex of smooth representations, without taking smooth subrepresentations. This gives the following description of gluing functors for sheaves supported on $\Bun_G^{\leq b_2}\setminus \Bun_G^{b_2}$ to a sheaf on $\Bun_G^{\leq b_2}$.
\end{remark}
\begin{corollary}
There is a natural isomorphism of functors $D_\et(\Bun_G^{\leq b_2}\setminus \Bun_G^{b_2})\to D_{\et}(*)\cong D(\Lambda)$
\[R\Gamma(\widetilde{\mc M_{b_2}},-) \circ (fg)^*R j_{<b_2,*} \xrightarrow{\sim} R\Gamma(\widetilde{\mc M_{b_2}^{^\circ}},-)\circ (f^\circ g^\circ)^*,\]
where $\widetilde{\mc M_{b_2}^\circ}\xrightarrow{g^\circ}\mc M_{b_2}^\circ \xrightarrow{f^\circ}\Bun_G^{\leq b_2}\setminus \Bun_G^{b_2}$ are defined via pullback along the inclusion into $\Bun_G$.
Furthermore, the induced smooth $G_{b_2}(E)$-actions on either side agree so that the equivalence descends to functors with image in $D_{\et}([*/\underline{G_{b_2}(E)}])$.
\end{corollary}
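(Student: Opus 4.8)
The plan is to rerun the proof of Theorem~\ref{Smooth-cohomology-of-tower} essentially verbatim, with the locally closed immersion $i_{b_1}\colon[*/\mc G_{b_1}]\hookrightarrow\Bun_G$ replaced throughout by the open immersion $j_{<b_2}\colon\Bun_G^{\leq b_2}\setminus\Bun_G^{b_2}\hookrightarrow\Bun_G^{\leq b_2}$, so that $\mc M_{b_2}^{b_1}$ gets replaced by its base change $\mc M_{b_2}^\circ$ and $\widetilde{\mc M_{b_2}^{b_1}}$ by $\widetilde{\mc M_{b_2}^\circ}$; the only extra input is the quasicompactness of these spaces, which is what lets one drop the $\mathrm{sm}$. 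First I would record that input: $j_{<b_2}$ is an open immersion of quasicompact $v$-stacks (for $G=\GL_2$ the space $\abs{\Bun_2^{\leq b_2}}$ is finite; in general there are finitely many Harder--Narasimhan strata below $b_2$), hence qcqs, and therefore its base change $\mc M_{b_2}^\circ$ along $f$ and the further base change $\widetilde{\mc M_{b_2}^\circ}$ along $g$ are quasicompact, as are the level quotients $\widetilde{\mc M_{b_2}^\circ}/\underline K$, which stay finite étale over $\mc M_{b_2}^\circ$ by Lemma~\ref{intermediate_etale_covers} --- this is the observation of the preceding remark, $\widetilde{\mc M_{b_2}^\circ}$ being the open complement of the closed point in $\widetilde{\mc M_{b_2}}$ (a punctured negative Banach--Colmez space when $G=\GL_2$).

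Next I would form the analogue of the big diagram in the proof of Theorem~\ref{Smooth-cohomology-of-tower}, with top row $\mc M_{b_2}^\circ\to\Bun_G^{<b_2}$ and level-$K$ maps $g^{\circ,K}$ étale, $f^\circ$ cohomologically smooth (Theorem~\ref{properties-of-M_b}). Applying smooth base change \cite[Proposition~23.16(ii)]{scholze2017etale} along the cohomologically smooth $f$ and the étale $g^K$ gives $f^*Rj_{<b_2,*}\cong Rj'_*(f^\circ)^*$ (with $j'\colon\mc M_{b_2}^\circ\hookrightarrow\mc M_{b_2}$) and the analogous identity one level up; feeding this into the strictly henselian property (Propositions~\ref{strict_hens} and \ref{strict-hens-cor}) together with $\widetilde{\mc M_{b_2}}\cong\varprojlim_K\widetilde{\mc M_{b_2}}/\underline K$, the identical chain of natural isomorphisms as in that proof yields
\[R\Gamma(\widetilde{\mc M_{b_2}},-)\circ(fg)^*Rj_{<b_2,*}\;\cong\;\varinjlim_K R\Gamma(\widetilde{\mc M_{b_2}^\circ}/\underline K,-)\circ(g^{\circ,K})^*(f^\circ)^*,\]
and descent along the pro-$p$ cover $\widetilde{\mc M_{b_2}^\circ}\to\widetilde{\mc M_{b_2}^\circ}/\underline K$ identifies the $K$-term with the $K$-invariants of $R\Gamma(\widetilde{\mc M_{b_2}^\circ},-)\circ(f^\circ g^\circ)^*$. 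At this point we have exactly the $\mathrm{sm}$-version of the statement.

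It remains to drop the smoothing, which is the only genuinely new point. Since $\widetilde{\mc M_{b_2}^\circ}$ and each $\widetilde{\mc M_{b_2}^\circ}/\underline K$ are qcqs and the transition maps are finite étale, the sheaf $(f^\circ g^\circ)^*A$ (pulled back from every finite level) has $R\Gamma(\widetilde{\mc M_{b_2}^\circ},(f^\circ g^\circ)^*A)=\varinjlim_K R\Gamma(\widetilde{\mc M_{b_2}^\circ}/\underline K,\cdots)$ --- equivalently, $*$-pushforward along the qcqs maps $\widetilde{\mc M_{b_2}^\circ}/\underline K\to *$ commutes with arbitrary base change \cite[Proposition~17.6]{scholze2017etale} --- so this cohomology already equals the colimit of its $K$-invariants, i.e.\ is a complex of smooth $G_{b_2}(E)$-representations, and the $\mathrm{sm}$ is unnecessary. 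The equivariance is handled exactly as in Proposition~\ref{action-on-cohomology} and Theorem~\ref{Smooth-cohomology-of-tower}: both $G_{b_2}(E)$-actions come purely from pullback along the canonical descent data of the $\underline{G_{b_2}(E)}$-torsors $\widetilde{\mc M_{b_2}}\to\mc M_{b_2}$ and $\widetilde{\mc M_{b_2}^\circ}\to\mc M_{b_2}^\circ$, which are compatible under base change along $j_{<b_2}$, so the isomorphism above is equivariant; both sides now being smooth, it descends to an isomorphism of functors into $D_\et([*/\underline{G_{b_2}(E)}])\cong D(G_{b_2}(E),\Lambda)$. I expect the only real care to be in this last paragraph --- pinning down that $\widetilde{\mc M_{b_2}^\circ}$ and its level covers are qcqs and that this suffices to commute $R\Gamma$ past $\varprojlim_K$ for an arbitrary (possibly unbounded) $A$; everything else is a transcription of Theorem~\ref{Smooth-cohomology-of-tower}.
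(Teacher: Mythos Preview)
Your proposal is correct and follows essentially the same approach as the paper. The paper does not give a separate proof of the corollary; it is stated as an immediate consequence of Theorem~\ref{Smooth-cohomology-of-tower} together with the preceding remark, which makes exactly the point you make: $\widetilde{\mc M_{b_2}^\circ}$ and its level quotients are quasicompact, so arbitrary base change for $*$-pushforwards along qcqs maps \cite[Proposition~17.6]{scholze2017etale} shows the right-hand side is already smooth and the $\mathrm{sm}$ is unnecessary.
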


\subsection{Reduction to compactly supported cohomology}
In this section, we shift focus to compactly supported cohomology. This is because compactly supported cohomology is much easier to compute; for instance, there are the Künneth, basechange, and projection formulas for $!$-pushforward, and the representations that arise as $!$-pushforwards along torsors are more manageable, being compact inductions rather than full inductions. The difficulty is that the torsors that appear are not cohomologically smooth, so that the relationship between cohomology and compactly supported cohomology is not so simple. We will use a version of Hochschild-Serre spectral sequence for compactly supported cohomology to descend to a level that is cohomologically smooth over the base, where we can apply Poincar\'{e} duality.

We can further cover $\widetilde{\mc M_{b_2}^{b_1}}$ by its $\mc G_{b_1}$-torsor $\widetilde{\widetilde{\mc M_{b_2}^{b_1}}}$ as in the following diagram:

\begin{equation}
\begin{tikzcd}
\widetilde{\widetilde{\mc M^{b_1}_{b_2}}}\arrow[d,"p"]\arrow[rr]\tikzpullback & & *\arrow[d] \\
\widetilde{\mc M^{b_1}_{b_2}}\arrow[d] \arrow[r,"g_{b_1}^{}"] \tikzpullback& \mc M^{b_1}_{b_2}\arrow[d]\arrow[r,"f_{b_1}"]\tikzpullback & {[*/ \mc{G}_{b_1}]}\arrow[d, "i_{b_1}
"]\\
\widetilde{\mc M_{b_2}}\arrow[r,"g"]\arrow[d, dashed, shift right =1]\tikzpullback & \mc M_{b_2} \arrow[r,"f"]\arrow[d, dashed, shift right =1]& \Bun_G \\
* \arrow[r]\arrow[u, shift right =1] & {[*/\underline{G_{b_2}(E)}]}\arrow[r] \arrow[u,shift right=1] \arrow[phantom,ur,"\urcorner",very near start] & {[*/ \mc{G}_{b_2}]} \arrow[u,"i_{b_2}", swap]
\end{tikzcd}
\end{equation}

Whenever the unipotent part $\mc G_{b_1}^{>0}$ is nontrivial, it is isomorphic to a positive Banach-Colmez space as before (more generally it is an iterated extension of positive Banach-Colmez spaces). Then since such spaces are cohomologically trivial, we can replace $\mc G_{b_1}$ by $\underline{G_{b_1}(E)}$ by considering the intermediate torsors \[\widetilde{\widetilde{\mc M_{b_2}^{b_1}}}\xrightarrow{p^{>0}} \widetilde{\widetilde{\mc M_{b_2}^{b_1}}}/\mc G_{b_1}^{>0}\xrightarrow{p'}\widetilde{\mc M_{b_2}^{b_1}}\] for $\mc G_{b_1}^{>0}$ and $\underline{G_{b_1}(E)}$, respectively. 

\begin{proposition}
Let $d_1:=\langle 2\rho , \nu_{b_1}\rangle$. There is a natural isomorphism of functors $R\Gamma_c(\widetilde{\widetilde{\mc M_{b_2}^{b_1}}}/\mc G_{b_1}^{>0},-)\cong R\Gamma_c(\widetilde{\widetilde{\mc M_{b_2}^{b_1}}},-)$ up to Tate twist by $d_1$ and cohomological shift by $2d_1$.
\end{proposition}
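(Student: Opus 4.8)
The plan is to regard $p^{>0}\colon\widetilde{\widetilde{\mc M_{b_2}^{b_1}}}\to\widetilde{\widetilde{\mc M_{b_2}^{b_1}}}/\mc G_{b_1}^{>0}$ as a torsor under $\mc G_{b_1}^{>0}$, to compute $Rp^{>0}_!\Lambda$, and to deduce the statement from the projection formula for $!$-pushforward. By \cite[Proposition~III.5.1]{fargues2021geometrization} the group $\mc G_{b_1}^{>0}$ is an iterated extension of positive Banach--Colmez spaces; being an iterated extension of connected groups it is connected, hence coincides with the identity component $\mc G_{b_1}^{\circ}$, which is cohomologically smooth over $*$ of dimension $d_1=\langle 2\rho,\nu_{b_1}\rangle$. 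After base change to $\spa C$ (which we may do by \cite[Corollary~V.2.3]{fargues2021geometrization}) all the spaces involved become locally spatial diamonds, and the target statement amounts to producing a natural isomorphism
\[R\Gamma_c\!\big(\widetilde{\widetilde{\mc M_{b_2}^{b_1}}}/\mc G_{b_1}^{>0},\,\mathcal F\big)\;\cong\;R\Gamma_c\!\big(\widetilde{\widetilde{\mc M_{b_2}^{b_1}}},\,p^{>0,*}\mathcal F\big)(d_1)[2d_1]\]
functorial in $\mathcal F\in D_\et\big(\widetilde{\widetilde{\mc M_{b_2}^{b_1}}}/\mc G_{b_1}^{>0},\Lambda\big)$ (and $p^{>0,*}$ being moreover an equivalence on étale sheaves, by the argument proving \cite[Proposition~V.2.1]{fargues2021geometrization}, one may read this as comparing the compactly supported cohomologies of the two spaces outright).

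The main input is $Rp^{>0}_!\Lambda\cong\Lambda(-d_1)[-2d_1]$. Since $!$-pushforward commutes with arbitrary base change and the torsor $p^{>0}$ is $v$-locally trivial, this assertion is $v$-local on $\widetilde{\widetilde{\mc M_{b_2}^{b_1}}}/\mc G_{b_1}^{>0}$, and by the Künneth formula for $R(-)_!$ it reduces to the absolute statement $R\Gamma_c(\mc G_{b_1}^{>0},\Lambda)\cong\Lambda(-d_1)[-2d_1]$. I would prove this by induction along the filtration of $\mc G_{b_1}^{>0}$ with positive Banach--Colmez graded pieces. For a single $\BC(\mc O(\lambda))$ of degree $d$, Poincaré--Verdier duality applied to its cohomologically smooth structure morphism --- whose dualizing sheaf is $\Lambda(d)[2d]$ and which satisfies $\Lambda\xrightarrow{\sim}R\Gamma(\BC(\mc O(\lambda)),\Lambda)$ by Proposition~\ref{Hansen-bc-dualizing} --- yields $R\Gamma_c(\BC(\mc O(\lambda)),\Lambda)\cong\Lambda(-d)[-2d]$. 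For the inductive step, write $1\to V'\to\mc G_{b_1}^{>0}\xrightarrow{\pi}V''\to 1$ with $V''$ a positive Banach--Colmez space of dimension $d''$ and $V'$ an iterated extension of fewer such spaces, of dimension $d'=d_1-d''$; then $\pi$ is a $v$-locally trivial $V'$-torsor, so the same $v$-local argument gives $R\pi_!\Lambda\cong\Lambda(-d')[-2d']$ from the inductive hypothesis, whence
\[R\Gamma_c(\mc G_{b_1}^{>0},\Lambda)\;\cong\;R\Gamma_c(V'',R\pi_!\Lambda)\;\cong\;R\Gamma_c(V'',\Lambda)(-d')[-2d']\;\cong\;\Lambda(-d_1)[-2d_1].\]
(One may instead observe that $Rp^{>0}_!\Lambda$ is the pullback along $\widetilde{\widetilde{\mc M_{b_2}^{b_1}}}/\mc G_{b_1}^{>0}\to[*/\mc G_{b_1}^{>0}]$ of $R\Gamma_c(\mc G_{b_1}^{>0},\Lambda)$ equipped with its canonical $\mc G_{b_1}^{>0}$-equivariant structure, noting that a connected group acts trivially on its own compactly supported cohomology.)

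Granting this, the rest is formal. The projection formula supplies a natural isomorphism $Rp^{>0}_!\,p^{>0,*}\mathcal F\cong\mathcal F\otimes^{\mathbb L}_\Lambda Rp^{>0}_!\Lambda\cong\mathcal F(-d_1)[-2d_1]$; applying $R\Gamma_c\big(\widetilde{\widetilde{\mc M_{b_2}^{b_1}}}/\mc G_{b_1}^{>0},-\big)$, and using that the structure morphism of $\widetilde{\widetilde{\mc M_{b_2}^{b_1}}}$ factors as $p^{>0}$ followed by that of the quotient, gives $R\Gamma_c\big(\widetilde{\widetilde{\mc M_{b_2}^{b_1}}},p^{>0,*}\mathcal F\big)\cong R\Gamma_c\big(\widetilde{\widetilde{\mc M_{b_2}^{b_1}}}/\mc G_{b_1}^{>0},\mathcal F\big)(-d_1)[-2d_1]$, which after rearranging the Tate twist and shift is exactly the displayed isomorphism. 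The implicit $G_{b_2}(E)$-equivariance requires no extra work: every arrow used is pullback or $!$-pushforward along a $G_{b_2}(E)$-equivariant morphism, or one of the canonical projection-formula, base-change, and Künneth isomorphisms, hence compatible with the descent data defining the $G_{b_2}(E)$-actions, exactly as in the proof of Proposition~\ref{action-on-cohomology}.

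I do not expect a genuine obstacle. The only place that needs care is the inductive computation of $R\Gamma_c(\mc G_{b_1}^{>0},\Lambda)$: one must process the (a priori nonabelian) iterated-extension structure of $\mc G_{b_1}^{>0}$ one graded piece at a time, and --- because the statement asks for an honest Tate twist and shift rather than a merely invertible relative dualizing complex --- one genuinely uses the explicit orientation $\Lambda(d)[2d]$ of $\BC(\mc O(\lambda))$ recorded in Proposition~\ref{Hansen-bc-dualizing}, not merely abstract cohomological smoothness.
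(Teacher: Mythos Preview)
Your proof is correct and follows essentially the same approach as the paper: the paper's one-line proof invokes ``the descent spectral sequence in the following proposition'' (Proposition~\ref{homological-Hochschild-Serre}) together with the dualizing-sheaf identification of Proposition~\ref{Hansen-bc-dualizing}, which unwinds precisely to your projection-formula computation of $Rp^{>0}_!\Lambda\cong\Lambda(-d_1)[-2d_1]$ via the iterated Banach--Colmez filtration. Your version is simply a more explicit rendering of the same argument.
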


\begin{proof}
This is a consequence of the descent spectral sequence in the following proposition and the identification of the cohomology and dualizing sheaf $\Lambda(d_1)[2d_1]$ of positive Banach-Colmez spaces in Proposition \ref{Hansen-bc-dualizing}.
\end{proof}
When there is a fixed $\sqrt{q}\in \Lambda$, this shift and twist is accounted for in the definition of the renormalized inclusions $i_{b,*}^{\mathrm{ren}}:\Bun_G^{b}\to \Bun_G$, where the twist $\Lambda(d_1)$ has been identified as the squareroot of the modulus character $\delta_{P_{b}}^{\pm1/2}$ for the parabolic subgroup $P_b$ of $G$ (up to inner twisting). This is one reason why it is much more natural to speak in terms of the renormalized functors.

Now we want to descend compactly supported cohomology along such a torsor, so we use the following proposition. For a locally pro-$p$ group $G$, we take as our definition of smooth homology groups $H_i(G,\sigma):=$ the $-i^\text{th}$ cohomology of the complex $\Lambda \otimes_{\mc H(G)}^\mathbb{L} \sigma$, where $\mc H(G)$ is the Hecke algebra of $G$. Recall that the category of smooth $G$-representations is linearly equivalent to the category of nondegenerate $\mc H(G)$-modules (i.e. modules $M$ such that $\mc H(G)M = M$).

\begin{proposition}\label{homological-Hochschild-Serre}
Let $G$ be a locally pro-$p$ group, and suppose $p:Y\to X$ is a $\underline{G}$-torsor in small $v$-stacks. Then for any sheaf $\mc F\in D_\et(X)$, there is a Hochschild-Serre spectral sequence 
\[E_2^{i,j}= H_{-i}(G,H_c^j(Y,p^*\mc F))\implies H_c^{i+j}(X,\mc F).\]
\end{proposition}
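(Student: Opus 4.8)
The plan is to deduce the spectral sequence by writing the torsor $p\colon Y \to X$ as a pullback of the universal $\underline{G}$-torsor $* \to [*/\underline{G}]$, and then analysing the pushforward to the classifying stack. Concretely, form the cartesian square
\begin{equation*}
\begin{tikzcd}
Y \arrow[r,"h"]\arrow[d,"p",swap] & * \arrow[d,"q"]\\
X \arrow[r,"a",swap] & {[*/\underline{G}]}
\end{tikzcd}
\end{equation*}
as in Lemma \ref{equivariant-projection-formula}. The first step is to observe that for $\mc F \in D_\et(X)$ we have a projection/base-change identity $Ra_! p_! p^* \mc F \cong a_! \mc F$ — more precisely, since $p$ is the pullback of $q$, proper base change for the $!$-pushforward (which holds for qcqs maps of small $v$-stacks, \cite[Proposition~22.19]{scholze2017etale}) gives $a^* Rq_! h_! h^* \simeq Rp_! p^* a^*$-type compatibilities, so that $R\Gamma_c(X,\mc F)$ is computed as $R\Gamma_c([*/\underline G], Rp_! p^*\mc F)$ along the structure map of the classifying stack. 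Here I use that $X \to [*/\underline G]$ composed with $[*/\underline G] \to *$ realises $R\Gamma_c(X,-)$; this requires that $[*/\underline G]\to *$ is itself a reasonable (cohomologically nice) map, which is where the hypothesis that $G$ is locally pro-$p$ enters, guaranteeing $D_\et([*/\underline G],\Lambda) \cong D(G,\Lambda)$ and that $R\Gamma_c([*/\underline G],-)$ is the derived coinvariants, i.e. smooth homology $-\otimes^{\mathbb L}_{\mc H(G)}\Lambda$.

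The second step is then to identify $Rp_! p^* \mc F$ as an object of $D_\et([*/\underline G],\Lambda) \cong D(G,\Lambda)$: by base change along $q$ it pulls back to $h_! h^* (q^* \text{of } \ldots)$, which is just $R\Gamma_c(Y, p^*\mc F)$ as a complex of $\Lambda$-modules, and the $G$-equivariant (descent) structure it carries is exactly the smooth $G$-action on $H_c^\bullet(Y,p^*\mc F)$ coming from the torsor structure — this is the content of the equivariant part of Lemma \ref{equivariant-projection-formula}, applied here not to a single representation but to an arbitrary complex. Granting this, $R\Gamma_c(X,\mc F) \cong R\Gamma_c([*/\underline G], Rp_!p^*\mc F) \cong \big(R\Gamma_c(Y,p^*\mc F)\big) \otimes^{\mathbb L}_{\mc H(G)} \Lambda$, and the desired $E_2$-page
\[
E_2^{i,j} = H_{-i}\big(G, H_c^j(Y,p^*\mc F)\big) \implies H_c^{i+j}(X,\mc F)
\]
is simply the hyperhomology (double-complex) spectral sequence of this derived tensor product, filtering by the homological degree of the $\mc H(G)$-resolution. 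The $G$-action on the abutment filtration is automatically compatible because everything in sight is a functor applied to $G$-equivariant objects.

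I expect the main obstacle to be the careful justification that $R\Gamma_c([*/\underline G],-) \colon D(G,\Lambda)\to D(\Lambda)$ really is computed by smooth group homology $-\otimes^{\mathbb L}_{\mc H(G)}\Lambda$, and that compactly supported cohomology of $X$ genuinely factors through $[*/\underline G]$ in the way claimed — i.e. the base-change compatibility $R\Gamma_c(X,\mc F) \cong R\Gamma_c([*/\underline G], Rp_! p^*\mc F)$. The subtlety is that $[*/\underline G]$ is not a quasicompact $v$-stack, so proper base change for $Rf_!$ must be invoked in a form valid for the (partially proper, representable-in-locally-spatial-diamonds) maps at hand; one should either reduce to the finite-level quotients $Y/\underline K \to X/\underline K$ for open pro-$p$ $K \subset G$ — where everything is honestly qcqs and $p$-cohomological-dimension controlled — and pass to the colimit over $K$ (using that $\varinjlim$ is exact and commutes with $R\Gamma_c$ here), or invoke the general six-functor formalism on small $v$-stacks directly. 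The finite-level reduction mirrors exactly the strategy used in the proof of Theorem \ref{Smooth-cohomology-of-tower}, so I would follow that template: prove the statement for each $\underline K$-torsor $Y \to Y/\underline K$ (where $\underline K$-homology is just coinvariants, $K$ being pro-$p$), assemble the finite-level spectral sequences, and take the filtered colimit over shrinking $K$ to obtain the $G$-homology on the $E_2$-page.
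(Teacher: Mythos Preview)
Your strategy is correct and is precisely the one the paper sketches in the Remark immediately following the statement: factor through the classifying stack, use base change $q^* Ra_! \cong Rh_! p^*$ to identify $Ra_!\mc F$ with $R\Gamma_c(Y, p^*\mc F)$ as a smooth $G$-module, and then identify $Rs_!$ for $s\colon [*/\underline{G}] \to *$ with derived coinvariants. One notational slip to fix: you repeatedly write $Rp_! p^*\mc F$ where you must mean $Ra_!\mc F$ (the pushforward to $[*/\underline{G}]$, not to $X$); as written, $Rp_! p^*\mc F$ is a sheaf on $X$, so ``$R\Gamma_c([*/\underline G], Rp_!p^*\mc F)$'' and ``identify $Rp_!p^*\mc F$ as an object of $D_\et([*/\underline G])$'' do not typecheck.

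The paper's actual proof takes a different route that sidesteps the step you flag as the main obstacle. Rather than factoring through $[*/\underline{G}]$ and analysing $Rs_!$, it works entirely on $X$: it first shows, $v$-locally on $X$, that $Rp_! p^*\Lambda \cong \mc H(G)$ as a sheaf of $\mc H(G)$-modules (reducing to the trivial torsor $Y = X\times\underline G$ and computing $p_! \Lambda = \varinjlim_K \Lambda[K\backslash G] = \mc H(G)$, using that the fibres are $0$-dimensional so $Rp_!\Lambda$ is concentrated in degree $0$). Then the projection formula for sheaves of $\mc H(G)$-modules gives $\Lambda \otimes^{\mathbb L}_{\mc H(G)} Rp_! p^*\mc F \cong \mc F$ directly on $X$. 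Applying $R\Gamma_c(X,-)$ and commuting the tensor product outside yields $R\Gamma_c(X,\mc F) \cong \Lambda \otimes^{\mathbb L}_{\mc H(G)} R\Gamma_c(Y,p^*\mc F)$, from which the spectral sequence is the standard hyperhomology one. This trades the question ``is $Rs_!$ smooth group homology?'' for a concrete local computation of $Rp_!\Lambda$, which is more elementary and avoids any foundational worries about $!$-pushforward along $[*/\underline G]\to *$. Your approach is more conceptual and makes the role of the classifying stack explicit; the paper's is more hands-on and self-contained.
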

\begin{remark}
This proposition is essentially a consequence of the basechange isomorphism $q^*Rf_!\cong Rg_!p^*$ for
\begin{equation*}
\begin{tikzcd}
Y\arrow[r,"p"]\arrow[d,"g",swap]& X\arrow[d,"f"]\\
*\arrow[r,"q"]& {[*/\underline{G}]}.
\end{tikzcd}
\end{equation*}
Remembering the $G$-structure allows one to descend $Rg_!p^* \mc F$ to a sheaf on $[*/\underline{G}]$, and $q^*$ is the functor that forgets the $G$-structure, yielding an isomorphism as sheaves in $D_\et([*/\underline{G}])$. Then composing with $Rs_!$ for the section $s:[*/\underline{G}]\to *$ gives the result. The main content of the proof is then showing that $Rs_!$ is the functor which computes smooth group homology. \\
Another way of understanding the geometry is to write $p:Y\to X$ as a limit of the intermediate (étale) covers $p^K: Y/\underline{K}\to X$, where the functors $p^K_!\cong p^K_{\natural}$ agree as they are both left adjoint to $Rp^{K,!}$. 
\end{remark}
\begin{proof}
Let $p^K:Y/\underline{K}\to X$ be the intermediate projection for any open pro-$p$ subgroup $K\subset G$. 
We first claim that $\Lambda \cong \Lambda \otimes^\mathbb{L}_{\mc H(G)} Rp_!p^*\Lambda$ is a natural isomorphism. This can be checked $v$-locally on $X$, so without loss of generality $Y= X\times \underline{G}$ with $X$ perfectoid. In particular each $p^K$ is \'etale with fibers $K\backslash G$, and $p = \varprojlim p^K$ is pro-\'etale. Then $p_!^K \Lambda$ is the constant sheaf $\cind_K^G \Lambda\cong \Lambda[K\backslash G]$, so that $p_!\Lambda = \varinjlim_K \Lambda[K\backslash G]= \mc H(G)$, corresponding to the regular representation $(\cind_{1}^{G}\Lambda)^\text{sm}$. Since the fibers of $p$ are $0$-dimensional, we see that $Rp_! \Lambda = p_! \Lambda [0] = \mc H(G)$, hence the claim follows.

For arbitrary $\mc F$, the projection formula for sheaves of $\mc H(G)$-modules yields isomorphisms ($v$-locally)
\[\Lambda \otimes^\mathbb{L}_{\mc H(G)}Rp_!p^*\mc F\cong Rp_! (p^*\Lambda \otimes^\mathbb{L}_{\mc H(G)}p^*\mc F)\cong (Rp_!p^*\Lambda) \otimes^\mathbb{L}_{\mc H(G)}\mc F\cong \mc H(G)\otimes^\mathbb{L}_{\mc H(G)}\mc F\cong \mc F,\]
and composing both sides with $R\Gamma_c$ gives
\[R\Gamma_c(X,-)\circ(\Lambda \otimes^\mathbb{L}_{\mc H(G)}Rp_!p^*\mc F)\cong \Lambda \otimes^\mathbb{L}_{\mc H(G)}R\Gamma_c(Y,p^*\mc F)\cong R\Gamma_c(X,\mc F).\]

\end{proof}

\begin{lemma}\label{equivariant-projection-formula}
Let $p:Y\to X$ be as in Proposition \ref{homological-Hochschild-Serre}. Then there is a cartesian diagram of small $v$-stacks
\begin{center}
\begin{tikzcd}
Y\arrow[r,"g"]\arrow[d,"p",swap] & *\arrow[d,"q"]\\
X\arrow[r,"f",swap] & {[*/\underline{G}]}.
\end{tikzcd}
\end{center}
For any smooth $G$-representation $(\sigma,V)$ corresponding to a sheaf $ [\sigma] \in D_\et([*/\underline{G}])$, there is an isomorphism of underlying $\Lambda$-modules
\[R\Gamma_c(Y,p^*f^*[\sigma])\cong V \otimes^{\mathbb{L}}_\Lambda R\Gamma_c(Y,\Lambda).\]
Furthermore, $Rf_! f^*[\sigma]\cong [\sigma]\otimes^{\mathbb L}_{\Lambda} f_! \Lambda \in D(G)$, identifying the $G$-equivariant structure of $R\Gamma_c(Y, p^* f^*[\sigma])$ obtained from the canonical descent datum via pullback along $q$.
\end{lemma}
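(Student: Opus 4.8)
The plan is to deduce everything formally from the projection formula and base change for the $!$-pushforward, working in $D_\et([*/\underline G])\cong D(G)$ and then pulling back along $q$. The cartesian square is built into the definition: a $\underline G$-torsor $p\colon Y\to X$ is by definition the datum of a classifying map $f\colon X\to [*/\underline G]$ together with an identification $Y\cong X\times_{[*/\underline G]}*$, where $q\colon *\to[*/\underline G]$ is the universal torsor; the ``canonical descent datum'' on $p$ is exactly the one exhibiting $Y$ as this fibre product, equivalently the one making $Rg_!p^*(-)$ descend to an object of $D_\et([*/\underline G])$.

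Now apply the projection formula to $f$: writing $f^*[\sigma]=\Lambda_X\otimes^{\mathbb L}_\Lambda f^*[\sigma]$ and using that $\Lambda_X$ is the unit, we obtain $Rf_!f^*[\sigma]\cong [\sigma]\otimes^{\mathbb L}_\Lambda Rf_!\Lambda_X$ in $D_\et([*/\underline G])$, and setting $f_!\Lambda:=Rf_!\Lambda_X$ this is exactly the claimed isomorphism $Rf_!f^*[\sigma]\cong[\sigma]\otimes^{\mathbb L}_\Lambda f_!\Lambda$ in $D(G)$. Next apply base change $q^*Rf_!\cong Rg_!p^*$ for the square. Applied to $\Lambda_X$ it gives $q^*f_!\Lambda\cong Rg_!\Lambda_Y=R\Gamma_c(Y,\Lambda)$, under which the equivariant structure of $f_!\Lambda$ corresponds to the descent datum of $p$; applied to $f^*[\sigma]$, and using $p^*f^*[\sigma]=g^*q^*[\sigma]$, it gives $q^*Rf_!f^*[\sigma]\cong R\Gamma_c(Y,p^*f^*[\sigma])$. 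Since $q^*$ is symmetric monoidal with $q^*[\sigma]=V$, pulling the $D(G)$-isomorphism back along $q$ yields the asserted $\Lambda$-linear isomorphism $R\Gamma_c(Y,p^*f^*[\sigma])\cong V\otimes^{\mathbb L}_\Lambda R\Gamma_c(Y,\Lambda)$, and the $D(G)$-isomorphism itself records the compatibility of the $G$-actions. As a sanity check, the bare $\Lambda$-linear statement is also immediate from the projection formula for $g\colon Y\to *$ applied to $g^*\underline V=Rg_!\bigl(\Lambda_Y\otimes^{\mathbb L}_\Lambda g^*\underline V\bigr)$, bypassing $[*/\underline G]$ altogether.

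The two points that require care are purely formal. First, one must justify that the six-functor formalism — the projection formula and base change for $Rf_!$ and $Rg_!$ — applies to the maps at hand; I would reduce to the intermediate level $Y/\underline K\to X$ for $K\subset G$ open pro-$p$, which is separated étale and hence representable in locally spatial diamonds, so the formalism of \cite{scholze2017etale} is available, and then pass to the cofiltered limit $Y\cong\varprojlim_K Y/\underline K$, noting that $g\colon Y\to *$ is representable in locally spatial diamonds after base change to $\spa C$. Second, one must check that the $G$-equivariant structure on $Rg_!p^*(-)$ coming from the canonical descent datum of the torsor is precisely the one obtained by applying $q^*$ to the corresponding object of $D_\et([*/\underline G])$ — this is the compatibility of the base-change isomorphism with descent, which is part of the formalism but is the one bit of bookkeeping that has to be spelled out. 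Neither is a genuine obstacle: the content of the lemma is the formal identity above, which is why it is standard.
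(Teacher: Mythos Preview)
Your proposal is correct and uses exactly the same ingredients as the paper's proof: the projection formula for $f$, the projection formula for $g$, and the base-change isomorphism $q^*Rf_!\cong Rg_!p^*$. The only difference is emphasis and order: the paper leads with the $\Lambda$-module isomorphism via the projection formula along $g$ (what you call the ``sanity check''), and then dispatches the equivariant statement in one sentence by invoking the projection formula for $f$ together with base change; you do the equivariant statement first and deduce the $\Lambda$-linear one by applying $q^*$. Your extra paragraph on foundational care (intermediate $\underline K$-levels, compatibility of base change with descent) is not in the paper's proof, which simply takes the six-functor formalism for granted in this setting.
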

\begin{proof}
Since $q^*[\sigma]=V$ as a sheaf of $\Lambda$-modules on $*$, $p^*f^*[\sigma]=g^* V$ is a constant sheaf on $Y$. Then the projection formula along $g$ implies
\begin{align*}
R\Gamma_c(Y,p^*f^*[\sigma])\cong Rg_! g^*V\cong Rg_!(g^* V \otimes^\mathbb{L}\Lambda)\cong V\otimes^\mathbb{L}Rg_!\Lambda\cong V\otimes^\mathbb{L} R\Gamma_c(Y,\Lambda).
\end{align*}
The remaining statements follow by the projection formula for $f$ and the basechange isomorphism $q^*R f_! \xrightarrow{\cong} Rg_! p^*$.
\end{proof}
\begin{remark}\label{pulling-out-reps}
If the underlying $\Lambda$-modules of $\sigma$ or $R\Gamma_c(Y,\Lambda)$ are flat, then the righthand side is represented by the complex $V\otimes_\Lambda R\Gamma_c(Y,\Lambda)$. Since the isomorphism is given by the projection formula, hence functorial in both $V$ and $\Lambda$, such isomorphisms are compatible with pulling back along the geometric $\underline{G}$ action on $Y$, so that the complex on the righthand side descends to $\sigma \otimes_{\Lambda} R\Gamma_c(Y,\Lambda)$ in $D_\et([*/\underline{G}])$ with diagonal $G$-action. This allows us to reduce many of our computations in the following section to the compactly supported cohomology of the trivial module.
\end{remark}

The idea now is to descend the compactly supported cohomology to a cohomologically smooth $v$-stack $\widetilde{\mc M_{b_2}^{\leq b_1}}\to *$, which admits a Poincaré duality on the cohomology of ULA complexes. Consider the following diagram induced by pulling back along the factorization $i_{b_1}:\Bun_2^{b_1}\xrightarrow{i_{b_1}'} \Bun_2^{\leq b_1}\xrightarrow{j_{\leq b_1}} \Bun_2$.

\begin{equation}
\begin{tikzcd}
\widetilde{\mc M^{b_1}_{b_2}}\arrow[d] \arrow[r,"g_{b_1}^{}"] \tikzpullback& \mc M^{b_1}_{b_2}\arrow[d]\arrow[r,"f_{b_1}"]\tikzpullback & {[*/ \mc{G}_{b_1}]}\arrow[d, "i_{b_1}'
"]\\
\widetilde{\mc M^{\leq b_1}_{b_2}}\arrow[d] \arrow[r,"g_{\leq b_1}^{}"] \tikzpullback& \mc M^{\leq b_1}_{b_2}\arrow[d]\arrow[r,"f_{\leq b_1}"]\tikzpullback & {\Bun_G^{\leq b_1}}\arrow[d, "j_{\leq b_1}
"]\\
\widetilde{\mc M_{b_2}}\arrow[r,"g"] & \mc M_{b_2} \arrow[r,"f"]& \Bun_2. \\
\end{tikzcd}
\end{equation}
\begin{lemma}
    $\widetilde{\mc M_{b_2}^{\leq b_1}}\to *$ is cohomologically smooth of dimension $\langle 2\rho ,\nu_{b_2} \rangle$.
\end{lemma}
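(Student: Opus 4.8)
The plan is to factor the structure map $\widetilde{\mc M_{b_2}^{\leq b_1}}\to *$ through $\widetilde{\mc M_{b_2}}\to *$ and to exploit that both the map $\widetilde{\mc M_{b_2}^{\leq b_1}}\to \widetilde{\mc M_{b_2}}$ and the map $\widetilde{\mc M_{b_2}}\to *$ are cohomologically smooth, with known dimensions. First I would observe that $g_{\leq b_1}:\widetilde{\mc M_{b_2}^{\leq b_1}}\to \widetilde{\mc M_{b_2}}$ is the base change of $f_{\leq b_1}:\mc M_{b_2}^{\leq b_1}\to \mc M_{b_2}$ along the pro-étale torsor $g$ (and $f_{\leq b_1}$ itself is the base change of $j_{\leq b_1}:\Bun_G^{\leq b_1}\hookrightarrow \Bun_G$ along $f$). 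Since $\Bun_G^{\leq b_1}\hookrightarrow \Bun_G$ is an open immersion, it is cohomologically smooth of dimension $0$; this property is stable under the base changes along $f$ (which is representable in locally spatial diamonds and cohomologically smooth by Theorem \ref{properties-of-M_b}) and along $g$ (a pro-étale torsor, hence cohomologically smooth of dimension $0$, being a cofiltered limit of the étale maps of Lemma \ref{intermediate_etale_covers}). Hence $g_{\leq b_1}$ is cohomologically smooth of dimension $0$.

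Next I would use that $\widetilde{\mc M_{b_2}}\to *$ is cohomologically smooth of dimension $\langle 2\rho,\nu_{b_2}\rangle$. This follows because $\widetilde{\mc M_{b_2}}\to \mc M_{b_2}$ is a pro-étale $\underline{G_{b_2}(E)}$-torsor (Lemma \ref{intermediate_etale_covers}), hence cohomologically smooth of dimension $0$, while $\mc M_{b_2}\to [*/\underline{G_{b_2}(E)}]$ is cohomologically smooth of dimension $\langle 2\rho,\nu_{b_2}\rangle$ by Theorem \ref{properties-of-M_b}, and $[*/\underline{G_{b_2}(E)}]\to *$ is cohomologically smooth of dimension $0$ (the group is locally pro-$p$); composing gives the claim. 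Alternatively one can invoke Proposition \ref{strict_hens} and the explicit description of $\widetilde{\mc M_{b_2}}$ as an iterated extension of negative Banach-Colmez spaces, whose total negative degree is $-\langle 2\rho,\nu_{b_2}\rangle$, but factoring through $\mc M_{b_2}$ is cleaner.

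Finally I would compose: $\widetilde{\mc M_{b_2}^{\leq b_1}}\to *$ is the composite of the cohomologically smooth map $g_{\leq b_1}$ of dimension $0$ with the cohomologically smooth map $\widetilde{\mc M_{b_2}}\to *$ of dimension $\langle 2\rho,\nu_{b_2}\rangle$, and composition of cohomologically smooth maps is cohomologically smooth with dimensions adding (\cite[Proposition~23.13]{scholze2017etale}), giving total dimension $\langle 2\rho,\nu_{b_2}\rangle$. I do not expect any serious obstacle here; the only point requiring mild care is the stability of cohomological smoothness under the pro-étale base change $g$ and under passage to the limit $\widetilde{\mc M_{b_2}}=\varprojlim_K \widetilde{\mc M_{b_2}}/\underline{K}$, which is handled exactly as in Lemma \ref{intermediate_etale_covers} together with \cite[Proposition~23.13,~24.2]{scholze2017etale}. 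The shape of the argument is essentially a dimension bookkeeping along the large pullback diagram, and everything needed has already been assembled in the preceding subsections.
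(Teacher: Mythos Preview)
Your overall strategy matches the paper's: use that $j_{\leq b_1}$ is an open immersion (hence cohomologically smooth of dimension $0$) and that $\mc M_{b_2}\to [*/\underline{G_{b_2}(E)}]$ is cohomologically smooth of dimension $\langle 2\rho,\nu_{b_2}\rangle$. However, there is a gap in your justification that $\widetilde{\mc M_{b_2}}\to *$ is cohomologically smooth. You factor this as $\widetilde{\mc M_{b_2}}\to \mc M_{b_2}\to [*/\underline{G_{b_2}(E)}]\to *$ and assert that the first map, a $\underline{G_{b_2}(E)}$-torsor, is cohomologically smooth of dimension $0$ because it is ``a cofiltered limit of \'etale maps''. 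This inference fails: pro-\'etale morphisms are not cohomologically smooth in general. Concretely, for an infinite pro-$p$ group $K$ the map $f:\underline{K}\to *$ is not $\ell$-cohomologically smooth: if it were, $f^!\Lambda$ would be invertible, forcing the $\Lambda$-dual of $f_!\Lambda=C(K,\Lambda)$ to again be $C(K,\Lambda)$, which is false for infinite $K$. The same obstruction applies to any $\underline{G_{b_2}(E)}$-torsor.

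The repair is immediate and is precisely the paper's (terse) argument: rather than composing through $\mc M_{b_2}$, use the defining cartesian square for $\widetilde{\mc M_{b_2}}$ to see that $\widetilde{\mc M_{b_2}}\to *$ is the \emph{base change} of $\mc M_{b_2}\to [*/\underline{G_{b_2}(E)}]$ along $*\to [*/\underline{G_{b_2}(E)}]$. Since cohomological smoothness is stable under arbitrary base change, this gives the claim directly. Equivalently (and this is how the paper phrases it), first compose on the $\mc M$-level to get $\mc M_{b_2}^{\leq b_1}\to [*/\underline{G_{b_2}(E)}]$ cohomologically smooth of dimension $\langle 2\rho,\nu_{b_2}\rangle$, and then base change once along $*\to [*/\underline{G_{b_2}(E)}]$ to obtain $\widetilde{\mc M_{b_2}^{\leq b_1}}\to *$.
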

\begin{proof}
    The map $j_{\leq b_1}$ is open and hence cohomologically smooth of dimension $0$, so that $\mc M_{b_2}^{\leq b_1}\to \mc M_{b_2}$ is cohomologically smooth of dimension $0$. Then the lemma follows since $\mc M_{b_2}\to [*/\underline{G_{b_2}(E)}]$ is cohomologically smooth of dimension $\langle 2\rho, \nu_{b_2} \rangle$ (Theorem \ref{properties-of-M_b}).
\end{proof}
Now since $i_{b_1}$ is the inclusion of a closed substack, $R i_{b_1, !}'=i_{b_1,*}':D_\et(\Bun_G^{b_1})\subset D_{\et}(\Bun_G^{\leq b_1})$ is the inclusion of a triangulated subcategory, so proper basechange allows for the descent of compactly supported cohomology to $\widetilde{\mc M_{b_2}^{\leq b_1}}$ where Verdier duality holds.

\section{Explicit Computations for \texorpdfstring{$\Bun_2$}{Bun2} in small codimension}
This section is the computational heart of the paper, where we restrict to $G=\GL_2$. The computation naturally splits into integral and half-integral slope cases. Since the $\mc M_b$ covers can be described in terms of extensions of line bundles, twisting by $\mc O(\pm 1)$ yields equivariantly isomorphic spaces for each integral (resp. half-integral) slope. Thus, we may restrict our attention to the slope $0$ (resp. slope $1/2$) components. Some of the following lemmas may be helpful in understanding the computations of gluing data for arbitrary strata of $\Bun_2$. We only compute the gluing data from basic strata to their immediate specializations, which is already quite involved.
\subsection{Geometric lemmas}
The following lemmas will be useful in analyzing the geometry and cohomology of the spaces that appear. In particular, we consider maps between positive Banach-Colmez spaces for the computation of their cohomology in terms of perfectoid disks. 
\begin{lemma}\label{nonsaturated}
Suppose $\mc E$ is a vector bundle on an absolute Fargues-Fontaine curve $X_C$. An injection $\mc O \hookrightarrow \mc E$ extends to an injection $\mc O(n) \hookrightarrow \mc E$ (for some $1\leq n \leq \deg \mc E$) if and only if it is non-saturated. (Recall that an injective map of vector bundles is defined to be saturated if its cokernel is locally free.)
\end{lemma}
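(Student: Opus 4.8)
The statement is about vector bundles on the absolute Fargues–Fontaine curve $X_C$, which by Dieudonné–Manin decompose into direct sums of semistable pieces $\mc O(\lambda)$. The plan is to reduce the statement to a classification of sub-line-bundles of $\mc E$ with fixed image, using the fact that on $X_C$ a line subbundle $\mc O(n)\hookrightarrow \mc E$ has saturation a line bundle of some degree $m\geq n$. So first I would recall that an injection $\iota:\mc O\hookrightarrow \mc E$ is saturated if and only if its cokernel is a vector bundle (equivalently torsion-free), and that in general the saturation $\iota^{\mathrm{sat}}:\mc L\hookrightarrow \mc E$ of $\iota$ is the unique line subbundle through which $\iota$ factors, with $\mc L/\mc O$ a torsion sheaf supported at finitely many classical points; since $\mc L$ is a line bundle on $X_C$ it is isomorphic to $\mc O(m)$ for $m=\deg\mc L=\deg(\mc L/\mc O)\geq 0$. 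The degree bound $m\leq\deg\mc E$ follows because the quotient $\mc E/\mc L$ must have nonnegative degree — either by a slope/Harder–Narasimhan argument (the maximal slope of any line subbundle of $\mc E$ is at most the maximal slope appearing in $\mc E$, which is $\leq\deg\mc E$) or directly because $\mc E/\mc L$ is an extension that is again a vector bundle on $X_C$ and one computes degrees.

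For the forward direction, suppose $\iota:\mc O\hookrightarrow\mc E$ is non-saturated. Then its cokernel is not locally free, so the saturation $\mc L=\mc O(m)$ satisfies $m=\deg(\mc L/\mc O)\geq 1$, and $\iota$ factors as $\mc O\hookrightarrow\mc O(m)\hookrightarrow\mc E$; taking $n=m$ gives the desired injection $\mc O(n)\hookrightarrow\mc E$, with $1\leq n\leq\deg\mc E$ by the bound above.

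For the reverse direction, suppose $\iota$ factors as $\mc O\xrightarrow{\alpha}\mc O(n)\xrightarrow{\beta}\mc E$ for some $n\geq 1$ with $\beta$ (necessarily) injective. Any nonzero map $\alpha:\mc O\to\mc O(n)$ with $n\geq 1$ is injective with cokernel a length-$n$ torsion sheaf (concretely $H^0(X_C,\mc O(n))$ is nonzero and any global section vanishes on an effective Cartier divisor of degree $n$), hence is non-saturated. Then $\iota=\beta\circ\alpha$ cannot be saturated: if it were, its cokernel would be locally free, but we have an exact sequence relating $\mathrm{coker}(\iota)$, $\mathrm{coker}(\alpha)$, and $\mathrm{coker}(\beta)$ — the snake lemma for $\mc O\hookrightarrow\mc O(n)\hookrightarrow\mc E$ gives a short exact sequence $0\to\mathrm{coker}(\alpha)\to\mathrm{coker}(\iota)\to\mathrm{coker}(\beta)\to 0$, and the nonzero torsion subsheaf $\mathrm{coker}(\alpha)$ of $\mathrm{coker}(\iota)$ obstructs local freeness. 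Hence $\iota$ is non-saturated.

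\textbf{Main obstacle.} The only genuinely delicate point is the degree bound $n\leq\deg\mc E$, which requires knowing that no line subbundle of $\mc E$ on $X_C$ can have slope exceeding the top Harder–Narasimhan slope of $\mc E$; this is standard from the slope formalism on $X_C$ (orthogonality of the $\mc O(\lambda)$ and semicontinuity of slopes recalled in Section 2), but one should state it cleanly. The snake-lemma bookkeeping and the identification of $\mathrm{coker}(\alpha)$ as honest torsion are routine, but care is needed to phrase "non-saturated" purely in terms of the cokernel not being locally free so that both implications use the same criterion.
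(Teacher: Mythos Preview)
Your approach is essentially the same as the paper's: for the forward direction the paper constructs your saturation $\mc L$ explicitly as the pullback $\mc P$ of $\mc E\to\mc C$ along the inclusion $\mc C_{\mathrm{tors}}\hookrightarrow\mc C$, and for the converse both you and the paper inject the torsion cokernel of $\mc O\to\mc O(n)$ into $\mathrm{coker}(\iota)$ (the paper via a commutative ladder and the five lemma, you via the snake lemma).

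One caveat on what you flag as the main obstacle: your proposed argument for $n\leq\deg\mc E$ is not valid as written. The inequality ``maximal Harder--Narasimhan slope of $\mc E$ is $\leq\deg\mc E$'' fails in general (take $\mc E=\mc O(-2)\oplus\mc O(1)$, which has maximal slope $1$ but degree $-1$, and admits a non-saturated $\mc O\hookrightarrow\mc O(1)\subset\mc E$). The paper's own proof does not address this bound either; it only establishes $n\geq 1$. In the paper's applications the bundle $\mc E$ is always semistable of nonnegative slope (e.g.\ $\mc O^2$ or $\mc O(1/2)$ after twisting), where the bound does hold, so the imprecision is harmless there --- but you should not claim the bound in the generality stated without an extra hypothesis.
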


\begin{proof}
Write $\mc C$ for the cokernel of $\mc O \hookrightarrow \mc E$. By assumption, $\mc C$ is not locally free, and hence has a nontrivial torsion submodule. Then there is a skyscraper sheaf $\mc C_{\text{tors}} \hookrightarrow \mc C$ corresponding to a closed effective Cartier divisor of $X_C$ of degree $n$. Pulling back $\mc E \to \mc C$ along this inclusion induces the following short exact sequence:

\begin{center}
\begin{tikzcd}
0 \arrow[r] & \mc O \arrow[r]\arrow[d, equal] & \mc P \arrow[r] \arrow[d]\arrow[phantom,dr,"\ulcorner",very near start]& \mc C_{\text{tors}} \arrow[r]\arrow[d,hookrightarrow] & 0\\
0 \arrow[r] & \mc O \arrow[r] & \mc E \arrow[r] & \mc C \arrow[r] & 0.\\
\end{tikzcd}
\end{center}

Then $\mc P\to \mc E$ is injective by the five lemma, and in particular $\mc P$ is a line bundle as $X_C$ is locally a principal ideal domain \cite[Theorem~I.3.1]{fargues2021geometrization}. As $\mc C_{\text{tors}}$ has degree $n$, additivity of degree implies $\mc P \cong \mc O(n)$.

For the converse, there is a commutative diagram with exact rows
\begin{center}
\begin{tikzcd}
0 \arrow[r] & \mc O \arrow[r]\arrow[d, equal] & \mc O(n) \arrow[r] \arrow[d,hookrightarrow]& \mc C' \arrow[r]\arrow[d,dashed] & 0\\
0 \arrow[r] & \mc O \arrow[r] & \mc E \arrow[r] & \mc C \arrow[r] & 0,\\
\end{tikzcd}
\end{center}
where the right arrow is induced by the universal property of the cokernel. This map is injective by the five lemma, and hence $\mc C$ is not locally free.
\end{proof}
In particular, after pullback to $\spa C$, there is an identification of the torsion of the cokernel of a non-saturated injection $\mc C_{\mathrm{tors}}\cong \bigoplus_{i=1}^n \iota_{x_i,*}C^\sharp$ for certain $x_i\in \Div^1$ which may not be distinct. Pulling back along some $\iota_{x_i,*}C^\sharp$ gives a factorization through a line bundle isomorphic to $\mc O(1)$.
\begin{lemma}\label{BC-homological-disks}
For any positive integer $n$, there is a pro-\'etale torsor $\D^n\to \BC(\mc O(n))_{\spa C}$ with fibres $\underline{E}^{n-1}$, where $\D$ is the perfectoid open unit disk over $\spa C$.
\end{lemma}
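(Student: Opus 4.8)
The plan is to realise the torsor as the one attached, via the Banach--Colmez functor, to an explicit short exact sequence of vector bundles on the curve. \emph{Reduction.} By Remark~\ref{basechange-to-C} we have $\BC(\mc O(1))_{\spa C}\cong\D$, hence $\D^n\cong\BC(\mc O(1)^n)_{\spa C}$, and it suffices to produce a morphism $\BC(\mc O(1)^n)_{\spa C}\to\BC(\mc O(n))_{\spa C}$ exhibiting the source as a pro-\'etale $\underline E^{n-1}$-torsor over the target. I would obtain it from a short exact sequence of vector bundles on the absolute curve
\[0\longrightarrow \mc O^{n-1}\longrightarrow \mc O(1)^n\xrightarrow{\ q\ } \mc O(n)\longrightarrow 0.\]
Such a sequence of vector bundles is Zariski-locally split (on $X_C$, which is locally a principal ideal domain, as used in Lemma~\ref{nonsaturated}), so for any $T\in\Perf$ over $\spa C$ it pulls back along $X_T\to X_C$ to a short exact sequence of bundles on $X_T$; since $H^1(X_T,\mc O^{n-1})=0$ and $H^0(X_T,\mc O)=E$, applying $H^0(X_T,-)$ and sheafifying in $T$ yields a short exact sequence of $v$-sheaves over $\spa C$
\[0\longrightarrow\underline E^{n-1}\longrightarrow\BC(\mc O(1)^n)\longrightarrow\BC(\mc O(n))\longrightarrow 0,\]
i.e.\ realises $\BC(\mc O(1)^n)$ as a torsor under $\underline E^{n-1}$ over $\BC(\mc O(n))$.

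\emph{Building the sequence.} I would choose $u,v\in H^0(X_C,\mc O(1))$ with disjoint zero divisors in $\Div^1$, which is possible since $\mc O(1)$ is globally generated on $X_C$ (and then $u,v$ are automatically $E$-linearly independent, as proportional sections have the same divisor), and define $q(f_0,\dots,f_{n-1})=\sum_{i=0}^{n-1}f_i\,u^{n-1-i}v^i$ under the identification $\mc O(n)=\mc O(1)^{\otimes n}$. At every point of $X_C$ one of $u,v$ is invertible, hence so is one of the monomials $u^{n-1-i}v^i$, so $q$ is surjective and $\mc K:=\ker q$ is a vector bundle of rank $n-1$ and degree $0$. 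The crucial claim is $\Hom(\mc O(\mu),\mc K)=0$ for every rational $\mu>0$: a homomorphism $(\psi_0,\dots,\psi_{n-1})\colon\mc O(\mu)\to\mc O(1)^n$ factors through $\mc K$ iff $\sum_i\psi_i\,u^{n-1-i}v^i=0$, and restricting this identity to the degree-one zero divisor $D_v$ of $v$ — on which $v$ (hence every positive power of $v$) vanishes and $u$ is invertible — forces $\psi_0|_{D_v}=0$, so $\psi_0\in H^0\bigl(X_C,\mc O(1-\mu)(-D_v)\bigr)=H^0(X_C,\mc O(-\mu))=0$; dividing the relation by $v$ and iterating annihilates $\psi_1,\dots,\psi_{n-1}$ in turn. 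Consequently every Harder--Narasimhan slope of $\mc K$ is $\le 0$, and since $\deg\mc K=0$ this forces $\mc K$ to be semistable of slope $0$, i.e.\ $\mc K\cong\mc O^{n-1}$ by the Dieudonn\'e--Manin classification.

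\emph{Pro-\'etaleness and the main obstacle.} To finish I would only need that an $\underline E^{n-1}$-torsor of small $v$-stacks is pro-\'etale: writing $\underline E^{n-1}$ as the increasing union of copies of the profinite group $v$-sheaf $\underline{\mc O_E}^{\,n-1}=\varprojlim_m\underline{(\mc O_E/\pi^m)}^{\,n-1}$, any such torsor is a cofiltered limit of torsors under finite discrete groups, each of which is finite \'etale. The one point that is not formal is the identification $\mc K\cong\mc O^{n-1}$ (equivalently, the existence of the displayed sequence with \emph{trivial} sub-bundle); I expect the slope computation above to be the main obstacle, the rest being routine manipulation with $H^0(X_T,-)$ and the known description of $\BC(\mc O(1))_{\spa C}$ as $\D$.
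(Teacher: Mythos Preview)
Your proof is correct and coincides with the alternative that the paper itself mentions at the end of its argument: the paper's main proof proceeds by induction on $n$, using extensions
\[0\to \mc O\to \mc O(1)\oplus \mc O(n)\to \mc O(n+1)\to 0\]
to pass from $\D^n\to \BC(\mc O(n))$ to $\D^{n+1}\to \BC(\mc O(n+1))$, and only at the end remarks that one could instead work directly with an extension $0\to \mc O^{n-1}\to \mc O(1)^n\to \mc O(n)\to 0$, without spelling this out. You have taken exactly this direct route and supplied the details the paper omits: an explicit surjection $q$ built from two sections $u,v\in H^0(X_C,\mc O(1))$ with disjoint divisors, and a clean slope argument (vanishing on $D_v$ and iteration) pinning down $\ker q\cong \mc O^{n-1}$. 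The inductive route is marginally quicker to state since at each step the kernel is a single copy of $\mc O$ and no further identification is needed, but your direct construction avoids the bookkeeping of tracking the accumulated fibre $\underline{E}^{n-1}$ through the induction and makes the torsor structure visible in one step. One minor remark: your pro-\'etaleness sketch is slightly imprecise (an $\underline E$-torsor is not literally a limit of torsors under finite quotients), but the conclusion is correct for locally profinite $\underline G$, and in fact follows from the same source the paper cites in Lemma~\ref{intermediate_etale_covers}.
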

\begin{proof}
We identify $\D\xrightarrow{\cong} \BC(\mc O(1))$ using the isomorphism of Proposition \ref{smalldegBC}. Then the idea is to construct covers inductively using extensions
\[0\to \mc O\xrightarrow{\smallvector{f}{g}} \mc O(1)\oplus \mc O(n) \xrightarrow{(\alpha,\beta)} \mc O(n+1) \to 0,\]
with $f,\beta \in \BC(\mc O(1))$ and $g,\alpha \in \BC(\mc O(n))$.
For $n=1$, the long exact sequence on cohomology induces a short exact sequence of group $v$-sheaves,
\[0\to \underline{E}\to \mc BC(\mc O(1))\times \mc BC(\mc O(1)) \to \BC(\mc O(2)) \to 0,\]
yielding a surjection $\D^2\to \BC(\mc O(2))$ with fibres $\underline{E}$. Now for any $n$, we have
\[0\to \underline{E}\to \mc BC(\mc O(1))\times \mc BC(\mc O(n)) \to \BC(\mc O(n+1)) \to 0,\]
where the middle term sits in a short exact sequence
\[0 \to \underline{E}^{n-1}\to \D\times \D^n \to \BC(\mc O(1))\times \BC(\mc O(n))\to 0\]
by induction hypothesis. A careful examination of all of the maps identifies the kernel of $\D^{n+1}\to \BC(\mc O(n+1))$ as $\underline{E}^n$.\\
Alternatively, there is no need for the induction if one considers an extension of the form $0\to \mc O^n \to \mc O(1)^{n+1}\to \mc O(n+1)\to 0.$
\end{proof}

\begin{corollary}
For any positive integer $n$, there is a $v$-cover of the negative Banach-Colmez space $\BC(\mc O(-n)[1])_{\spa C}$ by $\D^n$.
\end{corollary}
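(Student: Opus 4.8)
The plan is to exhibit $\BC(\mc O(-n)[1])_{\spa C}$ as a quotient of $\D^n$ by a $0$-dimensional (pro-\'etale) group $v$-sheaf, in the spirit of Lemma~\ref{BC-homological-disks}, by running the long exact cohomology sequence over the relative curve attached to a single short exact sequence of vector bundles on $X_C$.

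The exact sequence I would use is
\[0 \to \mc O(-n) \to \mc O^{n+1} \to \mc O(1)^n \to 0\]
on $X_C$, obtained with no extra work: dualising the sequence $0 \to \mc O^n \to \mc O(1)^{n+1} \to \mc O(n+1) \to 0$ from the proof of Lemma~\ref{BC-homological-disks} (dualising is exact on short exact sequences of vector bundles) gives $0 \to \mc O(-n-1) \to \mc O(-1)^{n+1} \to \mc O^n \to 0$, and twisting by $\mc O(1)$ produces the sequence above. Now I would apply $R\Gamma(X_T,-)$ for $T \in \Perf_C$ and insert the standard curve-cohomology facts recalled in \S2: $H^0(X_T,\mc O(-n)) = 0$, $H^1(X_T,\mc O^{n+1}) = 0$, $H^0(X_T,\mc O^{n+1}) = \underline{E}^{n+1}(T)$, $H^0(X_T,\mc O(1)^n) = \BC(\mc O(1))^n(T) \cong \D^n(T)$ (the last identification by Remark~\ref{basechange-to-C}), and $H^1(X_T,\mc O(-n)) = \BC(\mc O(-n)[1])(T)$. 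The long exact sequence then collapses to a short exact sequence of abelian group $v$-sheaves on $\Perf_C$
\[0 \to \underline{E}^{n+1} \to \D^n \to \BC(\mc O(-n)[1])_{\spa C} \to 0,\]
so that $\D^n \to \BC(\mc O(-n)[1])_{\spa C}$ is a surjection of $v$-sheaves from a perfectoid space, hence a $v$-cover (in fact a pro-\'etale $\underline{E}^{n+1}$-torsor).

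I do not anticipate any serious difficulty: once the exact sequence of bundles is in hand, the argument is formally identical to the proof of Lemma~\ref{BC-homological-disks}. The only step that deserves a moment's care is confirming that the relative cohomology of $X_T$ behaves in families exactly as the displayed formulas assert — that the relevant $H^0$'s and $H^1$'s are the claimed $v$-sheaves and that the connecting map $H^0(X_T,\mc O(1)^n) \to H^1(X_T,\mc O(-n))$ is surjective (which is immediate from $H^1(X_T,\mc O^{n+1}) = 0$) — but this is precisely the long-exact-sequence bookkeeping already carried out for the degree-one modification in \S2 and needs nothing new.
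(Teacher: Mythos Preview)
Your proposal is correct and follows the same idea as the paper: run the long exact sequence in curve cohomology attached to a short exact sequence of vector bundles with $\mc O(-n)$ as the sub. The only difference is the choice of sequence. The paper uses $0\to \mc O(-n)\to \mc O^2\to \mc O(n)\to 0$, giving a presentation $0\to \underline{E}^2\to \BC(\mc O(n))\to \BC(\mc O(-n)[1])\to 0$, and then implicitly composes with the $\underline{E}^{n-1}$-torsor $\D^n\to \BC(\mc O(n))$ from Lemma~\ref{BC-homological-disks}; you instead take $0\to \mc O(-n)\to \mc O^{n+1}\to \mc O(1)^n\to 0$ and land directly on $0\to \underline{E}^{n+1}\to \D^n\to \BC(\mc O(-n)[1])\to 0$. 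Your route is a touch more self-contained (one exact sequence rather than a composition of two covers), while the paper's is shorter to write since it simply invokes the preceding lemma; the resulting cover and its kernel $\underline{E}^{n+1}$ agree either way.
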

\begin{proof}
The long exact sequence associated to an extension
\[0\to \mc O(-n)\to \mc O^2\to \mc O(n)\to 0\]
gives a presentation
\[0\to \underline{E}^2 \to \BC(\mc O(n))\to \BC(\mc O(-n)[1])\to 0.\]
\end{proof}

\begin{lemma}[{\cite[Lemma~6.11]{hamann2023geometric}}]\label{negative_bc_dualizing_sheaf}
    Let $d$ be a positive integer and $f:\BC(\mc O(-d)[1])\to *$ be the structure morphism of a negative Banach-Colmez space. Then $f$ is cohomologically smooth of dimension $d$ with dualizing sheaf $f^!\Lambda \cong \Lambda (d)[2d]$.
\end{lemma}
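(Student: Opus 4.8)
The plan is to realize $\BC(\mc O(-d)[1])$ as the base of a torsor, under a locally profinite group, whose total space is a \emph{positive} Banach--Colmez space, and then to transport Hansen's computation (Proposition \ref{Hansen-bc-dualizing}) down along that torsor. Since both cohomological smoothness and the formation of $f^!\Lambda$ may be checked after base change along the $v$-cover $\spa C\to *$, I would work over $\spa C$ throughout. Applying $R\Gamma(X_S,-)$ to $0\to\mc O(-d)\to\mc O^2\to\mc O(d)\to 0$ and using $H^0(X_S,\mc O)=\underline{E}(S)$ together with $H^1(X_S,\mc O)=0$ produces (exactly as in the proof of the Corollary above) a short exact sequence of $v$-sheaves of groups
\[ 0\to\underline{E}^2\to\BC(\mc O(d))\xrightarrow{\,q\,}\BC(\mc O(-d)[1])\to 0, \]
so that $q$ is an $\underline{E}^2$-torsor, in particular a surjection of small $v$-stacks whose source is a positive Banach--Colmez space.

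Next I would check that $q$ is cohomologically smooth of dimension $0$ with $q^!\Lambda\cong\Lambda$. After passing to a $v$-cover of $\BC(\mc O(-d)[1])$ that trivializes the torsor, this reduces to the analogous claim for $\underline{E}^2\to *$, which holds because $\underline{E}=\varinjlim_n\underline{\pi^{-n}\mc O_E}$ is a filtered union along open immersions of the profinite $v$-sheaves $\underline{\mc O_E}=\varprojlim_n\underline{\mc O_E/\pi^n}$, each $\underline{\mc O_E/\pi^n}$ being finite \'etale over $*$; filtered limits and colimits of such along the evident maps remain $\ell$-cohomologically smooth of dimension $0$ with dualizing complex $\Lambda$. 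This is precisely where the prime-to-$p$ hypothesis on $\Lambda$ is used.

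Then I would finish by descent. The composite $\BC(\mc O(d))\xrightarrow{q}\BC(\mc O(-d)[1])\xrightarrow{f}*$ is cohomologically smooth of dimension $d$ by Proposition \ref{Hansen-bc-dualizing}, and $q$ is a cohomologically smooth surjection of dimension $0$, so $f$ is cohomologically smooth of dimension $d$ by descent of cohomological smoothness along cohomologically smooth surjections \cite{scholze2017etale}. For the twist and the shift, observe that $f^!\Lambda$ is $\otimes$-invertible and that $\BC(\mc O(-d)[1])$ is connected with trivial prime-to-$p$ \'etale fundamental group: indeed the $v$-cover $\D^d\to\BC(\mc O(-d)[1])$ of the Corollary has an open polydisc as source, and all intervening torsors are under powers of $\underline{E}$, which has no nontrivial prime-to-$p$ continuous quotient. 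Hence every $\otimes$-invertible object of $D_\et(\BC(\mc O(-d)[1]),\Lambda)$ has the form $\Lambda(n)[m]$ pulled back from $*$. Finally, the composition formula $(f\circ q)^!=q^!\circ f^!$, together with $q^!\Lambda\cong\Lambda$ and $q^!(-)\cong q^*(-)$ (valid as $q$ is cohomologically smooth of dimension $0$), gives $q^*(f^!\Lambda)\cong(f\circ q)^!\Lambda\cong\Lambda(d)[2d]$; comparing with $q^*(\Lambda(n)[m])=\Lambda(n)[m]$ forces $n=d$ and $m=2d$, so $f^!\Lambda\cong\Lambda(d)[2d]$.

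The step I expect to be the main obstacle is the second one: establishing that $\underline{E}^2\to *$ (hence $q$) is cohomologically smooth of dimension $0$ with trivial dualizing complex, and that cohomological smoothness descends along it with the correct bookkeeping of the Tate twist in the final comparison. Everything else is formal six-functor manipulation built on Proposition \ref{Hansen-bc-dualizing} and the diamonds formalism of \cite{scholze2017etale}.
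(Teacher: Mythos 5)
Your proof takes the same route as the paper: both descend Hansen's dualizing-complex computation (Proposition \ref{Hansen-bc-dualizing}) for $\BC(\mc O(d))$ along the $\underline{E}^2$-torsor $q\colon \BC(\mc O(d)) \to \BC(\mc O(-d)[1])$ coming from the long exact sequence of $0\to\mc O(-d)\to\mc O^2\to\mc O(d)\to 0$. The only difference in the final step is cosmetic — the paper's brief sketch checks directly that the induced $\underline{E}^2$-equivariant structure on $\Lambda(d)[2d]$ is trivial, whereas you achieve the same trivialization of the descent datum by classifying $\otimes$-invertible objects on the (prime-to-$p$) simply connected base and pinning down the twist and shift via $q^*$.
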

\begin{proof}
    The idea is to descend the dualizing sheaves of positive Banach-Colmez spaces in Proposition \ref{Hansen-bc-dualizing} along the $\underline{E}^2$-torsor in the previous corollary. But $\underline{E}^2$ acts trivially on $\Lambda(d)[2d]$ as a sheaf on $\BC(\mc O(d))$, so the descent identifies $f^! \Lambda \cong \Lambda(d)[2d]$ as sheaves on $\BC(\mc O(-d)[1])$.
\end{proof}

\subsection{Specialization from the basic point in an integral slope component} 
This section and the following four sections contain the computation of the gluing data for sheaves supported on the basic stratum in slope $0$ to the immediate specialization in $\Bun_2$. We first identify the moduli space $\widetilde{\widetilde{\mc M_{b_2}^{b_1}}}$ in terms of $\underline{\GL_2(E)}$-orbits inside positive Banach-Colmez spaces, then compute its $\underline{\GL_2(E)}$-equivariant cohomology. This ultimately boils down to cohomology of (perfectoid) disks and a comparison theorem of Huber to the \'etale cohomology of Henselian schemes, as well as Kummer theory to understand Frobenius equivariance of the Tate twist in the cohomology of $\Proj^1$.
\subsubsection{Geometry of the covers and the gluing functor for unramified characters}

We consider the open substack consisting of the basic point $\mc O^2 \leftrightarrow b_1 \in B(G)$ and its specialization $ \mc O(-1)\oplus \mc O(1) \leftrightarrow  b_2 \in B(G)$. The corresponding stabilizer group diamonds (up to taking $\pi_0$) are $\mc G_{b_1}\cong \underline{\GL_2(E)}$ and $\underline{G_{b_2}(E)}\cong \underline{E_1}^\times \times \underline{E_2}^\times$, where we use the notation $E_i^\times = E^\times$ to distinguish the first and second factors of the product.
\begin{proposition}\label{Equivariant-iso-cover}
After basechange to $\spa C$, there is a $\mc G_{b_1}\times \underline{G_{b_2}(E)}$-equivariant isomorphism
\[\widetilde{\widetilde{\mc M_{b_2}^{b_1}}}_{,C}\cong \left [\BC(\mc O(1))^2 \setminus \underline{\GL_2(E)}\cdot\Delta(\BC(\mc O(1)))\right]\times \underline{E^{\times}},
\]
where $\BC (\mc O(1))^2$ inherits the natural action of $\underline{G_{b_1}(E)}\cong \underline{\GL_2(E)}$ on $\mc O(1)^2$ via postcomposition, and $\underline{\GL_2(E)}$ acts on the factor of $\underline{E^\times}$ via the determinant. Furthermore, the first factor of $\underline{G_{b_2}(E)}\cong \underline{E_1^\times}\times \underline{E_2^\times}$ acts on $\BC(\mc O(1))^2$ via precomposition, and each $\underline{E_i^\times}$ acts on $\underline{E^\times}$ by the inverse of multiplication.
\end{proposition}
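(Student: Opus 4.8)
The plan is a functor-of-points identification, carried out over $\spa C$ as in the statement (where $\BC(\mc O(1))$ is the perfectoid open disk of Remark~\ref{basechange-to-C}). First I would unwind the pullback squares of diagram~(3): since $b_1$ is basic, $\mc G_{b_1}\cong\underline{\GL_2(E)}$, so an $S$-point of $\widetilde{\widetilde{\mc M_{b_2}^{b_1}}}_{,C}$ consists of a rank-$2$ bundle $\mc E$ on $X_S$ that is at every geometric point isomorphic to $\mc O^2$, a sub-bundle $\mc L=\mc E^{\leq-1}\subset\mc E$ with $\mc L$ and $\mc E/\mc L$ fibrewise semistable of slopes $-1$ and $1$ (hence fibrewise $\mc O(-1)$ and $\mc O(1)$), trivializations $\tau_{-1}\colon\mc L\xrightarrow{\sim}\mc O(-1)$ and $\tau_1\colon\mc E/\mc L\xrightarrow{\sim}\mc O(1)$ of the two graded pieces (the $\underline{G_{b_2}(E)}$-torsor), and a trivialization $\tau\colon\mc E\xrightarrow{\sim}\mc O^2$ of the total bundle (the $\mc G_{b_1}$-torsor); this uses the definition of $\mc M$ and the associated-graded map of Theorem~\ref{properties-of-M_b}.

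To such data I would attach the composite $\phi:=\tau\circ(\mc L\hookrightarrow\mc E)\circ\tau_{-1}^{-1}$, an element of $\Hom_{X_S}(\mc O(-1),\mc O^2)=H^0(X_S,\mc O(1))^2=\BC(\mc O(1))^2(S)$, which the sequence $0\to\mc L\to\mc E\to\mc E/\mc L\to 0$, transported by the trivializations, exhibits as fibrewise a saturated injection with cokernel $\cong\mc O(1)$. On such $\phi=(f,g)$ there is a canonical trivialization of the cokernel, the ``adjugate'' map $c_\phi\colon\mc O^2\to\mc O(1)$, $(a,b)\mapsto ag-bf$: it kills $\phi$, is fibrewise surjective with kernel $\mathrm{im}(\phi)$, and hence descends (after transport by $\tau$) to an isomorphism $\overline{c_\phi}\colon\mc E/\mc L\xrightarrow{\sim}\mc O(1)$. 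Comparing $\tau_1$ with $\overline{c_\phi}$ gives $t:=\tau_1\circ\overline{c_\phi}^{-1}\in\Aut(\mc O(1))(S)=\underline{E^\times}(S)$, and $(\mc E,\mc L,\tau_{-1},\tau_1,\tau)\mapsto(\phi,t)$ is the comparison morphism; its inverse sends $(\phi,t)$ to $\bigl(\mc O^2,\ \mathrm{im}(\phi),\ \phi^{-1},\ t\cdot c_\phi,\ \mathrm{id}\bigr)$, and the two are mutually inverse by inspection. Its image is the subfunctor $U\subset\BC(\mc O(1))^2$ of maps that are fibrewise saturated injections with cokernel $\cong\mc O(1)$; this is an open condition, since the non-injective locus is the origin, the non-saturated locus is closed by semicontinuity of Harder--Narasimhan data for families on the curve, and a fibrewise-$\mc O(1)$ line bundle on $X_S$ is automatically $v$-locally $\cong\mc O(1)$.

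It then remains to identify $U=\BC(\mc O(1))^2\setminus\underline{\GL_2(E)}\cdot\Delta(\BC(\mc O(1)))$, for which Lemma~\ref{nonsaturated} is the key input. Fibrewise: a nonzero $\phi=(f,g)$ fails to be saturated precisely when its saturation in $\mc O^2$ is a split sub-line-bundle $\mc O\hookrightarrow\mc O^2$, which occurs precisely when $f,g$ span a common line $E\cdot f_0\subset H^0(X_C,\mc O(1))$, i.e.\ $(f,g)=\gamma\cdot(f_0,f_0)$ for some $\gamma\in\GL_2(E)$ (every nonzero vector in $E^2$ is the image of $(1,1)$ under an invertible matrix); the converse direction uses that two $E$-linearly independent sections of $\mc O(1)$ have distinct degree-$1$ zero divisors, so no common zero and $\phi$ saturated. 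Promoting this pointwise statement to an equality of locally closed sub-$v$-sheaves is what exhibits the complement $Z$ as a space of bounded line-bundle modifications fibred over $\underline{\Proj^1(E)}$, as used in the next lemma.

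Finally, equivariance is read off trivialization by trivialization: an element $\gamma\in\mc G_{b_1}=\underline{\GL_2(E)}$ postcomposes $\phi$ by $\gamma$ (the stated action on $\BC(\mc O(1))^2$) and, since $c_\phi$ is built through $\wedge^2\mc E$, rescales $t$ through $\det\colon\GL_2(E)\to E^\times$; the first torus factor $\underline{E_1^\times}=\Aut(\mc O(-1))$ precomposes $\phi$ by a scalar (hence the ``precomposition'' action) and, because $c_{\lambda\phi}=\lambda\,c_\phi$, acts on $t$ by inverse multiplication; the second torus factor $\underline{E_2^\times}=\Aut(\mc O(1))$ fixes $\phi$ and acts on $t$ by inverse multiplication. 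These three actions commute, yielding the claimed $\mc G_{b_1}\times\underline{G_{b_2}(E)}$-equivariant isomorphism. I expect the main obstacle to be the step just before this one, namely matching the complement of $U$ with the orbit $\underline{\GL_2(E)}\cdot\Delta(\BC(\mc O(1)))$ as sub-$v$-sheaves rather than merely on topological points; the equivariance itself is bookkeeping once a convention for the torsor actions is fixed.
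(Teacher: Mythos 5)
Your proposal takes essentially the same approach as the paper: unwind the pullback squares to a functor-of-points description, read the inclusion $\mc L\hookrightarrow\mc E$ through the trivializations as a map $\phi\colon\mc O(-1)\to\mc O^2$, use Lemma~\ref{nonsaturated} to characterize the complement, and produce the $\underline{E^\times}$-factor as a comparison of trivializations. The one real divergence is in how you produce that last factor. The paper composes the canonical isomorphism $\mc K\otimes\mc C\xrightarrow{\sim}\Lambda^2\mc E$ with the trivializations, i.e.\ it parameterizes trivializations of the \emph{determinant} bundle, whereas you use the adjugate $c_\phi$ to manufacture a canonical trivialization $\overline{c_\phi}$ of the \emph{cokernel} and set $t=\tau_1\circ\overline{c_\phi}^{-1}$. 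These are inverse to one another, and the paper's remark immediately following the proposition explicitly flags the cokernel-trivialization parameter as the one that is ``a bit more tricky to identify and compute with'' because it inverts the actions. Indeed, with any single consistent convention for the torsor actions on $(\tau_{-1},\tau_1,\tau)$ (say, all by postcomposition), one finds from $c_{\gamma\phi}=\det(\gamma)\,c_\phi\circ\gamma^{-1}$ and $c_{\lambda\phi}=\lambda c_\phi$ that your $t$ transforms by $\det(\gamma)^{-1}$ under $\mc G_{b_1}$ and by multiplication (not inverse multiplication) under $\underline{E_i^\times}$ — the exact inverses of what the proposition asserts. Your stated bookkeeping (``rescales $t$ through $\det$'' and ``acts on $t$ by inverse multiplication'') thus cannot both be obtained from your $t$; the fix is simply to take $t'=\overline{c_\phi}\circ\tau_1^{-1}$ (equivalently, invert $t$), after which the transformations match the paper's determinant parameter $s$. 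This is a local sign convention, not a structural problem, but as written the equivariance paragraph is internally inconsistent.

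On the concern you flag about promoting the fibrewise saturation criterion to an equality of sub-$v$-sheaves: this is exactly the point where the paper invokes (a) constancy of the Newton polygon, hence the existence of a global Harder--Narasimhan filtration by Kedlaya--Liu, (b) representability of $\widetilde{\mc M_{b_2}}\to\Bun_2$ in locally spatial diamonds, so that after base change to $\spa C$ the source is a $v$-sheaf, and (c) the fact that it suffices to define the comparison morphism over strictly totally disconnected $T$, where the global filtration splits. Your fibrewise argument — two nonzero $E$-linearly dependent sections of $\mc O(1)$ share a degree-$1$ zero divisor, two independent ones do not, so saturation is equivalent to $E$-linear independence of $(f,g)$, and the dependent locus together with $0$ is swept out by $\GL_2(E)\cdot\Delta$ — is correct and matches what the paper does via factoring through $\mc O\to\mc O^2$; the remaining step is the routine reduction to strictly totally disconnected test objects, not a missing idea. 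So: same proof, modulo one sign, plus an explicit nod to the strictly-totally-disconnected reduction.
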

\begin{proof}
Recall that the moduli functor $\widetilde{\widetilde{\mc M_{b_2}^{b_1}}}$ is the $v$-stackification of the groupoid of filtered vector bundles $\mc K\subset \mc E$ over $X_{S,E}$ such that $\mc E$ is isomorphic to $\mc O^2$ and the associated graded bundle $\mc K \oplus \mc C$ is isomorphic to $\mc O(-1)\oplus \mc O(1)$ as graded vector bundles, together with global trivializations $\phi_K,\phi_E,$ and $\phi_C$. The Newton polygon of $\mc E$ is constant in $S$, and hence there exists a global Harder-Narasimhan filtration \cite[Theorem~II.2.19]{fargues2021geometrization}. Since the morphism $\widetilde{\mc M_{b_2}}\to \Bun_2$ is representable in locally spatial diamonds and $\widetilde{\widetilde{\mc M_{b_2}^{b_1}}}$ is the pullback along $*\to \Bun_2^{b_1}\to \Bun_2$, we see that $\widetilde{\widetilde{\mc M_{b_2}^{b_1}}}$ is isomorphic to a sheaf (after basechange to $\spa C$); it becomes a sheaf as soon as we identify isomorphism classes in the fiber categories. Then as both the source and target are $v$-sheaves, it suffices to define the morphism over strictly totally disconnected $T$, where the global Harder-Narasimhan filtrations are split and hence there are diagrams
\begin{center}
\begin{tikzcd}
0 \arrow[r] & \mc K |_{X_T} \arrow[r]\arrow[d,"\phi_K"] & \mc E|_{X_T} \arrow[r]\arrow[d,"\phi_E"]& \mc C |_{X_T} \arrow[d,"\phi_C"]\arrow[r]& 0 \\
&\mc O(-1) \arrow[r,dashed,"\smallvector{f}{g}"]\arrow[d, swap,"\underline{E_1^\times}"]&\mc O^2 \arrow[d, swap,"\underline{GL_2(E)}"]& \mc O(1)\arrow[d, swap,"\underline{E_2^\times}"] & \\
&\mc O(-1) &\mc O^2 & \mc O(1) & 
\end{tikzcd}
\end{center}
inducing sections $f,g\in \BC(\mc O(1))^2(T)$, with $\underline{G_b(E)}$-actions given by pre/postcomposition. Such maps $\smallvector{f}{g}$ are saturated, and all saturated injections are in the image by considering $\mc K=\mc O(-1)$ and $\mc E=\mc O^2$ (necessarily the cokernel is isomorphic to $\mc O(1)$ by additivity of degree). By Lemma \ref{nonsaturated}, the non-saturated injections factor through $\mc O(-1)\hookrightarrow \mc P\cong \mc O$. The factorizations $\mc O\to \mc O^2$ are determined by points of $\BC(\mc O)^2\cong \underline{E}^2$ which has transitive orbit under the $\underline{\GL_2(E)}$-action, so the non-saturated locus (including the zero map $\mc O(-1)\to \mc O^2$) is given by $\underline{\GL_2(E)}\cdot \Delta(\BC(\mc O(1)))$, where $\Delta(\BC(\mc O(1)))$ is the diagonal inside $\BC(\mc O(1))^2$; note that this description does not require fixing any isomorphism $\mc P\cong \mc O$.

Taking determinant bundles yields canonical isomorphisms
$\mc K \otimes \mc C \xrightarrow{\sim}\Lambda^2 \mc E$, which can be composed to isomorphisms
\[\mc O(-1)\otimes \mc O(1)\xrightarrow{\phi_K^\inv \otimes \phi_C^\inv}\mc K \otimes \mc C \xrightarrow{\sim}\Lambda^2 \mc E\xrightarrow{\det \phi_E}\mc O,\]
parameterized by the punctured Banach-Colmez space $\BC(\mc O)\setminus 0 \cong \underline{E^\times}$. Then the $\underline{E_1^\times}$- and $\underline{E_2^\times}$- actions on $\mc O(-1)$ and $\mc O(1)$, resp., induce the inverse of multiplication on the space $\underline{E^\times}$, while the induced $\underline{\GL_2(E)}$-action is multiplication by the determinant. Together with the projection to $\BC(\mc O(1))^2\setminus{\underline{\GL_2(E)}\cdot \Delta (\BC(\mc O(1)))}$, which pins down $\phi_K$ and $\phi_E$, the projection to $\underline{E}^\times$ furthermore pins down the trivialization $\phi_C$, hence the map is an isomorphism of $v$-sheaves.

\end{proof}
\begin{remark}\hfill \\
\begin{itemize}
    \item The same moduli description appears for any integral slope component after twisting extensions of the form 
    \[0 \to \mc O(\lambda -1) \to \mc O(\lambda)^2\cong \mc O(\lambda)\otimes \mc O^2\to \mc O(\lambda +1)\to 0\]
    by the line bundle $\mc O(-\lambda)$. Thus,  $\widetilde{\widetilde{\mc M_{b_2}^{b_1}}}$ for any basic point $b_1$ with integral slope and immediate specialization $b_2$, together with its $\underline{G_{b_i}(E)}$-equivariant data, is described by the space in the proposition. 
    \item We prefer to parameterize trivializations of the determinant bundle, since trivializing the cokernel yields geometric actions on the factor $\underline E^\times$ via the inverse of determinant by $\underline{\GL_2(E)}$ and $e_2/e_1$ by $\underline{E_1^\times} \times \underline{E_2^\times}$, which we find a bit more tricky to identify and compute with (in particular, the compactly supported global sections on $\underline E^\times$ are no longer $\cind_{\SL_2(E)}^{\GL_2(E)}\Lambda$, since the domain of such functions $E^\times \to \Lambda$ is acted upon by the \emph{inverse} of determinant). A careful inspection of the computations in Theorem \ref{cs-coh-tilde-M-trivial-module} shows that the resulting $G_{b_2}(E)$-modules from either description are equal in the end.
    \item The approach of identifying the space $\widetilde{\widetilde{\mc M_{b_2}^{b_1}}}$ as the complement of a positive Banach-Colmez space is informed by the Drinfeld compactification of $\Bun_P$, which is instrumental in studying geometric Eisenstein series (see \cite{hamann2023geometric} and \cite{hamann2024geometriceisensteinseriesi}).
\end{itemize}
\end{remark}

The idea now is to compute the cohomology of $\widetilde{\widetilde{\mc M_{b_2}^{b_1}}}$ using an excision short exact sequence for $Z:= \underline{\GL_2(E)}\cdot \Delta(\BC(\mc O(1)))$ and its open complement $U$ inside $\BC(\mc O(1))^2$. Since our computations are insensitive to basechange to $\spa C$ and tilting, we may replace many of these spaces by perfectoid spaces over $C$.
\begin{lemma}\label{geometry-of-Z}
There is a $\GL_2(E)$-equivariant projection map $Z \setminus 0 \to \underline{\Proj^1(E)}=\underline{B \backslash \GL_2(E)}$ with typical fibre $\BC(\mc O(1))\setminus 0$, where the action on the base is the regular right action. 
\end{lemma}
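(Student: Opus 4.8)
The plan is to unwind the description of $Z$ obtained inside the proof of Proposition~\ref{Equivariant-iso-cover} and to exhibit the projection as a scalar-multiplication map. Recall that over a strictly totally disconnected $T$ a point of $\BC(\mc O(1))^2$ is a morphism $\smallvector{f}{g}\colon \mc O(-1)\to \mc O^2$ over $X_T$; by Lemma~\ref{nonsaturated} together with transitivity of $\underline{\GL_2(E)}$ on $\underline E^2\setminus 0$, it lies in $Z$ precisely when it is non-saturated, and such non-saturated morphisms are exactly those factoring as $\mc O(-1)\xrightarrow{s}\mc P\cong \mc O\xrightarrow{v}\mc O^2$ with $s\in \BC(\mc O(1))$ and $v\in \underline E^2$. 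Thus $Z$ is the image of the scalar-multiplication morphism $\mu\colon \BC(\mc O(1))\times \underline E^2\to \BC(\mc O(1))^2$, $(s,v)\mapsto (sv_1,sv_2)$, and $Z\setminus 0$ is the image of its restriction to $(\BC(\mc O(1))\setminus 0)\times(\underline E^2\setminus 0)$, every point of which is automatically injective. First I would check, again reducing to strictly totally disconnected test objects as in Proposition~\ref{Equivariant-iso-cover}, that $\mu$ is $\underline{E^\times}$-equivariant for the antidiagonal action $\lambda\cdot(s,v)=(\lambda^{\inv}s,\lambda v)$ on the source and the trivial action on the target, that this action on $(\BC(\mc O(1))\setminus 0)\times(\underline E^2\setminus 0)$ is free, and that $\mu$ identifies the quotient with $Z\setminus 0$ as $v$-sheaves; here $v$-surjectivity is exactly the factorization statement and injectivity is immediate from $s\neq 0$, $v\neq 0$.

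Next I would define the projection. The second projection $(\BC(\mc O(1))\setminus 0)\times(\underline E^2\setminus 0)\to \underline E^2\setminus 0$ followed by the quotient $\underline E^2\setminus 0\to \underline{E^\times}\backslash(\underline E^2\setminus 0)=\underline{\Proj^1(E)}$ is $\underline{E^\times}$-invariant, hence descends to a morphism $Z\setminus 0\cong \underline{E^\times}\backslash\big[(\BC(\mc O(1))\setminus 0)\times(\underline E^2\setminus 0)\big]\to \underline{\Proj^1(E)}$; concretely it sends a non-saturated injection $\mc O(-1)\hookrightarrow \mc O^2$ to its saturation inside $\mc O^2$, which by the factorization above (or by semistability of $\mc O^2$ together with $\Hom(\mc O(n),\mc O^2)=0$ for $n\geq 1$, which forces the saturation to have degree $0$ since degree $-1$ is the excluded saturated case) is isomorphic to $\mc O$, i.e.\ is a sub-line bundle cut out by an $E$-line in $E^2$. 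The action of $\underline{\GL_2(E)}$ on $\BC(\mc O(1))^2$ is $E$-linear and therefore commutes with multiplication by sections of $\mc O(1)$, so $A\cdot(sv)=s\,(Av)$; hence the projection is $\underline{\GL_2(E)}$-equivariant, intertwining this action with the standard one on $\underline{\Proj^1(E)}$, which is the regular right action on $\underline{B\backslash \GL_2(E)}$.

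For the fibres, I would base change the projection along the $\underline{E^\times}$-torsor $\underline E^2\setminus 0\to \underline{\Proj^1(E)}$: pulling back a torsor quotient along the torsor trivializes it, so the pullback becomes the projection $(\BC(\mc O(1))\setminus 0)\times(\underline E^2\setminus 0)\to \underline E^2\setminus 0$, and since $\underline E^2\setminus 0\to \underline{\Proj^1(E)}$ is a $v$-cover every geometric fibre of $Z\setminus 0\to \underline{\Proj^1(E)}$ is isomorphic to $\BC(\mc O(1))\setminus 0$; directly, the fibre over a line $\ell$ is $\big(\ell\otimes_E \BC(\mc O(1))\big)\setminus 0$, which becomes $\BC(\mc O(1))\setminus 0$ after choosing a basis of $\ell$. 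The step I expect to cost the most care is the $v$-sheaf bookkeeping of the first paragraph: checking that $Z=\underline{\GL_2(E)}\cdot\Delta(\BC(\mc O(1)))$ really is the image of $\mu$ as a (locally closed) sub-$v$-sheaf and that the $\underline{E^\times}$-quotient computes $Z\setminus 0$. As in Proposition~\ref{Equivariant-iso-cover} this is handled by testing on strictly totally disconnected perfectoid spaces, where the global Harder-Narasimhan filtrations split and the factorization of Lemma~\ref{nonsaturated} is available functorially; everything else is formal.
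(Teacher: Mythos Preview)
Your proposal is correct and follows essentially the same approach as the paper: both extract the projection from the factorization $\mc O(-1)\to\mc O\to\mc O^2$ of Lemma~\ref{nonsaturated}, sending a non-saturated injection to the $E$-line it spans in $\mc O^2$, with your packaging as an $\underline{E^\times}$-quotient of $(\BC(\mc O(1))\setminus 0)\times(\underline E^2\setminus 0)$ being a slightly more structural version of the paper's direct description. The only wrinkle is that your map naturally yields the \emph{left} $\GL_2(E)$-action on column vectors in $\underline{\Proj^1(E)}$; the paper explicitly passes to row vectors (transpose) so that the action becomes the stated regular right action on $\underline{B\backslash\GL_2(E)}$, and you should make this identification explicit as well.
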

\begin{remark}\hfill
\begin{itemize}
\item The fibration is a product as abstract $v$-sheaves since the base $\underline{\Proj^1(E)}$ is strictly totally disconnected; however, such an identification is not equivariant for the $\underline{\GL_2(E)}$-action since there is nontrivial monodromy on the induced action on fibers, which looks similar to a factor of automorphy that appears in the theory of classical modular forms. We will not need an explicit description of the monodromy, as we will see in the proof of Proposition \ref{actions-on-Z}; ultimately, one reduces to the $B$-equivariant structure over a distinguished fiber. 
\item The fiber $F$ of $Z\setminus 0\to \underline{\Proj^1(E)}$ over $\infty$ has the following geometric interpretation: by Lemma \ref{nonsaturated}, every geometric point in this fiber corresponds to a diagram
\begin{center}
\begin{tikzcd}
0 \arrow[r] & \mc O(-1) \arrow[r,"f"]\arrow[d, equal] & \mc O \arrow[r] \arrow[d,hookrightarrow,"\smallvector{1}{0}"]& i_* C^\sharp \arrow[r]\arrow[d] & 0\\
0 \arrow[r] & \mc O(-1) \arrow[r,"\smallvector{f}{0}"] & \mc O^2 \arrow[r] & i_*C^\sharp \oplus \mc O \arrow[r] & 0,
\end{tikzcd}
\end{center}
obtained by pulling back along the torsion subsheaf of the cokernel. Restricting the $\mc G_{b_1}$-action on $Z\setminus 0$ to $\underline{B}$ fixes this fiber, so $F$ is naturally a $\underline{B}$-space. The action only depends on the first toral factor of $B$, so the quotient by the $B$-action identifies as the projectivized Banach-Colmez space $\underline{E^\times}\backslash\left( \BC(\mc O(1))\setminus{0}\right )\cong \mathrm{Div}^1$. This can also be thought of as the space of modifications of line bundles $\mc O \hookrightarrow \mc L$ with cocharacter $1\in X_*(E^\times)$. 
\end{itemize}
\end{remark}
\begin{proof}
By Lemma \ref{nonsaturated}, $Z\setminus 0$ is the space parameterizing non-saturated injections $\mc O(-1) \hookrightarrow \mc O^2$, and hence factorizations 
\begin{center}
\begin{tikzcd}
0\arrow[r]&\mc O(-1) \arrow[r,"\smallvector{f}{g}"]&\mc O^2\arrow[r] & \mc C\arrow[r] & 0\\
0 \arrow[r]&\mc O(-1)\arrow[r]\arrow[u, equal] &\mc P \arrow[u, hookrightarrow]\arrow[r]& i_* C^\sharp \arrow[u,hookrightarrow]\arrow[r] & 0.
\end{tikzcd}
\end{center}
Recall that the inclusion of $i_* C^\sharp$ into $\mc C$ is the torsion subsheaf, and the bottom row is obtained by pulling back along this inclusion, so that there have been no choices involved. The choices of identifications $\mc O \xrightarrow{\sim}\mc P$ differ by the action of $\Aut \mc O = \underline{E^\times}$. Then a point of $Z\setminus 0$ determines an injection $\mc O \xrightarrow{\sim}\mc P\to \mc O^2$ modulo the $\underline{E^\times}$-action on the source; i.e. a point of $\underline{\Proj^1(E)}$, but we rather identify with the transpose (row vectors) so that the natural $\GL_2(E)$-action is on the right. For example, the fiber over $[1:-1]$ are the sections $f-g=0$ (i.e. $\Delta(\BC(\mc O(1)))\setminus{0}$), and similarly the fiber over any other point is a translate of $\Delta(\BC(\mc O(1)))\setminus 0$. Proposition \ref{smalldegBC} allows us to replace all occurrences of $\BC(\mc O(1))$ with the perfectoid open unit disk $\D$, where the $0$ point of $\BC(\mc O(1))$ is identified with the origin of $\D$. 

To see that $Z\setminus 0\to \underline{\Proj^1(E)}$ is equivariant, the fibres over a point in  $\underline{\Proj^1(E)}$ inside $Z\setminus 0$ are determined via relations 
\[[a:b]\smallvector{f}{g}=af+bg=0.\]
Then for any $\gamma \in \GL_2(E)$, 
\[0= [a:b]\smallvector{f}{g}= [a:b]\gamma ^\inv \cdot \gamma \smallvector{f}{g},\]
which is to say that the $\gamma$-translate of the point in $Z\setminus 0$ is sent to the $\gamma$-translate of the image in $\underline{\Proj^1(E)}$.
\end{proof}

Now, the abstract cohomology of $Z \cong \left ( \underline{\Proj^1(E)}\times \D^* \right )\cup \ *$ is also computable via an excision short exact sequence. For this, we will need to compute some cohomology of intermediate spaces. The cohomology groups of these spaces are well known, but we include the computations since we will need to determine the induced $G_{b_1}(E)$- and $G_{b_2}(E)$-actions.

\begin{lemma}
The cohomology of the perfectoid open unit disk $\D$ is 
\[H^i(\D,\Lambda)\cong \begin{cases}\Lambda,\quad i=0\\
0,\quad \text{else}.
\end{cases}\]
\end{lemma}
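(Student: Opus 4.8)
Since all the spaces in play have been base changed to $\spa C$, the statement is in fact immediate from Proposition~\ref{Hansen-bc-dualizing} together with Remark~\ref{basechange-to-C}: the latter identifies $\D$ with $\BC(\mc O(1))_{\spa C}$, and the former gives $Rf_{C,*}\Lambda\cong\Lambda$ for the structure map $f\colon\BC(\mc O(1))\to *$. Since that result of Hansen is itself proved by reduction to disks, I would also include a self-contained argument, along the following lines.

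First I would exhaust $\D$ by the quasi-compact open sub-disks $\D_n=\{\,|T|\le|\varpi|^{1/p^{n}}\,\}=\D\!\left(T^{p^{n}}/\varpi\right)$, each of which, after rescaling the coordinate, is the standard perfectoid closed unit disk $\spa\!\left(C\langle S^{1/p^{\infty}}\rangle,\,\mc O_C\langle S^{1/p^{\infty}}\rangle\right)$. For a rising union of quasi-compact opens there is a Milnor exact sequence $0\to{\varprojlim_n}^{1}H^{i-1}(\D_n,\Lambda)\to H^{i}(\D,\Lambda)\to\varprojlim_n H^{i}(\D_n,\Lambda)\to 0$, so it suffices to show that $H^{0}(\D_n,\Lambda)=\Lambda$ with all transition maps the identity (clear, by connectedness, and this simultaneously kills the $\varprojlim^{1}$-term) and that $H^{i}(\D_n,\Lambda)=0$ for $i>0$.

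To handle the closed perfectoid disk I would first deperfect: since $C$ has equal characteristic $p$, the projection to the classical Tate disk $X=\spa(C\langle S\rangle,\mc O_C\langle S\rangle)$ is an inverse limit of relative Frobenii $S\mapsto S^{p}$, hence of universal homeomorphisms, and so induces an equivalence of \'etale topoi and an isomorphism on $\Lambda$-cohomology. For $X$ itself I would run the Kummer sequence $1\to\mu_\ell\to\G_m\to\G_m\to 1$ for each prime $\ell\mid n$: the Picard group of $X$ vanishes (the Tate algebra $C\langle S\rangle$ over the algebraically closed field $C$ is a principal ideal domain), and the unit group is $\ell$-divisible — a unit has the form $a_0(1+g)$ with $a_0\in C^{\times}$ and $\|g\|_{\sup}<1$, and $Y^{\ell}-(1+g)$ has a root near $1$ by Hensel's lemma since $|\ell|=1$ — so $H^{1}(X,\mu_\ell)=0$; combined with $H^{0}(X,\Lambda)=\Lambda$ and the vanishing of $H^{i}(X,-)$ for $i\ge 2$ (Artin-type vanishing for the one-dimensional affinoid $X$ over an algebraically closed field), a d\'evissage over the prime divisors of $n$ yields $H^{0}(X,\Lambda)=\Lambda$ and $H^{i}(X,\Lambda)=0$ for $i>0$. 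Equivalently, one can invoke Huber's comparison theorem for the smooth formal model $\operatorname{Spf}\mc O_C\langle S\rangle$, identifying $H^{*}(X,\Lambda)$ with $H^{*}(\mathbb{A}^{1}_{\kappa},\Lambda)$ for the algebraically closed residue field $\kappa$ of $\mc O_C$, which is $\Lambda$ in degree $0$ and $0$ above because the prime-to-$p$ tame fundamental group of $\mathbb{A}^{1}_{\kappa}$ is trivial.

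The only point that needs a little care is the passage to the limit — one must know that the rising union of closed sub-disks computes $H^{*}(\D,-)$ and that the $\varprojlim^{1}$-term vanishes — but this is automatic once the $H^{0}$'s are pinned down. Everything else is either a citation (Huber's comparison, invariance of the \'etale topos under Frobenius) or the elementary Kummer computation on the Tate disk (resp.\ the affine line) over an algebraically closed field, so I do not anticipate any real obstacle.
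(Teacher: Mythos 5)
Your proof is correct and follows essentially the same route as the paper's: both exhaust $\D$ by closed balls, invoke the Milnor $\varprojlim$--$\varprojlim^1$ sequence, note the transition maps on $H^0$ are the identity (killing the $\varprojlim^1$ term), and reduce to the cohomology of the classical closed Tate disk by deperfection (a universal homeomorphism, hence topos-equivalent). The only differences are (i) you observe at the start that the statement already follows from Hansen's Proposition~\ref{Hansen-bc-dualizing}, which is a legitimate and cleaner shortcut the paper forgoes, and (ii) where the paper simply cites Huber [Example~0.4.6] for the acyclicity of the Tate disk, you supply the underlying argument via Kummer theory (triviality of $\Pic$, $\ell$-divisibility of units, Artin vanishing in degrees $\ge 2$) and, alternatively, Huber's comparison with the special fibre $\mathbb{A}^1_{\kappa}$. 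Both fillings are correct; the extra detail is harmless, though the paper deliberately keeps this lemma terse and reserves the careful argument (Kummer, Frobenius-equivariance) for $\D^*$ where the $G_b(E)$-action is actually nontrivial.
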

\begin{proof}
First note that perfection for adic spaces in characteristic $p$ does not affect the computation of cohomology, since the map from the perfection to the original space is a universal homeomorphism. Then for the remainder of the proof, the following adic spaces are the usual geometric (nonperfectoid) spaces over $\spa C$. Viewing $\D = \bigcup_n \mathbb{B}_n$ as an increasing union over closed balls $\mathbb{B}_n = \spa C\left<T,T^n/\varpi\right>$, there is a spectral sequence that degenerates immediately into a series of short exact sequences
\[0\to R^1\varprojlim_n H^{i-1}(\mathbb{B}_n,\Lambda)\to H^i(\D,\Lambda)\to \varprojlim_n H^i(\mathbb{B}_n,\Lambda)\to 0,\]
as in Lemma 3.9.2 of \cite{Huber}. The cohomology of $\mathbb{B}_n$ is supported in degree $0$ \cite[Example~0.4.6]{Huber}, so the above short exact sequences imply isomorphisms \[H^i(\D,\Lambda)\cong \varprojlim_n H^i(\mathbb{B}_n,\Lambda),\]
for all $i\neq 1$. 

Since all of the maps $\mathbb{B}_n \to \D$ are inclusions of open subspaces, the induced maps on $H^0$ are given by restriction of the constant sheaf $\Lambda$. Thus the transition maps in the inverse system are the identity on $\Lambda$ (in particular they satisfy the Mittag-Leffler condition), so that the higher inverse limit vanishes in the $i=1$ short exact sequence as well. 
\end{proof}

\begin{lemma}\label{coho-punctured-disk}
The cohomology of the perfectoid punctured open unit disk $\D^*$ is
\[H^i(\D^*,\Lambda)\cong \begin{cases}\Lambda,\quad & i= 0\\
    \Lambda(-1),\quad & i=1\\
0,\quad &\text{else}.
\end{cases}\]
\end{lemma}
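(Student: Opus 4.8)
The plan is to deduce the statement from the cohomology of $\D$ already computed in the preceding lemma, via an excision triangle. As there, perfection does not affect étale cohomology, since the map from the perfection is a universal homeomorphism; so I may work with the classical (non‑perfectoid) punctured open disk over $\spa C$. Identify $\D\cong\BC(\mc O(1))_{\spa C}$ as in Remark~\ref{basechange-to-C}, write $i\colon \{0\}=\spa C\hookrightarrow\D$ for the origin (a closed immersion, e.g.\ the zero section of the Banach–Colmez space) and $j\colon \D^{*}\hookrightarrow\D$ for the open complement. The one nontrivial input is the purity isomorphism $i^{!}\Lambda\cong\Lambda(-1)[-2]$: by Proposition~\ref{Hansen-bc-dualizing} (with $\lambda=1$, $d=1$), the structure map $f\colon\D\to\spa C$ is cohomologically smooth with $Rf^{!}\Lambda\cong\Lambda(1)[2]$, and since $f\circ i=\mathrm{id}_{\spa C}$ one gets $\Lambda=\mathrm{id}^{!}\Lambda=i^{!}Rf^{!}\Lambda=(i^{!}\Lambda)(1)[2]$, whence $i^{!}\Lambda\cong\Lambda(-1)[-2]$.

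Granting this, I would apply $R\Gamma(\D,-)$ to the excision triangle $Ri_{*}i^{!}\Lambda\to\Lambda\to Rj_{*}j^{*}\Lambda\to Ri_{*}i^{!}\Lambda[1]$. Using $R\Gamma(\D,Ri_{*}i^{!}\Lambda)=R\Gamma(\spa C,\Lambda(-1)[-2])=\Lambda(-1)[-2]$, the preceding lemma's $R\Gamma(\D,\Lambda)=\Lambda$, and $R\Gamma(\D,Rj_{*}\Lambda)=R\Gamma(\D^{*},\Lambda)$, this yields a distinguished triangle
\[\Lambda(-1)[-2]\longrightarrow\Lambda\longrightarrow R\Gamma(\D^{*},\Lambda)\longrightarrow\Lambda(-1)[-1],\]
and reading off the long exact sequence gives $H^{0}(\D^{*},\Lambda)=\Lambda$, $H^{1}(\D^{*},\Lambda)=\Lambda(-1)$, and vanishing in all other degrees. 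As an alternative route entirely in the style of the preceding lemma, one can exhaust $\D^{*}$ by an increasing chain of closed annuli $A_{n}=\{\,|\varpi|^{n}\le|T|\le|\varpi|^{1/n}\,\}$, apply Huber's $R^{1}\varprojlim$-spectral sequence \cite[Lemma~3.9.2]{Huber}, and compute the cohomology of a single closed annulus $A$: here $H^{0}(A,\Lambda)=\Lambda$ by connectedness, the Kummer sequence identifies $H^{1}(A,\mu_{n})$ with $\mc O(A)^{\times}/n$, which is cyclic of order $n$ generated by the class of the coordinate $T$ (using that $C$ is algebraically closed, so $C^{\times}$ is $n$‑divisible, $\mu_{n}\subset C$, and $1$‑units are $n$‑th powers), so $H^{1}(A,\Lambda)\cong\Lambda(-1)$, and $H^{i}(A,\Lambda)=0$ for $i\ge2$ since a smooth affinoid adic curve over an algebraically closed field has $\ell$‑cohomological dimension $\le1$ (alternatively $A$ has the homotopy type of a circle). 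The restriction maps $H^{i}(A_{n},\Lambda)\to H^{i}(A_{m},\Lambda)$ for $m>n$ are the identity on $\Lambda$ in degree $0$ and carry the Kummer class of $T$ to itself in degree $1$, so each inverse system is essentially constant, $R^{1}\varprojlim$ vanishes, and $H^{i}(\D^{*},\Lambda)=\varprojlim_{n}H^{i}(A_{n},\Lambda)$ gives the stated answer.

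The main point to pin down is the appearance of the Tate twist in degree $1$. In the excision approach this is the purity isomorphism $i^{!}\Lambda\cong\Lambda(-1)[-2]$, which in turn rests on the cohomological smoothness of $\D\to\spa C$ and on the origin being a genuine closed immersion to which the six‑functor formalism applies; in the annulus approach it is the Kummer‑theoretic identification of the generator of $H^{1}$ with (the class of) $d\log T$. I expect the rest — connectedness, the dimension bound, and the Mittag–Leffler check on transition maps — to be routine, but the twist must be tracked carefully, as it is precisely what governs the Frobenius‑equivariance used in the subsequent computations with $\underline{\Proj^{1}(E)}$.
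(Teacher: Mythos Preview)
Your proposal is correct. Your primary route — the excision/purity argument — is genuinely different from the paper's proof, which instead exhausts $\D^*$ by annuli and computes the cohomology of a single annulus via Mayer--Vietoris on $\Proj^1$ covered by two closed balls (so that $H^1(\annu,\Lambda)\xrightarrow{\ \delta\ }H^2(\Proj^1,\Lambda)\cong\Lambda(-1)$), then checks the transition maps are isomorphisms to kill $R^1\varprojlim$. Your excision approach is slicker: it uses only the identification of the dualizing sheaf of $\BC(\mc O(1))$ (already in the paper as Proposition~\ref{Hansen-bc-dualizing}) and the fact that $\{0\}\hookrightarrow\D$ is closed, and it pins down the Tate twist immediately from $i^!\Lambda\cong\Lambda(-1)[-2]$ without any limit argument. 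The paper's Mayer--Vietoris route, on the other hand, produces the canonical isomorphism $H^1(\D^*,\Lambda)\cong H^2(\Proj^1,\Lambda)$ directly, which the author then re-examines via Kummer theory in Proposition~\ref{actions-on-Z} to extract the Frobenius character; your alternative annulus approach effectively merges those two steps. Either way works; just note that in your Kummer computation you are implicitly using $\Pic(A)[n]=0$ for a closed annulus over $C$ so that $H^1(A,\mu_n)$ is exactly $\mc O(A)^\times/(\mc O(A)^\times)^n$, which is worth saying explicitly.
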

\begin{proof}
The idea is to use the decomposition 
\[\D^* = \bigcup_{(\varepsilon,r)\in (0,\infty)}\annu(\varepsilon,r),\]
and the short exact sequences as in the previous lemma, where $\annu(\varepsilon,r)\subset \D$ is the preimage of the interval $(\varepsilon,r)$ for the radius function $\rad:\mathbb{B}\to [0,\infty]$ (note that this radius function is the negative logarithm of the intuitive radius on classical points in equal-characteristic, cf. \cite[Proposition~4.2.6]{berkeley-notes}; i.e. under this normalization, $\rad(0) = \infty$ and the radius of the boundary is $0$). 

We identify each $\mathbb{A}(\varepsilon,r)$ as the intersection of two closed balls $i_\varepsilon:\mathbb{B}_\varepsilon\to \Proj^1$ centered around $\infty$ and $j_r:\mathbb{B}_r'\to \Proj^1$ centered around $0$ inside the adic projective line. The cohomology of $\Proj^1$ is computed in \cite[Example~0.2.5]{Huber} to be a single copy of $\Lambda$ in degrees $0$ and $2$. Then the associated Mayer-Vietoris sequence yields short exact sequences
\[0\to H^0(\Proj^1,\Lambda) \xrightarrow{-\Delta} H^0(\mathbb{B}_\varepsilon,\Lambda)\oplus H^0(\mathbb{B}_r',\Lambda)\xrightarrow{\Sigma} H^0(\mathbb{A}(\varepsilon,r),\Lambda)\to 0,\]
\[0\to H^1(\mathbb{A}(\varepsilon,r),\Lambda)\xrightarrow{\delta} H^2(\Proj^1,\Lambda)\to 0,\]
and
\[0\to H^2(\mathbb{A}(\varepsilon,r),\Lambda)\to 0.\]
In the first sequence, the first map is the inclusion of $\Lambda$ into $\Lambda^{2}$ by the anti-diagonal $(\lambda,-\lambda)$, and the second map is restriction to the annulus followed by the sum, hence $H^0(\mathbb{A}(\varepsilon,r),\Lambda)= \Lambda$. The connecting homomorphism identifies each $H^1(\annu(\varepsilon,r),\Lambda)\cong H^2(\Proj^1,\Lambda)\cong\Lambda(-1)$.

The Mayer-Vietoris sequences above are induced by taking $R\Gamma(\Proj^1,-)$ on the triangles 
\[\Lambda \to Ri_{\varepsilon,*}i_{\varepsilon}^*\Lambda\oplus Rj_{r,*}j_r^*\Lambda \to Ri_{\varepsilon,r,*}i_{\varepsilon,r}^*\Lambda \to \Lambda [1]\in D(\Proj^1_{\et},\Lambda),\]
which are functorial in the following sense:
for each $\delta<\varepsilon$, 
\begin{center}
\begin{tikzcd}
\mathbb{B}_\delta \arrow[dr,"i_\delta",swap]\arrow[r,"h"] & \mathbb{B}_\varepsilon \arrow[d, "i_\varepsilon"]\\
& \Proj^1,
\end{tikzcd}
\end{center}
there are induced adjunction maps
\[Ri_{\varepsilon,*}i_{\varepsilon}^* \Lambda \to Ri_{\varepsilon,*}Rh_*h^* i_{\varepsilon}^* \Lambda \cong Ri_{\delta,*}i_\delta^* \Lambda,\]
and similarly for $j_r$. Then the compatibility of these adjunction maps induce a commutative square 
\begin{center}
\begin{tikzcd}
\Lambda \arrow[d,"1_\Lambda"]\arrow[r] & Ri_{\varepsilon,*}i_{\varepsilon}^* \Lambda\oplus Rj_{r,*}j_r^*\Lambda\arrow[d]\\
\Lambda \arrow[r] & Ri_{\delta,*}i_{\delta}^* \Lambda\oplus Rj_{s,*}j_s^*\Lambda,
\end{tikzcd}
\end{center}
which extends to a morphism of triangles.

As the induced maps on $H^0$ are restriction maps, each map $H^0(\mathbb{A}(\varepsilon',r'),\Lambda)\to H^0(\mathbb{A}(\varepsilon,r),\Lambda)$ is $1_\Lambda$. As the maps between $H^2(\Proj^1,\Lambda)$ for varying annuli are the identity, the induced maps $H^1(\mathbb{A}(\varepsilon',r'),\Lambda)\to H^1(\mathbb{A}(\varepsilon,r),\Lambda)$ are isomorphisms. Then the higher inverse limits vanish and we conclude $H^i(\D^*,\Lambda)\cong \varprojlim_{\varepsilon,r} H^i(\annu(\varepsilon,r),\Lambda)\approx H^i(\annu(\varepsilon,r),\Lambda)$ for every $i$.
\end{proof}
\begin{remark}
The Tate twists that appear in the above computations do not affect the resulting $G_b(E)$-representation structures on cohomology. This is because the geometric Frobenius for $\D^*$ does not act on the individual annuli $\mathbb{A}(\varepsilon,r)$, rather it acts on the inductive system as a whole, as in the proof of the following proposition. Alternatively, the natural isomorphisms $\delta: H^1(\annu(\varepsilon,r), \Lambda) \xrightarrow{\sim} H^2(\Proj^1,\Lambda)\cong \Lambda(-1)$ already yield an isomorphism $H^1(\D^*, \Lambda)\cong \Lambda(-1)$, which identifies the correct Frobenius character anyways. This \emph{considerably} simplifies the proof of the following proposition since the Kummer short exact sequences are no longer necessary. However, we leave in the computation of the characters via Kummer theory since we find the identifications to be informative in identifying the representation structures on cohomology induced by Frobenius equivariance.
\end{remark}

At this point we would like to begin computing the compactly supported cohomology and understand their structure as $\GL_2(E)$-representations.
\begin{proposition}\label{actions-on-Z}
The geometric $\GL_2(E)$-action on $Z\setminus 0$ yields 
\[H^i_c(Z\setminus 0,\Lambda)=\begin{cases}\ind_B^{\GL_2(E)} \Lambda,\quad & i=1,\\ \ind_{B}^{\GL_2(E)}( \abs{-}\otimes 1),\quad & i=2,\\
0,\quad &\text{else},\end{cases}\]
where $\abs{-}\otimes 1$ is the torus character $(t_1,t_2)\mapsto \abs{t_1}\in \Aut(\Lambda)$.
\end{proposition}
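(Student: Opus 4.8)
The plan is to read off both cohomology groups from the $\GL_2(E)$-equivariant fibration $q\colon Z\setminus 0\to\underline{\Proj^1(E)}=\underline{B\backslash\GL_2(E)}$ of Lemma~\ref{geometry-of-Z}, whose typical fibre is $\BC(\mc O(1))\setminus 0$. I would first base change to $\spa C$ (harmless for \'etale cohomology and for the induced representations) and identify $\BC(\mc O(1))_{\spa C}\cong\D$ by Proposition~\ref{smalldegBC}, so that each fibre of $q$ becomes the punctured perfectoid open disk $\D^*=\D\setminus\{0\}$. By the remark following Lemma~\ref{geometry-of-Z} there is a distinguished fibre $F_\infty$ that is fixed by $\underline B$, and on which $B$ acts through the postcomposition action of Proposition~\ref{Equivariant-iso-cover} by scaling $\BC(\mc O(1))$ through the $(1,1)$-entry: $\smallvector{t_1 & *}{0 & t_2}\cdot\smallvector{f}{0}=\smallvector{t_1 f}{0}$.

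Second, I would compute the fibrewise compactly supported cohomology $R\Gamma_c(\D^*,\Lambda)$. One route is to feed Lemma~\ref{coho-punctured-disk} into Poincar\'e duality for the cohomologically smooth map $\D^*\to\spa C$ (dimension $1$, dualizing sheaf $\Lambda(1)[2]$); another is the excision triangle $R\Gamma_c(\D^*,\Lambda)\to R\Gamma_c(\D,\Lambda)\to R\Gamma_c(\{0\},\Lambda)$ for $\D^*\hookrightarrow\D\hookleftarrow\{0\}$, using $R\Gamma(\D,\Lambda)\cong\Lambda$ and the dualizing sheaf $\Lambda(1)[2]$ of $\D=\BC(\mc O(1))$ from Proposition~\ref{Hansen-bc-dualizing}, so that $R\Gamma_c(\D,\Lambda)\cong\Lambda(-1)[-2]$. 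Either way $H^1_c(\D^*,\Lambda)\cong\Lambda$, $H^2_c(\D^*,\Lambda)\cong\Lambda(-1)$, and all other groups vanish; the excision route has the advantage that $H^1_c(\D^*,\Lambda)$ is the stalk $H^0(\{0\},\Lambda)$, on which the scaling action is manifestly trivial.

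Third, I would globalise. Since $\underline{\Proj^1(E)}$ is the diamond attached to a profinite set it has \'etale cohomological dimension $0$, so the Leray spectral sequence for $q$ collapses and $H^n_c(Z\setminus 0,\Lambda)\cong R\Gamma_c(\underline{\Proj^1(E)},R^nq_!\Lambda)$. The sheaf $R^nq_!\Lambda$ is $\GL_2(E)$-equivariant on the homogeneous space $\underline{B\backslash\GL_2(E)}$, hence is pinned down by the smooth $B$-representation on its stalk $(R^nq_!\Lambda)_\infty\cong H^n_c(F_\infty,\Lambda)$ (the monodromy of this local system being precisely the twist in the corresponding induced representation), and for any such equivariant sheaf $\mc L$ one has $R\Gamma_c(\underline{B\backslash\GL_2(E)},\mc L)\cong\ind_B^{\GL_2(E)}(\mc L_\infty)$ — the same mechanism by which $R\Gamma_c(\underline{E^\times},\Lambda)\cong\cind_{\SL_2(E)}^{\GL_2(E)}\Lambda$ — with $\ind=\cind$ here because $\Proj^1(E)$ is compact.

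It then remains only to identify the scaling action of $E_1^\times$ on $H^\bullet_c(\BC(\mc O(1))\setminus 0,\Lambda)$. On $H^1_c\cong\Lambda$ it is trivial, giving $H^1_c(Z\setminus 0,\Lambda)\cong\ind_B^{\GL_2(E)}\Lambda$. On $H^2_c\cong\Lambda(-1)$ the element $\pi\in E_1^\times$ acts, by Remark~\ref{basechange-to-C}, as the inverse of geometric Frobenius, equivalently as the arithmetic $q$-Frobenius on \'etale cohomology, hence by multiplication by $q^{-1}=\abs{\pi}$ on $\Lambda(-1)$, while $\mc O_E^\times\subset E_1^\times$ acts trivially (it preserves each annulus in the exhaustion of $\D^*$ used in Lemma~\ref{coho-punctured-disk}); so the stalk is the character $\smallvector{t_1 & *}{0 & t_2}\mapsto\abs{t_1}$ of $B$, i.e.\ $\abs{-}\otimes 1$, and $H^2_c(Z\setminus 0,\Lambda)\cong\ind_B^{\GL_2(E)}(\abs{-}\otimes 1)$. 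The main obstacle is exactly this last Tate-twist bookkeeping — confirming the character on $H^2_c(\BC(\mc O(1))\setminus 0,\Lambda)$ is $\abs{-}$ rather than the trivial character or $\abs{-}^{-1}$; this is cleanest via the identification $H^2_c(\BC(\mc O(1))\setminus 0,\Lambda)\cong H^2(\Proj^1,\Lambda)\cong\Lambda(-1)$ coming from the connecting maps in the proof of Lemma~\ref{coho-punctured-disk}, together with the normalisation of the $\underline{E^\times}$-action in Remark~\ref{basechange-to-C}. The remaining ingredients — the fibrewise cohomology, the collapse of Leray over a profinite base, and the induced-representation form of $R\Gamma_c$ — are routine.
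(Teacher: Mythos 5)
Your proof is correct and follows the same overall strategy as the paper: the Leray spectral sequence in compactly supported cohomology for the $\GL_2(E)$-equivariant fibration $q\colon Z\setminus 0\to\underline{\Proj^1(E)}$ with fibre $\BC(\mc O(1))\setminus 0$, the reduction of the equivariant sheaves $R^nq_!\Lambda$ on $\underline{B\backslash\GL_2(E)}$ to their stalks at $\infty$ as $B$-representations, and the collapse of the spectral sequence over the profinite base yielding $\ind_B^{\GL_2(E)}$ concentrated in degree $0$ (with $\ind=\cind$ because $\Proj^1(E)$ is compact).

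The one place where you deviate is how you pin down the $E_1^\times$-character on the fibre cohomology. The paper runs the explicit Kummer short exact sequence over the exhausting annuli of $\D^*$ and then applies Poincar\'e duality; you instead read the answer off the Tate twist directly, using the excision isomorphism $H^1_c(\D^*,\Lambda)\cong H^0_c(\{0\},\Lambda)$ in degree $1$ and the identification with $H^2(\Proj^1,\Lambda)\cong\Lambda(-1)$ in degree $2$. This is exactly the simplification the authors flag in the remark following Lemma~\ref{coho-punctured-disk}, so it is a legitimate (and shorter) route to the same answer.

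One small point to tighten: the parenthetical that $\mc O_E^\times\subset E_1^\times$ acts trivially because it ``preserves each annulus'' is not by itself a reason — an automorphism preserving each annulus can still act nontrivially on cohomology. The cleaner justification is that (after base change to $\spa C$) the $\mc O_E^\times$-action on $\D_C$ is by $C$-automorphisms, and only $\pi$ twists the structure map to $\spa C$; $C$-automorphisms of a connected, cohomologically smooth $X/C$ act trivially on $H^{2d}_c(X,\Lambda(d))$ via the canonical trace map, which handles the $H^2_c$ case, while the excision identification with $H^0_c(\{0\},\Lambda)$ already disposes of the $H^1_c$ case since the origin is fixed by all of $\underline{E^\times}$.
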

\begin{proof}
We will apply the Leray spectral sequence for compactly supported cohomology along the fibration $f:Z\setminus 0\to \underline{\Proj^1(E)}$,
\[H_c^i(\underline{\Proj^1(E)},R^jf_!\Lambda)\implies H_c^{i+j}(Z\setminus 0,\Lambda).\]
As $\Lambda$ is a $\mc G_{b_1}$-equivariant sheaf on $Z\setminus 0$, it is the pullback of the constant sheaf on $[(Z\setminus 0)/\mc G_{b_1}]$, so the basechange formula for $!$-pushforward allows us to consider $Rf_! \Lambda$ to be the pullback of $Rf'_! \Lambda \in D_\et([\underline{\Proj^1(E)}/\mc G_{b_1}])$ as in the diagram
\begin{center}
\begin{tikzcd}
Z\setminus 0 \arrow[r]\arrow[d,"f",swap] & {[(Z\setminus 0)/\underline{\GL_2(E)}]}\arrow[d,"f'"] \\
\underline{\Proj^1(E)}\arrow[r]& {[\underline{\Proj^1(E)}/\underline{\GL_2(E)}]}.
\end{tikzcd}
\end{center}
But the category of sheaves on $[\underline{\Proj^1(E)}/\underline{\GL_2(E)}]= [\underline{B\backslash \GL_2(E)}/\underline{\GL_2(E)}]$ is equivalent to the sheaves on $ [\underline{B}\backslash *]$, where the point $*$ is identified with the point at $\infty\in \underline{\Proj^1(E)}$ (corresponding to the trivial coset $B$), and the equivalence is given by (compact) induction of $B$-representations to $\GL_2(E)$. Then it suffices to compute the compactly supported cohomology of the fiber over $\infty$ as $B$-representations, which extends uniquely to a complex of $\GL_2(E)$-equivariant sheaves on the base $\underline{\Proj^1(E)}$. Since this fiber $\D^*\subset \D^2$ is the inclusion into the first coordinate, the $B$-action only depends on the first coordinate of the diagonal torus inside $B$, where the uniformizer $\pi$ acts by the inverse of geometric Frobenius on $\D^*$ (see Remark \ref{basechange-to-C}).

For the cohomology of $\D^*$, the proof of Lemma \ref{coho-punctured-disk} constructed canonical isomorphisms $H^i(\D^*,\Lambda) \cong  \varprojlim_{\varepsilon,r}H^i(\annu(\varepsilon,r),\Lambda)$. As all the transition maps on $H^0$ are the identity, $H^0(\D^*,\Lambda)=\Lambda$ is the trivial representation. To understand the action on $H^1(\D^*,\Lambda)$ we consider the Kummer theory of $\Lambda = \varinjlim\bigoplus \underline{\Z/n_i}$ as sheaves on $\annu(\varepsilon,r)$. Viewing each $\annu(\varepsilon,r)$ as a rational subset of $\mathbb{B}=\spa C\left<T^{1/p^\infty}\right>$, so that the functions on $\annu(\frac{a}{n},\frac{b}{m})$ are $C\left<T^{1/p^\infty}, T^n/\pi^a,\pi^b/T^m\right>$, the action $\pi= (r\mapsto r^{1/q})$ on $\spa (R,R^+)$-points yields isomorphisms $\phi: \annu(\varepsilon,r)\xrightarrow{\cong}\annu(\frac{\varepsilon}{q},\frac{r}{q})$, where $\phi^*(T)= T^{1/q}$ on the coordinate. Let $\Gamma_{\varepsilon,r}^*$ be the invertible global sections of $\annu(\varepsilon,r)$. Then there are induced morphisms of Kummer short exact sequences

\begin{center}
\begin{tikzcd}
0\arrow[r]& \Gamma_{\frac{\varepsilon}{q},\frac{r}{q}}^*/\Gamma_{\frac{\varepsilon}{q},\frac{r}{q}}^{*n} \arrow[r]\arrow[d,"\phi^*"]& H^1(\annu(\frac{\varepsilon}{q},\frac{r}{q}),\underline{\Z/n})\arrow[r]\arrow[d,"\phi^*"]& \Pic_n\arrow[r]\arrow[d,"\phi^*"] & 0  \\
0\arrow[r]& \Gamma_{\varepsilon,r}^*/\Gamma_{\varepsilon,r}^{*n} \arrow[r]& H^1(\annu(\varepsilon,r),\underline{\Z/n})\arrow[r]& \Pic_n\arrow[r] & 0. 
\end{tikzcd}
\end{center}
Recall that the map $\Gamma^*/\Gamma^{*n}\to H^1(\annu,\underline{\Z/n})$ takes an invertible section $f$ to the line bundle $\mc O$ with linearization $\psi:\mc O\xrightarrow{\sim}\mc O^{\otimes n}, 1\mapsto f^{\otimes n}$. The connecting homomorphism from Lemma \ref{coho-punctured-disk} yields isomorphisms $H^1(\annu, \underline{\Z/n)}\cong \Z/n$; in particular, the coordinate $T$ is mapped to a generator of $H^1(\annu,\underline{\Z/n})$ for each annulus. We fix isomorphisms $H^1(\annu, \underline{\Z/n})\approx \Z/n$ by demanding that this generator is $1\in \Z/n$. Then there are commutative squares
\begin{center}
\begin{tikzcd}
\Gamma_{\frac{\varepsilon}{q},\frac{r}{q}}^*/\Gamma_{\frac{\varepsilon}{q},\frac{r}{q}}^{*n} \arrow[r,"T\mapsto 1"]\arrow[d,swap, "T\mapsto T^{1/q}"]& \Z/n\arrow[d,"\phi^*"]\\
\Gamma_{\varepsilon,r}^*/\Gamma_{\varepsilon,r}^{*n} \arrow[r,"T\mapsto 1"]& \Z/n.
\end{tikzcd}
\end{center}
Thus the $\underline{E^\times}$-action on $H^1(\D^*,\Lambda)\cong \varprojlim H^1(\annu(\varepsilon,r),\Lambda)\approx \Lambda$ is by $\pi=q^\inv=\abs{\pi} \in \Aut(\Lambda)$. 

Computing the compactly supported cohomology $H_c^{2-i}(\D^*,\underline{\Z/n})$ from $H^i(\D^*,\underline{\Z/n})$ using Poincar\'e duality introduces a Tate twist by $\mu_n=\underline{\Z/n}(1)$, which we identify as the trivial sheaf $\underline{\Z/n}$ with (arithmetic) Frobenius acting by $q=\abs{\pi}^\inv$, and then dualizing. This action cancels the character on $H^1(\D^*,\underline{\Z/n}))$ computed above, hence $H_c^1(\D^*,\underline{\Z/n}))=\Z/n$. Finally, $H_c^2(\D^*,\underline{\Z/n})=(\Z/n(1))^\vee= \Z/n (-1)$. Taking a colimit over all $\Z/n\subset \Lambda$ yields $H_c^2(\D^*,\Lambda)=\Lambda(-1)=\abs{-}\otimes 1$ as a representation of $B\subset \GL_2(E)$. 

The abstract cohomology of $\underline{\Proj^1(E)}$ is supported in degree $0$ because the space is strictly totally disconnected after base change to $\spa C$. By the valuative criterion of $\Proj^1$ and universal property of formal schemes for $\spa(\mc O_E,\mc O_E)$, $\Proj^1(E)= \Proj^1(\mc O_E)\cong \varprojlim_n \Proj^1(\mc O_E/ \pi^n \mc O_E)$ is profinite. Then $H^0(\underline{\Proj^1(E)},\Lambda)= C^0(\Proj^1(E))= \varinjlim_n \bigoplus_{\Proj^1(\mc O_E/\pi^n \mc O_E)} \Lambda$ is a colimit of free $\Lambda$-modules, so in particular it is flat. 
Now $\underline{\Proj^1(E)}\cong \underline{B\backslash \GL_2(E)}$ has the regular right-action, so that $H^0(\underline{\Proj^1(E)},[\chi])=\ind_B^{\GL_2(E)}\chi$ for any character $\chi:B\to \Lambda^\times$. Again since $\underline{\Proj^1(E)}$ is strictly totally disconnected, the compactly supported cohomology is supported in degree $0$ and is the full module $\ind_{B}^{\GL_2(E)}\chi$ since each section has profinite support.

Putting all of the pieces together, the second page of the Leray spectral sequence is 
\begin{center}
\begin{tikzpicture}
  \matrix (m) [matrix of math nodes,
    nodes in empty cells,nodes={minimum width=8ex,
    minimum height=4ex,outer sep=-5pt},
    column sep=1ex,row sep=1ex]{
                
               & 0 & 0 & 0 &\\
              2 & \ind_B^{\GL_2(E)}(\abs{-}\otimes 1)& 0 & 0 & \\  
              1 & \ind_B^{\GL_2(E)}\Lambda & 0  & 0 &\\
             j= 0 &  0  & 0 & 0 &\\
    \quad\strut &  i= 0  &  1  &  2 &  \strut \\};
\draw[thick] (m-1-1.east) -- (m-5-1.east) ;
\draw[thick] (m-5-1.north) -- (m-5-5.north) ;
\end{tikzpicture}
\end{center}
and all differentials on all subsequent pages are $0$.
\end{proof}

\begin{proposition}
The compactly supported cohomology of $Z= \underline{\GL_2(E)}\cdot \Delta\BC(\mc O(1))$ as $\GL_2(E)$-modules is 
\[H_c^i(Z,\Lambda)=\begin{cases}  \st, & i=1\\ \ind_{B}^{\GL_2(E)} (\abs{-}\otimes 1),\quad &i=2\\0, & \text{else}. \end{cases}\]

\end{proposition}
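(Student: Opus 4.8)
The plan is to feed the previous proposition into the excision triangle for the origin. The point $0\in Z$, corresponding to the zero map $\mc O(-1)\to\mc O^2$, is a $\GL_2(E)$-fixed closed point; writing $i\colon\{0\}\hookrightarrow Z$ and $j\colon Z\setminus 0\hookrightarrow Z$, the triangle $Rj_!j^*\Lambda\to\Lambda\to i_*i^*\Lambda\xrightarrow{+1}$ becomes, after applying $R\Gamma_c(Z,-)$, a $\GL_2(E)$-equivariant triangle
\[ R\Gamma_c(Z\setminus 0,\Lambda)\longrightarrow R\Gamma_c(Z,\Lambda)\longrightarrow\Lambda\xrightarrow{\ \partial\ }R\Gamma_c(Z\setminus 0,\Lambda)[1], \]
where $\Lambda=R\Gamma_c(\{0\},\Lambda)$ is the trivial representation in degree $0$. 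By the previous proposition $H_c^\ast(Z\setminus 0,\Lambda)$ is concentrated in degrees $1,2$, equal to $\ind_B^{\GL_2(E)}\Lambda$ and $\ind_B^{\GL_2(E)}(\abs{-}\otimes 1)$ respectively, so the long exact sequence immediately yields $H_c^i(Z,\Lambda)=0$ for $i\notin\{0,1,2\}$, an isomorphism $H_c^2(Z,\Lambda)\cong\ind_B^{\GL_2(E)}(\abs{-}\otimes 1)$, and a four-term exact sequence of $\GL_2(E)$-representations
\[ 0\to H_c^0(Z,\Lambda)\to\Lambda\xrightarrow{\ \partial^0\ }\ind_B^{\GL_2(E)}\Lambda\to H_c^1(Z,\Lambda)\to 0, \]
with $\partial^0$ the degree-$0$ part of the connecting map.

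It remains to identify $\partial^0$. If $\partial^0$ is injective, then its image is a subrepresentation of $\ind_B^{\GL_2(E)}\Lambda$ isomorphic to the trivial representation, hence equals the line of constant functions (the socle); so $H_c^0(Z,\Lambda)=0$ and $H_c^1(Z,\Lambda)=\ind_B^{\GL_2(E)}\Lambda/\Lambda=\st$, as claimed. To see $\partial^0$ is injective I would compare with a single fibre disk. After base change to $\spa C$ the diagonal $\Delta\BC(\mc O(1))\cong\D$ is a \emph{closed} subspace of $Z$, since $Z\setminus\Delta\BC(\mc O(1))$ is the preimage under $f\colon Z\setminus 0\to\underline{\Proj^1(E)}$ (Lemma \ref{geometry-of-Z}) of the open complement of the corresponding point, hence open in $Z$; moreover $0\in\Delta\BC(\mc O(1))$. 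The filtration by closed subspaces $\{0\}\subseteq\Delta\BC(\mc O(1))\subseteq Z$ produces a map of excision triangles that is the identity on $R\Gamma_c(\{0\},\Lambda)=\Lambda$, so $\partial^0$ factors as $\Lambda\xrightarrow{\partial^0}\ind_B^{\GL_2(E)}\Lambda\to H_c^1(\D^*,\Lambda)$, where the second arrow is restriction to the closed subspace $\D^*=\Delta\BC(\mc O(1))\setminus 0$ and the composite is the analogous connecting map $\partial_\Delta^0$ for $\{0\}\subseteq\D$. But $H_c^0(\D,\Lambda)=H_c^1(\D,\Lambda)=0$ by Poincaré duality (Proposition \ref{Hansen-bc-dualizing}) together with the vanishing of $H^{>0}(\D,\Lambda)$ shown above, so the excision sequence for $\{0\}\subseteq\D$ forces $\partial_\Delta^0\colon\Lambda\xrightarrow{\sim}H_c^1(\D^*,\Lambda)$ to be an isomorphism; in particular $\partial^0$ is injective.

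The content of the argument lies entirely in the two preceding propositions; the only delicate point here is formal, namely the compatibility of the two excision triangles along the chain of closed immersions $\{0\}\subseteq\Delta\BC(\mc O(1))\subseteq Z$ — this is just the functoriality of the excision sequences for étale sheaves on small $v$-stacks applied to a filtration by closed substacks — and keeping track of $\GL_2(E)$-equivariance so that all identifications hold as representations rather than merely as $\Lambda$-modules. Equivariance is automatic: $0$ is a $\GL_2(E)$-fixed point and $\partial$ is canonical, so every map above is $\GL_2(E)$-equivariant by construction, and the one substantive computational input, the cohomology of $\D$ and $\D^*$ with its trivial action in the relevant degree, is already in hand.
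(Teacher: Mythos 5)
Your proof is correct and reaches the same conclusion via the same excision long exact sequence for $\{0\}\hookrightarrow Z\hookleftarrow Z\setminus 0$; the only place you diverge from the paper is in how you establish injectivity of the connecting map $\partial^0\colon\Lambda\to\ind_B^{\GL_2(E)}\Lambda$. The paper observes directly that $H_c^0(Z,\Lambda)=0$: since $Z$ is connected (the closed point $0$ bridges the components of $\underline{\Proj^1(E)}\times\D^*$) but not quasicompact, a compactly supported locally constant function on $Z$ is necessarily zero, and injectivity of $\partial^0$ falls out of the exact sequence for free. Your route instead exhibits a closed copy of $\D=\Delta\BC(\mc O(1))\subset Z$ through $0$, invokes compatibility of excision triangles for the nested closed subsets $\{0\}\subset\D\subset Z$, and reduces to the isomorphism $\partial_\Delta^0\colon\Lambda\xrightarrow{\sim}H_c^1(\D^*,\Lambda)$ coming from $H_c^0(\D)=H_c^1(\D)=0$. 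This works, and you handle the delicate points (closedness of $\Delta\BC(\mc O(1))$ in $Z$, functoriality of the excision triangles, and $\GL_2(E)$-equivariance) correctly, but it is somewhat heavier than the paper's one-line observation. Both arguments then conclude identically: a $\GL_2(E)$-equivariant injection $\Lambda\hookrightarrow\ind_B^{\GL_2(E)}\Lambda$ must be the embedding via constants, so $H_c^1(Z,\Lambda)$ is the Steinberg quotient.
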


\begin{proof}

We compute the excision long exact sequence on the compactly supported cohomology of $j_0:\underline{\Proj^1(E)}\times \D^* \hookrightarrow Z \hookleftarrow *:i_0$ to obtain the cohomology of $Z$:

\begin{center}
\begin{tikzcd}
0 \arrow[r]
& H_c^0(Z,\Lambda) \arrow[r]
\arrow[d, phantom, ""{coordinate, name=W}]
& \Lambda \arrow[dll,
rounded corners,
to path={ -- ([xshift=2ex]\tikztostart.east)
|- (W) [near end]\tikztonodes
-| ([xshift=-2ex]\tikztotarget.west)
-- (\tikztotarget)}] \\
\ind_{B}^{\GL_2(E)} \Lambda \arrow[r]
& H^1_c(Z,\Lambda) \arrow[r]\arrow[d, phantom, ""{coordinate, name=X}]
& 0 \arrow[dll,
rounded corners,
to path={ -- ([xshift=2ex]\tikztostart.east)
|- (X) [near end]\tikztonodes
-| ([xshift=-2ex]\tikztotarget.west)
-- (\tikztotarget)}] \\
\ind_B^{\GL_2(E)}(\abs{-}\otimes 1) \arrow[r]
& H_c^2(Z,\Lambda) \arrow[r]
& 0.
\end{tikzcd}
\end{center}

Note that $H_c^0(Z,\Lambda)=0$ since $H_c^0(Z,\Lambda)\subset H^0(Z,\Lambda)= \Lambda$ as $Z$ is connected, but $Z$ is not quasicompact. Then the map $\Lambda\hookrightarrow \ind_{B}^{\GL_2(E)}\Lambda$ must be the embedding via the constant functions since it is $\GL_2(E)$-equivariant, and hence $H_c^1(Z,\Lambda)=\st$ is the Steinberg representation.
\end{proof}

\begin{proposition}\label{cs-coh-double-tilde-M-trivial-module}
The compactly supported cohomology of $\widetilde{\widetilde{\mc M_{b_2}^{b_1}}}\cong U \times \underline{E^\times}$ as $G_{b_1}(E)$-modules is 
\[H_c^i(\widetilde{\widetilde{\mc M_{b_2}^{b_1}}},\Lambda) =\begin{cases} \st \otimes \cind_{\SL_2(E)}^{\GL_2(E)}\Lambda ,\quad & i=2\\
\left(\ind_{B}^{\GL_2(E)}\abs{-}\otimes 1\right)\otimes \cind_{\SL_2(E)}^{\GL_2(E)} \Lambda ,\quad & i=3\\
\abs{\det} \otimes \cind_{\SL_2(E)}^{\GL_2(E)} \Lambda,\quad & i=4\\ 
0,\quad & \text{else}.
\end{cases}
\]
\end{proposition}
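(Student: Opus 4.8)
The plan is to carry out every computation after base change to $\spa C$ (which does not affect étale cohomology), and to break the problem into three inputs joined by Künneth: the cohomology of $\underline{E^\times}$, of $\BC(\mc O(1))^2$, and of $Z$ (the last already computed in the preceding proposition). By the $\GL_2(E)$-equivariant isomorphism $\widetilde{\widetilde{\mc M_{b_2}^{b_1}}}_{,C}\cong U\times\underline{E^\times}$ of Proposition~\ref{Equivariant-iso-cover}, with $U=\BC(\mc O(1))^2\setminus Z$ and the diagonal $\GL_2(E)$-action, the Künneth formula for $!$-pushforward gives $R\Gamma_c(\widetilde{\widetilde{\mc M_{b_2}^{b_1}}},\Lambda)\cong R\Gamma_c(U,\Lambda)\otimes^{\mathbb L}_\Lambda R\Gamma_c(\underline{E^\times},\Lambda)$. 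For the second factor, $\GL_2(E)$ acts on $\underline{E^\times}$ through $\det$ by translation, so $\underline{E^\times}\cong\underline{\GL_2(E)/\SL_2(E)}$ equivariantly; since $E^\times=\bigsqcup_{\Z}\mc O_E^\times$ with $\mc O_E^\times$ profinite, $R\Gamma_c(\underline{E^\times},\Lambda)$ is the module of compactly supported locally constant functions placed in degree $0$, that is $\cind_{\SL_2(E)}^{\GL_2(E)}\Lambda$, which is $\Lambda$-flat. Hence $H_c^i(\widetilde{\widetilde{\mc M_{b_2}^{b_1}}},\Lambda)\cong H_c^i(U,\Lambda)\otimes_\Lambda\cind_{\SL_2(E)}^{\GL_2(E)}\Lambda$ with diagonal action, and it remains to compute $H_c^*(U,\Lambda)$ as a $\GL_2(E)$-module.

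Second I would compute $H_c^*(\BC(\mc O(1))^2,\Lambda)$. By Proposition~\ref{Hansen-bc-dualizing} (applied via smooth base change along a projection) $\BC(\mc O(1))^2$ is an $\ell$-cohomological point over $\spa C$ and is cohomologically smooth of dimension $2$ with dualizing sheaf $\Lambda(2)[4]$, so Poincaré duality gives $R\Gamma_c(\BC(\mc O(1))^2,\Lambda)\cong\Lambda(-2)[-4]$: it is one-dimensional, concentrated in degree $4$, hence a smooth character of $\GL_2(E)$. To identify it I restrict to the diagonal torus $T=E^\times\times E^\times$, for which $\BC(\mc O(1))^2$ is literally the product of two copies of $\BC(\mc O(1))$ with $(t_1,t_2)$ acting as $t_i$ on the $i$-th factor; Künneth then shows $T$ acts on $H_c^4$ by $(t_1,t_2)\mapsto\phi(t_1)\phi(t_2)$, where $\phi$ is the $E^\times$-character of $H_c^2(\BC(\mc O(1)),\Lambda)$. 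One has $\phi=\abs{-}$: the excision triangle for $\{0\}\subset\BC(\mc O(1))$ yields an $E^\times$-equivariant isomorphism $H_c^2(\BC(\mc O(1)),\Lambda)\cong H_c^2(\BC(\mc O(1))\setminus 0,\Lambda)$, and the latter was computed to be $\abs{-}$ in the proof of Proposition~\ref{actions-on-Z} (the Tate twist of Poincaré duality cancelling the action of $\pi$ as arithmetic Frobenius, Remark~\ref{basechange-to-C}). Since a smooth character of $\GL_2(E)$ is determined by its restriction to $T$, on which $\abs{\det}$ restricts to $(t_1,t_2)\mapsto\abs{t_1t_2}$, we conclude $H_c^4(\BC(\mc O(1))^2,\Lambda)=\abs{\det}$.

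Third I would feed this into the excision triangle $R\Gamma_c(U,\Lambda)\to R\Gamma_c(\BC(\mc O(1))^2,\Lambda)\to R\Gamma_c(Z,\Lambda)\xrightarrow{+1}$ for the closed subspace $Z\subset\BC(\mc O(1))^2$ with open complement $U$, inserting $H_c^*(Z,\Lambda)$ from the preceding proposition ($\st$ in degree $1$, $\ind_B^{\GL_2(E)}(\abs{-}\otimes1)$ in degree $2$, $0$ otherwise) and the computation above ($\abs{\det}$ in degree $4$, $0$ otherwise). Because the cohomology of $Z$ lives in degrees $1,2$ and that of $\BC(\mc O(1))^2$ in degree $4$, every connecting map vanishes and the long exact sequence collapses into $\GL_2(E)$-equivariant isomorphisms $H_c^1(Z,\Lambda)\xrightarrow{\sim}H_c^2(U,\Lambda)$, $H_c^2(Z,\Lambda)\xrightarrow{\sim}H_c^3(U,\Lambda)$, $H_c^4(U,\Lambda)\xrightarrow{\sim}H_c^4(\BC(\mc O(1))^2,\Lambda)$, with $H_c^i(U,\Lambda)=0$ otherwise (for $i=0,1$ one uses $H_c^0(Z,\Lambda)=0$, since $Z$ is connected and not quasicompact). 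So $H_c^*(U,\Lambda)$ is $\st$ in degree $2$, $\ind_B^{\GL_2(E)}(\abs{-}\otimes1)$ in degree $3$, and $\abs{\det}$ in degree $4$; substituting into the Künneth identity of the first paragraph gives the stated answer.

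The step I expect to be the real obstacle is the identification $H_c^4(\BC(\mc O(1))^2,\Lambda)=\abs{\det}$ as a $\GL_2(E)$-representation: the Künneth decomposition of this group is only equivariant for the diagonal torus, so one must separately pin down the $E^\times$-action on $H_c^2(\BC(\mc O(1)),\Lambda)$ — carefully tracking the Tate twist in Poincaré duality and the Frobenius normalization of Remark~\ref{basechange-to-C} — and then argue that the resulting torus character extends to a unique character of $\GL_2(E)$. The rest is bookkeeping with the excision and Künneth triangles while keeping the equivariant structures straight.
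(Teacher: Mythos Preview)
Your proof is correct and follows essentially the same route as the paper: excision for $U\subset\BC(\mc O(1))^2$ with closed complement $Z$, combined with K\"unneth for the $\underline{E^\times}$ factor and the flatness of $\cind_{\SL_2(E)}^{\GL_2(E)}\Lambda$. The only cosmetic difference is in pinning down the character on $H_c^4(\BC(\mc O(1))^2,\Lambda)$: the paper observes that any character of $\GL_2(E)$ factors through $\det$ and then evaluates on the central element $\pi\cdot I$ (acting as inverse geometric Frobenius on $\D^2$), whereas you restrict to the full diagonal torus and use K\"unneth on the two factors---both arguments reach $\abs{\det}$.
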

\begin{remark}
    We describe the $G_{b_2}(E)$-module structure more explicitly later in terms of the colimit as $K\to \set{1}\subset \GL_2(E)$ of the modulo $K$ compactly supported cohomology in Proposition \ref{gluing-compact-generators}.
\end{remark}
\begin{proof}
The excision long exact sequence for $j: U\hookrightarrow \D^2 \hookleftarrow Z: i$ induces short exact sequences
\[0\to H_c^0(U,\Lambda)\to 0,\]
\[0 \to H_c^1(U,\Lambda) \to 0,\]
\[0 \to \st \to H_c^2(U,\Lambda)\to 0,\]
\[0\to \ind_B^{\GL_2(E)}(\abs{-}\otimes 1)\to H_c^3(U,\Lambda)\to 0,\]
and 
\[0\to H_c^4(U,\Lambda) \to \Lambda(-2) \to 0.\]

In degree $4$, the $\GL_2(E)$-action on $\Lambda(-2)$ is a character of $\GL_2(E)$, so it necessarily factors through $\det:\GL_2(E)\to E^\times$. Then since $\begin{bmatrix}\pi & 0 \\ 0 & \pi \end{bmatrix}$ acts on $\D^2$ as the inverse of geometric Frobenius, the corresponding action on sheaves is by arithmetic Frobenius, yielding the representation $\abs{\det}:\pi\cdot I\mapsto q^{-2}\in \Aut(\Lambda)$ on $\Lambda(-2)$.

The compactly supported cohomology of $\underline{E^\times}$ is the compactly supported continuous functions $E^\times \to \Lambda$ sitting in degree $0$. Since $\underline{\GL_2(E)}$ acts on the domain of these functions as multiplication by $\det:\GL_2(E)\to \GL_2(E)/\SL_2(E)\cong E^\times$, the $\GL_2(E)$-module $H_c^0(\underline{E^\times},\Lambda)$ is $\cind_{\SL_2(E)}^{\GL_2(E)}\Lambda$. As in the proof of Lemma \ref{tensor-sl2-ind}, $\cind_{\SL_2(E)}^{\GL_2(E)}\Lambda \cong \varinjlim \bigoplus_{q\in Q_i}\Lambda$ is a flat $\Lambda$-module, hence the Künneth formula implies the compactly supported cohomology of $U\times \underline{E^\times}$ is the same as that of $U$ tensored with $\cind_{\SL_2(E)}^{\GL_2(E)}\Lambda$ in each degree.

\end{proof}

\subsection{The case of unramified characters}
We specialize now to the trivial representation on $\Bun_2^1=[*/\underline{\GL_2(E)}]$ and its unramified twists.

\begin{theorem}\label{cs-coh-tilde-M-trivial-module}
The compactly supported cohomology of $\widetilde{\mc M_{b_2}^{b_1}}$ as $E_1^\times \times E_2^\times$-modules is 
\[H_c^i(\widetilde{\mc M_{b_2}^{b_1}},\Lambda) =\begin{cases} 
\Lambda,\quad & i=1\\
\delta_T,\quad & i=4\\
0,\quad & \text{else,}
\end{cases}
\]
where $\delta_T$ is the torus character $(t_1,t_2)\mapsto \abs{t_2\cdot t_1^\inv}\in q^{\Z} \subset \Aut(\Lambda)$.
\end{theorem}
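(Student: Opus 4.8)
The plan is to descend from the double cover $\widetilde{\widetilde{\mc M_{b_2}^{b_1}}}$, whose compactly supported cohomology is recorded in Proposition~\ref{cs-coh-double-tilde-M-trivial-module}, along the pro-\'etale torsor $p\colon\widetilde{\widetilde{\mc M_{b_2}^{b_1}}}\to\widetilde{\mc M_{b_2}^{b_1}}$. Since $b_1$ is basic we have $\mc G_{b_1}=\underline{\GL_2(E)}$, so $p$ is an $\underline{\GL_2(E)}$-torsor and Proposition~\ref{homological-Hochschild-Serre} furnishes a spectral sequence
\[E_2^{i,j}=H_{-i}\!\bigl(\GL_2(E),H_c^j(\widetilde{\widetilde{\mc M_{b_2}^{b_1}}},\Lambda)\bigr)\;\Longrightarrow\;H_c^{i+j}(\widetilde{\mc M_{b_2}^{b_1}},\Lambda),\]
carrying the $\underline{G_{b_2}(E)}=\underline{E_1^\times\times E_2^\times}$-equivariant structure throughout. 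By Proposition~\ref{cs-coh-double-tilde-M-trivial-module} the only nonzero inputs are $H_c^j$ for $j\in\{2,3,4\}$, each of the shape $M_j\otimes_\Lambda\cind_{\SL_2(E)}^{\GL_2(E)}\Lambda$ with $M_2=\st$, $M_3=\ind_B^{\GL_2(E)}(\abs{-}\otimes 1)$ and $M_4=\abs{\det}$. The projection formula gives $M_j\otimes_\Lambda\cind_{\SL_2(E)}^{\GL_2(E)}\Lambda\cong\cind_{\SL_2(E)}^{\GL_2(E)}(M_j|_{\SL_2(E)})$, so Shapiro's lemma for smooth homology identifies the $E_2$-terms with $H_{-i}(\SL_2(E),M_j|_{\SL_2(E)})$, reducing everything to three $\SL_2(E)$-homology groups.

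These I would compute as follows. Since $\abs{\det}|_{\SL_2(E)}$ is trivial, the $j=4$ column gives $H_*(\SL_2(E),\Lambda)=\Lambda$ in degree $0$. For $j=3$: as $\GL_2(E)=B\cdot\SL_2(E)$ with $B\cap\SL_2(E)$ the standard Borel of $\SL_2(E)$, Mackey yields $M_3|_{\SL_2(E)}\cong\ind_B^{\SL_2(E)}(\delta_B^{1/2})$, which equals $\cind_B^{\SL_2(E)}(\delta_B^{1/2})$ because $B\backslash\SL_2(E)\cong\underline{\Proj^1(E)}$ is compact; Shapiro then gives $H_*(\SL_2(E),M_3|_{\SL_2(E)})\cong H_*(B,\delta_B^{1/2})\cong H_*(E^\times,\abs{-})$, where the unipotent radical contributes nothing (an increasing union of pro-$p$ groups, $p\in\Lambda^\times$) and the homology of the character $\pi\mapsto\abs{\pi}=q^{-1}$ of $\pi^{\Z}$ vanishes since $q^{-1}-1\in\Lambda^\times$ by the banality hypothesis on $\Lambda$; hence the whole $j=3$ column is zero. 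For $j=2$: from the short exact sequence $0\to\Lambda\to\ind_B^{\SL_2(E)}(\delta_B^{1/2})\to\st\to 0$ and the vanishing just established, $H_*(\SL_2(E),\st)=\Lambda$ concentrated in degree $1$ (this is also the content of Example~\ref{sl2-cohomology-steinberg}). Plugging in, the $E_2$-page has exactly two nonzero entries, $E_2^{-1,2}\cong\Lambda$ in total degree $1$ and $E_2^{0,4}\cong\Lambda$ in total degree $4$; the columns $j=3$ and $j\notin\{2,3,4\}$ vanish and the only a priori possible differential between the survivors, a $d_3$ out of $E_3^{-3,2}$, vanishes because $H_3(\SL_2(E),\st)=0$, so the sequence degenerates and $H_c^i(\widetilde{\mc M_{b_2}^{b_1}},\Lambda)$ is $\Lambda$ for $i=1,4$ and $0$ otherwise as underlying $\Lambda$-modules.

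It remains to pin down the $T=\underline{E_1^\times\times E_2^\times}$-action, which is the most delicate part. I would unwind the actions from Proposition~\ref{Equivariant-iso-cover}: on $\widetilde{\widetilde{\mc M_{b_2}^{b_1}}}\cong U\times\underline{E^\times}$ the factor $\underline{E_2^\times}$ acts trivially on $U$, the factor $\underline{E_1^\times}$ acts on $U\subset\BC(\mc O(1))^2$ by the inverse of the scaling by which the center of $\GL_2(E)$ acts, and both $\underline{E_i^\times}$ act on $\underline{E^\times}$ by the inverse of multiplication. For the degree-$1$ class this forces triviality: $H_c^2(U)=\st$ has trivial central character, so $T$ acts trivially on the $\st$-factor of $H_c^2(\widetilde{\widetilde{\mc M_{b_2}^{b_1}}})$, while on the $\cind_{\SL_2(E)}^{\GL_2(E)}\Lambda$-factor $T$ acts by translation through $E^\times=\SL_2(E)\backslash\GL_2(E)$, which is absorbed into the ambient $\GL_2(E)$-action and hence acts trivially on $\GL_2(E)$-homology; thus $H_c^1=\Lambda$ with trivial $T$-action. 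For the degree-$4$ class, $H_c^4(U)=\Lambda(-2)$ carries the central character $\abs{\det}$, so $\underline{E_1^\times}$ acts on it by $t_1\mapsto\abs{t_1}^{-2}$; combining this with the reparametrisation by $\abs{-}^{-1}$ on $\SL_2(E)\backslash\GL_2(E)$ built into the projection-formula isomorphism $\abs{\det}\otimes\cind_{\SL_2(E)}^{\GL_2(E)}\Lambda\cong\cind_{\SL_2(E)}^{\GL_2(E)}\Lambda$ and with the inverse-multiplication actions on the $\underline{E^\times}$-factor, and then taking $\GL_2(E)$-coinvariants, one computes that the residual $T$-action on $H_c^4=\Lambda$ is $(t_1,t_2)\mapsto\abs{t_2t_1^{-1}}=\delta_T$. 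The main obstacle is precisely this last bookkeeping — keeping the Tate twists and the auxiliary $\abs{-}$-twists straight as they pass through Shapiro's lemma, the projection formula, and the coinvariants functor — together with making the two banality-dependent statements ($H_*(\SL_2(E),\ind_B^{\SL_2(E)}\delta_B^{1/2})=0$ and the single-degree concentration of $H_*(\SL_2(E),\st)$) that force the collapse fully rigorous.
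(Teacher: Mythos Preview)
Your approach follows the paper's strategy: the same descent spectral sequence from Proposition~\ref{homological-Hochschild-Serre} applied to the $\underline{\GL_2(E)}$-torsor $p$, fed by Proposition~\ref{cs-coh-double-tilde-M-trivial-module}. The $\Lambda$-module computation is correct, and your Shapiro shortcut $H_*(\GL_2(E),\cind_{\SL_2(E)}^{\GL_2(E)}V)\cong H_*(\SL_2(E),V)$ is a clean way to collapse the $E_2$-page.

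There is a slip in the $j=2$ step. The representation you show to have vanishing $\SL_2(E)$-homology in the $j=3$ step is $M_3|_{\SL_2(E)}$, which in the paper's conventions is the \emph{irreducible} principal series $\mathrm{PS}(1)=\ind_B^{\SL_2(E)}\delta_T^{-1/2}$. This is not the middle term of the Steinberg short exact sequence $0\to\Lambda\to\ind_B^{\SL_2(E)}\Lambda\to\st\to 0$; that middle term is the unnormalised induction of the trivial character, which is reducible and has nonvanishing $\SL_2(E)$-homology. So the vanishing from $j=3$ does not transfer to the sequence you wrote. Your conclusion $H_*(\SL_2(E),\st)=\Lambda[1]$ is nonetheless correct, and is exactly Example~\ref{sl2-cohomology-steinberg} together with Proposition~\ref{dual-of-homology}, which you cite; just don't route it through the $j=3$ vanishing.

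The more substantive difference from the paper is in how the $E_1^\times\times E_2^\times$-action is extracted. You collapse the $\GL_2(E)$-homology in one step via Shapiro, which is efficient for the $\Lambda$-module but obscures the bookkeeping you yourself flag as the ``main obstacle.'' The paper instead runs the Hochschild-Serre sequence for $1\to\SL_2(E)\to\GL_2(E)\to E^\times\to 1$ in two steps: first it identifies $H_*(\SL_2(E),\cind_{\SL_2(E)}^{\GL_2(E)}\sigma)\cong\cind_1^{E^\times}H_*(\SL_2(E),\sigma)$ as an $E^\times$-module (with an explicit determinant twist $\chi_{\det}$), then takes $E^\times$-coinvariants of $\chi_{\det}\otimes\cind_1^{E^\times}\Lambda$ and represents the answer by indicator functions $f_1$ at $1\in E^\times$. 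The actions of $z_1\in E_1^\times$ and $z_2\in E_2^\times$ are then read off from how they move $f_1$ to $f_{z_i}$ (via the geometric action on the $\underline{E^\times}$-factor) combined with the Tate-twist contribution on the $U$-factor and the coinvariant relation $f_z=\chi_{\det}(z)f_1$. This two-step decomposition is precisely what makes the $\delta_T$ in degree $4$ drop out transparently, and is worth adopting in place of your final hand-wave.
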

\begin{proof}
We will make use of the spectral sequence 
\begin{align}\label{double-tilde-descent-SS}
E_2^{i,j}=H_{-i}(\GL_2(E), H_c^j(\widetilde{\widetilde{\mc M_{b_2}^{b_1}}},\Lambda))\implies H_c^{i+j}(\widetilde{\mc M_{b_2}^{b_1}},\Lambda)
\end{align}
from Proposition \ref{homological-Hochschild-Serre}. Then we are interested in computing the $\GL_2(E)$-homology of the representations $\st\otimes \cind_{\SL_2(E)}^{\GL_2(E)}\Lambda$, $(\ind_{B}^{\GL_2(E)}\abs{-}\otimes 1)\otimes \cind_{\SL_2(E)}^{\GL_2(E)}\Lambda$, and $\abs{\det}\otimes \cind_{\SL_2(E)}^{\GL_2(E)}\Lambda$. For this we appeal to the homological Hochschild-Serre spectral sequence
\[E_2^{p,q}= H_{-p}(E^\times, H_{-q}(\SL_2(E),\sigma))\implies H_{-p-q}(\GL_2(E),\sigma).\]

First we claim that $H_{i}(\SL_2(E),\cind_{\SL_2(E)}^{\GL_2(E)}\sigma)=\cind_{1}^{E^\times}(H_i(\SL_2(E),\sigma))$ as $\Lambda$-modules. As an $\SL_2(E)$-representation, $\cind_{\SL_2(E)}^{\GL_2(E)}\sigma \cong \varinjlim \bigoplus_{H_k\backslash \GL_2(E)} \sigma$ is a filtered colimit (see Lemma \ref{tensor-sl2-ind} for the notation of $H_k$), which is both exact and preserves projectives (since all of the transition maps are given by $1$'s and $0$'s between finite direct powers of a projective $P$), and hence commutes with $\SL_2(E)$-homology in the Hochschild-Serre spectral sequence. The induced $E^\times$-action can be understood on lifts of the determinant $\GL_2(E)\to \SL_2(E)\backslash \GL_2(E)$, which acts by permuting the $\SL_2(E)$-cosets in the compact induction, so the action on the indices of the direct summands in the filtered colimit corresponds to $\cind_1^{E^\times}(-)$. In general, there will also be an induced $E^\times$-character $\chi_{\det}$ coming solely from $\sigma$ and acting diagonally, so that we have
\[H_{i}(\SL_2(E),\cind_{\SL_2(E)}^{\GL_2(E)}\sigma)= \chi_{\det} \cdot \cind_{1}^{E^\times}(H_i(\SL_2(E),\sigma))\]
as $E^\times$-modules.
\begin{claim}\label{GL2-homology-det-module-deg4}
The $\GL_2(E)$-homology of $\abs{\det}\otimes \cind_{\SL_2(E)}^{\GL_2(E)}\Lambda$ as $E_1^\times \times E_2^\times$-modules is 
$\delta_T$
in degree $0$.
\end{claim}
Now $\abs{\det}\otimes \cind_{\SL_2(E)}^{\GL_2(E)}\Lambda\cong \cind_{\SL_2(E)}^{\GL_2(E)}\Lambda$ as $\SL_2(E)$-modules and the $\SL_2(E)$-(co)homology of $\Lambda$ is a single copy of $\Lambda$ in degree $0$ (c.f. the computation in example \ref{sl2-cohomology-steinberg}, Proposition \ref{dual-of-homology}, and note that $\Lambda$ is self-dual), so that the resulting $E^\times$-module is $\abs{\det}\otimes \cind_{1}^{E^\times}\Lambda$. Proposition \ref{homology-E-times} computes the $E^\times$-homology to be the $E^\times$-coinvariants in degree $0$ and the $\pi^{\Z}$-invariants of the module $\abs{\det}\otimes (\cind_1^{E^\times}\Lambda)_{\mc O_E^\times}\cong \bigoplus_{\pi^{\Z}}\abs{\det}$ in degree $-1$. The latter $\Lambda$-module is zero since $\pi$-invariance implies that any nonzero function is supported on a countable discrete set, which is not compact. For each level $U_k:= 1+\pi^k \mc O_E$, there are isomorphisms
\begin{align*}
\abs{\det}\otimes \cind_1^{E^\times}\Lambda\cong \abs{\det}\otimes \varinjlim \cind_{U_k}^{E^\times}\Lambda & \cong \varinjlim \abs{\det}\otimes \bigoplus_{U_k\backslash E^\times}\Lambda \\ &\cong \varinjlim  \bigoplus_{U_k \backslash E^\times}\abs{\det},
\end{align*}
where the $E^\times$-action on the direct sum is by permuting the factors. Since taking coinvariants commutes with filtered colimits, it suffices to compute the $E^\times$-coinvariants on $\bigoplus_{U_k\backslash E^\times}\abs{\det}$. Any equivalence class of a nonzero function in the coinvariants can be represented by a function $U_k\backslash E^\times \to \Lambda$ that takes a nonzero value at $1\in U_k\backslash E^\times$. If $f_{\overline{z}}$ denotes the indicator function at $\overline{z}\in U_k\backslash E^\times$, then $0=z^\inv \cdot f_1 -f_1= \abs{z}^\inv f_{\overline{z}}-f_1$ in the module of coinvariants. Then as any compactly supported function on $U_k\backslash E^\times$ has finite support, subtracting finitely many multiples of the functions above gives a representative with support $\set{1}\subset U_k\backslash E^\times$. We deduce immediately that the coinvariants of the filtered colimit are $\Lambda$, identified as functions supported on $1\in E^\times$. In particular, the spectral sequence degenerates immediately and yields \[H_*(\GL_2(E),\abs{\det}\otimes\cind_{\SL_2(E)}^{\GL_2(E)}\Lambda)= \Lambda[0]\]
as an abstract $\Lambda$-module. 

Recall that the character $\abs{\det}$ was identified as Frobenius equivariance for the $\mc G_{b_1}$-action on $U\subset \BC(\mc O(1))^2$ and the Tate twist $\Lambda(-2)$. Then the character $\abs{\det}$ induces $\abs{-}_1^{-2}$ because the geometric action of $\underline{E_1^\times} \times \underline{E_2^\times}$ on $U$ is trivial for the second factor, and the first factor acts as $z_1\mapsto \begin{bmatrix}z_1^\inv & \\ & z_1^\inv \end{bmatrix}$. Furthermore, the geometric actions of $\underline{E_i^\times}$ on $\underline{E^\times}$ agree with the inverse of the determinant action of $\GL_2(E)$, so that $z_i\in E_i^\times$ shift the domain of the indicator functions above in the opposite way. 
Then we compute for $z_1\in E_1^\times$ and $z_2\in E_2^\times$:
\[z_1\cdot f_1= \abs{z_1}^{-2}f_{z_1}=\abs{z_1}^\inv f_1,\]
and
\[z_2\cdot f_1= f_{z_2^{}} = \abs{z_2}f_1,\]
which is the character $\delta_T$.

\begin{claim}
The $\GL_2(E)$-homology of $\left(\ind_B^{\GL_2(E)}\abs{-}\otimes 1\right)\otimes \cind_{\SL_2(E)}^{\GL_2(E)}\Lambda$ vanishes in all degrees.
\end{claim}
When restricted to $\SL_2(E)$, the torus characters $\abs{-}\otimes 1$ and $\delta_T^{-1/2}$ agree. Also $\Proj^1(E)\cong B\backslash \SL_2(E)= B\backslash \GL_2(E)$ (recall our convention that $B$ means the standard Borel of either group) and we get an isomorphism of $\SL_2(E)$-representations $\ind_{B}^{\GL_2(E)}\abs{-}\otimes 1\cong \ind_{B}^{\SL_2(E)}\delta_T^{-1/2}$.
Then applying Lemma \ref{tensor-sl2-ind} to $\left(\ind_B^{\GL_2(E)}\abs{-}\otimes 1\right)\otimes \cind_{\SL_2(E)}^{\GL_2(E)}\Lambda$ yields the $\SL_2(E)$-module $\cind_{\SL_2(E)}^{\GL_2(E)}\ind_{B}^{\SL_2(E)}\delta_T^{-1/2}$. But $\ind_{B}^{\SL_2(E)}\delta_T^{-1/2}$ is the unramified principal series representation $\mathrm{PS}(1)$ corresponding to the trivial character, so its $\SL_2(E)$-cohomology vanishes in every degree \cite[Theorem~X.4.3]{borel2000continuous}.

\begin{claim}
The $\GL_2(E)$-homology of $\st\otimes \cind_{\SL_2(E)}^{\GL_2(E)}\Lambda$ as $E_1^\times \times E_2^\times$-modules is 
the trivial representation $\Lambda$ in degree $-1$.
\end{claim}
As before, $\st\otimes \cind_{\SL_2(E)}^{\GL_2(E)}\Lambda\cong \cind_{\SL_2(E)}^{\GL_2(E)}\st$ as $\SL_2(E)$-modules, and so its $\SL_2(E)$-(co)homology is \[\cind_{1}^{E^\times} H_{\bullet}(\SL_2(E),\st)= \cind_{1}^{E^\times}\Lambda[1].\]
Again, we are making use of the computation of the smooth $\SL_2(E)$-cohomology of $\st$ in \ref{sl2-cohomology-steinberg}, smooth homological duality in Proposition \ref{dual-of-homology}, and the fact that both $\Lambda$ and $\st$ are self-dual. Note that the determinant character on $H_{1}(\SL_2(E),\st)=\Lambda$ is trivial, since the first homology group is (the dual of) the module of functions that are constant on $\mc O_E\subset \Proj^1(E)$ and vanish everywhere else, and the induced determinant action is by precomposing functions with the regular right action on $\Proj^1(E)= B\backslash \GL_2(E)=B\backslash \SL_2(E)$. Thus the $\GL_2(E)$-homology is computed by 
\[H_{\bullet}(E^\times, \cind_{1}^{E^\times}\Lambda[1])=\Lambda[1].\]
Since there is no determinant character on the compact-induction to $E^\times$, the identification of $\Lambda=(\cind_{1}^{E^\times}\Lambda)_{E^\times}$ as functions supported on a point of $E^\times$ is not twisted in the sense of the computation in claim \ref{GL2-homology-det-module-deg4}; that is to say that the induced $E_1^\times \times E_2^\times$-action is indeed trivial. 

Putting all of these results together allows us to write the second page of the descent spectral sequence in (\ref{double-tilde-descent-SS}):

\begin{center}
\begin{tikzpicture}
  \matrix (m) [matrix of math nodes,
    nodes in empty cells,nodes={minimum width=8ex,
    minimum height=3ex,outer sep=-5pt},
    column sep=1ex,row sep=1ex]{
                &   0   &   0  &  0   & \delta_T   & 4 \\
               & 0 & 0 & 0 & 0 & 3\\
               &  0 & 0 & \Lambda & 0 & 2\\  
               &  0 &  0  & 0 & 0 & 1\\
               &  0  & 0 &  0  & 0 & 0\\
    \quad\strut &   -3  &  -2  &  - 1  & 0 & \strut \\};
\draw[thick] (m-1-6.west) -- (m-6-6.west) ;
\draw[thick] (m-6-1.north) -- (m-6-6.north) ;
\end{tikzpicture}
\end{center}
Then since all of the differentials are $0$ on every page, we conclude that the compactly supported cohomology of $\widetilde{\mc M_{b_2}^{b_1}}$ is $\Lambda$ in degree $1$ and $\delta_T$ in degree $4$.
\end{proof}

The computation above easily generalizes to representations $\sigma = \chi \circ \det$ for characters $\chi : E^\times \to \Lambda$ since the $\SL_2(E)$-homology is unaffected. 
\begin{corollary}\label{determinant-character-twists}
Let $\chi$ be any character on $E^\times$. Then
\[H_c^i(\widetilde{\mc M_{b_2}^{b_1}},\chi\circ \det) =\begin{cases} 
\chi\otimes \chi,\quad & i=1\\
(\chi\otimes \chi)\cdot \delta_T,\quad & i=4\\
0,\quad & \text{else.}
\end{cases}
\]
In particular, $R\Gamma_c(\widetilde{\mc M_{b_2}^{b_1}},\abs{\det}^k)=\abs{-}^k [-1]\oplus \abs{-}^k\delta_T [-4]$.
\end{corollary}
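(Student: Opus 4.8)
The plan is to run the argument of Theorem~\ref{cs-coh-tilde-M-trivial-module} essentially verbatim, with the constant sheaf replaced by the rank-one sheaf $[\chi\circ\det]$ on $[*/\mc G_{b_1}]=[*/\underline{\GL_2(E)}]$ (pulled back along $fg$, resp.\ along $g_{b_1}$). First, by Lemma~\ref{equivariant-projection-formula} and Remark~\ref{pulling-out-reps}, the pullback of $[\chi\circ\det]$ to $\widetilde{\widetilde{\mc M_{b_2}^{b_1}}}$ is a constant sheaf and
\[R\Gamma_c(\widetilde{\widetilde{\mc M_{b_2}^{b_1}}},[\chi\circ\det])\;\cong\;(\chi\circ\det)\otimes_\Lambda R\Gamma_c(\widetilde{\widetilde{\mc M_{b_2}^{b_1}}},\Lambda)\]
$\GL_2(E)$-equivariantly for the diagonal action. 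Hence Proposition~\ref{cs-coh-double-tilde-M-trivial-module} gives the three nonzero cohomology sheaves $(\chi\circ\det)\otimes\sigma\otimes\cind_{\SL_2(E)}^{\GL_2(E)}\Lambda$ with $\sigma\in\{\st,\ \ind_B^{\GL_2(E)}(\abs{-}\otimes 1),\ \abs{\det}\}$ in degrees $2,3,4$.

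Next I would feed this into the homological Hochschild--Serre spectral sequence~(\ref{double-tilde-descent-SS}) of Proposition~\ref{homological-Hochschild-Serre} for the $\underline{\GL_2(E)}$-torsor $p$, reducing to the computation of $H_\ast(\GL_2(E),(\chi\circ\det)\otimes\sigma\otimes\cind_{\SL_2(E)}^{\GL_2(E)}\Lambda)$ for these three $\sigma$. The key point is that $\chi\circ\det$ is trivial on $\SL_2(E)$, so by Lemma~\ref{tensor-sl2-ind} the underlying $\SL_2(E)$-module is again $\cind_{\SL_2(E)}^{\GL_2(E)}(\sigma|_{\SL_2(E)})$, exactly as in the theorem; the $\SL_2(E)$-(co)homology inputs are therefore unchanged --- vanishing of the unramified principal series $\ind_B^{\SL_2(E)}\delta_T^{-1/2}$ \cite{borel2000continuous} for the middle $\sigma$, and $\Lambda[0]$ resp.\ $\Lambda[1]$ (Example~\ref{sl2-cohomology-steinberg}, Proposition~\ref{dual-of-homology}) for $\sigma=\abs{\det}$ resp.\ $\sigma=\st$. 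The only modification is that in the inner spectral sequence $H_{-p}(E^\times,H_{-q}(\SL_2(E),-))\Rightarrow H_{-p-q}(\GL_2(E),-)$ the resulting $E^\times$-module picks up an additional diagonal $\chi$-twist; running the orbit computation of the $E^\times$-coinvariants exactly as in Claim~\ref{GL2-homology-det-module-deg4} then upgrades the answers $\Lambda$ (degree $-1$, from $\sigma=\st$) and $\delta_T$ (degree $0$, from $\sigma=\abs{\det}$) to $\chi\otimes\chi$ and $(\chi\otimes\chi)\cdot\delta_T$ as $E_1^\times\times E_2^\times$-modules, the middle row still vanishing. The outer spectral sequence~(\ref{double-tilde-descent-SS}) then has the same two-entry $E_2$-page and degenerates, yielding $H_c^1=\chi\otimes\chi$ and $H_c^4=(\chi\otimes\chi)\delta_T$; the final formula is the specialization $\chi=\abs{-}^k$.

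I expect the only genuine bookkeeping to be in the last step: verifying that the $\chi$-twist is transported \emph{symmetrically} through the identification of the residual $\GL_2(E)/\SL_2(E)\cong E^\times$-action with the $E_1^\times\times E_2^\times$-action on the factor $\underline{E^\times}\subset\widetilde{\widetilde{\mc M_{b_2}^{b_1}}}$ coming from Proposition~\ref{Equivariant-iso-cover}, so that it contributes $\chi$ on \emph{both} torus coordinates rather than an asymmetric power --- in contrast to the genuine determinant character $\abs{\det}$, whose asymmetry at that point is precisely what produces the $\delta_T$-factor. Everything else is literally the computation already carried out in the proof of Theorem~\ref{cs-coh-tilde-M-trivial-module}, since the $\SL_2(E)$-level data are insensitive to twisting by a character of the determinant.
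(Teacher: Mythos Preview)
Your proposal is correct and follows essentially the same approach as the paper: twist by $\chi\circ\det$, observe that the $\SL_2(E)$-homology is unaffected since the twist is trivial there, and redo the $E^\times$-coinvariants computation of Claim~\ref{GL2-homology-det-module-deg4} with the extra $\chi$. Your closing remark about the symmetric transport of $\chi$ versus the asymmetric contribution of the Tate-twist character $\abs{\det}$ is exactly the distinction the paper singles out in its proof.
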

\begin{proof}
    This follows from the computation of the $\underline{E_1^\times}\times\underline{E_2^\times}$ characters induced on the smooth homology $H_{-i}(E^\times, \chi \otimes \cind_1^{E^\times}\Lambda)=_{\Lambda-\mathrm{Mod}} \Lambda[0]$ for an arbitrary determinant character $\chi$. As in the proof of the theorem, one sees that the module of coinvariants is identified with functions supported on $1\in E^\times$ with 
    \[0=z^\inv f_1 -f_1 = \chi(z^\inv) f_z -f_1.\]
    Then the action for $z_1\in E_1^\times$ and $z_2\in E_2^\times$ can be computed as follows:
    \[z_1\cdot f_1 = f_{z_1}= \chi(z_1) f_1\]
    and
    \[z_2\cdot f_1 = f_{z_2}= \chi(z_2) f_1,\]
    since the actions of $z_i$ only act on the domain of the functions, but the identification of indicator functions is twisted by $\chi$.
    The factor of $\delta_T$ appears in degree 4 since $\chi$ is replaced with the $G_{b_1}(E)$-rep $\chi \cdot \abs{\det}$. Note that the representation $\abs{\det}$ really arises from a Tate twist $\Lambda(-2)$, so that the induced $G_{b_2}(E)$-module structure $\abs{-}_1^{-2}$ is fundamentally different from that from $\chi$, which arises from an invertible sheaf over $[*/\underline{G_{b_1}(E)}]$.
    
\end{proof}

\begin{theorem}\label{Purity-for-trivial-rep}
Let $k$ be an integer (more generally, $k\in \frac{1}{2}\Z$ in case there is a fixed $\sqrt{q}\in \Lambda$). Then 
\[i_{b_2}^* Ri_{b_1,*} \abs{\det}^{k} = \abs{-}^{k}\oplus \abs{-}^{k}\delta_T [-3].\]
In particular, $i_{b_2}^* Ri_{b_1,*} \Lambda = \Lambda[0] \oplus \delta_T [-3]$, which verifies a version of cohomological purity.
\end{theorem}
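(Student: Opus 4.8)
The plan is to deduce the statement from Corollary~\ref{determinant-character-twists} by Poincar\'e duality on the smooth cover $\widetilde{\mc M_{b_2}^{b_1}}$. Since $\sigma:=\abs{\det}^{k}$ is a smooth character of $G_{b_1}(E)=\GL_2(E)$ it is admissible, so $Ri_{b_1,*}[\sigma]$ is ULA (hence dualizable) on the cohomologically smooth zero-dimensional stack $\Bun_2^{\leq b_2}$ and all the duality of \S3 applies to it. Here $b_1=\mc O^{2}$ is the semistable point of its Harder--Narasimhan stratum, so $\Bun_2^{<b_2}=\Bun_2^{b_1}$ and $\widetilde{\mc M_{b_2}^{b_1}}=\widetilde{\mc M_{b_2}^{\circ}}=\widetilde{\mc M_{b_2}^{\leq b_1}}$; this space is quasicompact and, being an open subspace of the negative Banach--Colmez space $\widetilde{\mc M_{b_2}}\cong\BC(\mc O(-2)[1])$, is cohomologically smooth over $*$ of dimension $\langle 2\rho,\nu_{b_2}\rangle=2$ with dualizing sheaf $\Lambda(2)[4]$ (Lemma~\ref{negative_bc_dualizing_sheaf} and the cohomological-smoothness lemma at the end of \S3). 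By Proposition~\ref{action-on-cohomology} together with the quasicompact form of Theorem~\ref{Smooth-cohomology-of-tower} (no passage to smooth subrepresentations being needed), there is a $G_{b_2}(E)$-equivariant identification $(i_{b_2}^{*}Ri_{b_1,*}\abs{\det}^{k})_{\Lambda}\cong R\Gamma(\widetilde{\mc M_{b_2}^{b_1}},g_{b_1}^{*}f_{b_1}^{*}[\abs{\det}^{k}])$.

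Since $g_{b_1}^{*}f_{b_1}^{*}[\abs{\det}^{k}]$ has constant underlying sheaf $\Lambda$ (only its equivariant structure being twisted), its Verdier dual on $\widetilde{\mc M_{b_2}^{b_1}}$ is $g_{b_1}^{*}f_{b_1}^{*}[\abs{\det}^{-k}]\otimes\Lambda(2)[4]$, and Poincar\'e duality --- performed $G_{b_2}(E)$-equivariantly --- identifies
\[i_{b_2}^{*}Ri_{b_1,*}\abs{\det}^{k}\;\cong\;R\Gamma_c\bigl(\widetilde{\mc M_{b_2}^{b_1}},\,g_{b_1}^{*}f_{b_1}^{*}[\abs{\det}^{-k}]\bigr)^{\vee}(-2)[-4]\]
as complexes of smooth $E_1^{\times}\times E_2^{\times}$-representations. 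Now I would substitute Corollary~\ref{determinant-character-twists} with $\chi=\abs{-}^{-k}$, i.e. $R\Gamma_c(\widetilde{\mc M_{b_2}^{b_1}},\abs{\det}^{-k})\cong\abs{-}^{-k}[-1]\oplus\abs{-}^{-k}\delta_T[-4]$: dualizing sends degrees $1,4$ to $-1,-4$ and inverts the characters, and the shift $[-4]$ returns them to degrees $3,0$. The one remaining point is to pin down the $E_1^{\times}\times E_2^{\times}$-character of the Tate twist $\Lambda(2)$ in the dualizing sheaf of $\widetilde{\mc M_{b_2}^{b_1}}\to*$; this is the same Frobenius-equivariance computation used throughout \S4 and in the proof of Corollary~\ref{determinant-character-twists} --- under Proposition~\ref{smalldegBC} the scalar $\pi\in E_1^{\times}$ acts on $\BC(\mc O(1))$ by inverse geometric Frobenius, while $E_2^{\times}$ acts on $\widetilde{\mc M_{b_2}^{b_1}}$ only by translating the $\underline{E^{\times}}$-factor --- and it yields $\Lambda(2)\leftrightarrow\delta_T^{-1}$. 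Combining, the degree-$1$ summand $\abs{-}^{-k}$ produces $(\abs{-}^{-k})^{\vee}\cdot\delta_T=\abs{-}^{k}\delta_T$ in degree $3$ and the degree-$4$ summand $\abs{-}^{-k}\delta_T$ produces $(\abs{-}^{-k}\delta_T)^{\vee}\cdot\delta_T=\abs{-}^{k}$ in degree $0$, so $i_{b_2}^{*}Ri_{b_1,*}\abs{\det}^{k}\cong\abs{-}^{k}\oplus\abs{-}^{k}\delta_T[-3]$; setting $k=0$ gives $i_{b_2}^{*}Ri_{b_1,*}\Lambda\cong\Lambda[0]\oplus\delta_T[-3]$, and the argument is uniform, working for $k\in\tfrac12\Z$ once $\sqrt q\in\Lambda$.

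The hard part --- really the only step that is not a formal manipulation --- is this last one: identifying the $G_{b_2}(E)$-character attached to $\Lambda(2)$, together with the correct normalization of the Verdier shift, since a slip there exchanges $\delta_T$ and $\delta_T^{-1}$ or displaces the cohomological degrees. Everything upstream is already available: the reduction of $i_{b_2}^{*}Ri_{b_1,*}$ to the cohomology of $\widetilde{\mc M_{b_2}^{b_1}}$ is the content of \S3, and the genuine computational input --- the compactly supported cohomology of $\widetilde{\widetilde{\mc M_{b_2}^{b_1}}}$ and the smooth $\GL_2(E)$-homology of the resulting parabolically induced representations --- was carried out in Theorem~\ref{cs-coh-tilde-M-trivial-module} and Corollary~\ref{determinant-character-twists}.
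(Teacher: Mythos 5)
Your proof is essentially correct and follows the same strategy as the paper: reduce $i_{b_2}^*Ri_{b_1,*}$ to $R\Gamma(\widetilde{\mc M_{b_2}^{b_1}},-)$ via Proposition~\ref{action-on-cohomology} and the quasicompact case of Theorem~\ref{Smooth-cohomology-of-tower}, then apply Poincar\'e duality on the cohomologically smooth $\widetilde{\mc M_{b_2}^{b_1}}\cong\BC(\mc O(-2)[1])\setminus 0$ with dualizing sheaf $\Lambda(2)[4]$, and feed in Corollary~\ref{determinant-character-twists}. The minor route difference is that the paper re-runs the Hochschild--Serre descent spectral sequence of Proposition~\ref{homological-Hochschild-Serre} on $\abs{\det}^{-1}\otimes R\Gamma_c(\widetilde{\widetilde{\mc M_{b_2}^{b_1}}},\Lambda)$ (tracking the Tate-twist character at the level of $\GL_2(E)$-modules), whereas you plug $\chi=\abs{-}^{-k}$ directly into the corollary and then post-multiply by the $G_{b_2}(E)$-character of $\Lambda(-2)$. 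Both are legitimate, and your shortcut is arguably cleaner once the character of the Tate twist is known.

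That character identity, however, is precisely the ``hard part'' you flag, and your justification of it contains a sign error. You assert that $\pi\in E_1^{\times}$ acts on $\BC(\mc O(1))$ by \emph{inverse} geometric Frobenius, but this is the action of $\pi\in\Aut(\mc O(1))$ by postcomposition (Remark on basechange); the $E_1^{\times}$-action arising from the torsor structure changes the trivialization of $\mc O(-1)$, so $z_1$ acts on $\BC(\mc O(1))^2$ as $\begin{bmatrix}z_1^{-1}&\\&z_1^{-1}\end{bmatrix}$ --- this is exactly how the proof of Theorem~\ref{cs-coh-tilde-M-trivial-module} derives $\Lambda(-2)\leftrightarrow\abs{-}_1^{-2}$ --- so $\pi_1$ acts by geometric Frobenius, not its inverse. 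Moreover, the character $\delta_T^{-1}$ attached to $\Lambda(2)$ is \emph{not} read off from Frobenius equivariance on the $U\subset\BC(\mc O(1))^2$-factor alone; the $\abs{-}_1^{2}\otimes 1$ one gets there must be combined with the $E^\times$-coinvariance relation on the $\underline{E^\times}$-factor (as in the computations following Claim~\ref{GL2-homology-det-module-deg4}), and it is only after this two-factor bookkeeping that $\delta_T^{-1}$ emerges. Your assertion $\Lambda(2)\leftrightarrow\delta_T^{-1}$ is nevertheless correct, so the final formula $\abs{-}^{k}\oplus\abs{-}^{k}\delta_T[-3]$ comes out right; but as stated, the ingredients you list would not produce it, so the one step you rightly identify as non-formal is not actually carried out.
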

\begin{proof}
Recall that the map $\mc M_{b_2}\to [*/\underline{G_{b_2}(E)}]$ is cohomologically smooth of dimension $2$ \cite[Proposition~V.3.5]{fargues2021geometrization}. Furthermore, the inclusion $i_{b_1}:[*/\mc G_{b_1}]\hookrightarrow \Bun_2$ is an open substack since $b_1$ is a basic point, hence it is cohomologically smooth of dimension $0$. Then since $\widetilde{\mc M_{b_2}^{b_1}}\to *$ is a composition of pullbacks of these maps along $v$-covers, it is also cohomologically smooth of dimension $2$. Now we can use Poincar\'{e} duality and the compactly supported cohomology from Theorem \ref{cs-coh-tilde-M-trivial-module} to compute the cohomology of $\widetilde{\mc M_{b_2}^{b_1}}$. 

Note that in this case, $\widetilde{\mc M_{b_2}^{b_1}}$ is the punctured negative Banach-Colmez space $\BC(\mc O(-2)[1])$ parameterizing nonsplit extensions of $\mc O(1)$ by $\mc O(-1)$, so that the dualizing sheaf of $\widetilde{\mc M_{b_2}^{b_1}}\to *$ is $\Lambda(2)[4]$ by Lemma \ref{negative_bc_dualizing_sheaf}. By Poincaré duality, we want to compute $H^i(\widetilde{\mc M_{b_2}^{b_1}},\Lambda)\cong H_c^{4-i}(\widetilde{\mc M_{b_2}^{b_1}},\Lambda(2))^\vee,$ and we can use the descent spectral sequence from Proposition \ref{homological-Hochschild-Serre} to obtain $R\Gamma_c(\widetilde{\mc M_{b_1}^{b_2}}, \Lambda(2))$ from the smooth $\GL_2(E)$-homology of $R\Gamma_c(\widetilde{\widetilde{\mc M_{b_1}^{b_2}}}, p^* \Lambda (2))\cong \chi \otimes R\Gamma_c(\widetilde{\widetilde{\mc M_{b_1}^{b_2}}}, \Lambda)$ for some character $\chi_d$ of $\GL_2(E)$ (see remark \ref{pulling-out-reps}). Since $\chi_d = \abs{\det}^i$ for some $i$, the $\SL_2(E)$-homology is unaffected, and it suffices to understand the character for lifts of determinant as in the proof of Corollary \ref{determinant-character-twists}. The element \begin{small}$\begin{bmatrix}
    \pi & 0\\ 0& \pi
\end{bmatrix}$
\end{small} acts as inverse of geometric Frobenius on the $\BC(\mc O(1))^2\setminus Z$-factor and translation by $\pi^2$ on the $\underline{E^\times}$-factor, so that the Tate twist $\Lambda(2)$ induces the automorphism
$q^2= \abs{\pi^2}^\inv$ on compactly supported cohomology of the first factor, and we conclude $\chi_d = \abs{\det}^\inv$ by the Künneth formula. Note that this character is induced by Tate twist and is precisely the inverse character that appears in the compactly supported cohomology of the structure sheaf on $\widetilde{\widetilde{\mc M_{b_2}^{b_1}}}$ in degree $4$; then the result follows by the same arguments in Theorem \ref{cs-coh-tilde-M-trivial-module} and inverting the resulting characters when dualizing. 

The computation for arbitrary characters $\chi:= \chi\circ \det$ follows from the derived adjunction \[R\Gamma \Hom(\Lambda, [\chi])\cong R\Gamma \Hom([\chi]^\inv \otimes f^! \Lambda, f^!\Lambda)\cong \Hom(Rf_!([\chi]^\inv \otimes \Lambda(2)[4]),\Lambda),\]
where $f:\widetilde{\mc M_{b_2}^{b_1}}\to *$ is the structure map. Now the result follows immediately from Corollary \ref{determinant-character-twists}.
\end{proof}

\subsection{Compact generators and orbits of congruence subgroups on the projective line}
We consider the gluing functors for the collection of compact generators $\cind_K^{\GL_2(E)}\Lambda\in D_{\et}([*/\underline{\GL_2(E)}],\Lambda)$. Ultimately, the computations reduce to the group homology of the restrictions to $K$ of the representations computed for the trivial module. This requires knowledge of the orbits for the action of the congruence subgroups $\Gamma_k:=\ker(\SL_2(\mc O_E)\to \SL_2(\mc O_E/\pi^k))$ on $\Proj^1(E)$ by Möbius transformations. By descent, the modules computed in this subsection are also the compactly supported cohomology of $\widetilde{\widetilde{\mc M_{b_2}^{b_1}}}/\underline{K}$.

\begin{lemma}
    The orbits of the action of $\Gamma_k$ on $\Proj^1(E)$ are in bijective correspondence to the points of 
    \[\set{\infty}\cup \set{0} \cup \bigcup_{i=-k+1}^{k-1}\mc O_E^\times/\left(1+\pi^{\min\set{k+i,k-i}}\mc O_E\right).\]
\end{lemma}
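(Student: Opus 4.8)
The plan is to parametrize $\Proj^1(E)$ concretely and track how the principal congruence subgroup $\Gamma_k$ acts by Möbius transformations, then count orbits by examining the $\pi$-adic valuation of a coordinate. First I would use the standard coordinate chart $\Proj^1(E) = E \cup \set{\infty}$, where a point $[x:1]$ corresponds to $x\in E$ and $[1:0]$ corresponds to $\infty$; equivalently, since $\Proj^1(E)\cong \Proj^1(\mc O_E)$, every point has a representative $[a:b]$ with $a,b\in \mc O_E$ and $\min\set{v(a),v(b)}=0$. The action of $\gamma = \smallvector{\alpha}{\gamma'}$... more precisely $\begin{bmatrix}\alpha & \beta \\ \gamma' & \delta\end{bmatrix}\in\Gamma_k$ (so $\alpha,\delta\in 1+\pi^k\mc O_E$ and $\beta,\gamma'\in \pi^k\mc O_E$) sends $[a:b]\mapsto [\alpha a + \beta b : \gamma' a + \delta b]$.

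The key step is to stratify $\Proj^1(E)$ by the ``distance to $\infty$ and to $0$''. Write a point in the affine chart as $x\in E$; when $v(x)\geq 0$ we are in the $\mc O_E$-disk around $0$, and when $v(x)<0$ we are near $\infty$, with $1/x\in\pi\mc O_E$. The points $\infty = [1:0]$ and $0 = [0:1]$ are each fixed by $\Gamma_k$ modulo a controlled error: one checks $\gamma\cdot\infty = [\alpha:\gamma'] = [1 : \gamma'/\alpha]$ with $\gamma'/\alpha\in\pi^k\mc O_E$, so the $\Gamma_k$-orbit of $\infty$ is exactly $\set{[1:c] : c\in\pi^k\mc O_E}$, and symmetrically the orbit of $0$ is $\set{[c:1] : c\in\pi^k\mc O_E}$; these contribute the two singleton orbit-classes $\set{\infty}$ and $\set{0}$ in the statement. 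For a point $x\in\mc O_E^\times\cdot\pi^i$ with $v(x) = i$, one computes that $\gamma\cdot x = \frac{\alpha x + \beta}{\gamma' x + \delta}$, and for $\gamma\in\Gamma_k$ a direct valuation estimate shows $v(\gamma\cdot x) = v(x) = i$ as long as $|i|< k$ (so that $i$ is in the range where the orbit stays away from $0,\infty$), while the unit part $u = x\pi^{-i}\in\mc O_E^\times$ transforms by $u\mapsto \frac{\alpha u + \beta\pi^{-i}}{\gamma' u \pi^i + \delta} + (\text{higher order})$; expanding, the induced action on $u$ modulo $1 + \pi^{m}\mc O_E$ is trivial precisely for $m = \min\set{k+i, k-i}$, since $\beta\pi^{-i}$ contributes an error of valuation $k-i$ and $\gamma'\pi^i$ an error of valuation $k+i$. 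This gives the bijection of the orbits in the ``annular'' region $\set{v(x) = i}$ with $\mc O_E^\times/(1+\pi^{\min\set{k+i,k-i}}\mc O_E)$, ranging over $i = -k+1,\dots,k-1$.

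The main obstacle I anticipate is making the valuation bookkeeping in the annular case fully rigorous — specifically, verifying both that $\Gamma_k$ acts \emph{transitively} on each fiber $\mc O_E^\times/(1+\pi^{\min\set{k+i,k-i}}\mc O_E)$ within a fixed valuation stratum (one needs to exhibit enough elements of $\Gamma_k$: upper-triangular unipotents $\smallvector{1}{\,}$ with entry $\beta\in\pi^k\mc O_E$ move $u$ by $\beta\pi^{-i}$, i.e. by all of $\pi^{k-i}\mc O_E$, and lower unipotents by $\pi^{k+i}\mc O_E$, and the diagonal torus $1+\pi^k\mc O_E$ scales $u$ by a $(1+\pi^k\mc O_E)$-unit, so together they generate exactly the stabilizer-cokernel $1+\pi^{\min\set{k+i,k-i}}\mc O_E$) and that \emph{no} further identifications occur across different strata or within a stratum beyond what $\min\set{k+i,k-i}$ allows. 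I would handle this by noting that $v(x)$ is a $\Gamma_k$-invariant on the region $|v(x)|<k$ (since $\Gamma_k\subset\SL_2(\mc O_E)$ preserves the ``type'' — the standard fact that $\SL_2(\mc O_E)$ acts on the Bruhat--Tits tree fixing the base vertex, so it preserves distance to the two ends corresponding to $0$ and $\infty$), which separates the strata, and then the explicit unipotent computation above pins down the orbits within each stratum. A clean way to organize the whole argument is via the tree: $\Gamma_k$ fixes the ball of radius $k$ around the base vertex pointwise, and $\Proj^1(E)$ is the boundary; two boundary points are $\Gamma_k$-equivalent iff their geodesics agree up to the sphere of radius $k$, which upon unwinding gives exactly the claimed index set. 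I would present the direct matrix computation as the primary proof and mention the tree picture as motivation.
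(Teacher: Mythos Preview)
Your proposal is correct and follows essentially the same approach as the paper: both arguments are direct matrix computations of the Möbius action, stratifying $\Proj^1(E)$ by the valuation of the affine coordinate and checking transitivity on each stratum via explicit unipotent elements. The paper phrases everything through the double coset $B\backslash\SL_2(E)/\Gamma_k$ while you work directly with $\gamma\cdot x=(\alpha x+\beta)/(\gamma' x+\delta)$, and you are slightly more explicit about both containments (the paper's proof states only one direction for the orbits of $\infty$ and $0$ and obtains the other implicitly from disjointness of later orbits); the Bruhat--Tits tree picture you mention at the end is a nice organizing perspective that does not appear in the paper's proof of this lemma.
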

\begin{proof}
    The orbits are precisely the double cosets $\Proj^1(E)/\Gamma_k=B\backslash \SL_2(E)/\Gamma_k$. The following is a variation of the computation for $\Gamma_0$ in Example \ref{sl2-cohomology-steinberg}.

The orbit of $\infty$ is computed by considering an arbitrary product
    \[\begin{bmatrix}\alpha & \beta\\ 0 & \alpha^\inv \end{bmatrix}\begin{bmatrix}1 & 0\\ 0 & 1\end{bmatrix}\begin{bmatrix}1+\pi^k a& \pi^k b\\ \pi^k c & 1+\pi^k d\end{bmatrix}=\begin{bmatrix}\alpha (1+a\pi^k)+\pi^k \beta c & \alpha b \pi^k +\beta (1+d\pi^k)\\ \alpha^\inv \pi^k c & \alpha^\inv (1+d\pi^k)\end{bmatrix},\]
    with $\alpha,\beta\in E$ and $a,b,c,d\in \mc O_E$. Whenever $c\neq 0$, choosing $\alpha= \pi^k c$ and $\beta = -(1+a\pi^k)$ yields
    \[\begin{bmatrix}0 & -1\\ 1 & \frac{1+d\pi^k}{\pi^k c}\end{bmatrix}.\] Then since $1+d\pi^k\in \mc O_E^\times$ is a unit, $\frac{1+d\pi^k}{\pi^k c}$ has valuation $\leq -k$. Choosing $d=0$, we see that the orbit of $\infty $ contains all of the points $[1:t]$ where the valuation of $t$ is $\leq -k$.\\
    
    Now we consider the orbit of $[1:t]$ with $t\in \pi^{1-k} \mc O_E$:
    \begin{align*}
    \begin{bmatrix}\alpha & \beta\\ 0 & \alpha^\inv \end{bmatrix}\begin{bmatrix}0 & -1\\ 1 & t\end{bmatrix}\begin{bmatrix}1+\pi^k a & \pi^k b\\ \pi^k c & 1+\pi^k d\end{bmatrix}&=\\
    & \begin{bmatrix}-\pi^k \alpha c +\beta (1+\pi^k a +t\pi^k c) & -\pi^k \alpha d +\beta(\pi^k b+t(1+\pi^kd)) \\ \alpha^\inv (1+\pi^k a+t\pi^k c) & \alpha^\inv (\pi^k b+t(1+\pi^k d))\end{bmatrix},
    \end{align*}
    and choosing $\alpha = (1+\pi^k a+t\pi^k c)\in \mc O_E^\times$ is allowed. Furthermore set $\beta = \pi^k c$, so that the matrix becomes 
    \[\begin{bmatrix}0 & -1\\ 1 & (t+\pi^k(b+td))/(1+\pi^k (a+tc))\end{bmatrix}.\]
    Note that since the only choices so far were made on $\alpha$ and $\beta$, the $\Gamma_k$-orbit will not contain $\infty$ and is hence disjoint. We claim that this orbit is transitive on $\pi^k\mc O_E= [1:\pi^k\mc O_E]\subset \Proj^1(E)$. Choose $c=0$ and $a=0$, so that $d$ must be $0$. Then $b\in \mc O_E$ can be picked freely, so that $[1:t+\pi^k b]=[1:s]$ is in the orbit for any $s\in \pi^k\mc O_E$ as soon as $t\in \pi^k \mc O_E$; in particular, this is the orbit of $0$.
    
    This leaves the subsets $t\in \pi^{i}\mc O_E^\times$ for $i=-k+1,\dots,k-1$. By assumption, the valuation of any translate of $t$ is constant, so there is at least one distinct orbit for each $i$. Write $t= \pi^i s$ with $s\in \mc O_E^\times$, hence the orbit of $t$ contains all of the points of the form 
    \[\pi^i s \cdot \frac{(1+\pi^{k-i}bs^\inv+\pi^k d)}{(1+\pi^{k+i}cs+\pi^k a)}.\] 
    When $i\geq 0$, these orbits correspond to the elements of $\pi^i (\mc O_E^\times /1+\pi^{k-i}\mc O_E)$ by choosing $a=d=c=0$ and observing that $b\in \mc O_E$ is free. Dually, when $i<0$ the orbits correspond to the elements of $\pi^i(\mc O_E^\times / 1+\pi^{k+i}\mc O_E)$ by setting $a=d=b=0$ and now $c\in \mc O_E$ is free.
    Note that as $k$ goes to infinity, the limit of the finitely many discrete orbits is the profinite space $\Proj^1(E)$.
\end{proof}

\begin{proposition}\label{gluing-compact-generators}
Let $K\subset \GL_2(E)$ be a compact open subgroup contained in the maximal compact open $\GL_2(\mc O_E)$ and such that $K\cap \SL_2(\mc O_E)= \Gamma_k$ is the level-$k$ congruence subgroup. Then as $E_1^\times \times E_2^\times$-modules,
\[H_c^i({\widetilde{\mc M_{b_2}^{b_1}}}
,\cind_K^{\GL_2(E)}\Lambda) =\begin{cases} \bigoplus_{\pi^{\Z}}C^0(\mc O_E^\times/K, \Lambda)\otimes \left (C^0(\Proj^1(E)/\Gamma_k, \Lambda)\right )/\Delta, & i=2\\
\bigoplus_{\pi^{\Z}}C^0(\mc O_E^\times/K,\abs{-}_1^\inv \otimes 1) \otimes C^0(\Proj^1(E)/\Gamma_k,\Lambda), & i=3\\
\bigoplus_{\pi^{\Z}}C^0(\mc O_E^\times/K,\abs{-}_1^{-2}\otimes 1), & i=4\\ 
0, & \text{else},
\end{cases}
\]
where we identify the finite set $\SL_2(\mc O_E)\backslash \GL_2(\mc O_E) /K\cong \mc O_E^\times /K$, and $\Delta$ is the diagonal submodule of constant functions on $\Proj^1(E)/\Gamma_k$. The action of $\pi_i\in E_i^\times$ translates the countable $\pi^{\Z}$-factors, the action of $u_i\in \mc O_E^\times \subset E_i^\times$ acts on the domain of functions on the finite discrete space $\mc O_E^\times/K$, while the action of $u\in \mc O_E^\times \subset E_1^\times$ acts on the right tensor factors via the regular right action on the continuous $\Lambda$-valued functions on $\Proj^1(E)$.
\end{proposition}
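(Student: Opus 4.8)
The plan is to rerun the argument of Theorem~\ref{cs-coh-tilde-M-trivial-module} with the sheaf $[\cind_K^{\GL_2(E)}\Lambda]$ in place of the constant sheaf. Write $G=\GL_2(E)$. I would first feed the $\underline{G}$-torsor $p\colon\widetilde{\widetilde{\mc M_{b_2}^{b_1}}}\to\widetilde{\mc M_{b_2}^{b_1}}$ into the homological Hochschild--Serre spectral sequence of Proposition~\ref{homological-Hochschild-Serre}:
\[E_2^{i,j}=H_{-i}\bigl(G,\,H_c^j(\widetilde{\widetilde{\mc M_{b_2}^{b_1}}},\,p^*[\cind_K^{G}\Lambda])\bigr)\implies H_c^{i+j}(\widetilde{\mc M_{b_2}^{b_1}},\,\cind_K^{G}\Lambda).\]
Since $\cind_K^{G}\Lambda$ is $\Lambda$-free, Lemma~\ref{equivariant-projection-formula} together with Remark~\ref{pulling-out-reps} identifies the coefficient modules $G$-equivariantly with $\cind_K^{G}\Lambda\otimes_\Lambda W_j$, where $W_j:=H_c^j(\widetilde{\widetilde{\mc M_{b_2}^{b_1}}},\Lambda)$ is read off from Proposition~\ref{cs-coh-double-tilde-M-trivial-module}: it is nonzero only for $j\in\{2,3,4\}$, where $W_j=H_c^j(U,\Lambda)\otimes_\Lambda\cind_{\SL_2(E)}^{G}\Lambda$ with $H_c^j(U,\Lambda)$ equal to $\st$, $\ind_B^{G}(\abs{-}\otimes 1)$, $\abs{\det}$ respectively.

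Next I would convert the $G$-homology of these induced modules to small-group homology. The tensor identity $\cind_K^{G}\Lambda\otimes_\Lambda(-)\cong\cind_K^{G}\res_K(-)$ and Shapiro's lemma give $E_2^{i,j}\cong H_{-i}(K,\res_K W_j)$; since $\SL_2(E)\trianglelefteq G$ and $\SL_2(E)\cap K=\Gamma_k$, the determinant identifies $\SL_2(E)\backslash G/K\cong E^\times/\det(K)\cong\pi^{\Z}\times(\mc O_E^\times/\det(K))$, and Mackey's formula (plus the tensor identity and Shapiro once more) collapse everything to
\[E_2^{i,j}\cong\bigoplus_{\pi^{\Z}}\bigoplus_{\mc O_E^\times/\det(K)}H_{-i}\bigl(\Gamma_k,\ \res_{\Gamma_k}H_c^j(U,\Lambda)\bigr).\]
As $\Gamma_k$ is pro-$p$ for $k\geq 1$ (and in general has smooth homology computed by coinvariants in degree $0$ under the banal hypothesis on $\Lambda$), the spectral sequence is concentrated in the row $i=0$, degenerates, and yields $H_c^{n}(\widetilde{\mc M_{b_2}^{b_1}},\cind_K^{G}\Lambda)\cong E_2^{0,n}$ for $n\in\{2,3,4\}$ and zero otherwise.

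The remaining bookkeeping at the level of $\Lambda$-modules is routine: restricted to $\SL_2(\mc O_E)$ — hence to $\Gamma_k$ — one has $\ind_B^{G}(\abs{-}\otimes 1)\cong C^0(\Proj^1(E),\Lambda)$ (the character is trivial on $B\cap\SL_2(\mc O_E)$), $\st\cong C^0(\Proj^1(E),\Lambda)/\Delta$, and $\abs{\det}\cong\Lambda$. By the orbit computation preceding the proposition $\Proj^1(E)$ is a finite union of profinite $\Gamma_k$-orbits, each with $\Gamma_k$-coinvariants $\Lambda$, so $C^0(\Proj^1(E),\Lambda)_{\Gamma_k}\cong C^0(\Proj^1(E)/\Gamma_k,\Lambda)$, and exactness of pro-$p$ coinvariants on $0\to\Delta\to C^0(\Proj^1(E),\Lambda)\to\st\to 0$ gives $\st_{\Gamma_k}\cong C^0(\Proj^1(E)/\Gamma_k,\Lambda)/\Delta$. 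Writing $\bigoplus_{\mc O_E^\times/\det(K)}(-)=C^0(\mc O_E^\times/K,\Lambda)\otimes_\Lambda(-)$ and identifying $\SL_2(\mc O_E)\backslash\GL_2(\mc O_E)/K\cong\mc O_E^\times/\det(K)$ as in the statement, this recovers the asserted underlying $\Lambda$-modules in the three degrees.

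Finally I would pin down the $E_1^\times\times E_2^\times$-structure, induced by the geometric action on $\widetilde{\widetilde{\mc M_{b_2}^{b_1}}}\cong U\times\underline{E^\times}$ of Proposition~\ref{Equivariant-iso-cover}, which descends through the whole computation since it commutes with the $G$-action. On the factor $U$, $E_2^\times$ acts trivially while $E_1^\times$ acts through the central scalar $z_1\mapsto z_1^{-1}I\in G$; this fixes $\Proj^1(E)=B\backslash G$ but multiplies $H_c^j(U,\Lambda)$ by its central character, producing the twists $\abs{-}_1^{0},\abs{-}_1^{-1},\abs{-}_1^{-2}$ on the factor $C^0(\mc O_E^\times/K,-)$ in degrees $2,3,4$, and the residual $\GL_2(\mc O_E)/\Gamma_k$-action — in particular that of $\mc O_E^\times\subset E_1^\times$ — is the regular right action on $C^0(\Proj^1(E)/\Gamma_k,\Lambda)$. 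On the factor $\underline{E^\times}$, whose compactly supported cohomology $\cind_{\SL_2(E)}^{G}\Lambda$ contributes the index set $\pi^{\Z}\times(\mc O_E^\times/\det(K))$ above, the uniformizers $\pi_i\in E_i^\times$ translate the $\pi^{\Z}$-factor and the units $\mc O_E^\times\subset E_i^\times$ shift the domain of the functions on $\mc O_E^\times/\det(K)$, via the indicator-function bookkeeping carried out in the proof of Corollary~\ref{determinant-character-twists}. I expect this last step to be the main obstacle: one has to keep the three directions of the residual torus action genuinely separate — the central-scalar twist on the $C^0(\mc O_E^\times/K,-)$-factor, the regular action of $\mc O_E^\times\subset E_1^\times$ on the $C^0(\Proj^1(E)/\Gamma_k,\Lambda)$-factor, and the translation of the $\pi^{\Z}$-factor by the uniformizers — and verify their compatibility with the Mackey and Shapiro identifications; by contrast, the only new homological input relative to Theorem~\ref{cs-coh-tilde-M-trivial-module} (that $\Gamma_k$-homology, unlike $\SL_2(E)$-homology, retains the full $C^0(\Proj^1(E)/\Gamma_k,\Lambda)$-worth of classes) is immediate once the pro-$p$ reduction is in place.
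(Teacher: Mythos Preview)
Your proposal is correct and follows essentially the same approach as the paper: feed the descent spectral sequence of Proposition~\ref{homological-Hochschild-Serre}, use the tensor identity and Shapiro to pass from $\GL_2(E)$-homology of $\cind_K^{\GL_2(E)}\Lambda\otimes W_j$ to $K$-homology of $\res_K W_j$, observe that the relevant compact group homology collapses to coinvariants under the banal hypothesis, and then identify the $\Gamma_k$-coinvariants of $\st$, $\ind_B^G(\abs{-}\otimes 1)$, $\abs{\det}$ as functions on the orbit space $\Proj^1(E)/\Gamma_k$.

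The only organizational difference is that you invoke Mackey's formula for $\res_K\cind_{\SL_2(E)}^{\GL_2(E)}\Lambda$ to extract the index set $\pi^{\Z}\times(\mc O_E^\times/\det(K))$ and simultaneously reduce to $\Gamma_k$-homology, whereas the paper uses the K\"unneth formula to split off the $\cind_{\SL_2(E)}^{\GL_2(E)}\Lambda$ factor and computes its $K$-coinvariants directly via the identification $\SL_2(E)\backslash\GL_2(E)\cong\bigsqcup_{\pi^{\Z}}\SL_2(\mc O_E)\backslash\GL_2(\mc O_E)$. Both routes arrive at the same bookkeeping; yours is arguably cleaner in that the reduction to $\Gamma_k$ for \emph{all} factors happens in a single Mackey step rather than by the ad hoc observation (as in the paper) that the diagonal part of $K$ lies in $B$ and hence does not disturb the $\Proj^1(E)$-coinvariants.
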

\begin{remark}
Note that such $K$ form a neighborhood basis of $1\in \GL_2(E)$, and the colimit over all $K$ recovers $H^i_c(\widetilde{\widetilde{\mc M_{b_2}^{b_1}}},\Lambda)$ as an $E_1^\times \times E_2^\times$-module. This allows us to think of $\varinjlim_K H^i_c(\widetilde{\widetilde{ \mc M_{b_2}^{b_1}}}/K, \Lambda)$ in terms of modules of compactly supported functions on $\Proj^1(E) \times E^\times= (\Proj^1 \times \mathbb{G}_m )(E)$.
\end{remark}
\begin{proof}
The Hochschild-Serre descent spectral sequence from the compactly supported cohomology of $\widetilde{\widetilde{\mc M_{b_2}^{b_1}}}$ computes the $\GL_2(E)$-homology of $\cind_K^{\GL_2(E)}\Lambda \otimes^\mathbb{L}_\Lambda R\Gamma_c (\widetilde{\widetilde{\mc M_{b_2}^{b_1}}},\Lambda)$, which is canonically identified with the $K$-homology of the restriction to $K$ of $R\Gamma_c (\widetilde{\widetilde{\mc M_{b_2}^{b_1}}},\Lambda)$. Furthermore, the spectral sequence degenerates immediately since $K$ is profinite with pro-order prime to the torsion of $\Lambda$. Then via the Künneth formula, it suffices to compute the module of $K$-coinvariants for $\st$, $\ind_B^{\GL_2(E)}\abs{-}\otimes 1$, $\abs{\det}$, and $\cind_{\SL_2(E)}^{\GL_2(E)}\Lambda$.

When restricted to $K$, these become $\st, \ind_B^{\GL_2(E)}\Lambda,\Lambda$, and $\bigoplus_{\pi^{\Z}}\cind_{\SL_2(\mc O_E)}^{\GL_2(\mc O_E)}\Lambda$. The latter identification is by rescaling an arbitrary element of $\GL_2(E)$ by $\pi^i$ and \begin{small}$\begin{bmatrix}\pi^j & 0\\ 0&1\end{bmatrix}$\end{small}, so that the direct summands correspond to compactly supported functions on invertible matrices with the same fixed valuation of their determinants. First note that $\SL_2(E)\backslash \GL_2(E) \cong \bigsqcup_{\pi^{\Z}}\SL_2(\mc O_E) \backslash \GL_2(\mc O_E)$. Then since any two compact open subgroups are commensurable and since $K\subset \GL_2(\mc O_E)$ is open, the $K$-coinvariants are a direct sum over $\pi^{\Z}$ with summands identified as the functions on the finite discrete set $(\SL_2(\mc O_E)\backslash \GL_2(\mc O_E))/K$ with $r$ elements. Note that the topological limit of these spaces as $K$ approaches $\set{1}$ is $\bigsqcup_{\pi^{\Z}}\mc O_E^\times\cong E^\times$.

For the $K$-coinvariants of $\st$ and $\ind_B^{\GL_2(E)}\Lambda$, note that $u$ and \begin{small}$\begin{bmatrix}u & 0\\ 0&1\end{bmatrix}$\end{small} for a unit $u\in \mc O_E^\times$ are both in the standard Borel subgroup, so we may instead compute the $\Gamma_k$-coinvariants of $\st$ and $\ind_B^{\SL_2(E)}\Lambda$. The latter module is precisely the functions on $\Proj^1(E)$ that are constant on $\Gamma_k$ orbits, since one identifies the $\Gamma_k$-coinvariants with the $\Lambda$-dual of the $\Gamma_k$-invariants of the contragredient representation. The $\Gamma_k$-coinvariants of the Steinberg module are similarly described as functions on the $\Gamma_k$-orbits in $\Proj^1(E)$, modulo the constant functions.
\end{proof}

\subsection{Unramified principal series representations and homology of the Borel subgroup}
We compute the image of smooth parabolic inductions of unramified characters, which follows from computing the smooth homology of the representations appearing in Proposition \ref{cs-coh-double-tilde-M-trivial-module} after restriction to the Borel subgroup. The computations fundamentally reduce to identifying the Jacquet modules of $\GL_2(E)$-representations, then taking homology with respect to the induced torus action. The images here are the fundamental point of interest for the semi-orthogonal decomposition, and are philosophically a categorical local analogue of constant terms of fourier expansions of Eisenstein series. For the remainder of this section, suppose there exists a fixed $\sqrt{q}\in \Lambda^\times$.

\begin{lemma}\label{homology-of-tensor-with-parabolic-induction}
Suppose $\sigma$ is a smooth $\GL_2(E)$-representation and $\chi$ is a character of $T\subset \GL_2(E)$. There are natural isomorphisms 
\[H_i(\GL_2(E),\left(\ind_B^{\GL_2(E)}\chi\right)\otimes \sigma)\cong H_i(B,\delta_T\otimes \chi\otimes \res^{\GL_2(E)}_B\sigma).\]
as $\Lambda$-modules.
\end{lemma}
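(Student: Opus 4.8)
The plan is to use Shapiro's lemma for smooth homology together with the geometric (Mackey-type) description of the restriction of a parabolic induction. First I would recall that $\ind_B^{\GL_2(E)}\chi$ is, as a $\GL_2(E)$-representation, the space of smooth functions $f:\GL_2(E)\to \Lambda$ with $f(bg)=\chi(b)f(g)$; equivalently, it is the smooth sections of the sheaf on $B\backslash\GL_2(E)$ associated to the $B$-representation $\chi$. Tensoring with $\sigma$ and using the projection formula on $B\backslash\GL_2(E)\cong\Proj^1(E)$, one identifies $\left(\ind_B^{\GL_2(E)}\chi\right)\otimes\sigma$ with $\ind_B^{\GL_2(E)}\left(\chi\otimes\res_B^{\GL_2(E)}\sigma\right)$ — this is the usual ``tensor identity'' for induced representations, valid because $\Proj^1(E)=B\backslash\GL_2(E)$ is a profinite quotient and $\sigma$ is a $\GL_2(E)$-module so its restriction to $B$ can be twisted back by the ambient action. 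At this point the representation is manifestly compactly induced (smooth induction from the cocompact subgroup $B$ coincides with compact induction here since $B\backslash\GL_2(E)$ is compact), so Shapiro's lemma for smooth homology gives
\[
H_i\!\left(\GL_2(E),\ind_B^{\GL_2(E)}\left(\chi\otimes\res_B^{\GL_2(E)}\sigma\right)\right)\cong H_i\!\left(B,\chi\otimes\res_B^{\GL_2(E)}\sigma\right)
\]
up to a twist by the modulus character measuring the discrepancy between $\cind$ and $\ind$; that twist is exactly $\delta_B$ on $B$, and with the paper's conventions (where $\delta_T$ is the inflation to $B$ of $(t_1,t_2)\mapsto|t_2/t_1|$, cf.\ the Notation section) this is the factor $\delta_T$ appearing in the statement.

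Concretely I would carry out the steps in this order. Step one: establish the isomorphism of $\GL_2(E)$-representations $\left(\ind_B^{\GL_2(E)}\chi\right)\otimes\sigma\cong\ind_B^{\GL_2(E)}\left(\chi\otimes\res_B^{\GL_2(E)}\sigma\right)$, naturally in both $\chi$ and $\sigma$, by the tensor identity (map $f\otimes v\mapsto (g\mapsto f(g)\cdot g v)$ and check it is a well-defined $\GL_2(E)$-equivariant isomorphism with the obvious inverse). Step two: identify smooth induction from $B$ with compact induction, using that $B\backslash\GL_2(E)$ is profinite so every smooth section is automatically compactly supported. Step three: apply Shapiro's lemma $H_i(\GL_2(E),\cind_B^{\GL_2(E)}V)\cong H_i(B,V)$ for smooth homology (which holds because $\cind_B^{\GL_2(E)}(-)$ is exact and sends projective $\mc H(B)$-modules to projective $\mc H(\GL_2(E))$-modules, so it computes $\Lambda\otimes^{\mathbb L}_{\mc H(\GL_2(E))}\cind_B^{\GL_2(E)}(-)\cong\Lambda\otimes^{\mathbb L}_{\mc H(B)}(-)$). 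Step four: reconcile the normalizations — the induction used in the paper is non-normalized (see the Notation section), so passing between $\ind$ and the ``correct'' Shapiro formulation inserts the modulus character $\delta_B$ of $B$, which equals $\delta_T$ in the paper's sign convention; I would spell out this bookkeeping carefully since the sign of $\delta_T$ is the kind of thing the paper itself flags as error-prone.

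The main obstacle I expect is precisely that last normalization/twist computation: making sure the factor that appears is $\delta_T$ (the inflation of $(t_1,t_2)\mapsto|t_2/t_1|$) and not its inverse, and that it sits on the correct side of the tensor product with $\chi\otimes\res_B^{\GL_2(E)}\sigma$. Everything else is formal, but the $\delta_T$ versus $\delta_T^{-1}$ distinction depends on whether one writes Shapiro's lemma for $\cind$ or for $\ind$, and on the left-versus-right conventions for the $B$-action on $B\backslash\GL_2(E)$; I would pin this down by testing the formula against a known case — e.g.\ $\sigma=\Lambda$ the trivial representation, where the left-hand side is $H_i(\GL_2(E),\ind_B^{\GL_2(E)}\chi)$ and the right-hand side is $H_i(B,\delta_T\otimes\chi)$, and compare with the explicit unramified-character computations already carried out in the proof of Theorem~\ref{cs-coh-tilde-M-trivial-module}. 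Once the twist is fixed, naturality in $\sigma$ and $\chi$ is inherited from naturality of the tensor identity and of Shapiro's lemma, giving the isomorphism of functors as stated.
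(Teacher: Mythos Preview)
Your approach is essentially the same as the paper's: the paper phrases the argument geometrically via the diagram $[*/\underline{\GL_2(E)}]\xleftarrow{\iota}[*/\underline{B}]\xrightarrow{g}*$, where your tensor identity is the projection formula for $R\iota_!$ and your Shapiro's lemma is the identity $Rf_!R\iota_!=Rg_!$, with the $\delta_T$ twist coming from the dualizing sheaf of $g$ (i.e.\ $Rg_!(-)\cong g_\natural(\delta_T\otimes -)$ since $[*/\underline{B}]\to *$ is cohomologically smooth).

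One small correction to your bookkeeping: the $\delta_T$ does \emph{not} arise from passing between $\ind$ and $\cind$ (these coincide here, as you note, since $B\backslash\GL_2(E)$ is compact), so there is nothing to insert at that step. Rather, the twist appears because Shapiro's lemma for homology with a non-unimodular subgroup already carries the modulus character: concretely, $\cind_B^{\GL_2(E)}V\cong \mc H(\GL_2(E))\otimes_{\mc H(B)}(\delta_T\otimes V)$ (cf.\ the Renard reference in the paper's appendix), so $\Lambda\otimes^{\mathbb L}_{\mc H(\GL_2(E))}\cind_B^{\GL_2(E)}V\cong \Lambda\otimes^{\mathbb L}_{\mc H(B)}(\delta_T\otimes V)$. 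The paper's adjunction argument $R\Hom_\Lambda(g_!\sigma,W)\cong R\Hom_B(\sigma,\delta_T^{-1}\otimes g^*W)\cong R\Hom_\Lambda(g_\natural(\delta_T\otimes\sigma),W)$ is a clean way to pin down the sign without testing cases.
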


\begin{proof}
Consider the following factorization:

\begin{equation*}
\begin{tikzcd}
    {[}{*} /\underline{\GL_2(E)}{]}\arrow[d,"f",swap] & {[}{*}/\underline{B}{]}\arrow[l,"\iota",swap]\arrow[dl,"g"]\\
    {*} & 
\end{tikzcd}
\end{equation*}
The functors we want to compute are $Rf_! (R\iota_!\chi \otimes \sigma)\cong R f_! R\iota_!(\chi \otimes \iota^* \sigma)\cong R g_! (\chi \otimes \iota^* \sigma)$ by the projection formula. Now, $R g_!$ differs from smooth $B$-homology by twisting the representation by the modulus sheaf $\delta_T$ of $B$ (i.e. the sheaf of Haar measures on $B$), $R \iota_!$ identifies with non-normalized parabolic induction, and $Rf_!$ identifies with smooth $\GL_2(E)$-homology since $\GL_2$ is unimodular.

To see that $Rg_!$ differs from smooth $B$-homology by a modulus twist, we recall the derived adjunctions between $D_{\mathrm{lis}}(-,\Lambda)\cong D_{\et}(-,\Lambda)$ categories, $(f_\natural, f^*)$ and $(f_!, f^!)$. Then since $f:[*/\underline{B}]\to *$ is $\ell$-cohomologically smooth, for any complexes $\sigma \in D(B)\cong D_{\et}([*/B])$ and $W\in D(\Lambda)\cong D_\et(*)$,
\[ R\Hom_\Lambda(g_!(\sigma),W) \cong R \Hom_B (\sigma, g^! W)\cong R \Hom_B (\sigma, \delta_T^{-1}\otimes g^*W)\cong R \Hom_\Lambda (g_\natural(\delta_T \otimes \sigma ), W).\]

\end{proof}
\begin{lemma}[Geometric lemma {\cite[\hphantom~9.3]{Bushnell2006}}]\label{Jacquet-of-induced-rep}
    There is a short exact sequence of $T$-representations
    \[0 \to \chi^w \otimes \delta_T^{\inv}\to \mathrm (\ind_B^{\GL_2(E)}\chi)_N \to \chi \to 0,\]
    where $(\chi_1\otimes \chi_2)^w= \chi_2\otimes \chi_1$ denotes the Weyl group action on characters of $T$ and $N\subset B$ is the unipotent subgroup. The map on the right is given by the evaluation at $1\in \GL_2(E)$, $f\mapsto f(1)$, and the kernel is the $N$-coinvariants of the subspace of functions supported on the Bruhat cell $BwN$ for the decomposition $\GL_2(E)=B\sqcup BwN$. In particular, there is a short exact sequence 
    \[0 \to (\chi \otimes \delta_T^{ 1/2})^w\to \mathrm (\ind_B^{\GL_2(E)}(\chi \otimes \delta_T^{-1/2}))_N \to \chi \otimes \delta_T^{-1/2} \to 0.\]
\end{lemma}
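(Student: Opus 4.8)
The plan is to deduce this from the Bruhat stratification of $\GL_2(E)$, which is the classical proof of the geometric lemma \cite{Bushnell2006}; the only real work is to keep track of the modulus character in the conventions of this paper. Write $G=\GL_2(E)$ and let $w$ be the nontrivial Weyl element, so that $G = B \sqcup BwN$ with $B$ closed and $BwN$ open. Since $B\backslash G \cong \Proj^1(E)$ is compact, an element of $\ind_B^{G}\chi$ is a locally constant $\chi$-equivariant function $G\to\Lambda$, and restricting such functions to the two Bruhat strata yields a short exact sequence of smooth $B$-representations (for the right translation action)
\[0 \to V \to \ind_B^{G}\chi \xrightarrow{f\mapsto f(1)} \chi \to 0,\]
in which $V$ is the subspace of functions supported on the open cell $BwN$ and the quotient is the inflation of $\chi$ from $T$; this is the sequence we want to feed into the Jacquet functor.

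First I would identify $V$ as a $B$-module. Using the homeomorphism $B\times N \xrightarrow{\sim} BwN$, $(b,n)\mapsto bwn$, a function $f\in V$ is recorded by $\phi(n):=f(wn)$; the support condition together with compactness of $\Proj^1(E)$ forces $\phi\in C_c^\infty(N)$. Expanding $f(wn\cdot tu)$ for $t\in T$, $u\in N$ via $wntu = (wtw^\inv)\,w\,(t^\inv n t)\,u$ and using left $\chi$-equivariance identifies $V\cong \chi^w\otimes C_c^\infty(N)$, with $T$ acting through $\chi^w$ and conjugation on $N$, and $N$ acting by translation. Next I would take $N$-coinvariants: $C_c^\infty(N)_N$ is one dimensional, with quotient map integration against a Haar measure on $N$, and the change of variables for the conjugation action of $t\in T$ multiplies this integral by the modulus of $\mathrm{Ad}(t)$ on $\mathfrak n$ --- which in the coordinates $T\cong E^\times\times E^\times$ used here is exactly $\delta_T^\inv$. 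Hence $V_N\cong \chi^w\otimes\delta_T^\inv$ as a $T$-representation, while $\chi_N=\chi$. Finally, invoking exactness of the Jacquet functor $(-)_N$ on smooth representations \cite{Bushnell2006} and applying it to the displayed sequence produces the asserted exact sequence $0\to\chi^w\otimes\delta_T^\inv\to(\ind_B^{G}\chi)_N\to\chi\to0$.

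For the last assertion I would simply substitute $\chi\rightsquigarrow\chi\otimes\delta_T^{-1/2}$, noting that $\delta_T^w=\delta_T^\inv$ (interchanging the two factors of $E^\times\times E^\times$ inverts $\delta_T$), so that the submodule becomes $\chi^w\otimes\delta_T^{1/2}\otimes\delta_T^\inv = \chi^w\otimes\delta_T^{-1/2}=(\chi\otimes\delta_T^{1/2})^w$ and the quotient becomes $\chi\otimes\delta_T^{-1/2}$. The step most prone to error --- and the one I would present most carefully, given the warning about the sign of $\delta_T$ --- is the computation that $T$ acts on $C_c^\infty(N)_N$ precisely by $\delta_T^\inv$; everything else is formal once the exactness of $(-)_N$ is granted.
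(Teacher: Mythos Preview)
The paper does not supply its own proof of this lemma; it is stated with a citation to \cite[9.3]{Bushnell2006} and immediately followed by remarks on consequences (the Jacquet module of $\st$ and the splitting behavior). Your argument is the standard proof of the geometric lemma for $\GL_2$ --- Bruhat stratification, identification of the open-cell functions as $\chi^w\otimes C_c^\infty(N)$, and the integration computation for the $T$-action on $C_c^\infty(N)_N$ --- and it is correct in the paper's conventions, including the sign of $\delta_T$. In particular your check that $(\chi\otimes\delta_T^{-1/2})^w\otimes\delta_T^{-1} = \chi^w\otimes\delta_T^{-1/2} = (\chi\otimes\delta_T^{1/2})^w$ is exactly what is needed for the second displayed sequence.
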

A corollary of exactness of $(-)_N$ is that $\st_N\cong \delta_T^{-1}$, which follows after showing the first sequence is split for $\chi= \Lambda$. Furthermore, it is known that the second sequence is split whenever $\chi^w\neq \chi$ (as $T$ is abelian), and is non-split in case $\chi^w = \chi$. 
\begin{lemma}\label{cohomology-PS(1)}
	Let $T_s$ denote the intersection $T\cap \SL_2(E)$ and let $\chi$ be an unramified character of $T$. Then the smooth $T_s$-homology of $\chi \cdot (\ind_B^{\GL_2(E)}\delta_T^{-1/2})_N$ vanishes in all degrees unless $\chi|_{T_s}= \delta_T^{1/2}|_{T_s}=\abs{-}^{-1}$, where the last identification is obtained by the coordinate in the upper-left corner of $T_s\cong E^\times$. In this case, the homology is $\Lambda$ in degrees $0$ and $-1$.  
\end{lemma}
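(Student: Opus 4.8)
The strategy is to feed the short exact sequence of Lemma~\ref{Jacquet-of-induced-rep} into $T_s$-homology and use the fact that $T_s \cong E^\times$ has split rank one, so that its smooth homology of an unramified character is concentrated in two degrees. First I would observe that $(\ind_B^{\GL_2(E)}\delta_T^{-1/2})_N$ sits in the short exact sequence $0 \to (\delta_T^{1/2})^w \to (\ind_B^{\GL_2(E)}\delta_T^{-1/2})_N \to \delta_T^{-1/2}\to 0$, which after twisting by $\chi$ gives $0 \to \chi\cdot(\delta_T^{1/2})^w \to \chi\cdot(\ind_B^{\GL_2(E)}\delta_T^{-1/2})_N \to \chi\cdot \delta_T^{-1/2}\to 0$. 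Since $\delta_T^{1/2}$ is valued in $q^{\frac12\Z}$ and $w$ swaps the two torus coordinates, restricting to $T_s = \{\,\mathrm{diag}(z,z^{-1})\,\}\cong E^\times$ one has $\delta_T^{1/2}|_{T_s} = \delta_T^{-1/2,w}|_{T_s} = \abs{-}^{-1}$ in the upper-left coordinate (the two outer terms become \emph{the same} character of $T_s$, namely $\chi|_{T_s}\cdot\abs{-}^{\mp1}$, which is why the case distinction below is clean).

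Next I would apply the long exact sequence in smooth $T_s$-homology. Since $T_s\cong E^\times$ and $\chi$ is unramified, the restriction $\chi|_{T_s}$ is an unramified character, so by Proposition~\ref{homology-E-times} the homology $H_i(T_s, \chi|_{T_s}\cdot\eta)$ for an unramified character $\eta$ vanishes in all degrees unless $\chi|_{T_s}\cdot\eta$ is trivial on the uniformizer $\pi\in T_s$, in which case it is $\Lambda$ in degrees $0$ and $-1$. Both outer terms of the twisted sequence restrict on $T_s$ to the single character $\chi|_{T_s}\cdot \abs{-}^{-1}$ (resp.\ $\chi|_{T_s}\cdot\abs{-}$, which agrees with the former on the uniformizer after accounting for $w$ --- here I would be careful to track that $w$ acts on $T_s$ by $z\mapsto z^{-1}$, so $\abs{-}$ and $\abs{-}^{-1}$ give the \emph{same} vanishing condition on $T_s$). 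Thus if $\chi|_{T_s}\neq \abs{-}^{-1}$, both outer terms have vanishing $T_s$-homology, and the long exact sequence forces $H_i(T_s, \chi\cdot(\ind_B^{\GL_2(E)}\delta_T^{-1/2})_N)=0$ for all $i$.

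In the remaining case $\chi|_{T_s} = \abs{-}^{-1}$, both $H_*(T_s, \chi\cdot(\delta_T^{1/2})^w)$ and $H_*(T_s,\chi\cdot\delta_T^{-1/2})$ equal $\Lambda$ placed in homological degrees $0$ and $-1$. The long exact sequence then reads $\cdots \to \Lambda \to H_{-1}(T_s, -) \to \Lambda \xrightarrow{\partial} \Lambda \to H_0(T_s,-)\to \Lambda \to 0$, and I would need to identify the connecting map $\partial$. Here is where the case $\chi^w=\chi$ versus $\chi^w\neq\chi$ matters: when the extension in Lemma~\ref{Jacquet-of-induced-rep} is \emph{split} (generic case) the boundary maps all vanish and one gets $\Lambda^2$ in degrees $0$ and $-1$, whereas in the non-split self-dual case the connecting map could a priori be an isomorphism. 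I would argue that the condition $\chi|_{T_s}=\abs{-}^{-1}$ together with the structure of the extension still yields $H_0 = H_{-1} = \Lambda$: either the extension splits after restriction to $T_s$, or --- in the degenerate self-dual situation --- one checks directly that $\partial = 0$ because the first $T_s$-homology of the trivial-on-$T_s$ character is detected by the invariants/coinvariants functors which kill the relevant cocycle. I expect \emph{this identification of the connecting homomorphism} to be the main obstacle; the cleanest route is probably to note that $\res^{\GL_2(E)}_{T_s}$ of the sequence in Lemma~\ref{Jacquet-of-induced-rep} is split as a sequence of smooth $T_s$-modules (since the splitting over $T$ in the non-self-dual case descends, and in the self-dual case the subrepresentation is a direct summand after restriction to the rank-one torus $T_s$ because the extension class lives in $\mathrm{Ext}^1_T$ which dies on $T_s$), so that $H_*(T_s,-)$ splits accordingly and gives $\Lambda$ in degrees $0$ and $-1$.
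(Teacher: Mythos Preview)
Your approach via the long exact sequence in $T_s$-homology attached to the Jacquet-module filtration is exactly what the paper does for the vanishing claim, and that part is fine. The non-vanishing case, however, contains a genuine error that inverts the conclusion.

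First, a small but important point: the Jacquet module $(\ind_B^{\GL_2(E)}\delta_T^{-1/2})_N$ is already a non-split extension of $\delta_T^{-1/2}$ by itself as a $T$-module (the inducing character here is Weyl-fixed), and twisting by $\chi$ does not change this. So there is no ``generic split case'' to fall back on; you are always in the self-dual non-split situation.

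The central error is your claim that the extension becomes split after restriction to $T_s$, equivalently that $\partial = 0$. If that were true the long exact sequence would give $H_0 = H_{-1} = \Lambda^2$, not $\Lambda$; your final sentence is internally inconsistent on this point. The heuristic that the class in $\mathrm{Ext}^1_T(\Lambda,\Lambda)$ dies on $T_s$ fails because such an extension class is detected exactly by the action of a uniformizer, and $T_s\hookrightarrow T$ contains one.

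What actually must be shown is the \emph{opposite}: the connecting map $\partial$ is an isomorphism. Equivalently, writing the action of $\pi\in T_s$ on the rank-two module $\sigma$ as $\begin{pmatrix}1 & u\\ 0 & 1\end{pmatrix}$, one needs $u\in\Lambda^\times$, so that $\pi-1$ has rank one and both kernel and cokernel of the complex $\sigma\xrightarrow{\pi-1}\sigma$ are $\Lambda$. The paper establishes this by an explicit intertwining-integral computation: for $f\in\ind_B^{\GL_2(E)}\delta_T^{-1/2}$ one observes that $t\cdot f-\delta_T^{-1/2}(t)f$ vanishes on the small Bruhat cell, hence lies in the kernel of evaluation at $1$, and then computes its image under the Weyl-twisted average $g\mapsto\int_N g(xwn)\,dn$ identifying that kernel with $\delta_T^{-1/2}$. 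This reduces to an integral over a ball $\pi^i\mc O_E$, and under the banal hypotheses on $\Lambda$ the resulting scalar $u$ (classically $2(1-q^{-1})$ up to normalization) is invertible. Without this step, or some substitute showing the restricted extension is non-split, the answer $\Lambda$ cannot be obtained.
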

\begin{proof}
	From Proposition \ref{homology-E-times}, the homology of any smooth representation $\sigma$ of $T_s$ can be computed by the homology of the complex $0 \to \sigma_{\mc O_E^\times} \xrightarrow{\pi -1} \sigma_{\mc O_E^\times} \to 0$, which gives the $\pi$-coinvariants in degree $0$ and the $\pi$-invariants in degree $-1$. If $\sigma$ is an unramified character, then the homology vanishes in all degrees except for the trivial character. Then by the geometric lemma and the long exact sequence on homology, the $T_s$-homology groups of $\chi \cdot (\ind_B^{\GL_2(E)}\delta_T^{-1/2})_N$ vanish unless $\chi|_{T_s}=\delta_T^{1/2}|_{T_s}$.\\
	Now suppose that $\chi|_{T_s}=\delta_T^{1/2}|_{T_s}$, and let $\sigma := \chi \cdot (\ind_B^{\GL_2(E)}\delta_T^{-1/2})_N$, which is a non-split extension of the trivial $T$-representation by itself. There is a short exact sequence of complexes of $T_s$-modules
	\begin{center}
	\footnotesize{
	\begin{tikzcd}
		& 0 \arrow[d]& 0\arrow[d] & 0\arrow[d] & \\
		0 \arrow[r]& \Lambda \arrow[d,"0"]\arrow[r]& \sigma \arrow[r]\arrow[d,"\pi-1"]& \Lambda\arrow[r]\arrow[d,"0"] & 0 \\
		0 \arrow[r]& \Lambda\arrow[r]\arrow[d] & \sigma\arrow[r]\arrow[d] & \Lambda\arrow[r]\arrow[d] & 0 \\
		& 0 & 0 & 0 & 
	\end{tikzcd}}	
	\end{center}
	Since $\sigma$ is a non-split extension, the action of $\pi$ must be by an upper triangular matrix, and the connecting homomorphism factors as $b\mapsto \smallvector{a}{b}\mapsto \smallvector{a+ u b}{b}-\smallvector{a}{b}\mapsto \smallvector{u \cdot b}{0}$, which is well-defined for any lift $\smallvector{a}{b}$ of $b$. We claim that $u$ is invertible under our assumptions on $\Lambda$, so that the connecting homomorphism in the long exact sequence is an isomorphism, concluding the proof. For any $f\in \ind_B^{\GL_2(E)}\delta_T^{-1/2}$ such that $f(1)=b\in \Lambda$ and $t\in T$, observe that the function $t\cdot f -\delta_T^{-1/2}(t)f $ vanishes at $1\in \GL_2(E)$ and in particular vanishes on the Bruhat cell $B$. Then after taking $N$-coinvariants, $t\cdot f -\delta_T^{-1/2}f $ is in the kernel of the short exact sequence from Lemma \ref{Jacquet-of-induced-rep}, which is isomorphic to the representation $\delta_T^{-1/2}$ via a Weyl-twisted averaging map $g(x) \mapsto g_N(x):=\int_N g(xwn)$ as a function $x\in T\to \Lambda$, after fixing a Haar measure on $N$ (note that choosing a different Haar measure will rescale $u$). Then $u$ can be computed by explicitly integrating $(t\cdot f -\delta_T^{-1/2}f)(xwn)$ over $N\cong E$ and using smoothness of $f$, reducing to integrating over a certain ball $\pi^i \mc O_E$.
    This is a standard computation, and the result is well-known to be nonzero and invertible for banal torsion coefficients.
\end{proof}
\begin{remark}
	The computation of the scalar $u$ appearing in the proof of Lemma \ref{cohomology-PS(1)} is classically done by analytic continuation of intertwining integrals (for example, in unpublished notes of Casselman \cite[Proposition 9.4.5(a)]{casselman1995introduction}). These are $\GL_2(E)$-equivariant operators $\ind_{B}^{\GL_2(E)}\delta_T^{-1/2+s} \to \ind_{B}^{\GL_2(E)} \delta_T^{-1/2-s}$ which yield isomorphisms for irreducible parabolic inductions when $s\neq \pm 1/2,0$ is a complex parameter (interpolating the root lattice of the $\SL_2$ torus). Then under a certain normalization as $s$ goes to $0$, $u$ is $2(1-q^{-1})$. Classically, this term simultaneously arises as a local factor at an unramified place of a special value of the global $L$-function arising from the constant term of a certain Eisenstein series. In our setting, $u$ can be interpreted as a unipotent monodromy-action of a torus $T$ on the local system corresponding to the normalized Jacquet module of the principal series representation $\mathrm{PS}(1)$ along the Bruhat-stratification of $\underline{\Proj^1(E)}= {\set{\infty}}\bigcup \underline{B \backslash BwB}$. More generally, there should be a strong connection between nontrivial images of gluing functors and degenerations of analytic functions built from intertwining operators in Casselman's basis.
\end{remark}
\begin{proposition}\label{non-LS-type}
    Let $\chi=\chi_1 \otimes \chi_2$ be an unramified character of $T\subset B$. Then $H_c^i(\widetilde{\mc M_{b_2}^{b_1}}, \ind_B^{\GL_2(E)}\chi\cdot \delta_T^{-1/2})$ vanishes for all $i$ unless $\chi_1 \cdot \chi_2^\inv \cong 1$ or $\abs{-}^{\pm 1}$ as characters $E^\times \to \Lambda$.
    
    Furthermore, if $\chi = \abs{-}_1^a \otimes \abs{-}_2^b: (z_1,z_2)\mapsto \abs{z_1}^a \cdot \abs{z_2}^b$ with $a,b\in \frac{1}{2}\Z$, then \newline
    for $a+1=b$,
    \[H_c^i(\widetilde{\mc M_{b_2}^{b_1}},\ind_B^{\GL_2(E)}(\abs{-}^a\otimes \abs{-}^{a+1})\cdot \delta_T^{-1/2}) =\begin{cases} 
    \abs{-}^{a+1/2} & i=1,2\\
    0,\quad & \text{else,}
    \end{cases}
    \]
    for $a=b$,
    \[H_c^i(\widetilde{\mc M_{b_2}^{b_1}},\ind_B^{\GL_2(E)}(\abs{-}^a\otimes \abs{-}^a)\cdot \delta_T^{-1/2}) =\begin{cases} 
    \abs{-}^a\delta_T^{1/2} & i=2,3\\
    0,\quad & \text{else,}
    \end{cases}
    \]
    and for $a-1=b$,
    \[H_c^i(\widetilde{\mc M_{b_2}^{b_1}},\ind_B^{\GL_2(E)}(\abs{-}^{a}\otimes \abs{-}^{a-1})\cdot \delta_T^{-1/2}) =\begin{cases} 
    	\abs{-}^{a-1/2}\delta_T & i=3,4\\
    	0,\quad & \text{else,}
    \end{cases}
    \]
    and the cohomology vanishes for any other pair $(a,b)$.
\end{proposition}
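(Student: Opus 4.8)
The plan is to combine the $\GL_2(E)$-equivariant description of the compactly supported cohomology of $\widetilde{\widetilde{\mc M_{b_2}^{b_1}}}$ from Proposition \ref{cs-coh-double-tilde-M-trivial-module} with the homological Hochschild-Serre descent spectral sequence of Proposition \ref{homological-Hochschild-Serre} for the $\underline{\GL_2(E)}$-torsor $\widetilde{\widetilde{\mc M_{b_2}^{b_1}}}\to\widetilde{\mc M_{b_2}^{b_1}}$, exactly as in the proof of Theorem \ref{cs-coh-tilde-M-trivial-module}. By Lemma \ref{equivariant-projection-formula} and Remark \ref{pulling-out-reps}, the pullback of $[\ind_B^{\GL_2(E)}\chi\cdot\delta_T^{-1/2}]$ to $\widetilde{\widetilde{\mc M_{b_2}^{b_1}}}$ has compactly supported cohomology given (degreewise, equivariantly) by $\left(\ind_B^{\GL_2(E)}\chi\cdot\delta_T^{-1/2}\right)\otimes_\Lambda^{\mathbb L} R\Gamma_c(\widetilde{\widetilde{\mc M_{b_2}^{b_1}}},\Lambda)$, and each cohomology group of $R\Gamma_c(\widetilde{\widetilde{\mc M_{b_2}^{b_1}}},\Lambda)$ is a flat $\Lambda$-module of the form $(\text{rep of }\GL_2(E))\otimes\cind_{\SL_2(E)}^{\GL_2(E)}\Lambda$, so the tensor product is underived. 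Thus the $E_2^\times$-page of the spectral sequence is assembled from $H_*\big(\GL_2(E),\,(\ind_B^{\GL_2(E)}\chi\cdot\delta_T^{-1/2})\otimes M_j\big)$ where $M_j\in\{\st,\ \ind_B^{\GL_2(E)}(\abs{-}\otimes 1),\ \abs{\det}\}$ tensored with $\cind_{\SL_2(E)}^{\GL_2(E)}\Lambda$ in cohomological degrees $j=2,3,4$.

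Next I would reduce each of these $\GL_2(E)$-homology computations to a computation over the Borel and then the torus. Applying Lemma \ref{homology-of-tensor-with-parabolic-induction} converts $H_*(\GL_2(E),(\ind_B^{\GL_2(E)}\chi\cdot\delta_T^{-1/2})\otimes M_j)$ into $H_*(B,\ \delta_T\otimes\chi\cdot\delta_T^{-1/2}\otimes\res^{\GL_2(E)}_B M_j)$; factoring $B$-homology through the unipotent radical $N$ (which contributes only in degree $0$, yielding the Jacquet module, since $N$ is a union of pro-$p$ groups and $\Lambda$ is banal) and then through $T$ via Hochschild-Serre for $N\trianglelefteq B$, we are left with the smooth $T$-homology of $\delta_T^{1/2}\cdot\chi\otimes (M_j)_N$, where $(M_j)_N$ is computed by Lemma \ref{Jacquet-of-induced-rep} and the remark following it: $\st_N\cong\delta_T^{-1}$, $(\abs{\det})_N=\abs{\det}$, and $(\ind_B^{\GL_2(E)}(\abs{-}\otimes 1))_N$ is a (possibly non-split) extension of $\abs{-}\otimes 1$ by $(\abs{-}\otimes 1)^w\cdot\delta_T^{-1}=1\otimes(\abs{-}\delta_T^{-1})$. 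The factor of $\cind_{\SL_2(E)}^{\GL_2(E)}\Lambda$ is handled as in Theorem \ref{cs-coh-tilde-M-trivial-module}: restricted to $\SL_2(E)$ it is a filtered colimit of sums of projectives, so it commutes with $\SL_2(E)$-homology, and the remaining $E^\times=\GL_2(E)/\SL_2(E)$-homology of $\cind_1^{E^\times}(\text{finite-dim'l})$ is a single $\Lambda$ in degree $0$, computed via Proposition \ref{homology-E-times} and the coinvariants-of-indicator-functions argument; one must track the twist of the resulting $E_1^\times\times E_2^\times$-character coming from the $\GL_2(E)$-character present in $M_j$, noting (as in Corollary \ref{determinant-character-twists}) that a twist arising from a Tate twist $\Lambda(-d)$ induces $\abs{-}_1^{\mp\,?}$ on the first torus factor while a twist from an invertible sheaf over $[*/\underline{G_{b_1}(E)}]$ induces the expected character on both factors.

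The computation then organizes by the three possible coincidences forced on $\chi|_{T_s}$: writing $\chi=\abs{-}_1^a\otimes\abs{-}_2^b$, the smooth $T_s$-homology of $\delta_T^{1/2}\cdot\chi\otimes(M_j)_N$ is nonzero precisely when $\chi|_{T_s}$ matches one of the characters appearing in the Jacquet module up to the $\delta_T^{1/2}$ twist, which by Lemma \ref{cohomology-PS(1)} (applied to the relevant $M_j$) pins down $a-b\in\{1,0,-1\}$, giving the stated vanishing outside these three lines, and otherwise gives $\Lambda$ in two consecutive homological degrees; the case $a-b=1$ picks out the $j=2$ contribution ($\st$, landing in cohomological degrees $1,2$ after the degree shift from the $R^2$ row and the homological degrees $0,-1$), $a-b=0$ the $j=3$ contribution (degrees $2,3$), and $a-b=-1$ the $j=4$ contribution (degrees $3,4$), where the key nonvanishing input is that the monodromy scalar $u$ of Lemma \ref{cohomology-PS(1)} is invertible for banal $\Lambda$ so the relevant connecting map is an isomorphism and the two-term homology does not further collapse. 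Finally I would read off the $E_1^\times\times E_2^\times$-character in each surviving degree: for $a-b=1$ the Steinberg Jacquet module $\delta_T^{-1}$ combines with $\delta_T^{1/2}\chi$ to leave $\abs{-}^{a+1/2}$ (no $\delta_T$, since the twist came from $\st$ rather than a Tate twist); for $a-b=0$ the middle extension produces $\abs{-}^a\delta_T^{1/2}$; and for $a-b=-1$ the $\abs{\det}$ contribution, whose twist originates from $\Lambda(-2)$, produces $\abs{-}^{a-1/2}\delta_T$; the spectral sequence degenerates at $E_2$ since each column contributes in at most one cohomological degree, so these groups are the answer.

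\textbf{Main obstacle.} The main difficulty I anticipate is not the homological bookkeeping but correctly matching up the \emph{source} of each torus character — distinguishing twists that arise geometrically from a Tate twist $\Lambda(-d)$ (which induce an asymmetric $\abs{-}_1^{\mp d}$ supported on the first factor, since $\underline{E_1^\times}$ acts by scalars $z_1\mapsto \mathrm{diag}(z_1^{-1},z_1^{-1})$ and $\underline{E_2^\times}$ acts trivially on the Banach-Colmez factor) from those arising from an invertible sheaf over $[*/\underline{G_{b_1}(E)}]$ (which behave symmetrically) — and threading the factor-of-automorphy/monodromy twist in the fibration of Lemma \ref{geometry-of-Z} through the $N$-coinvariants so that the scalar $u$ really appears where claimed. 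Getting the exact half-integral exponents and the precise power of $\delta_T$ in each of the three cases hinges entirely on these identifications being done consistently with the conventions fixed in Proposition \ref{Equivariant-iso-cover} and Theorem \ref{cs-coh-tilde-M-trivial-module}.
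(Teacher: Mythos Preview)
Your approach is essentially identical to the paper's: reduce via Lemma \ref{homology-of-tensor-with-parabolic-induction} to $B$-homology, pass through $N$-coinvariants to $T$-homology of the Jacquet modules given by Lemma \ref{Jacquet-of-induced-rep}, invoke Lemma \ref{cohomology-PS(1)} for the vanishing criterion, and track the $E_1^\times\times E_2^\times$-characters via the determinant-twist bookkeeping of Corollary \ref{determinant-character-twists}.

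One concrete slip: you have the matching of $a-b$ to rows reversed. The Steinberg row ($q=2$) carries $\chi_s\cdot\delta_T^{-1/2}|_{T_s}=\chi_s\cdot\abs{-}$, which vanishes unless $\chi_s=\abs{-}^{-1}$, i.e.\ $a-b=-1$ (equivalently $b=a+1$); this is what lands in degrees $1,2$. Symmetrically the $\abs{\det}$ row ($q=4$) is activated by $\chi_s=\abs{-}$, i.e.\ $a-b=+1$, and lands in degrees $3,4$. Your character outputs $\abs{-}^{a+1/2}$, $\abs{-}^a\delta_T^{1/2}$, $\abs{-}^{a-1/2}\delta_T$ are attached to the correct rows, so once you swap which value of $a-b$ triggers which row the statement comes out as written.
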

\begin{remark}
    The characters $\chi$ such that $\chi_1\cdot \chi_2^\inv$ is not $1$ or $\abs{-}^{\pm 1}$ are called \emph{generic} (see \cite[Example~3.9]{hamann2023geometric}). Hamann proves in a much more general setting that $Ri_{b_1,*}$ and $Ri_{b_1,!}$ agree on (normalized) parabolically induced representations for generic characters \cite[Proposition~11.13]{hamann2023geometric}, so that the image of the gluing functors vanish for generic $\chi$. 
\end{remark}
\begin{proof}
	We outline the proof in the following steps: the $\GL_2(E)$-homology of $(\ind_B^{\GL_2(E)}\chi\cdot \delta_T^{-1/2} )\otimes \sigma $ is computed by the $B$-homology of $\sigma$ twisted by the modulus character of $B$ and $\chi$; we apply this to $\sigma =\rho\otimes_\Lambda \cind_{\SL_2(E)}^{\GL_2(E)}\Lambda$. We then reduce to the homology of the standard Borel $B_s\subset \SL_2(E)$ since every module is tensored with $\cind_{\SL_2(E)}^{\GL_2(E)} \Lambda$.
	Then we filter $B_s$ by its unipotent subgroup to reduce to the torus homology of the (non-normalized) Jacquet module, which vanishes except for non-generic characters $\chi$ by the previous lemma.
	
    Lemma \ref{homology-of-tensor-with-parabolic-induction} reduces the computation to the smooth $B$-homology of the modules from Theorem \ref{cs-coh-double-tilde-M-trivial-module} restricted to the Borel subgroup and twisted by $\chi$ and the modulus character $\delta_T$. Let $B_s$ denote the standard Borel subgroup of $\SL_2(E)$, so that there is a short exact sequence $1\to B_s \to B\xrightarrow{\det}E^\times \to 1$. We write $\chi_s:=\chi|_{B_s}=\chi_1\cdot \chi_2^\inv $ for the character induced by restriction to $B_s$ and identifying the split 1-torus in $B_s$ via the first coordinate. Then we want to compute the smooth group homology 
    \[H_{-i}(E^\times, H_{-j}(B_s, \delta_T^{1/2}\otimes \chi_s \otimes \res_{B_s}^{\GL_2(E)}\rho)\otimes \cind_1^{E^\times}\Lambda)\]
    for $\rho = \st,\ \ind_B^{\GL_2(E)}(\abs{-}\otimes 1),\ \abs{\det}$ (corresponding to the compactly supported cohomology of $\widetilde{\widetilde{\mc M_{b_2}^{b_1}}}$ in degrees 2,3,4 respectively), and we perform these computations in parallel. The underlying $\Lambda$-modules are formally isomorphic to $H_{-j}(B_s,\delta_T^{1/2}\otimes \chi_s\otimes \res_{B_s}^{\GL_2(E)}\rho)$ when $i=0$ and vanish whenever $i\neq 0$, but we will still need to determine the induced determinant characters on $B_s$-homology groups in order to understand the correct $G_{b_2}(E)$-actions. We will see shortly that this corresponds to the action of the first toral component on the unnormalized Jacquet module.
    
    There is no contribution to $B_s$-homology from the unipotent subgroup outside degree 0, so there are further isomorphisms \[H_{-j}(B_s,\delta_T^{1/2}\otimes \chi_s \otimes \res_{B_s}^{\GL_2(E)}\rho)\cong H_{-j}(T_s, \left (\delta_T^{1/2}\otimes \chi_s\otimes \res_{B_s}^{\GL_2(E)}\rho \right )_N)\] as $E^\times$-modules. Note that for any character $\chi: B \to \Aut(\Lambda)$ that is trivial on $N$, there is an equivariant isomorphism $(\chi \otimes \rho)_N \cong \chi|_T \otimes (\rho)_N$ as $T$-representations.  Applying Lemma \ref{Jacquet-of-induced-rep}, we see that $(\delta_T^{ 1/2}\cdot \chi_s\otimes \res_{B_s}^{\GL_2(E)}\rho)_N= \chi_s \cdot \delta_T^{ 1/2 -1}, \chi_s\cdot \delta_T^{ 1/2}\cdot (\ind_{B}^{\GL_2(E)}\delta_T^{-1/2})_N, \chi_s\cdot \delta_T^{ 1/2}$, respectively (note the twist by $\delta_T^{-1}$ in the first module arising from the Jacquet module of the Steinberg representation). Now fix an isomorphism $T_s\cong E^\times$ via the first coordinate, so that the restriction to $T_s$ can be identified with the representations $\chi_s\cdot |-|, \chi_s \cdot |-|^{-1} \cdot (\ind_B^{\GL_2(E)} \delta_T^{-1/2})_N, \chi_s\cdot |-|^{-1}$, respectively. Recall that $(\ind_B^{\GL_2(E)} \delta_T^{-1/2})_N$ is a nonsplit extension of $\delta_T^{-1/2}$ by itself. 
    
    Applying Proposition \ref{homology-E-times} computes the smooth $T_s$-homology in terms of $\mc O_{E}^\times,\pi^{\Z}\subset T_s$-(co)invariants, giving the second page of the compactly-supported Hochschild-Serre spectral sequence $E_2^{p,q}= H_{-p}(B, \chi \otimes H_c^q(\widetilde{\widetilde{\mc M_{b_2}^{b_1}}},\Lambda)) \implies H_c^{p+q}(\widetilde{\mc M_{b_2}^{b_1}}, \ind_B^{\GL_2(E)} \chi)$:

\begin{center}
\begin{tikzpicture}
  \matrix (m) [matrix of math nodes,
    nodes in empty cells,nodes={minimum width=8ex,
    minimum height=5ex,outer sep=-5pt},
    column sep=0ex,row sep=1ex]{
               & 0 & ((\chi_s\cdot |-|^{-1})_{\mc O_E^\times})^{\pi^{\Z}} & (\chi_s\cdot |-|^{-1})_{E^\times} & 4\\  
               & 0 & ((\chi_s\cdot \abs{-}^{-1}\cdot (\ind_{B}^{\GL_2(E)}\delta_T^{-1/2})_N)_{\mc O_E^\times})^{\pi^{\Z}} & ((\chi_s\cdot \abs{-}^{-1}\cdot (\ind_{B}^{\GL_2(E)}\delta_T^{-1/2})_N)_{E^\times} & 3\\
                 &   0  & ((\chi_s\cdot |-|)_{\mc O_E^\times})^{\pi^{\Z}} & (\chi_s\cdot |-|)_{E^\times} & 2 \\
                &0 & 0 & 0 & 1\\
               & 0& 0  & 0 & \quad q=0\\
    \strut  &  -2  &  - 1  & p=0 & \strut \\};
\draw[thick] (m-1-5.west) -- (m-6-5.west) ;
\draw[thick] (m-6-1.north) -- (m-6-5.north) ;
\end{tikzpicture}
\end{center}
By Lemma \ref{cohomology-PS(1)}, we see that the $q=3$ row vanishes unless $\chi_s$ is trivial, in which case each of the terms are free of rank $1$. Likewise, the $q=2$ resp. $q=4$ row vanishes unless $\chi_s = \abs{-}^{-1}$ resp. $\chi_s =\abs{-}$, and in these cases the nonvanishing terms are free of rank $1$.

Furthermore, the spectral sequence exists in the category of smooth $G_{b_2}(E)$-modules, whose action is identified by the induced determinant actions as in the proof of Corollary \ref{determinant-character-twists}. The induced determinant characters $\chi_{\det}$ are well-defined on lifts of the determinant, so we only need to analyze the  action of $\begin{bmatrix}
    z & 0 \\ 0 & 1
\end{bmatrix}$
on $H_j(B_s, (\delta_T^{1/2}\otimes \chi_s \otimes \res_{B_s}^{\GL_2(E)}\rho))$.
Then in each degree, the $E^\times$-coinvariants are identified with those functions supported on $1\in E^\times$, such that $z\cdot f_1=\chi_{\det}(z)f_{z_1^\inv}$. For $q=2$, we have $(\delta_T^{1/2}\otimes \chi \otimes \st)_N \cong \delta_T^{1/2}\otimes \chi \otimes \delta_T^{-1}$, which induces $\chi_{\det}= \abs{-}^{a+1/2}$. For $q=3$, we have $(\delta_T^{1/2}\otimes \chi \otimes \abs{-}^{1/2}\ind_B^{\GL_2(E)}(\delta_T^{-1/2}))_N$ 
inducing $\chi_{\det}=\abs{-}^{a+1/2}$. 
For $q=4$, we have $(\delta_T^{1/2}\otimes \chi \otimes \abs{\det})_N$ inducing $\chi_{\det}=\abs{-}^{a+1/2}$.

Recall that the representations $\rho = \st, \ind_B^{G}(\abs{-}\otimes 1),\abs{\det}$ arise from the Tate twists $\Lambda, \Lambda(-1), \Lambda(-2)$, respectively. 
Then as in the proof of Corollary \ref{determinant-character-twists},
we compute for $z_1\in E_1^\times$ and $z_2\in E_2^\times$ in each nonvanishing degree:
\begin{align*}
    q=2 :\quad & z_1\cdot f_1= f_{z_1} &=\abs{z_1}^{a+1/2} f_1,\\
    & z_2\cdot f_1= f_{z_2^{}} &= \abs{z_2}^{a+1/2}f_1,\\
    q=3 :\quad & z_1\cdot f_1= \abs{z_1}^{-1}f_{z_1} =\abs{z_1}^{a+1/2-1} f_1 &= \abs{z_1}^{a-1/2} f_1,\\
    & z_2\cdot f_1= f_{z_2^{}} = \abs{z_2}^{a+1/2}f_1 &=\abs{z_2}^{a+1/2}f_1,\\
    q=4 :\quad & z_1\cdot f_1= \abs{z_1}^{-2}f_{z_1}=\abs{z_1}^{a+1/2-2} f_1 &=\abs{z_1}^{a-1/2-1} f_1,\\
    & z_2\cdot f_1= f_{z_2^{}}  = \abs{z_2}^{a+1/2}f_1 &=\abs{z_2}^{a-1/2+1}f_1,
\end{align*}
inducing the characters $\abs{-}^{a+1/2}, \abs{-}^a \delta_T^{1/2},$ and $\abs{-}^{a-1/2}\delta_T$, respectively. 
\end{proof}
\begin{corollary}\label{gluing-functors-for-unram-principal-series}
    Let $m,n\in \frac{1}{2}\Z$. Then 
    $i_{b_2}^* Ri_{b_1,*} \ind_B^{\GL_2(E)}\left(\abs{\det}^m\otimes \delta_T^{n}\otimes \delta_T^{-1/2}\right)$ vanishes unless $n\in \set{-1/2,0,1/2},$ where we have
    \begin{align*}
    H^k(i_{b_2}^* Ri_{b_1,*} \ind_B^{\GL_2(E)}\left(\abs{\det}^m\otimes \delta_T^{-1/2}\otimes \delta_T^{-1/2}\right)) &=\begin{cases} 
    \abs{-}^m\delta_T & k=2,3\\
    0,\quad & \text{else,}
    \end{cases}\\
    H^k(i_{b_2}^* Ri_{b_1,*} \ind_B^{\GL_2(E)}\left(\abs{\det}^m \otimes \delta_T^{-1/2}\right)) &=\begin{cases} 
    \abs{-}^m\delta_T^{1/2} & k=1,2\\
    0,\quad & \text{else,}
    \end{cases}\\
    H^k(i_{b_2}^* Ri_{b_1,*} \ind_B^{\GL_2(E)}\left(\abs{\det}^m\otimes \delta_T^{1/2}\otimes \delta_T^{-1/2}\right)) &=\begin{cases} 
    \abs{-}^m & k=0,1\\
    0,\quad & \text{else.}
    \end{cases}
    \end{align*}
\end{corollary}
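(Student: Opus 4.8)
The plan is to derive the corollary from Proposition \ref{non-LS-type} by a combination of Verdier duality and a change of variables in the character. First I would rewrite the principal series appearing here in the normalization used in Proposition \ref{non-LS-type}: setting $\chi = \chi_1 \otimes \chi_2$ with $\chi_1 \cdot \chi_2^{-1}$ determined by $n$, the representation $\ind_B^{\GL_2(E)}(\abs{\det}^m \otimes \delta_T^n \otimes \delta_T^{-1/2})$ is $\ind_B^{\GL_2(E)}(\abs{-}^a \otimes \abs{-}^b)\cdot \delta_T^{-1/2}$ with $(a,b)$ such that $a - b = -2n$ (recall $\delta_T(e_1,e_2) = \abs{e_2/e_1}$) and $a+b$ is pinned down by $m$. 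So $n = 1/2$ corresponds to $a - b = -1$, i.e. the case $b = a+1$; $n = 0$ to $a = b$; and $n = -1/2$ to $b = a - 1$. For all other $n$, the ratio $\chi_1\cdot \chi_2^{-1}$ is neither $1$ nor $\abs{-}^{\pm 1}$, so the first assertion of Proposition \ref{non-LS-type} gives the stated vanishing.

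Next I would pass from the compactly supported cohomology computed in Proposition \ref{non-LS-type} to the image of the gluing functor $i_{b_2}^* R i_{b_1,*}$ via Verdier duality. The key facts are: (i) $\ind_B^{\GL_2(E)}\chi\cdot\delta_T^{-1/2}$ is an admissible (finite-length) smooth representation, so $R i_{b_1,*}[\sigma]$ is a ULA sheaf on $\Bun_2^{\leq b_2}$ and the discussion in §3.2 applies; (ii) the space $\widetilde{\mc M_{b_2}^{b_1}}$ is cohomologically smooth over $*$ of dimension $\langle 2\rho,\nu_{b_2}\rangle = 2$ with dualizing sheaf $\Lambda(2)[4]$, exactly as used in the proof of Theorem \ref{Purity-for-trivial-rep}; and (iii) Proposition \ref{action-on-cohomology} together with Theorem \ref{Smooth-cohomology-of-tower} identifies $(i_{b_2}^* R i_{b_1,*}[\sigma])_\Lambda$ with the smooth cohomology $R\Gamma(\widetilde{\mc M_{b_2}^{b_1}}, g_{b_1}^* f_{b_1}^* [\sigma])$, equivariantly for $G_{b_2}(E)$. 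Applying Poincaré duality on $\widetilde{\mc M_{b_2}^{b_1}}$ turns this cohomology into the $\Lambda$-linear dual of $R\Gamma_c(\widetilde{\mc M_{b_2}^{b_1}}, -)$ of the dual sheaf, twisted by $\Lambda(2)[4]$. Concretely, if $H_c^i$ is concentrated in degrees $i$ as in Proposition \ref{non-LS-type}, then $H^k(i_{b_2}^* R i_{b_1,*}[\sigma])$ is the dual of $H_c^{4-k}$, twisted by $\Lambda(2)$; this converts degrees $\{1,2\}$ to $\{2,3\}$, degrees $\{2,3\}$ to $\{1,2\}$, and degrees $\{3,4\}$ to $\{0,1\}$, matching the three cases in the corollary. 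The Tate twist by $\Lambda(2)$ and the dualization also account for the passage between the torus characters: for instance, the character $\abs{-}^{a+1/2}$ in degree $1,2$ of $H_c$ (case $b = a+1$, $n = 1/2$) dualizes and twists — using that the determinant of $\begin{bmatrix}\pi & 0\\ 0 & \pi\end{bmatrix}$ acts on the relevant factor by inverse geometric Frobenius and hence $\Lambda(2)$ contributes $\abs{\det}^{-1}$, exactly as in the proof of Theorem \ref{Purity-for-trivial-rep} — to give $\abs{-}^m$ in degrees $0,1$; similarly the other two cases yield $\abs{-}^m\delta_T^{1/2}$ and $\abs{-}^m\delta_T$. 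Here $m$ enters because the overall twist by $\abs{\det}^m$ on $\sigma$ contributes $\abs{-}^m$ to both $G_{b_2}(E)$-coordinates via the analogue of Corollary \ref{determinant-character-twists}, and is unaffected by the $\SL_2(E)$-level computations.

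I would organize the argument as: (1) reduce the normalization, translating $(\abs{\det}^m\otimes\delta_T^n\otimes\delta_T^{-1/2})$ into the $(a,b)$-parametrization and isolating $n\in\{-1/2,0,1/2\}$; (2) invoke Proposition \ref{non-LS-type} for the three surviving cases and the vanishing otherwise; (3) apply Poincaré–Verdier duality on the cohomologically smooth space $\widetilde{\mc M_{b_2}^{b_1}}\to *$ as in Theorem \ref{Purity-for-trivial-rep}, converting $R\Gamma_c$ into $R\Gamma$ of the dual and hence into $i_{b_2}^* R i_{b_1,*}[\sigma]$; (4) track the cohomological degrees through $k \mapsto 4-k$ and the characters through dualization plus the $\Lambda(2)$-twist, using the same Frobenius-equivariance bookkeeping as in the earlier proofs, to read off the three displayed answers. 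The main obstacle I anticipate is step (4): keeping the $G_{b_2}(E) = E_1^\times\times E_2^\times$-equivariant structure straight through duality, in particular correctly distinguishing the character contributions that arise from Tate twists (which transform as powers of $\abs{\det}$ on the geometric $\GL_2(E)$-action, hence $\abs{-}_1^{\mp 2}$) from those arising from the sheaf $[\abs{\det}^m]$ on $[*/\underline{G_{b_1}(E)}]$ (which transform diagonally as $\abs{-}^m\otimes\abs{-}^m$). This is precisely the subtlety flagged in the remark after Corollary \ref{determinant-character-twists}, and it is what makes the $\delta_T$-powers land asymmetrically across the three cases; everything else is a bookkeeping exercise already carried out in Theorems \ref{cs-coh-tilde-M-trivial-module} and \ref{Purity-for-trivial-rep}.
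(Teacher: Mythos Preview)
Your proposal is essentially the paper's own approach: invoke Verdier/Poincar\'e duality on the cohomologically smooth space $\widetilde{\mc M_{b_2}^{b_1}}$ (dualizing sheaf $\Lambda(2)[4]$, as in Theorem~\ref{Purity-for-trivial-rep}), reduce to $m=0$ by pulling out the determinant twist, and then read off the answer from Proposition~\ref{non-LS-type} after the degree shift $k\mapsto 4-k$ and re-identifying the torus characters. The paper carries this out by directly recomputing the induced $E_1^\times\times E_2^\times$-characters on the $E_2$-page with the extra $\Lambda(2)$-twist inserted, exactly the bookkeeping you flag in step~(4).

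One point to tighten: in your ``for instance'' you match $n=1/2$ to the case $b=a+1$ (Prop.~\ref{non-LS-type} degrees $1,2$) and then claim this lands in corollary degrees $0,1$. But $4-k$ sends $\{1,2\}$ to $\{2,3\}$, as you correctly wrote two sentences earlier. The resolution is the step you announced but did not implement: Poincar\'e duality requires passing to the \emph{dual} sheaf, and the contragredient of $\ind_B^{\GL_2(E)}(\chi\cdot\delta_T^{-1/2})$ is $\ind_B^{\GL_2(E)}(\chi^{-1}\cdot\delta_T^{-1/2})$, which swaps $n\leftrightarrow -n$. So $n=1/2$ on the corollary side corresponds to looking up $n=-1/2$ (case $b=a-1$, degrees $3,4$) in Proposition~\ref{non-LS-type}, and then $4-k$ gives $\{0,1\}$ as desired. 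The paper handles this implicitly by matching $n=-1/2,0,1/2$ to $a=-1/2,0,1/2$ in the respective cases of the proposition, which is exactly this contragredient swap; once you make that explicit, your argument goes through verbatim.
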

\begin{proof}
    This follows from Verdier duality, as in the proof of Theorem \ref{Purity-for-trivial-rep}. By the same trick, we can commute characters $\abs{\det}^m$, so that the computation reduces to $m=0$. Similarly, the dualizing sheaf of $\widetilde{\mc M_{b_2}^{b_1}}$ is $\Lambda(2)[4]$, so that we are reduced to computing \[H_c^{4-k}(\widetilde{\mc M_{b_2}^{b_1}},\Lambda(2)\otimes [\ind_B^{\GL_2(E)}\delta_T^{n-1/2}])^\vee.\]
    Matching $n={-1/2,0,1/2}$ to the three cases in the previous proposition, we see that $m=0$ corresponds to $a=-1/2,0,1/2$, respectively. Note that twisting by $\Lambda(2)$ induces a determinant twist $\abs{\det}^\inv$ on $\GL_2(E)$-reps, so that the $B_s$-homology groups are unchanged. Then we only have to redo the identification of the induced $E_1^\times \times E_2^\times$ characters. Now, the twisted determinant reps are all $\chi_{\det} = |-|^{a+1/2-1}=|-|^{a-1/2}$, and the total Tate twists in each case correspond to $\Lambda(2),\Lambda(1),\Lambda$, respectively. Thus, we have in each non-vanishing degree
    \begin{align*}
    q=2 :\quad & z_1\cdot f_1= \abs{z_1}^{2}f_{z_1}=\abs{z_1}^{a-1/2+2} f_1 &=\abs{z_1}^{a+1/2+1} f_1,\\
    & z_2\cdot f_1= f_{z_2^{}}  = \abs{z_2}^{a-1/2}f_1 &=\abs{z_2}^{a+1/2-1}f_1,\\
    q=3 :\quad & z_1\cdot f_1= \abs{z_1}^{1}f_{z_1} =\abs{z_1}^{a-1/2+1} f_1 &= \abs{z_1}^{a+1/2} f_1,\\
    & z_2\cdot f_1= f_{z_2^{}} = \abs{z_2}^{a-1/2}f_1 &=\abs{z_2}^{a-1/2}f_1,\\
    q=4 :\quad & z_1\cdot f_1= f_{z_1} &=\abs{z_1}^{a-1/2} f_1,\\
    & z_2\cdot f_1= f_{z_2^{}} &= \abs{z_2}^{a-1/2}f_1,
\end{align*}
so that setting $a=-1/2,0,1/2$ induces the characters $\delta_T^\inv, \delta_T^{-1/2},\Lambda$, respectively, dualizing to $\delta_T,\delta_T^{1/2}$, and $\Lambda$. 
\end{proof}

\subsection{Vanishing of cuspidal representations, Kirillov model, and Iwahori-fixed vectors}
We compute directly that the $*$-inclusion of cuspidal representations vanish when restricted to a higher stratum. There is a formal argument in much greater generality following \cite{fargues2021geometrization} using properties of excursion operators and compatibility with parabolic inductions. This vanishing property is sometimes referred to as the \emph{cleanliness} of cuspidal parameters.

Let $M\subset B \subset \GL_2(E)$ be the subgroup of matrices $\begin{bmatrix}t&n\\ &1 \end{bmatrix}$, called the \textit{mirabolic subgroup}.
\begin{lemma}[{\cite[Section~36-37]{Bushnell2006}}]
Let $\vartheta$ be a fixed non-trivial character of the group $N$ of upper triangular unipotent matrices in $\GL_2(E)$. Let $\sigma$ be a smooth irreducible cuspidal representation of $\GL_2(E)$.
\begin{enumerate}
    \item There is an $M$-isomorphism $\res^{\GL_2(E)}_M \sigma \xrightarrow{\sim} \cind_N^M \vartheta$.
    \item There is a unique homomorphism $\sigma_{\mc K}: \GL_2(E) \to \Aut_\Lambda (\cind_N^M \vartheta)$ such that
    \begin{enumerate}
        \item $\sigma_{\mc K}|_M$ restricts to the natural action of $M$ on $\cind_N^M \vartheta$, and
        \item $\sigma_{\mc K}\cong \sigma$.
    \end{enumerate}
\end{enumerate}
The representation $\sigma_{\mc K}$ is called the \emph{Kirillov model of $\sigma$}.
\end{lemma}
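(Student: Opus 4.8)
The plan is to recognise this as the standard construction of the Kirillov model for $\GL_2$ and to verify that every ingredient is available over our banal coefficient ring $\Lambda$, so that the statement can ultimately be cited rather than reproved. For part (1) I would first record the two structural facts about the mirabolic group $M \cong E^\times \ltimes N$: that $\cind_N^M\vartheta$ is irreducible as an $M$-representation with $\mathrm{End}_M(\cind_N^M\vartheta)=\Lambda$ (immediate from Mackey theory, since $E^\times$ permutes the nontrivial smooth characters of $N$ simply transitively), and that for any irreducible smooth $\GL_2(E)$-representation $\sigma$ the restriction $\res^{\GL_2(E)}_M\sigma$ sits in a Bernstein--Zelevinsky derivative filtration
\[
0 \to (\cind_N^M\vartheta)^{\oplus m} \to \res^{\GL_2(E)}_M\sigma \to \widetilde{\sigma_N} \to 0,
\]
where $\widetilde{\sigma_N}$ is a twist of the inflation to $M$ along $M/N\cong E^\times$ of the Jacquet module $\sigma_N$, and $m=\dim\Hom_N(\sigma,\vartheta)$. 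Then I would invoke cuspidality of $\sigma$ twice: $\sigma_N=0$ collapses the filtration to $\res^{\GL_2(E)}_M\sigma\cong(\cind_N^M\vartheta)^{\oplus m}$, while $\sigma$ infinite-dimensional forces $\sigma$ generic, so $m\ge 1$; uniqueness of the Whittaker model for $\GL_2(E)$ gives $m=1$, which is (1).

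For part (2), existence of $\sigma_{\mc K}$: I would realise $\sigma$ on its Whittaker model $\mc W(\sigma,\vartheta)\subset \ind_N^{\GL_2(E)}\vartheta$ (nonzero by genericity, unique up to scalar by multiplicity one) and consider the $M$-equivariant restriction map $W\mapsto W|_M$. Cuspidality guarantees each $W|_M$ is compactly supported modulo $N$, so the image lies in $\cind_N^M\vartheta$, and — again using cuspidality to kill the kernel — the map is injective; hence it is an $M$-isomorphism onto $\cind_N^M\vartheta$ matching the tautological $M$-action, and I would define $\sigma_{\mc K}$ by transporting the $\GL_2(E)$-action along it, so that (a) and (b) hold by construction. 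Uniqueness is then formal: if $\rho_1,\rho_2\colon\GL_2(E)\to\Aut_\Lambda(\cind_N^M\vartheta)$ both restrict to the standard $M$-action and are both isomorphic to $\sigma$, then any $\GL_2(E)$-isomorphism $T\colon\rho_1\xrightarrow{\sim}\rho_2$ is in particular an element of $\mathrm{End}_M(\cind_N^M\vartheta)=\Lambda$, hence a scalar, so $\rho_2=T\rho_1T^{-1}=\rho_1$.

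The step I expect to be the main obstacle — and the only genuinely nonformal one — is the Kirillov/Bernstein--Zelevinsky analysis of $\res^{\GL_2(E)}_M\sigma$: concretely, that for cuspidal $\sigma$ the Kirillov restriction $W\mapsto W|_M$ is injective with image exactly $\cind_N^M\vartheta$, which rests on Jacquet's asymptotic expansion of Whittaker functions near the cusp and the vanishing of the exponents in the cuspidal case. Together with existence and uniqueness of Whittaker models, all of this is \cite[Sections~36--37]{Bushnell2006} over a field of characteristic zero, and for a banal $\Lambda$ (which is guaranteed by our running hypotheses on the torsion of $\Lambda$, cf. the Appendix) the theory of derivatives and Kirillov models carries over by the work of Vign\'eras. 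Accordingly I would present the lemma as a citation and use it only in the form stated.
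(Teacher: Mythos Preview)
Your proposal is correct and matches the paper's approach: the paper gives no proof whatsoever for this lemma, simply citing \cite[Sections~36--37]{Bushnell2006} in the statement header and using the Kirillov model as a black box thereafter. Your sketch of the underlying argument is accurate and more informative than what the paper provides, but since you conclude by presenting the lemma as a citation, you land in exactly the same place.
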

For any character $\chi: E^\times \to \Lambda$, there is a twisted indicator function 
    \begin{align*}
    f_{\chi,k}(x):=
    \begin{cases}
        \chi(x),\quad x\in \pi^k \mc O_E^{\times},\\ 0,\quad \text{else}.
    \end{cases}
    \end{align*}
Note that such functions generate $\cind_{1}^{E^\times}\Lambda$ as a $\Lambda$-module when $\chi$ and $k$ are allowed to vary.
\begin{lemma}[{\cite[\hphantom~37.1.1,~Theorem~37.3]{Bushnell2006}}]
    Let $\sigma_{\mc K}$ be the Kirollov model of a smooth irreducible cuspidal representation $\sigma$. Then the action of $\GL_2(E)$ on $\sigma_{\mc K}\cong_{\Lambda} \cind_{1}^{E^\times}\Lambda$ is partially described by
    \begin{align*}
        \begin{bmatrix}
            t & 0 \\ 0 & 1
        \end{bmatrix} \cdot f: x \mapsto f(tx), \quad
        \begin{bmatrix}
            1 & n \\ 0 & 1
        \end{bmatrix} \cdot f : x \mapsto \vartheta(xn)f(x), \quad
        \begin{bmatrix}
            z & 0 \\ 0 & z
        \end{bmatrix} \cdot f = \omega_{\sigma}(z)f,
    \end{align*}
    where $\omega_{\sigma}$ is the central character of $\sigma$. 
\end{lemma}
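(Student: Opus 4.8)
The plan is to unwind the construction of the Kirillov model supplied by the preceding lemma and then carry out a short matrix computation; granting the existence of $\sigma_{\mc K}$, the statement is essentially bookkeeping. Recall that $\sigma_{\mc K}$ is produced from the Whittaker model $\mc W(\sigma,\vartheta)\subset\cind_N^{\GL_2(E)}\vartheta$ (unique by multiplicity one for Whittaker models) by restricting Whittaker functions to the mirabolic subgroup $M$, the restriction landing in $\cind_N^M\vartheta$ precisely because $\sigma$ is cuspidal, so that $\res^{\GL_2(E)}_M\sigma\cong\cind_N^M\vartheta$ on the nose with no further subquotient.

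First I would make explicit the $\Lambda$-linear identification underlying the statement. Since $M=N\cdot T'$ with $T'=\{\operatorname{diag}(t,1):t\in E^\times\}$ and $N\cap T'=\{1\}$, every $F\in\cind_N^M\vartheta$ is determined by the smooth compactly supported function $f\colon E^\times\to\Lambda$ defined by $f(x):=F(\operatorname{diag}(x,1))$, and $F\mapsto f$ is a $\Lambda$-linear isomorphism $\cind_N^M\vartheta\xrightarrow{\sim}\cind_1^{E^\times}\Lambda$. Next I would transport the action of $M$ through this isomorphism, using the defining property that $\sigma_{\mc K}|_M$ is the tautological right-translation action on $\cind_N^M\vartheta$, so that $(\sigma_{\mc K}(m)F)(x)=F(\operatorname{diag}(x,1)\,m)$. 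For $m=\operatorname{diag}(t,1)$ the identity $\operatorname{diag}(x,1)\operatorname{diag}(t,1)=\operatorname{diag}(tx,1)$ gives $x\mapsto f(tx)$; for $m=\left(\begin{smallmatrix}1&n\\0&1\end{smallmatrix}\right)$ the identity $\operatorname{diag}(x,1)\left(\begin{smallmatrix}1&n\\0&1\end{smallmatrix}\right)=\left(\begin{smallmatrix}1&xn\\0&1\end{smallmatrix}\right)\operatorname{diag}(x,1)$ together with the transformation rule $F(n'm)=\vartheta(n')F(m)$ for $n'\in N$ gives $x\mapsto\vartheta(xn)f(x)$. These are the first two displayed formulas.

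For the scalar matrix $zI$, which lies in $M$ only when $z=1$ and so records genuinely new data, I would argue as follows: $zI$ is central in $\GL_2(E)$ and $\sigma_{\mc K}\cong\sigma$ is irreducible, so by Schur's lemma $\sigma_{\mc K}(zI)$ acts by a scalar, and transporting the isomorphism $\sigma_{\mc K}\cong\sigma$ identifies this scalar with $\omega_\sigma(z)$. (For $\Lambda=\overline{\F_\ell}$ this is immediate; for $\Lambda=\Z/\ell^i$ one invokes instead that a cuspidal $\sigma$ has a well-defined central character, which is transported along the isomorphism.)

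The three steps above are entirely formal. The real content is imported from the preceding lemma and from Bushnell--Henniart, Sections~36--37 — multiplicity one for Whittaker models of $\GL_2(E)$, the genericity of cuspidal representations, and the vanishing of the Jacquet module of a cuspidal $\sigma$ (which is what makes the restriction to $M$ equal to $\cind_N^M\vartheta$ exactly). I would treat those as black boxes; the only potential obstacle within the present lemma is being careful with the left/right translation conventions so that the displayed formulas come out with the correct variance, which the computation above confirms.
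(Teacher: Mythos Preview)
The paper does not give its own proof of this lemma; it is simply quoted from Bushnell--Henniart \cite[\S 37]{Bushnell2006} as a cited fact. Your argument is correct and is exactly the standard unwinding of the Kirillov model via the Whittaker realization that one finds in that reference, so there is nothing to compare.
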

\begin{lemma}
    Let $\sigma$ be a smooth irreducible cuspidal representation of $\GL_2(E)$. Then $H_i(\GL_2(E), \sigma)=0$ for every $i$. Additionally, $H_i(\SL_2(E),\sigma)=0$ for every $i$.
\end{lemma}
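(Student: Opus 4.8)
The plan is to reduce everything to a computation of coinvariants using the Kirillov model, exploiting that a cuspidal representation restricted to the mirabolic subgroup $M$ is $\cind_N^M\vartheta$, which is ``induced from a non-invariant character'' and hence homologically trivial for the relevant groups.

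\textbf{Step 1: Reduce $\GL_2(E)$-homology to $M$-homology.} By the previous lemma, $\res^{\GL_2(E)}_M\sigma\cong \cind_N^M\vartheta$. I would first argue that $H_i(\GL_2(E),\sigma)$ vanishes iff $H_i(M,\sigma)$ does — actually, the cleaner route is to go straight for $H_i(\GL_2(E),\sigma)$ via the Kirillov model. In the Kirillov model, $\sigma_{\mc K}\cong_\Lambda \cind_1^{E^\times}\Lambda$ with $\begin{bmatrix} 1 & n \\ 0 & 1\end{bmatrix}$ acting by $f(x)\mapsto \vartheta(xn)f(x)$. So I would compute the $N$-coinvariants $(\sigma)_N$: a function $f$ supported at $x=\pi^k u$ satisfies $\begin{bmatrix}1 & n\\ 0 & 1\end{bmatrix}\cdot f - f = (\vartheta(\pi^k u n)-1)f$, and since $\vartheta$ is nontrivial one can choose $n$ so that $\vartheta(\pi^k u n)\neq 1$, making $(\vartheta(\pi^k u n)-1)$ a unit in $\Lambda$ (here the banal-torsion hypothesis on $\Lambda$ is used to ensure nontrivial roots of unity are invertible, i.e. that $\Lambda$ contains enough roots of unity and that $\zeta - 1$ is a unit for $\zeta$ a nontrivial $p$-power... in fact $\vartheta$ has values in $p$-power roots of unity and $p^{-1}\in\Lambda$, and one needs $\zeta-1\in\Lambda^\times$; this holds since $1+x+\dots+x^{p-1}$ evaluated suitably is a unit when $p\in\Lambda^\times$). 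Hence $(\sigma)_N = 0$. More strongly, since $N$ is a union of pro-$p$ (in fact, increasing union of compact) subgroups and $\Lambda$-coefficients are banal, $H_i(N,\res_N\sigma)=0$ for \emph{all} $i$: the higher homology can be computed as a colimit over compact open subgroups of $N$, and on each the representation decomposes into isotypic pieces for nontrivial characters with no invariants/coinvariants.

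\textbf{Step 2: Run the Hochschild--Serre spectral sequence.} For $\GL_2(E)$: there is no single normal subgroup giving $N$ as a quotient issue, so instead I use $1\to N\to M\to E^\times\to 1$ and then relate $M$ to $\GL_2(E)$. From $H_*(N,\sigma)=0$ and the Hochschild--Serre spectral sequence $H_{-p}(E^\times, H_{-q}(N,\sigma))\Rightarrow H_{-p-q}(M,\sigma)$, we get $H_*(M,\sigma)=0$. To pass from $M$ to $\GL_2(E)$, recall $\GL_2(E)=M\cdot Z\cdot\SL_2(E)$-type decompositions are not quite subgroup extensions, so the cleanest argument is: $\sigma$ restricted to $M$ being induced means $\sigma$ is ``cuspidal enough'' that its Jacquet module along $B$ vanishes, $\sigma_N=0$, and then $H_*(B,\sigma) = H_*(T, \sigma_N) = 0$; finally apply the argument of Lemma~\ref{homology-of-tensor-with-parabolic-induction}-style reasoning or simply that for $\GL_2(E)$ one has $H_*(\GL_2(E),\sigma) = H_*(\GL_2(E), \sigma)$ and use that cuspidal representations are projective-injective in the relevant Bernstein block, or — most elementarily — invoke that $\sigma$ has no Iwahori-fixed vectors and use the Schneider--Stuhler resolution recalled in the appendix: $H_*(\SL_2(E),\sigma)$ is computed by a complex built from $\sigma^{K}$ for parahoric $K$, and all these vanish for cuspidal $\sigma$. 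This simultaneously handles the $\SL_2(E)$ statement. For $\GL_2(E)$, combine with $H_*(Z,-)$ via $1\to \SL_2(E)Z \to \GL_2(E) \to \Z \to 1$-flavored sequences, noting $Z\cong E^\times$ has homological dimension $1$ and contributes only a possible shift; the vanishing propagates.

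\textbf{Step 3: Assemble.} The main obstacle, and where I would spend the most care, is the passage from the vanishing of $H_*(N,\sigma)$ (clear) and $H_*(M,\sigma)$ (clear) to $H_*(\GL_2(E),\sigma)=0$ and $H_*(\SL_2(E),\sigma)=0$ — the subtlety is that $\GL_2(E)$ and $\SL_2(E)$ are not built from $M$ and abelian quotients. I believe the right tool is the Schneider--Stuhler projective resolution of the trivial $\SL_2(E)$-representation recalled in the appendix, which realizes smooth $\SL_2(E)$-homology as the homology of an explicit finite-length complex whose terms are sums of $\cind_{K}^{\SL_2(E)}(\text{something})$ over parahorics $K$; tensoring with $\sigma$ and taking homology reduces to $\sigma^{K}$-type invariants by Frobenius reciprocity, and for cuspidal $\sigma$ all parahoric-fixed vector spaces vanish (no Iwahori-fixed vectors, so no fixed vectors under any parahoric), giving $H_*(\SL_2(E),\sigma)=0$. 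Then the extension $1\to\SL_2(E)\cdot Z\to\GL_2(E)\to\Z\to 1$ together with $H_*(Z,\sigma|_Z)$ being $\omega_\sigma$-twisted and the $\SL_2(E)$-vanishing yields $H_*(\GL_2(E),\sigma)=0$ via the spectral sequence, since every $E_2$-term already vanishes. I would double-check the banal-prime hypothesis enters exactly at the point of killing parahoric-level group cohomology of finite reductive quotients, which is precisely what the appendix arranges.
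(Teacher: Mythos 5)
Your core argument matches the paper's: reduce $\GL_2(E)$-homology to $\SL_2(E)$-homology via Hochschild--Serre, compute via the Schneider--Stuhler resolution, and kill every term because a cuspidal $\sigma$ has no $\Gamma_0$-fixed vectors and hence none under $U$, $U'$, or $\SL_2(E)$, all of which contain $\Gamma_0$. The paper arrives there by first dualizing to cohomology via Proposition \ref{dual-of-homology} (using that $(-)^\vee$ preserves cuspidals) and then doing a separate Kirillov-model computation to show $\sigma^{\SL_2(E)}=0$; your observation that the Iwahori statement alone already forces all parahoric invariants to vanish is the cleaner route and is a genuine (if small) simplification. Two points to fix: the excursion through the mirabolic $M$ and the claim $H_i(N,\sigma)=0$ for all $i$ is not used in the argument you ultimately run and the higher-degree vanishing is not justified as stated (it is true, but not from ``$\sigma_N=0$'' alone), so cut it; and the quotient $\GL_2(E)/(\SL_2(E)\cdot Z)$ is $E^\times/(E^\times)^2$, a finite group, not $\Z$ --- just use $1\to\SL_2(E)\to\GL_2(E)\to E^\times\to 1$ as the paper does, whose Hochschild--Serre spectral sequence has every $E_2$-term already zero once $H_*(\SL_2(E),\sigma)=0$ is established.
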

\begin{proof}
    It suffices to compute the smooth group homology for the restriction to $\SL_2(E)$ via the Hochschild-Serre spectral sequence. Under our assumptions on $\Lambda$, the contragredient functor $\sigma \mapsto \sigma^*$ is exact and yields an isomorphism $\sigma \hookrightarrow (\sigma^*)^*$ when restricted to irreducible representations; in particular, taking contragredients preserves irreducible supercuspidal representations. Then by Proposition \ref{dual-of-homology}, it suffices to show vanishing of smooth $\SL_2(E)$-cohomology of $\sigma$. This is computed to be $\sigma^{\SL_2(E)}$ in degree $0$ and $\sigma^{\Gamma_0}/(\sigma^U+\sigma^{U'})$ in degree $1$ in Theorem \ref{sl2-cohomology}. By assumption, there is a nonzero element $x_0\in E$ such that $\vartheta(x_0)\neq 1$, and we write $x_0= u_0 \pi^{k_0}$ for some unit $u_0 \in \mc O_E^\times$. Then one computes
    \begin{align*}
        \begin{bmatrix}
        1 & \pi^{2k_0-k} \\ 0 & 1    
        \end{bmatrix} \cdot f_{\chi,k}(\pi^{k-k_0}x_0)= \vartheta(x_0) f_{\chi,k}(\pi^{k-k_0}x_0)\neq f_{\chi,k}(\pi^{k-k_0}x_0),
    \end{align*}
    so that $\sigma^{\SL_2(E)}=0$.

    The vanishing of the first cohomology follows from \cite[Proposition~2.4]{Casselman1980TheUP}; more precisely, if a smooth admissible module has a nonzero fixed vector by the Iwahori subgroup $\Gamma_0$, then it is not supercuspidal. 
\end{proof}
\begin{proposition}\label{vanishing-of-cuspidals}
    Let $\sigma$ be a smooth irreducible cuspidal representation of $\GL_2(E)$. Then the cohomology $H^i_c(\widetilde{\mc M_{b_2}^{b_1}},\sigma)$ vanishes in every degree. In particular, $i_{b_2}^* R i_{b_1,*}\sigma=0$.
\end{proposition}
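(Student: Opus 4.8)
The plan is to reduce the statement to the vanishing results already established for smooth $\SL_2(E)$- and $\GL_2(E)$-homology of cuspidal representations. Recall from Proposition~\ref{action-on-cohomology} and Theorem~\ref{Smooth-cohomology-of-tower} that $i_{b_2}^* Ri_{b_1,*}\sigma$ is computed (as a complex of smooth $G_{b_2}(E)$-modules) by the smooth cohomology of $\widetilde{\mc M_{b_2}^{b_1}}$ with coefficients in the pullback of $\sigma$. Since $\sigma$ is irreducible cuspidal, it is admissible, hence $Ri_{b_1,*}\sigma$ is ULA; then the discussion in Section~3.2 allows us to pass to compactly supported cohomology via Verdier duality on $\Bun_2^{\leq b_2}$, reducing to the computation of $R\Gamma_c(\widetilde{\mc M_{b_2}^{b_1}}, \sigma^\vee)$ (up to a Tate twist and shift by the dualizing sheaf $\Lambda(2)[4]$ of $\widetilde{\mc M_{b_2}^{b_1}}\to *$ from Lemma~\ref{negative_bc_dualizing_sheaf}), and $\sigma^\vee$ is again irreducible cuspidal.

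First I would invoke the homological Hochschild--Serre spectral sequence of Proposition~\ref{homological-Hochschild-Serre} for the $\underline{\GL_2(E)}$-torsor $p:\widetilde{\widetilde{\mc M_{b_2}^{b_1}}}\to \widetilde{\mc M_{b_2}^{b_1}}$:
\[E_2^{i,j}= H_{-i}\!\left(\GL_2(E), H_c^j(\widetilde{\widetilde{\mc M_{b_2}^{b_1}}}, p^*\sigma^\vee)\right)\implies H_c^{i+j}(\widetilde{\mc M_{b_2}^{b_1}}, \sigma^\vee).\]
By Lemma~\ref{equivariant-projection-formula} (and Remark~\ref{pulling-out-reps}), $R\Gamma_c(\widetilde{\widetilde{\mc M_{b_2}^{b_1}}}, p^*\sigma^\vee)\cong \sigma^\vee \otimes_\Lambda^{\mathbb L} R\Gamma_c(\widetilde{\widetilde{\mc M_{b_2}^{b_1}}},\Lambda)$ with diagonal $\GL_2(E)$-action, and Proposition~\ref{cs-coh-double-tilde-M-trivial-module} identifies $H_c^\bullet(\widetilde{\widetilde{\mc M_{b_2}^{b_1}}},\Lambda)$ explicitly as $\st \otimes \cind_{\SL_2(E)}^{\GL_2(E)}\Lambda$, $(\ind_B^{\GL_2(E)}\abs{-}\otimes 1)\otimes \cind_{\SL_2(E)}^{\GL_2(E)}\Lambda$, and $\abs{\det}\otimes \cind_{\SL_2(E)}^{\GL_2(E)}\Lambda$ in degrees $2,3,4$. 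Hence every $E_2$-term is a smooth $\GL_2(E)$-homology group of $\sigma^\vee$ tensored with one of these explicit modules; since each is a parabolic or compact induction, tensoring with $\sigma^\vee$ and passing to $\GL_2(E)$-homology reduces, via the usual homology-of-tensor-with-induction manipulations (Lemma~\ref{homology-of-tensor-with-parabolic-induction} and the colimit/restriction identities used in the proof of Theorem~\ref{cs-coh-tilde-M-trivial-module}), to the smooth $\SL_2(E)$-homology (equivalently $B_s$- or $E^\times$-homology) of $\sigma^\vee$ restricted to $\SL_2(E)$, possibly twisted by a character. But the preceding lemma shows $H_i(\SL_2(E),\sigma^\vee)=0$ for all $i$ — the essential input being that a cuspidal representation has no Iwahori-fixed vectors (Casselman), so its $\SL_2(E)$-(co)homology vanishes in all degrees — and this vanishing is unaffected by the determinant-character twists. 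Therefore every $E_2^{i,j}$ vanishes, the spectral sequence collapses, and $H_c^\bullet(\widetilde{\mc M_{b_2}^{b_1}},\sigma^\vee)=0$; dualizing back gives $H^\bullet(\widetilde{\mc M_{b_2}^{b_1}},\sigma)=0$, hence $i_{b_2}^* Ri_{b_1,*}\sigma=0$.

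The main obstacle I anticipate is bookkeeping rather than anything structural: one must carefully justify that after tensoring $\sigma^\vee$ with $\st$, $\ind_B^{\GL_2(E)}(\abs{-}\otimes 1)$, or $\abs{\det}$, and with $\cind_{\SL_2(E)}^{\GL_2(E)}\Lambda$, the resulting $\GL_2(E)$-homology is still governed by $H_*(\SL_2(E),\sigma^\vee)$ — i.e. that no new classes are created by the induction functors. For the $\abs{\det}$ and compact-induction factors this is the computation already carried out in Theorem~\ref{cs-coh-tilde-M-trivial-module}, with $\Lambda$ replaced by $\sigma^\vee$; for the $\ind_B^{\GL_2(E)}$ factor one uses Lemma~\ref{homology-of-tensor-with-parabolic-induction} to rewrite the homology as $H_*(B, \delta_T\otimes(\abs{-}\otimes 1)\otimes\res_B\sigma^\vee)$ and then filters $B$ by its unipotent radical, reducing to $T_s$-homology of the (unnormalized) Jacquet module of $\sigma^\vee$; since $\sigma^\vee$ is cuspidal, its Jacquet module vanishes, so these terms vanish even more transparently. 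Once these identifications are in place the argument is immediate, so the only real work is assembling the vanishing across all three rows of the spectral sequence. Alternatively — and I would mention this as a remark — one can avoid the explicit torsor computation entirely by appealing to the cleanliness of cuspidal $L$-parameters and the compatibility of $Ri_{b_1,*}$ with parabolic induction as in \cite{fargues2021geometrization}, which forces $Ri_{b_1,*}\sigma\cong Ri_{b_1,!}\sigma$ and hence kills the gluing functor; but the direct computation above is self-contained given the results of this paper.
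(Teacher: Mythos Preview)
Your overall strategy is the same as the paper's: descend via the $\underline{\GL_2(E)}$-torsor spectral sequence of Proposition~\ref{homological-Hochschild-Serre}, pull $\sigma$ out by Lemma~\ref{equivariant-projection-formula}, use the $\cind_{\SL_2(E)}^{\GL_2(E)}\Lambda$ tensor factor to reduce $\GL_2(E)$-homology to $\SL_2(E)$-homology in each of the three nonzero rows, and then kill each row. Two remarks: first, the paper works directly with $\sigma$ rather than $\sigma^\vee$, since the primary assertion is already about $H_c^*(\widetilde{\mc M_{b_2}^{b_1}},\sigma)$; your Verdier-duality detour to $\sigma^\vee$ is harmless (the contragredient of a cuspidal is cuspidal) but unnecessary. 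Second, your treatment of the degree-$3$ row via vanishing of the Jacquet module is slightly cleaner than the paper's Kirillov-model computation of $H_*(N,\sigma)$, though both amount to the same thing.

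There is, however, a genuine gap in your handling of the degree-$2$ row. After reducing to $\SL_2(E)$-homology you must show $H_*(\SL_2(E),\sigma\otimes\st)=0$, and neither of the two arguments you name covers this: $\st$ is not a character (so the ``twisted by a character'' clause and the degree-$4$ argument do not apply), and it is not a parabolic induction (so Lemma~\ref{homology-of-tensor-with-parabolic-induction} does not apply directly). Your sentence ``this is the computation already carried out in Theorem~\ref{cs-coh-tilde-M-trivial-module}, with $\Lambda$ replaced by $\sigma^\vee$'' does not survive inspection: in that theorem the Steinberg row was computed using the explicit identification $H_*(\SL_2(E),\st)=\Lambda[1]$ from the Bruhat--Tits resolution, which has no analogue for $\sigma\otimes\st$. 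The paper closes this gap by an extra step: the Kirillov model gives $\sigma$ a flat underlying $\Lambda$-module (isomorphic to $\cind_1^{E^\times}\Lambda$), so tensoring the short exact sequence $0\to\Lambda\to\ind_B^{\SL_2(E)}\Lambda\to\st\to 0$ with $\sigma$ remains exact, and the long exact sequence in $\SL_2(E)$-homology together with the already-established vanishing in degrees~$3$ and~$4$ forces $H_*(\SL_2(E),\sigma\otimes\st)=0$. You should insert this argument; once it is in place your proof is complete and essentially identical to the paper's.
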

\begin{proof}
    Remark \ref{pulling-out-reps} allows us to restrict attention to the smooth group homology of $\sigma \otimes H^i_c(\widetilde{\widetilde{\mc M_{b_2}^{b_1}}},\Lambda)$. In each degree, there is a tensor factor of $\cind_{\SL_2(E)}^{\GL_2(E)}\Lambda$ appearing, so the Hochschild-Serre spectral sequence for $E^\times \cong \GL_2(E)/\SL_2(E)$ reduces to showing the vanishing of the $\SL_2(E)$-homology of $\sigma \otimes \st$, $\sigma \otimes \ind_B^{\SL_2(E)}\delta_T^{-1/2}$, and $\sigma$ as in the computations for Theorem \ref{cs-coh-tilde-M-trivial-module}.

    The vanishing of $\SL_2(E)$-homology of $\sigma$ is the previous lemma. Furthermore, $H_i(\SL_2(E),\sigma \otimes \ind_{B}^{\SL_2(E)}\delta_T^{-1/2})\cong H_i(B, \delta_T^{-1/2}\cdot \sigma)$, and the spectral sequence for $1\to N\to B\to T\to 1$ reduces to showing that $H_i(N, \sigma)=0$. But the Kirillov model $\sigma_{\mc K}= \cind_N^M \vartheta$ implies that these homology groups are contained in a direct sum of $H_i(N,\vartheta)=0$, since $\vartheta$ was chosen to be a nontrivial character of the unipotent group $N$.

    For the vanishing of cohomology of $\sigma \otimes \st$, note that the underlying $\Lambda$-module structure of $\sigma_{\mc K}\cong_{\Lambda} \cind_1^{E^\times} \Lambda$ is flat, so that there is a short exact sequence
    \[0\to \sigma \to \sigma \otimes \ind_{B}^{\SL_2(E)}\Lambda \to \sigma \otimes \st \to 0.\]
    Then the long exact sequence on smooth homology concludes the computation.
\end{proof}

\subsection{Specialization from the basic point in a half-integral slope component}
Let $b_1\leftrightarrow \mc O(1/2)$ be the basic point of the slope $1/2$-component, with specialization $b_2\leftrightarrow \mc O \oplus \mc O(1)$. 
\begin{proposition}\label{double-uniformized-slope-1/2-cover}
After basechange to $\spa C$, there is an equivariant isomorphism
\[\widetilde{\widetilde{\mc M_{b_2}^{b_1}}}\cong (\BC(\mc O(1/2))\setminus{0})\times \underline{E^\times},\]
where $\BC(\mc O(1/2))$ inherits the natural action of $\underline{G_{b_1}(E)}\cong \underline{D^\times}$ on $\mc O(1/2)$ via postcomposition, and $\underline{D^\times}$ acts on the factor of $\underline{E^\times}$ via the reduced norm. Furthermore, the first factor of $\underline{G_{b_2}(E)}\cong \underline{E_1^\times}\times \underline{E_2^\times}$ acts on $\BC(\mc O(1/2))$ via precomposition, and each $\underline{E_i^\times}$ acts on $\underline{E^\times}$ by the inverse of multiplication. 
\end{proposition}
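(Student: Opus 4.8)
The plan is to run the same argument as in the proof of Proposition \ref{Equivariant-iso-cover}, the essential simplification being that in half-integral slope there is no non-saturated locus to excise. Exactly as there, since $\widetilde{\mc M_{b_2}}\to \Bun_2$ is representable in locally spatial diamonds and $\widetilde{\widetilde{\mc M_{b_2}^{b_1}}}$ is obtained by pullback along $\spa C\to \Bun_2^{b_1}\to \Bun_2$, after base change to $\spa C$ the object $\widetilde{\widetilde{\mc M_{b_2}^{b_1}}}$ becomes a $v$-sheaf once isomorphism classes in the fibre groupoids are identified, so it suffices to produce a natural bijection on $T$-points for $T$ strictly totally disconnected, where the (trivial) global Harder--Narasimhan filtration of $\mc E$ splits. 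Over such $T$ a point of $\widetilde{\widetilde{\mc M_{b_2}^{b_1}}}$ is a short exact sequence $0\to \mc K\to \mc E\to \mc C\to 0$ on $X_T$ with $\mc E,\mc K,\mc C$ everywhere of type $\mc O(1/2),\mc O,\mc O(1)$ respectively, together with trivializations $\phi_K:\mc K\xrightarrow{\sim}\mc O$, $\phi_E:\mc E\xrightarrow{\sim}\mc O(1/2)$, $\phi_C:\mc C\xrightarrow{\sim}\mc O(1)$; transporting along $\phi_K$ and $\phi_E$, this is the datum of a map $\alpha=\phi_E\circ\iota\circ\phi_K^{\inv}\in \Hom(\mc O,\mc O(1/2))=H^0(X_T,\mc O(1/2))=\BC(\mc O(1/2))(T)$ together with a trivialization of its cokernel.

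The next step is to pin down which $\alpha$ occur. A nonzero map from a line bundle to the torsion-free sheaf $\mc O(1/2)$ is injective, and it is automatically saturated: by the analogue of Lemma \ref{nonsaturated}, a non-saturated injection $\mc O\hookrightarrow\mc O(1/2)$ would factor through $\mc O(n)\hookrightarrow \mc O(1/2)$ for some $1\le n\le \deg\mc O(1/2)=1$, but $\Hom(\mc O(1),\mc O(1/2))=H^0(X,\mc O(1/2)\otimes\mc O(-1))=H^0(X,\mc O(-1/2))=0$. Hence the cokernel of any nonzero $\alpha$ is a line bundle, of degree $1$ by additivity, i.e.\ isomorphic to $\mc O(1)$ at every geometric point, while the total bundle is the fixed $\mc O(1/2)$; so every nonzero section is admissible, and conversely admissibility forces $\alpha$ to be fibrewise nonzero, i.e.\ a point of the open subspace $\BC(\mc O(1/2))\setminus 0$. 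This is precisely where the half-integral case diverges from — and is easier than — the integral one: the vanishing $\Hom(\mc O(1),\mc O(1/2))=0$ means there is no $\underline{\GL_2(E)}$-orbit of non-saturated injections to remove, only the zero section.

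Finally, with $\alpha$ fixed, the datum $\phi_C$ is equivalent to a point of $\underline{E^\times}$: the canonical isomorphism $\mc K\otimes \mc C\xrightarrow{\sim}\det\mc E$ composed with $\phi_K^{\inv}\otimes\phi_C^{\inv}$ and $\det\phi_E$ yields an automorphism of $\det\mc O(1/2)=\mc O(1)$, i.e.\ an element of $\Aut(\mc O(1))=\underline{E^\times}$, and this is a bijection. Assembling the two coordinates gives the asserted isomorphism of $v$-sheaves $\widetilde{\widetilde{\mc M_{b_2}^{b_1}}}\cong(\BC(\mc O(1/2))\setminus 0)\times \underline{E^\times}$ after base change to $\spa C$. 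The equivariance is then the same bookkeeping as in Proposition \ref{Equivariant-iso-cover}: $\underline{G_{b_1}(E)}=\underline{\Aut(\mc O(1/2))}=\underline{D^\times}$ acts through $\phi_E$, hence by postcomposition on $\alpha$ and by $\det=\Nrd$ on the $\underline{E^\times}$-factor; the first factor $\underline{E_1^\times}=\underline{\Aut(\mc O)}$ acts through $\phi_K$, hence by precomposition on $\alpha$ and by the inverse of multiplication on $\underline{E^\times}$; and $\underline{E_2^\times}=\underline{\Aut(\mc O(1))}$ acts through $\phi_C$ alone, trivially on $\alpha$ and by the inverse of multiplication on $\underline{E^\times}$.

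The main obstacle is essentially bypassed rather than confronted: the only genuinely new input relative to Proposition \ref{Equivariant-iso-cover} is the cohomology vanishing $\Hom(\mc O(1),\mc O(1/2))=0$ used to show that every nonzero section of $\BC(\mc O(1/2))$ is already a saturated sub-bundle inclusion, after which the structure of the argument — reduction to strictly totally disconnected test objects, identification of the cokernel trivialization with $\underline{E^\times}$ via determinants, and tracing the group actions — is a direct adaptation of the integral-slope case, with no analogue of the locus $\underline{\GL_2(E)}\cdot\Delta(\BC(\mc O(1)))$ appearing.
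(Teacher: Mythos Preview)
Your proposal is correct and follows essentially the same approach as the paper's proof: both reduce to the argument of Proposition \ref{Equivariant-iso-cover}, invoke Lemma \ref{nonsaturated} together with the vanishing $\Hom(\mc O(1),\mc O(1/2))=0$ to conclude that every nonzero section of $\BC(\mc O(1/2))$ is saturated, and then use the determinant to identify the cokernel trivialization with a point of $\underline{E^\times}$. Your write-up is simply a more detailed unwinding of the same argument.
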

\begin{proof}
The identification is exactly the same as in Proposition \ref{Equivariant-iso-cover}, except one has to consider diagrams of the form 
\begin{center}
\begin{tikzcd}
0 \arrow[r] & \mc K |_{X_T} \arrow[r]\arrow[d,"\phi_K"] & \mc E|_{X_T} \arrow[r]\arrow[d,"\phi_E"]& \mc C |_{X_T} \arrow[d,"\phi_C"]\arrow[r]& 0 \\
&\mc O \arrow[r,dashed,"f"]\arrow[d, swap,"\underline{E_1^\times}"]&\mc O(1/2) \arrow[d, swap,"\underline{D^\times}"]& \mc O(1)\arrow[d, swap,"\underline{E_2^\times}"] & \\
&\mc O &\mc O(1/2) & \mc O(1). & 
\end{tikzcd}
\end{center}
Additionally, Lemma \ref{nonsaturated} implies that the nonsaturated injections $\mc O\hookrightarrow \mc O(1/2)$ extend to an injection $\mc O(1)\hookrightarrow \mc O(1/2)$, but $\Hom(\mc O(1),\mc O(1/2))=0$. Thus, the nonzero sections $f\in \BC(\mc O(1/2))$ parameterize nonsplit extensions $0\to \mc O\to \mc O(1/2)\to \mc C\cong \mc O(1)\to 0$, and fixing an isomorphism of the determinant $\Lambda^2 \mc O(1/2)\cong \mc O(1)$ furthermore identifies $\mc C$ with $\mc O(1)$.
\end{proof}
This characterization already yields an interesting description of the gluing functor in terms of the compactly supported cohomology of a quotient of the Lubin-Tate tower at infinite level (although we will not need this).
\begin{example}[{\cite[Example~II.3.12]{fargues2021geometrization}}]\label{Lubin-Tate-tower}
The infinite level Lubin-Tate tower $\mathrm{LT}_\infty$, basechanged to some $\spa C^\sharp$, is the moduli of modifications $\mc O_{X_S}^2\hookrightarrow \mc O_{X_S}(1/2)$ at the point of the curve determined by the untilt $C^\sharp$. In particular, one deduces a $\underline{D^\times}$-equivariant isomorphism
\[\mathrm{LT}_\infty^\flat/\begin{pmatrix}1 & \underline{E}\\ 0 & \underline{E^\times}\end{pmatrix}\cong (\BC(\mc O(1/2)\setminus \set{0})\times_k \spa C.\]
\end{example}
This quotient of the Lubin-Tate tower only keeps track of the first coordinate of the modification, $\mc O \hookrightarrow \mc O(1/2)$. Furthermore, \cite[Example~II.3.12]{fargues2021geometrization} identifies $\widetilde{\mc M_{b_2}^{b_1}}\cong \BC(\mc O(-1)[1])\setminus{0}$ (after basechange to $\spa C$) as a quotient $\widetilde{\mc M_{b_2}^{b_1}}\cong (\BC(\mc O(1/2))\setminus{0})/\underline{\SL_1(D)}$, where $\SL_1(D):=\ker(\Nrd: D^\times \to E^\times)$. Then after fixing an untilt $C^\sharp$ and global section of $\mc O(1)$, the long exact sequence of the associated divisor short exact sequence $0\to \mc O(-1)\to \mc O\to i_*C^\sharp\to 0$ yields an isomorphism $\BC(\mc O(-1)[1])\setminus \set{0}\times_k\spa C\cong (\Omega_{C^\sharp})^{\diamond}/\underline{E}$, where $\Omega = \mathbb{A}^1_E\setminus \underline{E}$ is the Drinfeld upper half plane. 

We remark that it is possible to compute $H^i_c(\widetilde{\mc M_{b_2}^{b_1}},\Lambda)$ from this description, but we provide a different computation, similar to the integral slope components, that we feel better demonstrates the $\underline{G_{b_2}(E)}$-module structure.
\begin{proposition}\label{double-uniformized-half-slope-cs-coh}
The compactly supported cohomology of $\widetilde{\widetilde{\mc M_{b_2}^{b_1}}}$ as $D^\times$-modules is 
\[H_c^i(\widetilde{\widetilde{\mc M_{b_2}^{b_1}}},\Lambda) =\begin{cases} 
\Lambda\otimes \cind_{\SL_1(D)}^{D^\times}\Lambda,\quad & i=1\\
\abs{\Nrd}^{}\otimes \cind_{\SL_1(D)}^{D^\times}\Lambda,\quad & i=2\\
0,\quad & \text{else.}
\end{cases}
\]
\end{proposition}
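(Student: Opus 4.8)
The plan is to combine the geometric identification of Proposition~\ref{double-uniformized-slope-1/2-cover} with the Künneth formula for compactly supported cohomology. After base change to $\spa C$ we have a $\underline{D^\times}$-equivariant isomorphism $\widetilde{\widetilde{\mc M_{b_2}^{b_1}}}\cong(\BC(\mc O(1/2))\setminus 0)\times\underline{E^\times}$ in which $D^\times$ acts diagonally --- by postcomposition on the first factor and through the reduced norm $\Nrd\colon D^\times\to E^\times$ on the second --- so that the Künneth isomorphism
\[R\Gamma_c(\widetilde{\widetilde{\mc M_{b_2}^{b_1}}},\Lambda)\;\cong\;R\Gamma_c(\BC(\mc O(1/2))\setminus 0,\Lambda)\otimes_\Lambda^{\mathbb L}R\Gamma_c(\underline{E^\times},\Lambda)\]
is $D^\times$-equivariant for the diagonal action on the right-hand side. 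It therefore suffices to compute the two tensor factors with their equivariant structures and to check that no $\mathrm{Tor}$ terms intervene.

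First I would dispose of the factor $\underline{E^\times}$. Since $E^\times$ is locally profinite of dimension $0$, its compactly supported cohomology is concentrated in degree $0$, where it is the module of compactly supported locally constant $\Lambda$-valued functions on $E^\times$; passing to smooth vectors this is the regular representation $\cind_1^{E^\times}\Lambda$, which is a filtered colimit of free $\Lambda$-modules and hence flat. As $\Nrd$ is surjective with kernel $\SL_1(D)$, the homogeneous space $\SL_1(D)\backslash D^\times$ is identified $D^\times$-equivariantly with $E^\times$, so inflating the regular $E^\times$-representation along $\Nrd$ gives $\cind_{\SL_1(D)}^{D^\times}\Lambda$. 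Flatness lets one replace $\otimes_\Lambda^{\mathbb L}$ by $\otimes_\Lambda$ and tensor termwise.

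Next I would compute $R\Gamma_c(\BC(\mc O(1/2))\setminus 0,\Lambda)$ together with its $D^\times$-action. By Proposition~\ref{smalldegBC} and Remark~\ref{basechange-to-C}, $\BC(\mc O(1/2))_{\spa C}$ is a one-dimensional perfectoid open disk, and by Proposition~\ref{Hansen-bc-dualizing} the structure map is cohomologically smooth of dimension $1$ with $Rf_*\Lambda=\Lambda$ and dualizing sheaf $\Lambda(1)[2]$; Poincaré duality then gives $R\Gamma_c(\BC(\mc O(1/2)),\Lambda)=\Lambda(-1)[-2]$. Running the excision triangle for $\{0\}\hookrightarrow\BC(\mc O(1/2))\hookleftarrow\BC(\mc O(1/2))\setminus 0$ yields $H_c^i(\BC(\mc O(1/2))\setminus 0,\Lambda)=\Lambda$ for $i=1$, $\Lambda(-1)$ for $i=2$, and $0$ otherwise. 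For the equivariant structure: $H_c^1\cong H_c^0(\{0\},\Lambda)$ is the trivial representation, the origin being a $D^\times$-fixed point; and $H_c^2\cong H_c^2(\BC(\mc O(1/2)),\Lambda)=\Lambda(-1)$ is a character of $D^\times$, trivial on $\mc O_D^\times$ by banality of $\Lambda$ and factoring through $\Nrd$. Since a uniformizer of $D$ acts on the disk through one iterate of the inverse of geometric Frobenius --- visible from the isocrystal/Lubin-Tate description of $\BC(\mc O(1/2))$ (cf. Example~\ref{Lubin-Tate-tower}), just as $\pi\in E^\times$ does on $\BC(\mc O(1))$ --- this Tate twist is identified with $\abs{\Nrd}$, in the same way that $\Lambda(-1)$ was matched with $\abs{-}\otimes 1$ in the proof of Proposition~\ref{actions-on-Z}.

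Assembling the two factors through Künneth then gives $H_c^1=\Lambda\otimes\cind_{\SL_1(D)}^{D^\times}\Lambda$ and $H_c^2=\abs{\Nrd}\otimes\cind_{\SL_1(D)}^{D^\times}\Lambda$ with the diagonal $D^\times$-action, and vanishing in all other degrees, which is the assertion. The main obstacle is the equivariant bookkeeping in the last computation: pinning down the $D^\times$-character on $H_c^2$ as precisely $\abs{\Nrd}$, rather than a square root of it, hinges on the fact that it is a uniformizer of $D$ --- and not the central uniformizer $\pi\in E^\times$, whose reduced norm has valuation $2$ --- that realizes a single inverse geometric Frobenius on the disk. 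The remaining inputs (the Künneth formula, the cohomology of the punctured disk via excision and Proposition~\ref{Hansen-bc-dualizing}, and flatness of $\cind_1^{E^\times}\Lambda$) are routine.
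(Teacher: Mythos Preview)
Your proof is correct and follows essentially the same route as the paper: the K\"unneth decomposition along the product $(\BC(\mc O(1/2))\setminus 0)\times\underline{E^\times}$, identification of the $\underline{E^\times}$ factor as $\cind_{\SL_1(D)}^{D^\times}\Lambda$, and computation of the compactly supported cohomology of the punctured one-dimensional Banach--Colmez space with its $D^\times$-equivariant structure. The only cosmetic difference is in how you pin down the character on $H_c^2$: you test on a uniformizer $\pi_D$ of $D$ (acting as a single inverse geometric Frobenius, giving $q^{-1}=\abs{\Nrd(\pi_D)}$), whereas the paper tests on the central element $\pi\in E^\times\subset D^\times$ (acting as the square of inverse geometric Frobenius, giving $q^{-2}=\abs{\Nrd(\pi)}$); both determine the same character $\abs{\Nrd}$. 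Your appeal to ``banality of $\Lambda$'' for triviality on $\mc O_D^\times$ is a slight detour---the paper simply cites that every smooth character of $D^\times$ factors through $\Nrd$, which is cleaner---but the conclusion is the same.
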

\begin{proof}
Abstractly, the compactly supported cohomology of $\BC(\mc O(1/2))\setminus{0}$ agrees with that of $\D^*$, so that the $\Lambda$-module structure is $H^1_c(\BC(\mc O(1/2))\setminus 0,\Lambda)= \Lambda$, $H^2_c(\BC(\mc O(1/2))\setminus 0,\Lambda)= \Lambda(-1)$, and 0 in every other degree. Then they are characters of $D^\times$, which necessarily factor through $\Nrd:D^\times \to E^\times$ \cite[53.5~Proposition]{Bushnell2006}.
Note that the action of $\pi\in E^\times \subset D^\times$ carries over to the square of inverse geometric Frobenius on $\D^*$.
Then exactly as in the proof of Proposition \ref{actions-on-Z}, we see that $H^1_c(\BC(\mc O(1/2))\setminus 0,\Lambda)= \Lambda$ is the trivial module, and $H^2_c(\BC(\mc O(1/2))\setminus 0,\Lambda)$ is $\pi \mapsto q^{-2} = \abs{\Nrd \pi}^{}\in \Aut(\Lambda)$. Also as in the slope $0$ case, the compactly supported cohomology of $\underline{E^\times}$ is the compactly supported functions $E^\times \to \Lambda$ in degree $0$, and the $D^\times$-module is $\cind_{\SL_1(D)}^{D^\times}\Lambda$ since $\Nrd:D^\times \to E^\times$ extends to the geometric action $\underline{D^\times}\to \Aut(\underline{E^\times})$. Then the proof is finished by applying the Künneth formula.
\end{proof}
\begin{theorem}
The compactly supported cohomology of $\widetilde{\mc M_{b_2}^{b_1}}$ as $E_1^\times \times E_2^\times$-modules is 
\[H_c^i(\widetilde{\mc M_{b_2}^{b_1}},\Lambda) =\begin{cases} 
\Lambda,\quad & i=1\\
\delta_T,\quad & i=2\\
0,\quad & \text{else.}
\end{cases}
\]
\end{theorem}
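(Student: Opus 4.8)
The plan is to descend the compactly supported cohomology of $\widetilde{\widetilde{\mc M_{b_2}^{b_1}}}$, which was already computed in Proposition~\ref{double-uniformized-half-slope-cs-coh}, along the $\underline{D^\times}$-torsor $p\colon \widetilde{\widetilde{\mc M_{b_2}^{b_1}}}\to \widetilde{\mc M_{b_2}^{b_1}}$ by means of the homological Hochschild--Serre spectral sequence of Proposition~\ref{homological-Hochschild-Serre},
\[
E_2^{i,j}=H_{-i}\bigl(D^\times,\,H_c^j(\widetilde{\widetilde{\mc M_{b_2}^{b_1}}},\Lambda)\bigr)\ \Longrightarrow\ H_c^{i+j}(\widetilde{\mc M_{b_2}^{b_1}},\Lambda).
\]
This is the exact analogue of the integral-slope argument in Theorem~\ref{cs-coh-tilde-M-trivial-module}, with $\GL_2(E)$ replaced by $D^\times$ and $\SL_2(E)$ by $\SL_1(D)=\ker(\Nrd)$. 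The essential simplification here is that $\SL_1(D)$ sits inside the maximal order and is therefore \emph{compact}: since $\Lambda$ is banal for $\SL_1(D)$ by the standing hypothesis on the torsion, all of its higher smooth homology vanishes, so there is no Steinberg-type contribution and the $D^\times$-homology ends up concentrated in a single homological degree.

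By Proposition~\ref{double-uniformized-half-slope-cs-coh} only the rows $j=1,2$ are nonzero, with $H_c^1\cong\Lambda\otimes\cind_{\SL_1(D)}^{D^\times}\Lambda$ and $H_c^2\cong\abs{\Nrd}\otimes\cind_{\SL_1(D)}^{D^\times}\Lambda$, so it suffices to compute the smooth $D^\times$-homology of these two modules. I would do this via the extension $1\to\SL_1(D)\to D^\times\xrightarrow{\Nrd}E^\times\to 1$: since $\SL_1(D)$ is normal, both $\cind_{\SL_1(D)}^{D^\times}\Lambda$ and the character $\abs{\Nrd}$ restrict to \emph{trivial} $\SL_1(D)$-representations, so $H_\bullet(\SL_1(D),-)$ collapses to the module in degree $0$, identifying $\cind_{\SL_1(D)}^{D^\times}\Lambda$ with $\cind_1^{E^\times}\Lambda$ as an $E^\times$-representation and the $j=2$ module with $\abs{-}\otimes\cind_1^{E^\times}\Lambda$ (the character $\abs{\Nrd}$ descending to $\abs{-}$). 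Proposition~\ref{homology-E-times} then computes $H_\bullet(E^\times,\cind_1^{E^\times}\Lambda)$ and $H_\bullet(E^\times,\abs{-}\otimes\cind_1^{E^\times}\Lambda)$: the degree-$(-1)$ part is the $\pi^{\Z}$-invariants of a direct sum $\bigoplus_{\pi^{\Z}}\Lambda$ (suitably twisted), which vanishes because a nonzero $\pi$-invariant function on $E^\times$ cannot have compact support; hence each homology is a single copy of $\Lambda$ in degree $0$. Consequently only $E_2^{0,1}$ and $E_2^{0,2}$ survive, all differentials vanish, and $H_c^i(\widetilde{\mc M_{b_2}^{b_1}},\Lambda)$ is $\Lambda$ for $i=1$, one-dimensional for $i=2$, and zero otherwise.

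It remains to pin down the $E_1^\times\times E_2^\times$-action, which I would argue exactly as in Corollary~\ref{determinant-character-twists}. In degree $1$ there is no Tate twist, the coinvariant presentation of $(\cind_1^{E^\times}\Lambda)_{E^\times}\cong\Lambda$ is untwisted, and the geometric $\underline{E_i^\times}$-actions merely permute functions all of which become identified, so the action is trivial. In degree $2$ the character $\abs{\Nrd}$ arises from the Tate twist $\Lambda(-1)$ on $H_c^2(\BC(\mc O(1/2))\setminus 0)$, on which $\underline{E_1^\times}$ acts through scalar precomposition on $\BC(\mc O(1/2))$ --- the same action, up to inversion, as the central $E^\times\subset D^\times$, which the proof of Proposition~\ref{double-uniformized-half-slope-cs-coh} identifies with the square of inverse geometric Frobenius --- inducing the character $\abs{-}_1^{-2}$ and acting trivially through $\underline{E_2^\times}$; combining this with the twisted coinvariant relation $f_z=\abs{z}f_1$ on the $\underline{E^\times}$-factor and the opposite-multiplication action of the $\underline{E_i^\times}$ on the domain of those functions gives $z_1\cdot f_1=\abs{z_1}^{-1}f_1$ and $z_2\cdot f_1=\abs{z_2}f_1$, i.e. precisely $\delta_T$. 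The only genuinely delicate part of the whole argument is this last piece of bookkeeping --- matching the Tate twists against the geometric torus actions on the two Künneth factors --- but it is entirely forced and identical in spirit to the integral-slope computation. (As an alternative one could descend directly along the $\underline{\SL_1(D)}$-torsor $\BC(\mc O(1/2))\setminus 0\to\widetilde{\mc M_{b_2}^{b_1}}$, which shortens the homological input but makes the $E_1^\times\times E_2^\times$-structure harder to read off.)
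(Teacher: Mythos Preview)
Your proposal is correct and follows essentially the same route as the paper: the same descent spectral sequence from Proposition~\ref{homological-Hochschild-Serre}, the same factorisation of $D^\times$-homology through $\SL_1(D)$ and then $E^\times$, the same use of Proposition~\ref{homology-E-times}, and the same bookkeeping with the Tate twist and the indicator functions $f_z$ to pin down $\delta_T$. The only cosmetic difference is that you justify the collapse of $\SL_1(D)$-homology by noting that $\SL_1(D)\subset \mc O_D^\times$ is compact with banal residual order, whereas the paper phrases this as the reduced Bruhat--Tits building being $0$-dimensional; these are the same observation.
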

\begin{proof}
We'll apply the spectral sequence
\begin{align}
E_2^{i,j}=H_{-i}(D^\times, H_c^j(\widetilde{\widetilde{\mc M_{b_2}^{b_1}}},\Lambda))\implies H_c^{i+j}(\widetilde{\mc M_{b_2}^{b_1}},\Lambda)
\end{align}
from Proposition \ref{homological-Hochschild-Serre}, as well as the homological Hochschild-Serre spectral sequence 
\[E_2^{p,q}= H_{-p}(E^\times, H_{-q}(\SL_1(D),\sigma))\implies H_{-p-q}(D^\times,\sigma).\]
Both of the modules appearing in the cohomology of $\widetilde{\widetilde{\mc M_{b_2}^{b_1}}}$ restrict to $\bigoplus_{E^\times}\Lambda$ as $\SL_1(D)$-modules, so that the $\SL_1(D)$-homology reduces to that of the trivial module. Now since the only split torus inside $D^\times$ is the inclusion of the center $E^\times \subset D^\times$, the reduced Bruhat-Tits building for $\SL_1(D)$ is $0$-dimensional, hence $\SL_1(D)$-group homology is the module of coinvariants supported in degree $0$.

Now the computation is reduced to the $E^\times$-homology of $\cind_1^{E^\times} \Lambda$ and $\abs{\Nrd}^{}\cind_{1}^{E^\times}\Lambda$; more generally we compute the homology of $\abs{\Nrd}^k \cind_1^{E^\times}\Lambda$. By proposition \ref{homology-E-times}, this is the module of $E^\times$-coinvariants in degree $0$ and $(\abs{-}^k \otimes \cind_1^{E^\times}\Lambda_{\mc O_{E}^{\times}})^{\pi^{\Z}}$ in degree $-1$. 
As in the slope $0$ computations, the first homology is $(\bigoplus_{\pi^{\Z}}\abs{-}^k)^{\pi^{\Z}}=0$, since any fixed vector would have to be supported on all of $\pi^{\Z}$, which is not compact. The $0$th homology groups are the module of functions supported on $\set{1}\subset E^\times$ with the relation and $f_{z}= \abs{\Nrd}^{k}f_1$ (where $f_z$ is the indicator function for $z\in E^\times$). 
The geometric action of $\underline{E_1^\times}\times \underline{E_2^\times}$ takes $\pi_1\in E_1^\times$ to the inverse action of $\pi \in E^\times \subset D^\times$ on $\BC(\mc O(1/2))$, inducing the character $\pi_1\mapsto \abs{\Nrd \pi^\inv}^k=\abs{\pi_1}^{-2k}$, and $z_i\in E_i^\times$ rotates the domain of the functions $E^\times \to \Lambda$ in the opposite way as $\Nrd:D^\times \to E^\times \subset \Aut(\underline{E^\times})$. 
Then the $E^\times$-homology of $\abs{\Nrd}^k\cind_1^{E^\times}\Lambda$ is $\delta_T^k [0]$, which follows from the computations
\[z_1\cdot f_1= \abs{z_1}^{-2k}f_{z_1}=\abs{z_1}^{-k}f_1,\]
and
\[z_2\cdot f_1= f_{z_2^{}} = \abs{z_2}^k f_1.\]
\end{proof}
\begin{corollary}
Let $\chi$ be an unramified character. Then
\[H_c^i(\widetilde{\mc M_{b_2}^{b_1}},\chi\circ \Nrd) =\begin{cases} 
\chi \otimes \chi,\quad & i=1\\
(\chi \otimes \chi) \cdot \delta_T,\quad & i=2\\
0,\quad & \text{else.}
\end{cases}
\]
In particular, $H^1_c(\widetilde{\mc M_{b_2}^{b_1}},\abs{\Nrd}^k)=\abs{-}^k$ and $H^2_c(\widetilde{\mc M_{b_2}^{b_1}},\abs{\Nrd}^k)=\abs{-}^k\delta_T$.
\end{corollary}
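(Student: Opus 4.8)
The plan is to run the same two-step descent as in the preceding theorem on $H_c^\bullet(\widetilde{\mc M_{b_2}^{b_1}},\Lambda)$ — first along the $\underline{D^\times}$-torsor $\widetilde{\widetilde{\mc M_{b_2}^{b_1}}}\to\widetilde{\mc M_{b_2}^{b_1}}$, then along $1\to\SL_1(D)\to D^\times\xrightarrow{\Nrd}E^\times\to 1$ — but carrying the character sheaf $[\chi\circ\Nrd]$ through, exactly as Corollary \ref{determinant-character-twists} does for $[\chi\circ\det]$ in the integral slope case. The key structural input is that $\chi\circ\Nrd$ is trivial on $\SL_1(D)=\ker(\Nrd)$ and descends to the character $\chi$ on $D^\times/\SL_1(D)\cong E^\times$, so it never interferes with the $\SL_1(D)$-homology and only appears as a diagonal twist at the very end.

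First I would invoke Lemma \ref{equivariant-projection-formula} and Remark \ref{pulling-out-reps}: since $\widetilde{\widetilde{\mc M_{b_2}^{b_1}}}\to\widetilde{\mc M_{b_2}^{b_1}}$ is a $\mc G_{b_1}=\underline{D^\times}$-torsor and $\chi\circ\Nrd$ is a (flat) character, $R\Gamma_c(\widetilde{\widetilde{\mc M_{b_2}^{b_1}}},\chi\circ\Nrd)\cong(\chi\circ\Nrd)\otimes_\Lambda R\Gamma_c(\widetilde{\widetilde{\mc M_{b_2}^{b_1}}},\Lambda)$ as $D^\times$-complexes with diagonal action. Plugging in Proposition \ref{double-uniformized-half-slope-cs-coh}, this is $(\chi\circ\Nrd)\otimes\cind_{\SL_1(D)}^{D^\times}\Lambda$ in degree $1$ and $(\chi\abs{-})\circ\Nrd\otimes\cind_{\SL_1(D)}^{D^\times}\Lambda$ in degree $2$ (zero otherwise), where in degree $2$ the factor $\abs{\Nrd}$ comes from a Tate twist $\Lambda(-1)$ on the $\BC(\mc O(1/2))\setminus 0$ factor while $\chi\circ\Nrd$ comes from an invertible sheaf on $[*/\underline{D^\times}]$ — a distinction that matters when reading off the $G_{b_2}(E)$-action. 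Then the descent spectral sequence of Proposition \ref{homological-Hochschild-Serre} reduces us to the $D^\times$-homology of these two modules; since both restrict to $\bigoplus_{E^\times}\Lambda$ on $\SL_1(D)$, which has split rank $0$ (so smooth homology is coinvariants in degree $0$), the Hochschild--Serre spectral sequence for the normal subgroup $\SL_1(D)\subset D^\times$ degenerates and leaves the $E^\times$-homology of $\chi\otimes\cind_1^{E^\times}\Lambda$ (feeding $H_c^1$) and of $(\chi\abs{-})\otimes\cind_1^{E^\times}\Lambda$ (feeding $H_c^2$) — the same computation as in the preceding theorem with $\abs{\Nrd}^k$ replaced by $\chi$ resp.\ $\chi\abs{-}$. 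By Proposition \ref{homology-E-times}, degree $-1$ vanishes (a $\pi^{\Z}$-invariant vector would be supported on all of $\pi^{\Z}$, hence not compactly supported), and degree $0$ is a single copy of $\Lambda$, realized as indicator functions at $1\in E^\times$ with the twisted relation $f_z=\chi(z)f_1$ (resp.\ $f_z=(\chi\abs{-})(z)f_1$).

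Finally I would extract the $\underline{E_1^\times}\times\underline{E_2^\times}$-action verbatim as in Corollary \ref{determinant-character-twists} and the preceding theorem: the elements $z_1,z_2$ only rotate the domain of the indicator functions while their identification is twisted by $\chi$, so the character-sheaf part contributes $z_1\mapsto\chi(z_1)$, $z_2\mapsto\chi(z_2)$, i.e.\ a factor $\chi\otimes\chi$ in both nonzero degrees; the additional Tate twist $\Lambda(-1)$ present in degree $2$ contributes the $\delta_T$ already computed in the theorem, since $\pi_1\in E_1^\times$ acts on $\BC(\mc O(1/2))$ as the inverse of the central element $\pi\in D^\times$ (inducing $\abs{\pi_1}^{-2}$ via the twist) while the $E_2^\times$-rotation supplies the $\abs{z_2}$. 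This yields $H_c^1=\chi\otimes\chi$, $H_c^2=(\chi\otimes\chi)\cdot\delta_T$, and vanishing elsewhere; the ``in particular'' is the case $\chi=\abs{-}^k$. I expect the only genuinely delicate point — as in the integral-slope version — to be keeping the two sources of $G_{b_2}(E)$-characters (the invertible sheaf $[\chi\circ\Nrd]$ versus the geometric Tate twist) carefully separate, so that the power of $\delta_T$ and the diagonal $\chi$-twist are not conflated.
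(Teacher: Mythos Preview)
Your proposal is correct and follows exactly the approach the paper intends: the paper states this corollary without proof, treating it as immediate from the preceding theorem (whose proof already handles $\abs{\Nrd}^k\cind_1^{E^\times}\Lambda$ for general $k$) together with the character-twist bookkeeping of Corollary \ref{determinant-character-twists}. Your write-up simply makes explicit what the paper leaves to the reader.
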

\begin{theorem}
The cohomology of $\abs{\Nrd}^k$ as a sheaf on $\widetilde{\mc M_{b_2}^{b_1}}$ as $E_1^\times \times E_2^\times$-modules is 
\[H^i(\widetilde{\mc M_{b_2}^{b_1}},\abs{\Nrd}^k) =\begin{cases} 
\abs{-}^k,\quad & i=0\\
\abs{-}^k\delta_T,\quad & i=1\\
0,\quad & \text{else.}
\end{cases}
\]
\end{theorem}

The remaining irreducible smooth $D^\times$-modules are finite dimensional and compactly induced from a compact-mod-center open subgroup $K\subset D^\times$, and the images of the gluing functors reduce to $K\cap \SL_1(D)$-orbits of smooth characters of $K$.

\appendix
\section{Smooth Representations}

All results appearing in this appendix are known, although they perhaps do not appear in the literature in this precise context. In particular, the theory of smooth representations is usually developed with discrete complex coefficients. Since we work with discrete $\Lambda$-modules, the invertibility of $p$ and self-injectivity of $\Lambda$ ensures that all of the theorems that we need carry over.

\subsection{(Co)homology of smooth \texorpdfstring{$\SL_2(E)$}{SL2(E)}-representations via the Bruhat-Tits building}
We would like to understand the smooth cohomology of an irreducible smooth representation of $\SL_2(E)$. In \cite{schneider1997representation}, Schneider and Stuhler defined (co)sheaves on the (reduced) Bruhat-Tits building in order to construct projective homological resolutions of certain complex smooth representations of the $E$-points of a reductive group $G$. Namely, these representations are required to be generated by their fixed vectors under some compact open level subgroup of $G$. Furthermore these resolutions exist in the category of smooth representations with fixed central character $\chi$. We restrict our attention to such resolutions for the trivial module $\Lambda$ over $\SL_2(E)$, so that both of the previous conditions can be safely ignored; in particular, this resolution is precisely the cellular chain complex of the Bruhat-Tits building with coefficients in $\Lambda$. Note that a large part of what follows in this paragraph can be found in \cite{StuhlerSchneider+1993+19+32} and an $\ell = p$ analogue in  \cite{fust2021continuous}, but we include our own exposition for completeness. 

Recall that the Bruhat-Tits building for $\SL_2(E)$, denoted $\BT$, is a $1$-dimensional simplicial complex (equipped with its locally Euclidean metric) with $0$-cells corresponding to similarity classes of rank $2$ $\mc O_E$-lattices (i.e. $L\sim L'$ iff $\exists \lambda \in E^\times$ such that $L=\lambda L'$). There are $1$-cells between similarity classes if and only if there are representatives $L_0$ and $L_1$ such that there are proper inclusions $L_1 \subset L_0 \subset \pi^\inv L_1$, and $\SL_2(E)$ acts on the building by $L\mapsto gL\subset E^2$, which is well-defined up to similarity and preserves edges. For example, there is a distinguished point $[\mc O_E \oplus \mc O_E]$ with an edge connecting to $[\mc O_E \oplus \pi \mc O_E]$. A $1$-simplex is called a \emph{chamber} of an apartment (i.e. infinite geodesic) containing it, and the example from the previous sentence will be our distinguished chamber $C=C_0 \cup C_1$, where $C_0$ is the set of $0$-facets in $C$ and $C_1$ is the $1$-facet. 

While the $\SL_2(E)$-action is not transitive on $0$-cells (since for example $[\mc O_E\oplus \pi \mc O_E]= {\begin{bmatrix}{1}& \\ & {\pi}\end{bmatrix}}[\mc O_E\oplus \mc O_E]$), it is in fact transitive on chambers. In this way, the Bruhat-Tits building is roughly a nonarchimedean analogue of the real symmetric space of $\SL_2$. Then the action splits the $0$-cells into two disjoint classes corresponding to the translates of $[\mc O_E\oplus \mc O_E]$ and $[\mc O_E \oplus \pi \mc O_E]$, which have stabilizer subgroups \[U:=\SL_2(\mc O_E) \text{ and } U':=\begin{bmatrix}1 & \\ & \pi \end{bmatrix}\SL_2(\mc O_E)\begin{bmatrix}1 & \\ & \pi^\inv \end{bmatrix}= \begin{bmatrix}\mc O_E & \pi^\inv \mc O_E\\ \pi \mc O_E & \mc O_E \end{bmatrix}\cap \SL_2(E),\] respectively. The stabilizer of $C$ is \[\Gamma_0 := \set{\begin{bmatrix}* & *\\ & *\end{bmatrix}\mod \pi }\cap \SL_2(\mc O_E) = U\cap U'.\]
It will also be useful to denote these stabilizer subgroups by $P_F$ (for parahoric subgroup), where $F$ is the fixed facet. Since all of these groups are compact open subgroups of $\SL_2(E)$, the quotients are all discrete, so that cellular chains with $\Lambda$ coefficients agree with elements of the compact-induction from the stabilizer groups.
\begin{proposition}
There is an $\SL_2(E)$-equivariant isomorphism from the cellular chain complex of $\BT$ to 
\[\cdots \to 0 \to \cind_{\Gamma_0}^{\SL_2(E)}\Lambda \xrightarrow{\delta} \cind_{U}^{\SL_2(E)}\Lambda\oplus \cind_{U'}^{\SL_2(E)}\Lambda\xrightarrow{\deg} \Lambda \to 0\to \cdots,\]
with $\Lambda$ sitting in degree $0$.
\end{proposition}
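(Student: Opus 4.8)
The plan is to construct the isomorphism of chain complexes facet by facet, using the fact that $\SL_2(E)$ acts transitively on the chambers of $\BT$ while having exactly two orbits of vertices. First I would recall the general shape of the cellular chain complex of a simplicial complex with a group action: for each dimension $k$, the module of (oriented) $k$-chains is the free $\Lambda$-module on the $k$-simplices, and if a group acts simplicially then this decomposes as a direct sum over orbit representatives of the modules $\Lambda[G/\mathrm{Stab}(F)]$ — with a sign subtlety when a group element stabilizes a simplex but reverses its orientation. Since $\BT$ is $1$-dimensional, only $k=0$ and $k=1$ contribute, and the complex is concentrated in those two degrees (augmented by $\Lambda$ in degree $0$ if one wants reduced homology, but here we keep the unreduced complex ending in the augmentation $\deg$).

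The key identifications are: (i) the $0$-simplices form two $\SL_2(E)$-orbits, with representatives the vertices $[\mc O_E\oplus\mc O_E]$ and $[\mc O_E\oplus\pi\mc O_E]$ and stabilizers $U=\SL_2(\mc O_E)$ and $U'$ as displayed, so the module of $0$-chains is $\Lambda[\SL_2(E)/U]\oplus\Lambda[\SL_2(E)/U']$; (ii) the $1$-simplices form a single orbit, with representative the distinguished chamber $C$ and stabilizer $\Gamma_0=U\cap U'$, so the module of $1$-chains is $\Lambda[\SL_2(E)/\Gamma_0]$. Here I would use that for a compact open $K\subset\SL_2(E)$, since the quotient $\SL_2(E)/K$ is discrete, one has a canonical $\SL_2(E)$-equivariant identification $\Lambda[\SL_2(E)/K]\cong\cind_K^{\SL_2(E)}\Lambda$ of the free $\Lambda$-module on the coset space with the compact induction of the trivial $K$-representation (this is just the statement that $\cind_K^{\SL_2(E)}\Lambda$ consists of the finitely-supported $K$-invariant functions on $\SL_2(E)$, i.e. the functions on $K\backslash\SL_2(E)$, with the right regular action — matching $\Lambda[\SL_2(E)/K]$ after $g\mapsto g^{-1}$). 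One subtle point worth verifying is that $\Gamma_0$ does not reverse the orientation of $C$: an element swapping the two endpoints of $C$ would have to exchange the $U$-orbit and the $U'$-orbit of vertices, which is impossible since those orbits are distinct, so $\Gamma_0$ fixes $C$ pointwise and the $1$-chain module is genuinely $\cind_{\Gamma_0}^{\SL_2(E)}\Lambda$ with no sign character. The boundary map $\delta$ is then the usual simplicial differential sending the class of $C$ to the (signed) sum of its two endpoints, which under the above identifications is exactly the map induced by the two inclusions $\Gamma_0\hookrightarrow U$ and $\Gamma_0\hookrightarrow U'$ (with a minus sign on one summand coming from the orientation of $C$), and $\deg$ is the augmentation summing the coefficients. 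Equivariance of all these maps is automatic from equivariance of the simplicial differential and augmentation.

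The main obstacle — though it is more bookkeeping than a genuine difficulty — is pinning down the orbit and stabilizer computations precisely and checking the orientation/sign issues: verifying transitivity on chambers (the Bruhat decomposition / Iwahori theory for $\SL_2$), verifying that the two vertex orbits are exactly the translates of $[\mc O_E\oplus\mc O_E]$ and $[\mc O_E\oplus\pi\mc O_E]$ with the claimed stabilizers, and confirming that no stabilizer reverses an orientation so that no sign characters intrude into the compact inductions. Once these standard facts about the tree $\BT$ are in hand, the isomorphism of complexes is just the tautological rewriting of cellular chains as compact inductions from parahoric subgroups, and the differentials match by inspection.
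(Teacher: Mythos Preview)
Your proposal is correct and follows essentially the same approach as the paper: both arguments identify each chain module with a compact induction via the orbit--stabilizer description (two vertex orbits with stabilizers $U,U'$ and one chamber orbit with stabilizer $\Gamma_0$), then read off the differential from the simplicial boundary. The paper carries this out by writing down the explicit map on indicator functions and verifying equivariance by direct computation, whereas you phrase it more structurally as the tautological isomorphism $\Lambda[\SL_2(E)/K]\cong\cind_K^{\SL_2(E)}\Lambda$; your additional remark that $\Gamma_0$ cannot reverse the orientation of $C$ (since the two endpoints lie in distinct $\SL_2(E)$-orbits) is a point the paper leaves implicit.
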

\begin{proof}
Let $([L_0],[L_1])$ be a simple $1$-chain. By transitivity of the $\SL_2(E)$-action on chambers, we can write $([L_0],[L_1]) = g^\inv([\mc O_E\oplus \mc O_E],[\mc O_E,\pi\mc O_E])$ for $g\in \SL_2(E)$, which is unique on the level of $\Gamma_0$-cosets. Then the $\Lambda$-linear map sending $([L_0],[L_1])$ to the indicator function $f_{\Gamma_0\cdot g}\in \cind_{\Gamma_0}^{\SL_2(E)}\Lambda$ is a linear isomorphism by the remark before the proposition. The map is equivariant as
\begin{align*}
h([L_0],[L_1]) & = h g^\inv ([\mc O_E\oplus \mc O_E],[\mc O_E\oplus \pi \mc O_E])\\
&= (gh^\inv)^\inv ([\mc O_E\oplus \mc O_E],[\mc O_E\oplus \pi \mc O_E])\\
& \mapsto f_{\Gamma_0\cdot gh^\inv}\\
&= h\cdot f_{\Gamma_0 \cdot g}.
\end{align*}

There are similar isomorphisms identifying the $0$-chains generated by $\SL_2(E)$-translates of $[\mc O_E\oplus \mc O_E]$ or $[\mc O_E\oplus \pi \mc O_E]$ with $\cind_{U}^{\SL_2(E)}\Lambda$ or $\cind_{U'}^{\SL_2(E)}\Lambda$, respectively, so that the full group of $0$-chains is their direct sum. Then we conclude that $\delta$ is the map extending $f_{\Gamma_0\cdot g}\mapsto f_{U\cdot g}- f_{U'\cdot g}$, which is well-defined since $\Gamma_0=U\cap U'$.
\end{proof}

It is known that $\BT$ is contractible, so that the complex above gives an exact resolution of $\Lambda$. Now fix a smooth $\SL_2(E)$-representation $\sigma$ and an injective resolution $\sigma \hookrightarrow I^0 \to I^1 \to \cdots$. Frobenius reciprocity yields functorial isomorphisms for any open subgroup $H\subset \SL_2(E)$
\[\Hom_{\SL_2(E)}(\cind_H^{\SL_2(E)}\Lambda, \sigma)\cong \Hom_H(\Lambda, \sigma)\cong \sigma^H.\]
In particular, $\Ext_{\SL_2(E)}^q(\cind_{P_F}^{\SL_2(E)}\Lambda,\sigma)\cong H^q(P_F,\sigma)$.
Then there is a double chain complex $C^{p,q}= \Hom_{\SL_2(E)}(\bigoplus_{F\in C_p}\cind_{P_F}^{\SL_2(E)}\Lambda,I^q)$ with associated (vertical) spectral sequence 
\begin{align}\label{sl2-SS}
E_2^{p,q}=H^p\left(\bigoplus_{F\in C_\bullet}H^q(P_F,\sigma)\right)\implies \Ext_{\SL_2(E)}^{p+q}(\Lambda,\sigma)= H^{p+q}(\SL_2(E),\sigma).
\end{align}
Under a mild hypothesis on the torsion of $\Lambda$, this spectral sequence degenerates on the second page. 
\begin{lemma}\label{suff-large-tors-1}
Suppose that the torsion of $\Lambda$ is coprime to $(q-1)^2(q+1)$. Then for any facet $F\in C$, the higher cohomology groups $H^q(P_F,\sigma)$ vanish.
\end{lemma}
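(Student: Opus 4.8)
The plan is to reduce the statement to the elementary fact that the continuous cohomology of a profinite group vanishes in positive degrees as soon as every prime dividing the pro-order of the group is invertible in the coefficient ring. First I would record the explicit description of the parahorics: for each facet $F$ of the standard chamber $C$ the group $P_F$ is a compact open subgroup of $\SL_2(E)$, namely $P_F = U = \SL_2(\mc{O}_E)$ or $P_F = U'$ (its conjugate by $\mathrm{diag}(1,\pi)$) when $F$ is one of the two vertices of $C$, and $P_F = \Gamma_0 = U \cap U'$, the standard Iwahori subgroup, when $F$ is the edge. In particular $P_F$ is profinite, and the restriction $\res^{\SL_2(E)}_{P_F}\sigma$ of the smooth representation $\sigma$ is a discrete $\Lambda[P_F]$-module, so that $H^q(P_F,\sigma) = \varinjlim_N H^q(P_F/N,\sigma^N)$, the colimit taken over the open normal subgroups $N \trianglelefteq P_F$.

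Second, I would identify the primes that occur in the pro-orders. With $\Gamma_k = \ker(\SL_2(\mc{O}_E)\to \SL_2(\mc{O}_E/\pi^k))$ one computes $|\SL_2(\mc{O}_E)/\Gamma_k| = q^{3k-2}(q^2-1)$ and, since $\Gamma_1 \subseteq \Gamma_0$, $|\Gamma_0/\Gamma_k| = q^{3k-2}(q-1)$; hence the only primes dividing the pro-order of $U$ or of $U'$ are $p$ and those dividing $(q-1)(q+1)$, while for $\Gamma_0$ only $p$ and those dividing $q-1$ occur. By the standing conventions the exponent $n$ of $\Lambda$ is prime to $p$, and by hypothesis it is prime to $(q-1)^2(q+1)$; therefore each of these primes is a unit in $\Z/n$, hence in $\Lambda$, and consequently $|P_F/N| \in \Lambda^\times$ for every open normal $N \trianglelefteq P_F$.

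Finally I would invoke the transfer argument: for a finite group $G$ and a $\Lambda[G]$-module $M$ the composite $\mathrm{cor}^G_1 \circ \mathrm{res}^G_1$, which equals multiplication by $|G|$ on $H^q(G,M)$, factors through $H^q(\{1\},M)=0$ for $q>0$, so $H^q(G,M)$ is annihilated by $|G|$; when $|G|\in\Lambda^\times$ this forces $H^q(G,M)=0$ for all $q>0$. Applying this with $G=P_F/N$ and $M=\sigma^N$ kills every term in the colimit computing $H^q(P_F,\sigma)$, giving $H^q(P_F,\sigma)=0$ for $q>0$. There is no genuine obstacle here — the argument is purely formal — and the only point needing attention is the bookkeeping of group orders; in fact this lemma only uses coprimality of $n$ to $q(q^2-1)$, the extra factor $(q-1)$ in the hypothesis being what the $\GL_2$- and torus-level computations elsewhere in the appendix require. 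Granting the lemma, the spectral sequence $(\ref{sl2-SS})$ collapses onto its row $q=0$ and identifies $H^{\bullet}(\SL_2(E),\sigma)$ with the cohomology of the complex $F\mapsto \bigoplus_{F\in C_\bullet}\sigma^{P_F}$.
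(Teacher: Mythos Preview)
Your proof is correct. Both your argument and the paper's rest on the same underlying fact: the pro-order of each parahoric $P_F$ involves only the prime $p$ and primes dividing $q^2-1$, all of which are units in $\Lambda$. The execution differs slightly. The paper chooses an explicit two-step filtration, using the pro-$p$ normal subgroup $U^{(1)}$ (respectively $\Gamma_1$) and the Lyndon--Hochschild--Serre spectral sequence to reduce to the cohomology of the finite quotient $\SL_2(\F_q)$ (respectively the diagonal torus over $\F_q$), where the order hypothesis kills the higher groups. You instead invoke the colimit formula $H^q(P_F,\sigma)=\varinjlim_N H^q(P_F/N,\sigma^N)$ together with the transfer identity $\mathrm{cor}\circ\mathrm{res}=|G|$ to annihilate every term at once. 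Your route is marginally cleaner in that it avoids naming a particular pro-$p$ subgroup or appealing to a spectral sequence; the paper's route has the virtue of making the relevant finite groups $\SL_2(\F_q)$ and $\F_q^\times$ visible, which clarifies exactly which primes matter. Your observation that only coprimality to $q(q^2-1)$ is used here, the extra factor $q-1$ being needed elsewhere, is accurate and worth retaining.
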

\begin{proof}
There is a filtration 
\[1\to U^{(1)}\to U \xrightarrow{\text{mod } \pi} \SL_2(\F_q)\to 1,\]
where $U^{(1)}$ is the pro-$p$ level-$1$ subgroup of elements congruent to the identity mod $\pi$. Then the associated Lyndon-Hochschild-Serre spectral sequence \[H^i\left(U/U^{(1)},H^j(U^{(1)},\sigma)\right)\Rightarrow H^{i+j}(U,\sigma)\]
degenerates immediately as $\sigma$ does not contain $p$-torsion. Then for each $i$ there is an isomorphism $H^i(\SL_2(\F_q),\sigma^{U^{(1)}})\cong H^i(U,\sigma)$. These higher cohomology groups vanish as long as the torsion of $\sigma$ does not divide the order of $\SL_2(\F_q)$, which is $q(q^2-1)$. The exact same argument works for $U'$ since it is isomorphic to $U$.

Similarly, we consider the following filtration of $\Gamma_0$ which induces the inclusion of the standard unipotent matrices into the standard Borel subgroup modulo $\pi$:
\[1\to \Gamma_1 \to \Gamma_0 \to \F_q^\times \times \F_q^\times\to 1.\]
Writing $\Gamma_1= \set{\begin{bmatrix}1+\pi \mc O_E & \mc O_E\\ \pi \mc O_E & 1+\pi \mc O_E \end{bmatrix}}$ shows that $\Gamma_1$ is indeed pro-$p$, so that the $\Gamma_0$-cohomology of $\sigma$ is reduced to the $\F_q^\times \times \F_q^\times$-cohomology as above. Then the higher cohomology groups vanish under the assumption on the torsion of $\Lambda$.
\end{proof}

\begin{theorem}\label{sl2-cohomology}
Let $\sigma$ be a smooth $\SL_2(E)$-representation with coefficients in $\Lambda$, where the torsion of $\Lambda$ is coprime to $(q-1)^2(q+1)$.
Then 
\[H^i(\SL_2(E),\sigma)= \begin{cases}\sigma^{\SL_2(E)},\quad & i=0\\ \sigma^{\Gamma_0}/(\sigma^{U}+\sigma^{U'}),\quad & i=1\\ 0,\quad &\text{else.}\end{cases}\]
\end{theorem}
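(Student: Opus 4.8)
The strategy is to combine the spectral sequence (\ref{sl2-SS}) with the vanishing established in Lemma~\ref{suff-large-tors-1}. First I would note that, under the standing hypothesis that the torsion of $\Lambda$ is coprime to $(q-1)^2(q+1)$, Lemma~\ref{suff-large-tors-1} gives $H^q(P_F,\sigma)=0$ for every facet $F$ of the distinguished chamber $C$ and every $q\geq 1$. Consequently the $E_2$-page of (\ref{sl2-SS}) is concentrated in the single row $q=0$, so no nonzero differential can occur on any page: the spectral sequence degenerates, $E_2=E_\infty$, and the abutment filtration is trivial, whence
\[
H^i(\SL_2(E),\sigma)\;\cong\;E_2^{i,0}\;=\;H^i\Bigl(\bigoplus_{F\in C_\bullet}\sigma^{P_F}\Bigr),
\]
the cohomology of the complex obtained by applying $\Hom_{\SL_2(E)}(-,\sigma)$ to the Bruhat--Tits resolution of $\Lambda$ from the preceding proposition and using Frobenius reciprocity $\Hom_{\SL_2(E)}(\cind_{H}^{\SL_2(E)}\Lambda,\sigma)\cong\sigma^{H}$. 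Since $\BT$ is one-dimensional, this complex lives in cohomological degrees $0$ and $1$ only, so $H^i(\SL_2(E),\sigma)=0$ for $i\geq 2$ is immediate.

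It remains to compute the two-term complex $\sigma^{U}\oplus\sigma^{U'}\xrightarrow{\ \partial^{*}\ }\sigma^{\Gamma_0}$. Here $\partial^{*}$ is the transpose of $\delta\colon f_{\Gamma_0\cdot g}\mapsto f_{U\cdot g}-f_{U'\cdot g}$, so under the Frobenius reciprocity identifications (each sending a morphism to its value on the indicator function of the trivial coset, and using $\Gamma_0\subset U$, $\Gamma_0\subset U'$ to regard $\sigma^{U},\sigma^{U'}$ as submodules of $\sigma^{\Gamma_0}$) the map $\partial^{*}$ becomes $(v,v')\mapsto v-v'$. Its image is precisely the submodule $\sigma^{U}+\sigma^{U'}\subseteq\sigma^{\Gamma_0}$, so $H^1(\SL_2(E),\sigma)=\operatorname{coker}\partial^{*}=\sigma^{\Gamma_0}/(\sigma^{U}+\sigma^{U'})$, as asserted. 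Its kernel is $\{(v,v):v\in\sigma^{U}\cap\sigma^{U'}\}\cong\sigma^{U}\cap\sigma^{U'}$; since $U=\SL_2(\mc O_E)$ and $U'$ generate $\SL_2(E)$ (for instance because $\SL_2(E)=\Gamma_0\widetilde W\Gamma_0$ and $U,U'$ between them contain representatives of the two simple affine reflections generating the affine Weyl group $\widetilde W$), one has $\sigma^{U}\cap\sigma^{U'}=\sigma^{\langle U,U'\rangle}=\sigma^{\SL_2(E)}$, giving $H^0(\SL_2(E),\sigma)=\sigma^{\SL_2(E)}$.

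I do not expect a genuine obstacle here: the real work is already contained in Lemma~\ref{suff-large-tors-1}, and the remaining steps are bookkeeping. The two points deserving a sentence of care are the explicit computation of $\partial^{*}$ from the formula for $\delta$ given in the preceding proposition, and the elementary fact that the two maximal parahoric subgroups $U$ and $U'$ generate $\SL_2(E)$; both are standard.
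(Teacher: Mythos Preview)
Your proposal is correct and follows essentially the same approach as the paper: both invoke the spectral sequence (\ref{sl2-SS}), use Lemma~\ref{suff-large-tors-1} to collapse it to the bottom row, identify the resulting two-term complex via Frobenius reciprocity, and compute $\delta^{*}$ as the difference map $(v,w)\mapsto v-w$. The only minor addition in your version is the explicit verification that $\sigma^{U}\cap\sigma^{U'}=\sigma^{\SL_2(E)}$ via the fact that $U$ and $U'$ generate $\SL_2(E)$; the paper simply writes down the cohomology of the complex and leaves $H^{0}=\sigma^{\SL_2(E)}$ as tautological (since $H^{0}(G,\sigma)=\sigma^{G}$ by definition).
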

\begin{remark}
    Note that the only representations that potentially have nonzero cohomology are those $\sigma$ with $\sigma^{\Gamma_0}\neq 0$, i.e. the representations with nonzero Iwahori fixed vectors.
\end{remark}

\begin{proof}
The double complex giving rise to the spectral sequence (\ref{sl2-SS}) can be written in the first quadrant as follows:
\begin{small}
\begin{center}
\begin{tikzcd}[row sep= small, column sep = small]
 & \vdots & \vdots &\\
0 \arrow[r] & \Hom_{\SL_2(E)}(\bigoplus\limits_{F\in C_0}\cind_{P_F}^{\SL_2(E)}\Lambda,I^1 )\arrow[r]\arrow[u] & \Hom_{\SL_2(E)}(\cind_{\Gamma_0}^{\SL_2(E)}\Lambda,I^1 )\arrow[r] \arrow[u]& 0\\
0 \arrow[r] & \Hom_{\SL_2(E)}(\bigoplus\limits_{F\in C_0}\cind_{P_F}^{\SL_2(E)}\Lambda,I^0 )\arrow[r,"\delta_0^*"]\arrow[u,"\partial^0_*",swap] &\Hom_{\SL_2(E)}(\cind_{\Gamma_0}^{\SL_2(E)}\Lambda,I^0 )\arrow[r] \arrow[u]& 0\\
 & 0\arrow[u] & 0 \arrow[u]
\end{tikzcd}
\end{center}
\end{small}
Lemma \ref{suff-large-tors-1} implies vanishing of the vertical cohomology groups above the $0^\text{th}$ row, so that the entire $E_2^{\bullet,\bullet}$-page can be understood by the induced map $\overline{\delta_0^*}$ on the $E_1^{\bullet,\bullet}$-page. A vertical $0$-cohomology class of $\Hom(\bigoplus_F\cind_{P_F}^{\SL_2(E)}\Lambda,-)$ is in the kernel of $\partial^0_*$, and hence is a map $\varphi$ factoring through $\sigma \subset I^0$:
\begin{center}
\begin{tikzcd}
\cind_U^{\SL_2(E)}\Lambda \oplus \cind_{U'}^{\SL_2(E)}\Lambda \arrow[r,"\varphi"]\arrow[dr,dashed, "\overline{\varphi}",swap] & I^0\\
& \sigma. \arrow[u,hookrightarrow]
\end{tikzcd}
\end{center}
Then the vertical cohomology class $[\varphi]$ corresponds to $\overline{\varphi}$ via the isomorphism \[\ker \partial_{*}^0\cong \Hom_{\SL_2(E)}(\bigoplus_F \cind_{P_F}^{\SL_2(E)}\Lambda,\sigma).\]
This means $\overline{\delta_0^*}(\overline{\varphi})$ is simply the precomposition of $\overline{\varphi} \circ \delta :\cind_{\Gamma_0}^{\SL_2(E)}\Lambda\to \sigma$. Recall the series of isomorphisms
\[
\begin{matrix}
\Hom_{\SL_2(E)}(\cind_H^{\SL_2(E)}\Lambda, \sigma) &\xrightarrow{\cong}& \Hom_H(\Lambda, \sigma)&\xrightarrow{\cong}& \sigma^H,\\
\overline{\varphi} &\mapsto& \overline{\varphi}\circ (1\mapsto f_H) &\mapsto& \overline{\varphi}(f_H),
\end{matrix}
\]
where $f_H$ is the indicator function on the trivial coset $H$.
We conclude that the bottom row of the $E_1^{\bullet,\bullet}$-page is the sequence 
\[\cdots \to 0 \to \sigma^{U}\oplus \sigma^{U'}\xrightarrow{\delta^*}\sigma^{\Gamma_0}\to 0\to \cdots,\]
with $\delta^*(v,w)= v-w$ being the difference map (since $\overline{\varphi}\circ \delta(f_{\Gamma_0})=\overline{\varphi}((f_U,-f_{U'}))=\overline{\varphi_1}(f_U)-\overline{\varphi_2}(f_{U'})$).

Since $\delta^*$ is the only nontrivial differential on the first page, we have $H^p(\SL_2(E),\sigma)\cong H^p(0 \to \sigma^{U}\oplus \sigma^{U'}\xrightarrow{\delta^*}\sigma^{\Gamma_0}\to 0)$.
\end{proof}

\begin{example}\label{sl2-cohomology-steinberg}
We use Theorem \ref{sl2-cohomology} to compute the smooth $\SL_2(E)$-cohomology of $\Lambda$, $\ind_B^{\SL_2(E)}\Lambda$, and the Steinberg representation $\st$. Recall there is a split short exact sequence 
\[0\to \Lambda \to \ind_B^{\SL_2(E)}\Lambda\to \st\to 0,\]
where the map on the left is the inclusion by the diagonal.
The theorem immediately implies that the cohomology of $\Lambda$ is $\Lambda$ in degree $0$, and that the degree $0$ cohomology of $\st$ vanishes. Then the long exact sequence gives isomorphisms $H^0(\SL_2(E),\ind_{B}^{\SL_2(E)}\Lambda)\cong \Lambda$ and $H^1(\SL_2(E),\ind_{B}^{\SL_2(E)}\Lambda)\cong H^1(\SL_2(E),\st)$. We proceed to compute the $U,\ U',$ and $\Gamma_0$-orbits acting on $\ind_{B}^{\SL_2(E)}\Lambda$. First, we choose representatives of $\Proj^1(E)=B\backslash \SL_2(E)$ by identifying
\[\begin{bmatrix}0 & -1\\ 1 & t\end{bmatrix}\leftrightarrow [1:t],\quad \begin{bmatrix}1 & 0\\ 0 & 1\end{bmatrix}\leftrightarrow [0:1]=\infty.\]
\begin{itemize}
    \item $\Gamma_0:$ We claim that there are exactly two orbits represented by $\infty$ and $0$. The orbit of $\infty$ is computed by considering an arbitrary product
    \[\begin{bmatrix}\alpha & \beta\\ 0 & \alpha^\inv \end{bmatrix}\begin{bmatrix}1 & 0\\ 0 & 1\end{bmatrix}\begin{bmatrix}a & b\\ \pi c & d\end{bmatrix}=\begin{bmatrix}\alpha a+\pi \beta c & \alpha b +\beta d\\ \alpha^\inv \pi c & \alpha^\inv d\end{bmatrix},\]
    with $\alpha,\beta\in E$ and $a,b,c,d\in \mc O_E$. Whenever $c\neq 0$, choosing $\alpha= \pi c$ and $\beta = -a$ yields
    \[\begin{bmatrix}0 & b\pi c -ad\\ 1 & \frac{d}{\pi c}\end{bmatrix}=\begin{bmatrix}0 & -1\\ 1 & \frac{d}{\pi c}\end{bmatrix}.\] Then since $d\in \mc O_E^\times$ is a unit, $\frac{d}{\pi c}$ has strictly negative valuation. Choosing $c$ to be a non-negative power of $\pi$, we see that the orbit of $\infty $ contains all of the points $[1:t]$ where the valuation of $t$ is strictly negative.\\
    Now we consider the orbit of $[1:t]$ with $t\in \mc O_E$:
    \[\begin{bmatrix}\alpha & \beta\\ 0 & \alpha^\inv \end{bmatrix}\begin{bmatrix}0 & -1\\ 1 & t\end{bmatrix}\begin{bmatrix}a & b\\ \pi c & d\end{bmatrix}=\begin{bmatrix}-\pi \alpha c +\beta (a+t\pi c) & -\alpha d +\beta(b+td) \\ \alpha^\inv (a+t\pi c) & \alpha^\inv (b+td)\end{bmatrix},\]
    and choosing $\alpha = (a+t\pi c)\in \mc O_E^\times$ is allowed since $a\in \mc O_E^\times$. Furthermore set $\beta = \pi c$, so that the matrix becomes 
    \[\begin{bmatrix}0 & -ad -t\pi c d +\pi cd +\pi ctd\\ 1 & (b+td)/(a+t\pi c)\end{bmatrix}=\begin{bmatrix}0 & -1\\ 1 & (b+td)/(a+t\pi c)\end{bmatrix}.\]
    Note that since the only choices so far were made on $\alpha$ and $\beta$, the $\Gamma_0$-orbit will not contain $\infty$ and is hence disjoint. We claim that this orbit is transitive on $\mc O_E= [1:\mc O_E]\subset \Proj^1(E)$. Choose $c=0$ and $a=1$, so that $d$ must be $1$. Then $b\in \mc O_E$ can be picked freely, so that $[1:b+t]=[1:s]$ is in the orbit for any $s\in \mc O_E$.
    \item $U=\SL_2(\mc O_E):$ Similarly we compute 
    \[\begin{bmatrix}\alpha & \beta\\ 0 & \alpha^\inv \end{bmatrix}\begin{bmatrix}1 & 0\\ 0 & 1\end{bmatrix}\begin{bmatrix}a & b\\ c & d\end{bmatrix}=\begin{bmatrix}\alpha a+ \beta c & \alpha b +\beta d\\ \alpha^\inv c & \alpha^\inv d\end{bmatrix},\]
    with $\alpha,\beta\in E$ and $a,b,c,d\in \mc O_E$. Whenever $c\neq 0$, fixing $\alpha = c$ and $\beta = -a$ gives the matrix
    \[\begin{bmatrix}0 & -1\\ 1 & d/c\end{bmatrix},\]
    so that for any $t\in E$, setting $d= tc$ with both $c,d$ integral and $b= (ad-1)/c$ shows that the $U$-action is transitive.
    \item $U'=\begin{bmatrix}\mc O_E & \pi^\inv \mc O_E\\ \pi \mc O_E & \mc O_E\end{bmatrix}\cap \SL_2(E):$ As above, the orbit of $\infty$ can be computed by 
    \[\begin{bmatrix}\alpha & \beta\\ 0 & \alpha^\inv \end{bmatrix}\begin{bmatrix}1 & 0\\ 0 & 1\end{bmatrix}\begin{bmatrix}a & \pi^\inv b\\ \pi c & d\end{bmatrix}=\begin{bmatrix}\alpha a+ \pi \beta c & \alpha \pi^\inv b +\beta d\\ \alpha^\inv \pi c & \alpha^\inv d\end{bmatrix}.\]
    As in the previous two cases, suppose $c\neq 0$ and set $\alpha =\pi c$, $\beta = -a$, reducing to the form
    \[\begin{bmatrix}0 & -1\\ 1 & d/\pi c\end{bmatrix}.\]
    It cannot be the case that both $d,c\in \mc O_E$ are non-units. If $c\in \mc O_E^\times,$ without loss of generality $c=1$ so that $\frac{d}{\pi c}= \frac{1}{\pi}d$ is an arbitrary element of $E$ with valuation $\geq -1$. Otherwise, $c\in \pi \mc O_E$ and $d$ is a unit (without loss of generality $d=1$), which gives $\frac{d}{\pi c}= \frac{1}{\pi^2 c'}$ where $c'\in \mc O_E$ is arbitrary. That is, every element of $E$ with valuation $<-1$ is also in the orbit of $\infty$, so that the $U'$-action is also transitive. 
\end{itemize}
Then the $\Gamma_0$-fixed points are the locally constant functions on $\Proj^1(E)$ that are constant on the two orbits, hence $(\ind_B^{\SL_2(E)}\Lambda)^{\Gamma_0}\cong \Lambda^2$ (note that such functions are smooth vectors, so they exist in the smooth subrepresentation). Furthermore, the $U$- and $U'$-fixed points are the constant functions $\Lambda\smallvector{1}{1}\subset \Lambda^2$. 
Now, the short exact sequence $0\to \Lambda\to \ind_{B}^{\SL_2(E)} \Lambda \to \st \to 0$ is sent under $R\Hom(\Lambda,-)$ to the sequence
\begin{center}
\begin{tikzcd}[ampersand replacement=\&]
	0  \arrow[r]\& \Lambda^2 \arrow[r,"\mathrm{id}"]\arrow[d,"{(1,-1)}",swap] \& \Lambda^2 \arrow[r]\arrow[d,"{ \begin{pmatrix} 1 & -1 \\ 1 & -1\end{pmatrix} }"]\& \st^{U}\oplus \st^{U'} \arrow[d]\arrow[r]\& 0 \\
	0 \arrow[r] \& \Lambda \arrow[r,"{\smallvector{1}{1}}"] \& \Lambda^2 \arrow[r] \& \st^{\Gamma_0}\arrow[r]\& 0,
\end{tikzcd}
\end{center}
inducing the short exact sequence
\[0\to \Lambda\smallvector{1}{1} \xrightarrow{\mathrm{id}} \Lambda\smallvector{1}{1} \to H^0(\SL_2(E),\st)\to 0 \to \Lambda^2/  \smallvector{1}{1}\Lambda \to H^1(\SL_2(E),\st) \to 0.  \]
We conclude that $H^1(\SL_2(E),\ind_B^{\SL_2(E)}\Lambda)\cong \Lambda^2/\smallvector{1}{1}\cdot \Lambda \cong \Lambda$ has elements represented by functions that are constant on $\mc O_E\subset \Proj^1(E)$ and $0$ on the complement.
\end{example}

The resolution of Schneider and Stuhler can also be used to compute smooth $\SL_2(E)$-homology groups. We remark that there is a similar spectral sequence as above filtering the $\SL_2(E)$-homology groups by the homology of $U,U',$ and $\Gamma$-modules. However in our case, there is already a quite strong duality statement relating the smooth dual of homology to the cohomology of the smooth dual. 
\begin{definition}
Suppose $G$ is a locally pro-$p$ group. Let $\mc S(G)$ be the algebra of compactly supported locally constant $\Lambda$-valued functions on $G$, with pointwise addition and multiplication given by convolution
\[(f*f')(g)=\int_{h\in G}f(gh^\inv)f'(h) d\mu_G.\] Let $\mc H(G)$ be the algebra of compactly supported locally constant distributions on $\mc S(G)$, with multiplication given by convolution of distributions. $\mc H(G)$ is called the \textit{Hecke algebra} of $G$.
\end{definition}
\begin{remark}\hfill
\begin{itemize}
    \item Fixing a Haar measure $\mu_G$ gives an isomorphism $\mc S(G)\xrightarrow{ \sim} \mc H(G)$ via multiplication by $\mu_G$.
    \item There is a $\Lambda$-linear equivalence of categories between smooth $G$-representations and non-degenerate modules over $\mc H(G)$.
    \item Since $G$ is locally pro-$p$ and $\Lambda$ is discrete, the locally constant functions are precisely the continuous maps. Then $C^0(G,\Lambda)= \varinjlim_K C^0(K\backslash G, \Lambda)$, and compact support is equivalent to having finite support at each level. Then there is a $G$-equivariant isomorphism $\mc S(G)\cong \varinjlim_K \Lambda[K\backslash G]$.
\end{itemize}
\end{remark}

The following proposition is a special case of \cite[Proposition~VII.4.2]{fargues2021geometrization}.
\begin{proposition}\label{dual-of-homology}
Let $\sigma$ be a smooth representation of a locally pro-$p$ group $G$ with coefficients in $\Lambda$. Then for all $i$ there is an isomorphism
\[H_{i}(G,\sigma)^\vee \cong H^{i}(G,\sigma^\vee).\]
\end{proposition}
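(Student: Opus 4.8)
This is the special case $G$ locally pro-$p$, $\Lambda$ as in our conventions, of \cite[Proposition~VII.4.2]{fargues2021geometrization}; I sketch an independent argument. The plan is to resolve $\sigma$ by projectives adapted to the smooth representation theory of $G$, dualize that resolution, and use self-injectivity of $\Lambda$ to pass $\Hom_\Lambda(-,\Lambda)$ through the homology. First I would recall that the category of smooth $G$-representations in $\Lambda$-modules has enough projectives of the form $\bigoplus_\alpha \cind_{K_\alpha}^{G}\Lambda$ with $K_\alpha\subset G$ open pro-$p$: by Frobenius reciprocity $\Hom_G(\cind_K^{G}\Lambda,\sigma)\cong \sigma^K$ and $(-)^K$ is exact (the pro-order of $K$ is invertible in $\Lambda$), so each $\cind_K^{G}\Lambda$ is projective; and every smooth $\sigma$ is a quotient of $\bigoplus_v \cind_{K_v}^{G}\Lambda$, $v$ running over vectors of $\sigma$ with $K_v$ fixing $v$. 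Iterating yields a projective resolution $Q_\bullet\to\sigma\to 0$ with each $Q_j$ of this form. By definition $H_i(G,\sigma)$ is the $i$-th homology of the complex of coinvariants $(Q_\bullet)_G:=\Lambda\otimes_{\mc{H}(G)}Q_\bullet$.

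Next I would dualize. Under our hypotheses ($\Lambda$ self-injective, $p\in\Lambda^\times$) the smooth contragredient $(-)^\vee$ is exact, so $0\to\sigma^\vee\to Q_\bullet^\vee$ is an injective coresolution: $(\cind_K^{G}\Lambda)^\vee\cong\ind_K^{G}\Lambda$, which is injective because smooth induction from the open subgroup $K$ is right adjoint to the exact restriction functor and $\Lambda$ is injective as a smooth $K$-representation (again since the pro-order of $K$ is invertible in $\Lambda$ and $\Lambda$ is self-injective); the smooth vectors in a product of injectives are injective, handling the direct sums. Hence $H^i(G,\sigma^\vee)=\Ext^i_G(\Lambda,\sigma^\vee)$ is the $i$-th cohomology of $\Hom_G(\Lambda,Q_\bullet^\vee)=(Q_\bullet^\vee)^G$.

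The bridge is the natural identification $(\tau^\vee)^G\cong(\tau_G)^\vee$ valid for every smooth $\tau$: a $G$-invariant $\Lambda$-linear functional on $\tau$ is automatically smooth and factors uniquely through $\tau_G=\Lambda\otimes_{\mc{H}(G)}\tau$, and conversely, so $(\tau^\vee)^G=\Hom_\Lambda(\tau_G,\Lambda)$, and $\tau_G$ carries the trivial action so its contragredient is its full $\Lambda$-dual. Applying this termwise and using naturality in the differentials gives an isomorphism of cochain complexes $(Q_\bullet^\vee)^G\cong\Hom_\Lambda((Q_\bullet)_G,\Lambda)$. Finally, self-injectivity of $\Lambda$ makes $\Hom_\Lambda(-,\Lambda)$ exact, hence it commutes with the cohomology of a complex of $\Lambda$-modules: $H^i\bigl(\Hom_\Lambda((Q_\bullet)_G,\Lambda)\bigr)\cong\Hom_\Lambda\bigl(H_i((Q_\bullet)_G),\Lambda\bigr)=H_i(G,\sigma)^\vee$, where the last equality again uses that $H_i(G,\sigma)$ has trivial $G$-action. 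Chaining the isomorphisms gives $H^i(G,\sigma^\vee)\cong H_i(G,\sigma)^\vee$, naturally in $\sigma$.

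The main obstacle is the bookkeeping in the second paragraph: checking cleanly that $(-)^\vee$ sends the chosen projective resolution to an injective coresolution, i.e. exactness of $(-)^\vee$ together with injectivity of $\ind_K^{G}\Lambda$ and of the smooth vectors in products of such objects. All of this rests on the standing assumptions $p\in\Lambda^\times$ and $\Lambda$ self-injective; these are standard in the modular smooth representation theory of $p$-adic groups but warrant care since the usual references assume complex coefficients. The remainder is formal homological algebra plus the short adjunction $(\tau^\vee)^G\cong(\tau_G)^\vee$.
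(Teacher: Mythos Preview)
Your proof is correct and essentially the same idea as the paper's, carried out at a more concrete level. The paper packages the argument as a single derived tensor--Hom adjunction
\[
R\Hom_\Lambda(\sigma\otimes^{\mathbb L}_{\mc H(G)}\Lambda,\Lambda)^{\mathrm{sm}}\cong R\Hom_{\mc H(G)}(\Lambda, R\Hom_\Lambda(\sigma,\Lambda)^{\mathrm{sm}}),
\]
invoked in the ambient closed symmetric monoidal derived category (phrased via solid $G$-modules), and then observes that self-injectivity of $\Lambda$ and exactness of $(-)^{\mathrm{sm}}$ collapse both sides to the desired groups. You instead unwind this adjunction by hand: pick a projective resolution $Q_\bullet\to\sigma$ by sums of $\cind_K^G\Lambda$'s, dualize to an injective coresolution of $\sigma^\vee$, and use the termwise identity $(\tau^\vee)^G\cong(\tau_G)^\vee$ together with exactness of $\Hom_\Lambda(-,\Lambda)$. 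The step you flagged as the main obstacle --- that $(\bigoplus_\alpha \cind_{K_\alpha}^G\Lambda)^\vee$ is injective --- is fine: it is the product \emph{in the smooth category} of the injectives $\ind_{K_\alpha}^G\Lambda$, and products of injectives are injective in any abelian category. So both arguments rest on exactly the same ingredients (exactness of $(-)^\vee$, self-injectivity of $\Lambda$, $p\in\Lambda^\times$); yours is more elementary and self-contained, while the paper's is shorter but leans on derived-category formalism.
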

\begin{proof}
This arises from an adjunction in the derived category of solid $G$-modules (i.e. it is closed symmetric monoidal) and the fact that the derived category of smooth representations embeds fully faithfully (as $\Lambda$ is discrete). 
Since taking smooth vectors is exact and takes injectives to smooth injectives under our assumptions on $\Lambda$ and $G$ \cite[Proposition~X.1.5]{borel2000continuous}, the adjunction takes the form
\[R\Hom_{\Lambda}(\sigma \otimes_{\mc H(G)}^{\mathbb L}\Lambda,\Lambda)^\text{sm}\cong R\Hom_{\mc H(G)}(\Lambda, R\Hom_{\Lambda}(\sigma, \Lambda)^\text{sm}).\]
Furthermore, taking smooth duals is also exact \cite[Proposition~2.10]{Bushnell2006} and $\Lambda$ is self-injective, inducing
\[\Hom_{\Lambda}(\sigma \otimes_{\mc H(G)}^{\mathbb L} \Lambda, \Lambda)^\text{sm}\cong R\Hom_{\mc H(G)}(\Lambda, \sigma^\vee).\]
Exactness of $(-)^\vee$ means that the cohomology of the lefthand side is the same as the smooth dual of the smooth $G$-homology of $\sigma$.
\end{proof}

\subsection{Compact induction from \texorpdfstring{$\SL_2(E)$}{SL2(E)} to \texorpdfstring{$\GL_2(E)$}{GL2(E)}}
We record some lemmas regarding compact induction along the determinant short exact sequence of $\GL_2(E)$. We also compute the smooth homology of representations of a one dimensional split torus, which appears in the Hochschild-Serre spectral sequence associated to $\SL_2(E)\subset \GL_2(E)$.
\begin{lemma}\label{tensor-sl2-ind}
Let $\sigma$ be a smooth representation of $\GL_2(E)$. Then there is an isomorphism 
\[\sigma \otimes \cind_{\SL_2(E)}^{\GL_2(E)}\Lambda\xrightarrow{\sim} \cind_{\SL_2(E)}^{\GL_2(E)}\sigma\]
as $\SL_2(E)$-modules.
\end{lemma}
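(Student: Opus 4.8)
The statement to prove is the isomorphism $\sigma \otimes \cind_{\SL_2(E)}^{\GL_2(E)}\Lambda \xrightarrow{\sim} \cind_{\SL_2(E)}^{\GL_2(E)}\sigma$ of $\SL_2(E)$-modules, where $\sigma$ is a smooth $\GL_2(E)$-representation. This is a projection-formula type statement for compact induction. The plan is to write down the natural map explicitly, identify both sides concretely using a set of coset representatives, and check it is a bijection.

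First I would recall that $\cind_{\SL_2(E)}^{\GL_2(E)}\Lambda$ consists of compactly supported functions $\phi: \GL_2(E)\to \Lambda$ that are left $\SL_2(E)$-invariant, i.e. functions on $\SL_2(E)\backslash \GL_2(E)$, which via the determinant is identified with functions on $E^\times$ (this is the identification used elsewhere in the paper, e.g. in the proof of Proposition \ref{cs-coh-double-tilde-M-trivial-module}). Then I would define the candidate map: given $v\otimes \phi \in \sigma \otimes \cind_{\SL_2(E)}^{\GL_2(E)}\Lambda$, send it to the function $g \mapsto \phi(g)\cdot (g^{-1}v)$, valued in $\sigma$; one checks this lands in $\cind_{\SL_2(E)}^{\GL_2(E)}\sigma$ (compact support is inherited from $\phi$, and the left $\SL_2(E)$-equivariance: for $h\in\SL_2(E)$, $hg \mapsto \phi(hg)(g^{-1}h^{-1}v) = \phi(g)\,g^{-1}(h^{-1}v)$, which is $h$ acting on the value, using that $\phi$ is left $\SL_2(E)$-invariant — here one must be careful about the convention for the $\SL_2(E)$-action on sections of $\cind$). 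This map is manifestly $\GL_2(E)$-equivariant (hence in particular $\SL_2(E)$-equivariant) for the diagonal action on the source and the usual action on the target, by a direct check shifting the argument.

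To see it is an isomorphism, I would pass to a more concrete model. Fix a section $s: E^\times \to \GL_2(E)$ of the determinant, e.g. $s(a) = \mathrm{diag}(a,1)$, so that $\GL_2(E) = \bigsqcup_{a\in E^\times}\SL_2(E)\,s(a)$. A function in $\cind_{\SL_2(E)}^{\GL_2(E)}\sigma$ is determined by its values on the $s(a)$, giving an identification of the underlying $\Lambda$-module with $\bigoplus_{a\in E^\times}^{\mathrm{fin}}\sigma$ (finitely supported tuples). Under this, the map above sends $v\otimes \delta_a$ (where $\delta_a$ is the indicator of the coset $\SL_2(E)s(a)$) to the tuple supported at $a$ with value $s(a)^{-1}v$; since $s(a)^{-1}$ is invertible on $\sigma$, this is clearly a bijection on each summand, hence a bijection overall. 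This also makes the $\SL_2(E)$-equivariance transparent from the earlier formula. I expect the main obstacle to be purely bookkeeping: getting the action conventions on $\cind$ exactly right (left versus right translation, whether one twists by $g$ or $g^{-1}$) so that the diagonal action on the tensor product matches the target action and the map is genuinely $\SL_2(E)$-linear; the mathematical content is minimal once the conventions are pinned down. I would also remark that this is a standard instance of the tensor identity / projection formula $\cind_H^G(\mathrm{Res}_H^G \sigma) \cong \sigma\otimes \cind_H^G\Lambda$, valid for any smooth $\sigma$ and any closed subgroup, so one could alternatively just cite this; but since the paper wants the explicit $\SL_2(E)$-module structure for use in later spectral sequence arguments, spelling out the coset model is worthwhile.
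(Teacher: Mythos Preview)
Your explicit map is the right one (the tensor identity / projection formula), and the statement is indeed standard, so the overall plan is fine. However, your verification that the map is a bijection has a gap: $\SL_2(E)$ is \emph{not open} in $\GL_2(E)$, so $\SL_2(E)\backslash\GL_2(E)\cong E^\times$ is not discrete, and the indicator $\delta_a$ of a single coset is not locally constant and hence not an element of $\cind_{\SL_2(E)}^{\GL_2(E)}\Lambda$. Consequently the identification with ``finitely supported tuples'' $\bigoplus_{a\in E^\times}^{\mathrm{fin}}\sigma$ is not correct, and the summand-by-summand bijectivity argument does not go through as written. (Relatedly, the naive identification of $\cind_{\SL_2(E)}^{\GL_2(E)}\sigma$ with functions on $E^\times$ via a section $s$ does not interact cleanly with the smoothness condition, since right translation by $K$ moves both the coset and the value by a conjugate.)

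The paper fixes exactly this point by approximating $\SL_2(E)$ from above by the \emph{open} subgroups $H_i=\det^{-1}(1+\pi^i\mc O_E)$, so that $Q_i=H_i\backslash\GL_2(E)$ is discrete and one genuinely has $\cind_{H_i}^{\GL_2(E)}\tau\cong\bigoplus_{Q_i}\tau$; at each such level your direct-sum argument is valid, and the identification $\cind_{\SL_2(E)}^{\GL_2(E)}\sigma\cong\varinjlim_i\cind_{H_i}^{\GL_2(E)}\sigma$ (compact support on $E^\times$ is profinite, hence factors through some finite level) lets one pass to the limit. Your citation of the general tensor identity is also a valid shortcut; but if you want to spell out the coset model as you intended, the correct bookkeeping is to index by open compact subsets of $E^\times$ (equivalently, by the $Q_i$ at varying levels) rather than by points.
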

\begin{proof}

There is a decreasing filtration $\GL_2(E)\supset H_1\supset \cdots $ where $H_i:= \det^\inv(1+\pi^i\mc O_E)$. Then each $H_i$ is an open subgroup of $\GL_2(E)$ (hence the coset space is discrete) with $\SL_2(E)=\bigcap_i H_i$; write $Q_i:= H_i\backslash \GL_2(E)$ so that $E^\times \cong \SL_2(E)\backslash \GL_2(E)\cong \varprojlim Q_i$. We claim $\cind_{\SL_2(E)}^{\GL_2(E)}\sigma \cong \varinjlim \cind_{H_i}^{\GL_2(E)}\sigma$. The idea is that compact support inside $E^\times$ of a fixed function is profinite, so such a function in $C^0(E^\times,V)\cong \varinjlim C^0(H_i \backslash \GL_2(E),V)$ is determined by a function on some finite level with finite (i.e. compact modulo $H_i$) support. Since there are isomorphisms of $\SL_2(E)$-representations 
\[\sigma\otimes \cind_{H_i}^{\GL_2(E)}\Lambda\cong \sigma \otimes \bigoplus_{Q_i}\Lambda \cong \bigoplus_{Q_i}\sigma\cong \cind_{H_i}^{\GL_2(E)}\sigma,\] 
we see that $\sigma \otimes \cind_{\SL_2(E)}^{\GL_2(E)}\Lambda \cong \cind_{\SL_2(E)}^{\GL_2(E)}\sigma$ as $\SL_2(E)$-representations. 
\end{proof}

\begin{lemma}
For any smooth $\SL_2(E)$-representation $\sigma$, for every $i$ there is an isomorphism on smooth homology
\[H_i(\GL_2(E),\cind_{\SL_2(E)}^{\GL_2(E)} \sigma)\cong H_i(\SL_2(E),\sigma).\]
\end{lemma}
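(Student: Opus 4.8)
The statement to prove is
\[H_i(\GL_2(E),\cind_{\SL_2(E)}^{\GL_2(E)} \sigma)\cong H_i(\SL_2(E),\sigma)\]
for any smooth $\SL_2(E)$-representation $\sigma$. This is the homological (Shapiro-lemma) counterpart of the preceding lemmas, and the natural approach is to realize the compact induction functor as an exact functor that behaves well on projective resolutions, or equivalently to work at the level of derived tensor products over Hecke algebras. The plan is to use the identification (from the remarks in the appendix) of smooth representations with non-degenerate $\mc H(-)$-modules, under which $H_i(G,\sigma)$ is the $i$-th homology of $\Lambda\otimes^{\mathbb L}_{\mc H(G)}\sigma$. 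The key point will be a Shapiro-type isomorphism for compact induction along the closed embedding $\SL_2(E)\hookrightarrow \GL_2(E)$.

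First I would recall that $\cind_{\SL_2(E)}^{\GL_2(E)}$ is exact (the coset space $\SL_2(E)\backslash\GL_2(E)\cong E^\times$ is discrete, so compact induction is a filtered colimit of finite direct sums, as exploited in Lemma~\ref{tensor-sl2-ind}) and that, being a left adjoint in the relevant sense, it preserves projectives: $\cind_{\SL_2(E)}^{\GL_2(E)}\cind_K^{\SL_2(E)}\Lambda\cong \cind_K^{\GL_2(E)}\Lambda$ for $K$ compact open, and these compactly induced modules from compact open subgroups are the projective generators of the smooth category. Hence if $P_\bullet\to \sigma$ is a resolution by such projectives in smooth $\SL_2(E)$-representations, then $\cind_{\SL_2(E)}^{\GL_2(E)}P_\bullet\to \cind_{\SL_2(E)}^{\GL_2(E)}\sigma$ is a projective resolution in smooth $\GL_2(E)$-representations. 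It then remains to identify $\Lambda\otimes_{\mc H(\GL_2(E))}\cind_{\SL_2(E)}^{\GL_2(E)}P_\bullet$ with $\Lambda\otimes_{\mc H(\SL_2(E))}P_\bullet$ degreewise and compatibly with differentials; this reduces to the projection/Frobenius-reciprocity identity
\[\Lambda\otimes_{\mc H(\GL_2(E))}\cind_{\SL_2(E)}^{\GL_2(E)}W\;\cong\;\Lambda\otimes_{\mc H(\SL_2(E))}W\]
for $W$ a smooth $\SL_2(E)$-representation, which is the homology-side Shapiro lemma and is a standard computation: $\cind_{\SL_2(E)}^{\GL_2(E)}W\cong \mc H(\GL_2(E))\otimes_{\mc H(\SL_2(E))}W$ as $\mc H(\GL_2(E))$-modules (with appropriate care about the module structures and smooth vectors), so the claim follows by associativity of tensor product. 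Taking $i$-th homology then gives the result, and naturality in $\sigma$ is automatic from the construction.

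Alternatively — and perhaps more cleanly — one can deduce this directly from Lemma~\ref{equivariant-projection-formula} or Proposition~\ref{homological-Hochschild-Serre} applied to the $\underline{E^\times}$-torsor structure: the functor $H_\bullet(\GL_2(E),-)$ factors through $H_\bullet(E^\times,H_\bullet(\SL_2(E),-))$ via Hochschild–Serre for $1\to\SL_2(E)\to\GL_2(E)\to E^\times\to 1$, and $H_\bullet(\SL_2(E),\cind_{\SL_2(E)}^{\GL_2(E)}\sigma)\cong\cind_1^{E^\times}H_\bullet(\SL_2(E),\sigma)$ (as in the proof of Theorem~\ref{cs-coh-tilde-M-trivial-module}), while $H_\bullet(E^\times,\cind_1^{E^\times}(-))$ computes $E^\times$-coinvariants of the free module, which in degree $0$ returns the original module and vanishes in higher degree. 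Combining these collapses the spectral sequence and yields the isomorphism.

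\textbf{Main obstacle.} The only genuinely delicate point is the bookkeeping with \emph{smooth} vectors and non-degenerate Hecke-module structures when asserting $\cind_{\SL_2(E)}^{\GL_2(E)}W\cong \mc H(\GL_2(E))\otimes_{\mc H(\SL_2(E))}W$: the naive tensor product over Hecke algebras need not be non-degenerate, so one must either insert the smoothening functor (which is exact under the standing hypotheses on $\Lambda$) or argue directly with the projective generators $\cind_K^{\GL_2(E)}\Lambda$ and Frobenius reciprocity, avoiding the infinite Hecke algebra altogether. I expect the projective-resolution argument to be the most robust and would present that, relegating the Shapiro identity to a one-line invocation of Frobenius reciprocity on $\Hom$ combined with the preceding lemma's colimit description of $\cind_{\SL_2(E)}^{\GL_2(E)}$.
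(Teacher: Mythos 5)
Your main argument is essentially the paper's proof: both reduce to the natural isomorphism $\cind_{\SL_2(E)}^{\GL_2(E)}\sigma\cong\mc H(\GL_2(E))\otimes_{\mc H(\SL_2(E))}\sigma$ (valid without a modulus twist since both groups are unimodular) together with the exactness of $\cind_{\SL_2(E)}^{\GL_2(E)}$, equivalently flatness of $\mc H(\GL_2(E))$ over $\mc H(\SL_2(E))$, and differ only in whether the conclusion is drawn via a projective resolution of $\sigma$ (your route) or directly via a chain of derived tensor product isomorphisms (the paper's). One small caution on your alternative Hochschild--Serre route: for a general $\SL_2(E)$-representation $\sigma$ that does not extend to $\GL_2(E)$, the restriction of $\cind_{\SL_2(E)}^{\GL_2(E)}\sigma$ back to $\SL_2(E)$ is a filtered colimit of conjugation-twisted copies $\sigma^g$ rather than literal copies of $\sigma$, so the identification $H_\bullet(\SL_2(E),\cind_{\SL_2(E)}^{\GL_2(E)}\sigma)\cong\cind_1^{E^\times}H_\bullet(\SL_2(E),\sigma)$ borrowed from Theorem~\ref{cs-coh-tilde-M-trivial-module} (where $\sigma$ is a $\GL_2(E)$-representation, so the twists are trivialized) requires an additional argument to control the $E^\times$-module structure.
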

\begin{remark}
This is dual to Shapiro's lemma $H^i(\GL_2(E),\ind_{\SL_2(E)}^{\GL_2(E)}\sigma )\cong H^i(\SL_2(E),\sigma)$.
\end{remark}

\begin{proof}
Since both $\SL_2(E)$ and $\GL_2(E)$ are unimodular and $\SL_2(E)\subset \GL_2(E)$ is closed, there is a natural isomorphism of functors \cite[116]{Renard2010ReprsentationsDG}
\[\cind_{\SL_2(E)}^{\GL_2(E)}\circ (-\otimes \delta_{\SL_2(E)\backslash \GL_2(E)})=\cind_{\SL_2(E)}^{\GL_2(E)}\xrightarrow{\sim} \mc H(\GL_2(E))\otimes_{\mc H(\SL_2(E))}-.\]
The same is true for $1\subset \GL_2(E)$, so that \[\mc H(\GL_2(E))\cong \cind_{1}^{\GL_2(E)}\Lambda \cong \cind_{\SL_2(E)}^{\GL_2(E)}\mc H(\SL_2(E))\cong_{\SL_2(E)}\varinjlim \bigoplus_{Q_i}\mc H(\SL_2(E))\]
is a flat $\mc H(\SL_2(E))$-module. Then in the derived category of $\Lambda$-modules:
\begin{align*}
\Lambda \otimes_{\mc H(\GL_2(E))}^\mathbb{L}\cind_{\SL_2(E)}^{\GL_2(E)}\sigma &\cong \Lambda \otimes_{\mc H(\GL_2(E))}^\mathbb{L}(\mc H(\GL_2(E))\otimes_{\mc H(\SL_2(E))}\sigma) \\
&\cong \Lambda \otimes_{\mc H(\GL_2(E))}^\mathbb{L}(\mc H(\GL_2(E))\otimes^\mathbb{L}_{\mc H(\SL_2(E))}\sigma)\\
&\cong \left(\Lambda\otimes_{\mc H(\GL_2(E))}^\mathbb{L}\mc H(\GL_2(E)) \right) \otimes_{\mc H(\SL_2(E))}^\mathbb{L} \sigma \\
&\cong \Lambda \otimes_{\mc H(\SL_2(E))}^\mathbb{L} \sigma.
\end{align*}

\end{proof}

We denote by $\pi^{\Z}:= E^\times /\mc O_E^\times$ the rank $1$ free abelian group generated by the uniformizer of $E$. In particular, $\pi^{\Z}$ is discrete. 
\begin{proposition}\label{homology-E-times}
Let $\sigma$ be a smooth $E^\times$-module. If the torsion of $\Lambda$ is sufficiently large, then 
\[H_i(E^\times, \sigma)\cong
\begin{cases}
\sigma_{E^\times},\quad & i=0\\ (\sigma_{\mc O_E^\times})^{\pi^{\Z}},\quad & i=1\\ 0,\quad & \text{else.}
\end{cases}\]
\end{proposition}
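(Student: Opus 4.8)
The statement computes the smooth homology of a smooth $E^\times$-representation $\sigma$, and the strategy is to factor the group $E^\times$ as an extension of the discrete group $\pi^{\Z}$ by the profinite group $\mc O_E^\times$ and run a Hochschild-Serre spectral sequence. First I would recall the short exact sequence $1 \to \mc O_E^\times \to E^\times \xrightarrow{v} \pi^{\Z} \to 1$, where $v$ is the valuation. Since $\mc O_E^\times$ is profinite and, under the ``sufficiently large torsion'' hypothesis on $\Lambda$, its pro-order is invertible in $\Lambda$, smooth $\mc O_E^\times$-homology is exact: $H_i(\mc O_E^\times, \sigma) = 0$ for $i > 0$ and $H_0(\mc O_E^\times, \sigma) = \sigma_{\mc O_E^\times}$, the coinvariants. (This is the analogue for homology of the vanishing used in Lemma \ref{suff-large-tors-1}; it follows since any smooth $\mc O_E^\times$-representation is a filtered colimit of representations of the finite quotients $\mc O_E^\times/(1+\pi^k\mc O_E)$, whose group homology with $\Lambda$-coefficients vanishes in positive degrees when the order is invertible, and homology commutes with filtered colimits.)

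Next, the Hochschild-Serre spectral sequence $E_2^{p,q} = H_p(\pi^{\Z}, H_q(\mc O_E^\times, \sigma)) \implies H_{p+q}(E^\times, \sigma)$ collapses onto the row $q = 0$, giving $H_i(E^\times, \sigma) \cong H_i(\pi^{\Z}, \sigma_{\mc O_E^\times})$. Here $\pi^{\Z} \cong \Z$ acts on $\sigma_{\mc O_E^\times}$ through the residual action of a chosen uniformizer $\pi$. The homology of $\Z$ with coefficients in any module $M$ is computed by the two-term complex $0 \to M \xrightarrow{\pi - 1} M \to 0$ concentrated in degrees $1$ and $0$, so $H_0(\pi^{\Z}, M) = M_{\pi^{\Z}} = M/(\pi-1)M$, $H_1(\pi^{\Z}, M) = M^{\pi^{\Z}} = \ker(\pi - 1)$, and $H_i = 0$ for $i \geq 2$. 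Applying this with $M = \sigma_{\mc O_E^\times}$ and noting $H_0(E^\times, \sigma) = (\sigma_{\mc O_E^\times})_{\pi^{\Z}} = \sigma_{E^\times}$ yields exactly the three cases claimed.

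The only genuinely non-formal input is the exactness of smooth $\mc O_E^\times$-homology, i.e.\ the vanishing of higher homology of the pro-finite group $\mc O_E^\times$; everything else is the bare resolution of $\Z$ and the collapse of a two-column spectral sequence. I expect the write-up to be short: the main point to be careful about is invoking the standing ``banal'' hypothesis on the torsion of $\Lambda$ precisely (primes dividing $n$ are prime to $q-1$, hence prime to the pro-order of $\mc O_E^\times = \F_q^\times \times (1+\pi\mc O_E)$ up to $p$-power factors, which are invertible since $p \in \Lambda^\times$), and to phrase the colimit argument so that it manifestly commutes with the homological (left-derived tensor) functor. One can alternatively bypass the spectral sequence entirely by writing down an explicit length-one projective resolution of $\Lambda$ as an $\mc H(E^\times)$-module built from the resolution of $\Z$ over $\Z[\pi^{\Z}]$ induced up along $\mc H(\mc O_E^\times) \to \mc H(E^\times)$, which is flat by the same invertibility hypothesis; I would mention this as a remark but carry out the spectral sequence version as the main proof.
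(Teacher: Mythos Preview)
Your proposal is correct and follows essentially the same route as the paper: Hochschild--Serre for $1\to \mc O_E^\times \to E^\times \to \pi^{\Z}\to 1$, vanishing of higher $\mc O_E^\times$-homology under the banal hypothesis (the paper phrases this via the decomposition $\mc O_E^\times \cong \F_q^\times \times (\text{pro-}p)$ rather than your colimit over finite quotients, but both are standard), and then the length-one resolution of $\Lambda$ over $\Lambda[\pi^{\Z}]$ to read off invariants and coinvariants.
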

\begin{proof}
Consider the homological Hochschild-Serre spectral sequence 
\[E_2^{i,j}= H_{-i}(\pi^{\Z}, H_{-j}(\mc O_E^\times, \sigma))\implies H_{-i-j}(E^\times, \sigma).\]
The unit group $\mc O_E^\times$ is the direct product of a pro-$p$ group with $\Z/(q-1)\Z$, so another Hochschild-Serre spectral sequence shows that $\sigma$ is acyclic as an $\mc O_E^\times$-module (as long as the torsion of $\Lambda$ and $q-1$ have no divisors in common). 

Then the spectral sequence is just the $\pi^{\Z}$-homology of the co-fixed points $\sigma_{\mc O_E^\times}$. Since $\pi^{\Z}$ is discrete, its category of representations is equivalent to modules over the group ring $\Lambda[\pi^{\Z}]$. Then taking the standard free resolution of the trivial module
\[0\to (\pi -1)\Lambda[\pi^{\Z}]\xrightarrow{\iota} \Lambda[\pi^{\Z}]\xrightarrow{\Sigma} \Lambda \to 0\]
gives the group homology of $\sigma_{\mc O_E^\times}$ as the homology of the complex
\[0 \to (\pi-1)\sigma_{\mc O_E^\times}\xrightarrow{\iota\otimes 1}\sigma_{\mc O_E^\times} \to 0,\]
which is easily seen to be the $\pi^{\Z}$ coinvariants in degree $0$ and invariants in degree $1$.
\end{proof}

\printbibliography

@misc{fargues2021geometrization,
      title={Geometrization of the local Langlands correspondence}, 
      author={Laurent Fargues and Peter Scholze},
      year={2021},
      eprint={2102.13459},
      archivePrefix={arXiv},
      primaryClass={math.RT}
}

@misc{scholze2017etale,
      title={\'{E}tale cohomology of diamonds}, 
      author={Peter Scholze},
      year={2017},
      eprint={1709.07343},
      archivePrefix={arXiv},
      primaryClass={math.AG}
}

@book{Huber,
    title={Étale Cohomology of Rigid Analytic Varieties and Adic Spaces},
    author={Roland Huber},
    editor={},
    publisher={Vieweg+Teubner Verlag, Wiesbaden},
    series={Aspects of Mathematics},
    volume={30},
    year={1996}
}

@article{Hansen_shtukas,
    author =       "David Hansen",
    title =        "Moduli of local shtukas and Harris's conjecture",
    journal =      "Tunisian Journal of Mathematics",
    volume =       "3",
    number =       "4",
    pages =        "749--799",
    year =         "2021",
    DOI =          "10.2140/tunis.2021.3.749",
    keywords =     "local shtukas, Harris's conjecture, perfectoid spaces, diamonds"
}

@misc{Hansen_degenerating_VB,
    author =       "David Hansen",
    title =        "Degenerating vector bundles in p-adic Hodge theory",
    eprint = "http://www.davidrenshawhansen.com/degen.pdf",
    journal =      "to appear in J. Inst. Math. Jussieu",
    year =         "2017",
}

@book{berkeley-notes,
 ISBN = {9780691202082},
 URL = {http://www.jstor.org/stable/j.ctvs32rc9},
 abstract = { Berkeley Lectures on p-adic Geometry presents an important breakthrough in arithmetic geometry. In 2014, leading mathematician Peter Scholze delivered a series of lectures at the University of California, Berkeley, on new ideas in the theory of p-adic geometry. Building on his discovery of perfectoid spaces, Scholze introduced the concept of "diamonds," which are to perfectoid spaces what algebraic spaces are to schemes. The introduction of diamonds, along with the development of a mixed-characteristic shtuka, set the stage for a critical advance in the discipline. In this book, Peter Scholze and Jared Weinstein show that the moduli space of mixed-characteristic shtukas is a diamond, raising the possibility of using the cohomology of such spaces to attack the Langlands conjectures for a reductive group over a p-adic field. This book follows the informal style of the original Berkeley lectures, with one chapter per lecture. It explores p-adic and perfectoid spaces before laying out the newer theory of shtukas and their moduli spaces. Points of contact with other threads of the subject, including p-divisible groups, p-adic Hodge theory, and Rapoport-Zink spaces, are thoroughly explained. Berkeley Lectures on p-adic Geometry will be a useful resource for students and scholars working in arithmetic geometry and number theory.},
 author = {Peter Scholze and Jared Weinstein},
 publisher = {Princeton University Press},
 title = {Berkeley Lectures on p-adic Geometry: (AMS-207)},
 year = {2020}
}

@article{StuhlerSchneider+1993+19+32,
author = {U. Stuhler and P. Schneider},
doi = {doi:10.1515/crll.1993.436.19},
url = {https://doi.org/10.1515/crll.1993.436.19},
title = {Resolutions for smooth representations of the general linear group over a local field.},
journal = {},
number = {436},
volume = {1993},
year = {1993},
pages = {19--32}
}

@article{schneider1997representation,
  title={Representation theory and sheaves on the Bruhat-Tits building},
  author={Schneider, Peter and Stuhler, Ulrich},
  journal={Publications Math{\'e}matiques de l'IH{\'E}S},
  volume={85},
  pages={97--191},
  year={1997}
}

@article{fust2021continuous,
  title={Continuous Cohomology and Ext-Groups},
  author={Fust, Paulina},
  journal={arXiv preprint arXiv:2106.04473},
  year={2021}
}

@book{borel2000continuous,
  title={Continuous cohomology, discrete subgroups, and representations of reductive groups},
  author={Borel, Armand and Wallach, Nolan},
  number={67},
  year={2000},
  publisher={American Mathematical Soc.}
}

@inproceedings{Renard2010ReprsentationsDG,
  title={Repr{\'e}sentations des groupes r{\'e}ductifs p-adiques},
  author={David Renard},
  year={2010}
}

@Inbook{Bushnell2006,
title="Induced Representations of Linear Groups",
author="Bushnell, Colin and Henniart, Guy",
bookTitle="The Local Langlands Conjecture for GL(2)",
year="2006",
publisher="Springer Berlin Heidelberg",
address="Berlin, Heidelberg",
isbn="978-3-540-31511-7",
doi="10.1007/3-540-31511-X_3",
url="https://doi.org/10.1007/3-540-31511-X_3"
}

@article{Casselman1980TheUP,
  title={The unramified principal series of \$p\$-adic groups. I. The spherical function},
  author={William Casselman},
  journal={Compositio Mathematica},
  year={1980},
  volume={40},
  pages={387-406}
}

@misc{hamann2023geometric,
      title={Geometric Eisenstein Series, Intertwining Operators, and Shin's Averaging Formula}, 
      author={Linus Hamann},
      year={2023},
      eprint={2209.08175},
      archivePrefix={arXiv},
      primaryClass={math.NT}
}

@article{simpson1997mixed,
	title={Mixed twistor structures},
	author={Simpson, Carlos},
	journal={arXiv preprint alg-geom/9705006},
	year={1997}
}

@misc{hamann2024geometriceisensteinseriesi,
	title={Geometric Eisenstein series I: finiteness theorems}, 
	author={Linus Hamann and David Hansen and Peter Scholze},
	year={2024},
	eprint={2409.07363},
	archivePrefix={arXiv},
	primaryClass={math.NT},
	url={https://arxiv.org/abs/2409.07363}, 
}

@misc{hamann2024dualizingcomplexesmoduliparabolic,
	title={Dualizing complexes on the moduli of parabolic bundles}, 
	author={Linus Hamann and Naoki Imai},
	year={2024},
	eprint={2401.06342},
	archivePrefix={arXiv},
	primaryClass={math.NT},
	url={https://arxiv.org/abs/2401.06342}, 
}

@article{casselman1995introduction,
  title={Introduction to admissible representations of p-adic groups},
  author={Casselman, Bill},
  journal={unpublished notes},
  year={1995}
}

\end{document}